\newcommand{\fs}[1]{{#1}}
\newcommand{\sk}[1]{{#1}}
\newtheorem{theorem}{Theorem}[section]
\newtheorem{proposition}[theorem]{Proposition}
\newtheorem{lemma}[theorem]{Lemma}
\newtheorem{corollary}[theorem]{Corollary}
\theoremstyle{remark}
\newtheorem{remark}[theorem]{Remark}
\theoremstyle{definition}
\newtheorem{definition}[theorem]{Definition}
\newtheorem{example}[theorem]{Example}
\newtheorem*{acknowledgement}{Acknowledgements}
\newtheorem*{data}{Data availability}
\newtheorem*{conflicts}{Conflicts of interest}
\numberwithin{equation}{section}
\numberwithin{figure}{section}
\newcommand{\E}{\mathds{E}}
\renewcommand{\P}{\mathds{P}}
\newcommand{\R}{\mathbb{R}}
\newcommand{\Z}{\mathbb{Z}}
\newcommand{\N}{\mathbb{N}}
\newcommand{\IND}{{\bf 1}}
\newcommand{\dd}{{\rm d}}
\newcommand{\fstop}{\; \text{.}}
\newcommand{\comma}{\; \text{,}\;\;}
\newcommand{\tonde}[1]{\left(#1\right)}
\newcommand{\ttonde}[1]{\big(#1\big)}
\newcommand{\tttonde}[1]{(#1)}
\newcommand{\scalar}[2]{\left\langle #1\, \middle \vert\, #2 \right\rangle}
\newcommand{\tscalar}[2]{\big\langle #1 \big | #2 \big\rangle}
\newcommand{\norm}[1]{\left\lVert#1\right\rVert}
\newcommand{\triplenorm}[1]{{\left\vert\kern-0.25ex\left\vert\kern-0.25ex\left\vert #1 
		\right\vert\kern-0.25ex\right\vert\kern-0.25ex\right\vert}}
\renewcommand{\complement}{c}
\newcommand{\abs}[1]{\left\lvert#1\right\rvert}
\newcommand{\emparg}{\,\cdot\,}
\newcommand{\emp}{\varnothing}
\newcommand{\eqdef}{\coloneqq}
\newcommand{\defeq}{\eqqcolon}
\newcommand{\car}{\mathds{1}}
\newcommand{\eps}{\varepsilon}
\newcommand{\nscalar}[2]{\langle #1\, \vert\, #2 \rangle}
\newcommand{\set}[1]{\left\{#1\right\}}
\newcommand{\ann}{\mathfrak{a}_k}
\newcommand{\cre}{\mathfrak{a}_{\alpha,k-1}^\dagger}
\newcommand{\bbr}[2]{\llbracket #1,#2 \rrbracket}
\newcommand{\fc}{\mathfrak{c}}
\newcommand{\cA}{\ensuremath{\mathcal A}} 
\newcommand{\cB}{\ensuremath{\mathcal B}} 
\newcommand{\cD}{\ensuremath{\mathcal D}} 
\newcommand{\cE}{\ensuremath{\mathcal E}}
\newcommand{\cL}{\ensuremath{\mathcal L}} 
\newcommand{\cM}{\ensuremath{\mathcal M}}
\newcommand{\cP}{\ensuremath{\mathcal P}} 
\newcommand{\cR}{\ensuremath{\mathcal R}} 
\newcommand{\cS}{\ensuremath{\mathcal S}} 
\newcommand{\cU}{\ensuremath{\mathcal U}} 
\newcommand{\cW}{\ensuremath{\mathcal W}}
\newcommand{\cZ}{\ensuremath{\mathcal Z}} 
\title[One- \& two-particle spectral gap identities for SIP]{One- and	two-particle spectral gap identities for the symmetric inclusion process and related models
}
\subjclass[2020]{Primary 60K35; secondary 60J27, 05C50.}
\author{Seonwoo Kim} 
\address{\sk{Department of Mathematics, Yonsei University, Seoul, Republic of Korea}}
\email{\sk{seonwookim@yonsei.ac.kr}}
\author{Federico Sau}
\address{\fs{Dipartimento di Matematica \textquotedblleft F.\ Enriques\textquotedblright, Università degli Studi di Milano, Milano, Italy}}
\email{\fs{federico.sau@unimi.it}}
\keywords{Spectral gap; interacting particle systems;   Dirichlet distribution; symmetric inclusion process; Brownian energy process}
\begin{document} 
\maketitle
\begin{abstract}
	The symmetric inclusion process (SIP) models particles diffusing on a graph with mutual attraction. We recently showed \cite{kim_sau_spectral_2023} that, in the log-concave regime (where diffusivity dominates interaction), the spectral gap of the conservative SIP matches that of a single particle. In this paper, our main result demonstrates that this identity generally fails outside this regime, but always holds  for the non-conservative SIP, regardless of the interaction strength.
When this one-particle spectral gap identity breaks down, we derive sharp bounds for the  gap in terms of diffusivity, and reveal a two-particle spectral gap identity in the vanishing diffusivity limit. Our approach leverages the rigid eigenstructure of  SIP, refined comparisons of Dirichlet forms  for arbitrary diffusivity and particle numbers, and techniques from slow--fast system analysis. These findings extend to the dual interacting diffusion known as Brownian energy process, and shed some light on the spectral gap behavior for related Dirichlet-reversible systems on general, non-mean-field, geometries.
\end{abstract}
\thispagestyle{empty}

\section{Introduction}\label{sec1}
The \textit{symmetric inclusion process} (${\rm SIP}$) is a system of interacting particles hopping on the sites $x,y,z, \ldots \in V$ of a finite graph $G$. In its conservative version, each particle jumps from a site  $x$ to another one $y$ at rate $c_{xy}\tonde{\alpha_y+\eta_y}$, where:
\begin{itemize}
	\item  $c_{xy}=c_{yx}\ge 0$ is a symmetric weight attached to the edge $xy$;
	\item $\alpha_y> 0$ is a weight attached to the arrival  site $y$;
	\item $\eta_y\in \N_0$ denotes the number of particles sitting on  $y$.
\end{itemize} While the rate $c_{xy}\,\alpha_y$ accounts for the free motion (or diffusion) of each particle,  $c_{xy}\,\eta_y$ introduces an interaction among them by favoring jumps on top of already occupied sites.  Moreover, this  dynamics is conservative (i.e., it preserves the total number of particles),  reversible with respect to a discrete Dirichlet distribution, and irreducible as soon as the underlying graph is connected. 
We refer to Section \ref{sec2} below for the detailed definition of the process and its properties.

 For now, let us just remark that ${\rm SIP}$ arises within different contexts (e.g., as a discrete model of heat conduction \cite{giardina_duality_2007}, and  as a  multi-type Moran model from population genetics with non-mean-field interactions), and comes with closely related models from statistical physics. For instance, by setting $\alpha_y=1$ and replacing the plus sign with a minus sign  in the rates, one obtains the renowned symmetric exclusion process  \cite{spitzer_interaction_1970}. Furthermore, SIP admits a  continuous-spin counterpart, referred to as the Brownian energy process \cite{giardina_duality_2009} (see also Section \ref{sec:BEP} below). This continuous-spin model, roughly speaking, describes the energies of an instance of Kac's walk \cite{kac_foundations_1956}, in which velocities  evolve  as interacting diffusions, rather than being instantaneously updated  at 	  random \textquotedblleft collision\textquotedblright\ times. 
 
In all these models, a central question is that of determining precise  convergence rates to equilibrium, and, in the context of reversible processes,  one of the most investigated quantities for this purpose  is the so-called \textit{spectral gap}.

\subsection{One-particle spectral gap identity}For ${\rm SIP}$ on a graph $G=(V,(c_{xy})_{x,y\in V})$ endowed with site weights $\alpha=(\alpha_x)_{x\in V}$,  \begin{equation}{\rm gap}_{\rm SIP}(G,\alpha)>0
	\end{equation}denotes the corresponding spectral gap, i.e., the smallest non-zero eigenvalue of (the negative of) the infinitesimal generator (see Section \ref{sec2} for the precise definition).	
In a previous work \cite{kim_sau_spectral_2023}, we established the following bounds on ${\rm gap}_{\rm SIP}(G,\alpha)$ in terms of ${\rm gap}_{\rm RW}(G,\alpha)$, the spectral gap of the random walk (${\rm RW}$) on $G$ which  jumps from $x$ to $y$ with rate $c_{xy}\,\alpha_y$: for all graphs $G$ and site weights $\alpha$,
\begin{equation}\label{eq:kim-sau-ineq}
	\tonde{1\wedge \alpha_{\rm min}}{\rm gap}_{\rm RW}(G,\alpha)\le {\rm gap}_{\rm SIP}(G,\alpha)\le {\rm gap}_{\rm RW}(G,\alpha)\fstop
\end{equation}
Here and all throughout, $a\wedge b\eqdef \min\{a,b\}$ and  $\alpha_{\rm min}\eqdef \min_{x\in V}\alpha_x$.

Notably, as soon as $\alpha_{\rm min}\ge 1$, the inequalities in \eqref{eq:kim-sau-ineq} saturate to an identity:
\begin{equation}\label{eq:gap-identity}
	{\rm gap}_{\rm SIP}(G,\alpha)={\rm gap}_{\rm RW}(G,\alpha)\comma\qquad  \alpha_{\rm min}\ge 1\fstop
\end{equation} This one-particle reduction corresponds to a ${\rm SIP}$-version of the celebrated Aldous' spectral gap conjecture, originally formulated for  interchange and symmetric exclusion processes in the early 90s, and settled two decades later in \cite{caputo_proof_2010}.  In fact, the identity in \eqref{eq:gap-identity} is extremely  powerful, as it reduces, for every underlying graph $G$, the spectral gap of ${\rm SIP}$ --- an \fs{arbitrarily large} system, as it may consist of arbitrarily many particles --- to that of ${\rm RW}$, a much simpler Markov chain with finite state space $V$, for which several techniques to bound the spectral gap in terms of simple features of the underlying graph $G$ are known (see, e.g., \cite{saloff1997lectures,montenegro_mathematical_2005,levin2017markov,hermon2023relaxation}).  For completeness, let us emphasize that an identity like \eqref{eq:gap-identity}, while trivial for a system of independent particles, is not at all expected to hold for truly interacting systems. Indeed,   apart from the processes treated in \cite{caputo_proof_2010}, only a handful of other models were discovered to satisfy, on any geometry, a spectral gap identity involving the corresponding random walk. These models are:
\begin{itemize}
	\item the symmetric exclusion process in contact with reservoirs in \cite{salez2022universality,salez_spectral_2024};
	\item the Binomial splitting process in \cite{quattropani2021mixing,bristiel_caputo_entropy_2021}.
\end{itemize}
The identity in \eqref{eq:gap-identity} adds ${\rm SIP}$ to the short list above, provided that $\alpha_{\rm min}\ge 1$  (we shall also refer to this condition as  \textquotedblleft log-concave regime\textquotedblright, see Remark \ref{rem:log-concave} below).

One of our  main results (\textbf{Theorem \ref{th:failure-gap}}) states that, {without} this condition on  $\alpha_{\rm min}$, this identity, in general, fails:  for some graphs $G$ and site weights $\alpha$,  
	\begin{equation}\label{eq:failure-gap}
		{\rm gap}_{\rm SIP}(G,\alpha)<{\rm gap}_{\rm RW}(G,\alpha) \fstop
	\end{equation}
In other words, 
  the factor $1\wedge \alpha_{\rm min}$ in \eqref{eq:kim-sau-ineq} cannot be generally neglected when 
\begin{equation}\label{eq:regime-non-log-concave}
	\alpha_{\rm min}\in (0,1)\fstop
	\end{equation}
This naturally leads us to further investigate  this regime, with the twofold goal of:
\begin{enumerate}[(a)]
	\item deriving an alternative lower bound for ${\rm gap}_{\rm SIP}(G,\alpha)$, better capturing its dependence on $\alpha_{\rm min}$;
	\item obtaining a two-particle reduction of ${\rm gap}_{\rm SIP}(G,\alpha)$, in the asymptotic regime of vanishing diffusivity $\alpha\to 0$.	
	\end{enumerate}
We discuss these two steps  in Sections \ref{sec:intro-sharp} and \ref{sec:intro-2-particle}, respectively. 

\subsection{Sharp dependence on $\alpha_{\rm min}$} \label{sec:intro-sharp}   In order to isolate the role of $\alpha_{\rm min}$, let us introduce new site weights $\hat \alpha=\frac{\alpha}{\alpha_{\rm min}}$. 
With this notation, a simple scaling argument implies	
\begin{equation}
\label{eq:gap-RW-alpha-min}	{\rm gap}_{\rm RW}(G,\alpha)=\alpha_{\rm min}\,{\rm gap}_{\rm RW}(G,\hat \alpha)\fstop
\end{equation} 
Due to the particle interaction, the same argument does not directly apply to ${\rm SIP}(G,\alpha)$.   Instead,  
if combined with the inequalities in \eqref{eq:kim-sau-ineq}, the above identity yields
\begin{equation}\label{eq:quadratic}
 \alpha_{\rm min}^2 \, {\rm gap}_{\rm RW}(G,\hat \alpha)\le 	{\rm gap}_{\rm SIP}(G,\alpha)\le \alpha_{\rm min}\, {\rm gap}_{\rm RW}(G,\hat \alpha)\comma\qquad \alpha_{\rm min}\in (0,1)\fstop
\end{equation}
Hence, if we encode the dependence on $(G,\hat \alpha)$ through  ${\rm gap}_{\rm RW}(G,\hat \alpha)$,  the lower  bound  in \eqref{eq:quadratic} above would predict  ${\rm gap}_{\rm SIP}(G,\alpha)$ to depend  on the square of 
 $\alpha_{\rm min}$.

 Our second main result (\textbf{Theorem \ref{th:sharp-alpha-min}}) proves this guess to be wrong, showing that, in general, ${\rm gap}_{\rm SIP}(G,\alpha)$ depends linearly,  rather than quadratically as in \eqref{eq:quadratic}, on $\alpha_{\rm min}\in (0,1)$: for all graphs  $G$ and site weights $\alpha$,
 \begin{equation}\label{eq:lb-alpha-min-intro}
 	C\,\alpha_{\rm min}\le {\rm gap}_{\rm SIP}(G,\alpha)
 	\comma\qquad \alpha_{\rm min}\in (0,1)\comma
 \end{equation}
 where  $C> 0$ is a constant depending only on $G$ and $\hat\alpha=\frac{\alpha}{\alpha_{\rm min}}$.
\sk{Indeed, in Theorem \ref{th:sharp-alpha-min}, since $\alpha_{\rm min} \le 1$ and $\hat\alpha_{\rm ratio} = \alpha_{\rm ratio}$, we have
\begin{equation}
	\alpha_{\rm min}\set{	\frac1{54} \, \frac{ \hat\alpha_{\rm ratio}}{ (6e)^{1/\hat\alpha_{\rm ratio}} } \, \frac{ c_{\rm min}}{|V|^2\, {\rm diam}(G) }}	\le {\rm gap}_{\rm SIP}(G,\alpha) \comma
	\end{equation}
where the constant inside the bracket in the left-hand side depends only on the graph structure and the constant $\hat\alpha$.}
Because of the second inequality in \eqref{eq:quadratic}, the order-one dependence on $\alpha_{\rm min}$ for ${\rm gap}_{\rm SIP}(G,\alpha)$ in \eqref{eq:lb-alpha-min-intro} is sharp. As we will see, this sharpened result crucially requires to express the dependence on $G$ not through ${\rm gap}_{\rm RW}(G,\hat\alpha)$, but other features of the underlying geometry. 

\subsection{Two-particle spectral gap identity}\label{sec:intro-2-particle}
  All  considerations made so far leave  the following question unanswered: 
 given that the spectral gap of ${\rm SIP}$ does not always coincide with that of a single particle (namely, ${\rm RW}$),  does it instead coincide with the spectral gap of the $k$-particle ${\rm SIP}$, for some integer $k\ge 2$,  independent of the underlying graph $G$ and site weights $\alpha$?

Asking the same question  for ${\rm SIP}$'s \textquotedblleft instantaneously thermalized\textquotedblright\ 	variant also known as Beta-Binomial splitting process (see, e.g.,  \cite{pymar2023mixing} or Section \ref{sec:extensions})  --- Pietro Caputo conjectured (personal communication)  this spectral gap reduction to hold true with $k=2$. In words, the Beta-Binomial splitting dynamics consists of, first selecting an edge $xy$ with rate $c_{xy}$ (regardless of the particle configuration), and then letting particles sitting on $x$ and $y$ redistribute themselves according to the ${\rm SIP}$-equilibrium, restricted to that edge. Besides being clearly related to ${\rm SIP}$, 
 the Beta-Binomial splitting process is the particle analogue of a model which shows up in the literature under various names, e.g.,  the Kipnis-Marchioro-Presutti model \cite{kipnis_heat_1982}, a random walk on the simplex \cite{caputo_mixing_2019}, or (the energies of) the renowned Kac's walk \cite{kac_foundations_1956,carlen_carvalho_loss_determination_2003}. Spectral gap estimates for this model are known only on two specific geometric settings:  the complete graph \cite{carlen_carvalho_loss_determination_2003,caputo_kac2008}, and the segment with $\alpha_{\rm min}\ge 1$ \cite{caputo_mixing_2019}. 	In both cases, a spectral gap identity with $k=2$ is indeed verified.

The aforementioned conjecture translates to the context of ${\rm SIP}$ as follows: for all graphs $G$ and site weights $\alpha$, 
\begin{equation}\label{eq:conj}
	{\rm gap}_{\rm SIP}(G,\alpha)={\rm gap}_2(G,\alpha)\comma
\end{equation}
where ${\rm gap}_k(G,\alpha)$, $k\ge 1$, stands for the spectral gap of the $k$-particle ${\rm SIP}$. Clearly, we have ${\rm gap}_1(G,\alpha)={\rm gap}_{\rm RW}(G,\alpha)$ \fs{and \eqref{eq:conj} equivalently reads as ${\rm gap}_k(G,\alpha)={\rm gap}_2(G,\alpha)$, for all $k\ge 2$ (cf.\ \eqref{eq:gap-sip-def} and \eqref{eq:gap-consistency}).} Moreover, in view of \eqref{eq:gap-identity} and  ${\rm gap}_{\rm SIP}(G,\alpha)\le {\rm gap}_2(G,\alpha)\le {\rm gap}_1(G,\alpha)$ in \eqref{eq:gap-consistency},  the two-particle spectral gap identity in \eqref{eq:conj} remains to be verified only off the log-concave regime, i.e., when \eqref{eq:regime-non-log-concave} holds.

  We provide a partial answer in the affirmative to this conjecture, by determining spectral gap asymptotics as $\alpha\to 0$ \fs{along directional limits (i.e., for $\eps \to 0$ and some $\hat\alpha \in (0,\infty)^V$, $\alpha=\eps\hat\alpha$; see \eqref{eq:alpha-eps-beta})}. The precise form of the asymptotic validity of \eqref{eq:conj} is the content of  \textbf{Theorem \ref{th:gap-asymptotics1}} below. We refer to Section \ref{sec:results-asymptotics} for more details, but let us briefly remark that our asymptotic analysis highlights the key role that the two-particle dynamics plays for both results in Theorems \ref{th:gap-asymptotics1} and \ref{th:failure-gap}. We believe this point of view to be  fruitful also for other related interacting systems with Gamma-like reversible measures.  For more details, see Section \ref{sec:extensions}.

\subsection{Non-conservative case}\label{sec:intro-non-consevative}

So far, we discussed only  closed (or conservative) systems in which inclusion particles  neither get created nor annihilated. Open (or non-conservative) systems allow for this possibility,  usually modeled as systems in contact with reservoirs, see, e.g., \cite{spohn_long_1983,derrida_exact_1993-1,carinci_duality_2013-1}. 
For ${\rm SIP}$, the standard choice consists of creating a particle in $x$ with rate $\omega_x\,\theta_x\tonde{\alpha_x+\eta_x}$, and
 annihilating each particle therein (if any) with rate $\omega_x\tonde{1+\theta_x}$, where  $\omega_x$ and $\theta_x$  form a set of non-negative site parameters added to the model. For precise definitions,  see Section \ref{sec:open-SIP}.

In essence, the open ${\rm SIP}$ presents two main features: first, due to the lack of particle conservation, the configuration space now consists of a unique, countably infinite, irreducible component; second, the unique ergodic measure is reversible if the $\theta$-parameters do not depend on $x$, and non-reversible if they do depend. 
This drastic change from reversibility  to non-reversibility goes together with the emergence of a number of remarkable  phenomena (e.g., currents, long-range correlations), which distinguish equilibrium from non-equilibrium statistical mechanics, see, e.g., \cite{spohn_large_1991,de_masi_mathematical_1991,kipnis_scaling_1999,schutz_exactly2001}.

Nevertheless, as already observed in the physics literature around thirty years ago  for a large class of exactly solvable models \cite{alcaraz_droz_henkel_rittenberg_reaction_1994} (see also \cite{frassek_duality2020}), the \textquotedblleft spectrum\textquotedblright\ does not  depend on the $\theta$-parameters, and, thus, is the same  for both equilibrium and non-equilibrium systems. The quotation marks above are required when dealing with infinite configuration spaces and corresponding generators, because the full spectrum, \textit{a priori}, may include also non-eigenvalues and may depend on the underlying functional space on which the matrix/operators act. That said, this $\theta$-independence of the spectrum becomes  rigorous for systems having a finite configuration space as, for instance,  the symmetric exclusion process in contact with reservoirs (see Remark \ref{rem:SEP} below for more details).

In this article, we establish the following  results for   the non-conservative ${\rm SIP}$, for	 any underlying graph $G$ and site weights $\alpha$, $\omega$, and $\theta$:
\begin{enumerate}[(i)]
	\item \label{it:eigen1} Eigenfunctions of the purely absorbing system (corresponding to $\theta\equiv 0$) \textquotedblleft lift\textquotedblright\ to (generalized) eigenfunctions of ${\rm SIP}$ \fs{(with general $\theta$)} associated to the same eigenvalues.
	\item \label{it:eigen2} Remarkably, the spectral gap of the particle system with $\theta\equiv 0$  coincides with \begin{equation}{\rm gap}_{\rm RW}(G,\alpha,\omega)>0\comma
		\end{equation}
		the spectral gap of the one-particle system, namely, of the random walk evolving on $G$,  and killed with rate $\omega_x$ when sitting on $x\in V$ (\textbf{Theorem \ref{th:non-conservative}}).
		
	\item \label{it:eigen3} In the reversible case (i.e., when $\theta \equiv {\rm const.}$), the eigenfunctions obtained in   \ref{it:eigen1} provide an orthonormal basis in a natural $L^2$-space.
\end{enumerate}Let us emphasize that step \ref{it:eigen1} holds \fs{for general $\theta$ (i.e., in both reversible and non-reversible settings)}, while step \ref{it:eigen2} is a statement concerning the case $\theta\equiv 0$ only; establishing the claim in \ref{it:eigen3} is where we need to restrict to the reversible ${\rm SIP}$.

Putting these steps  together, our main result in this non-conservative setting (\textbf{Corollary \ref{cor:non-conservative}}) may be summarized as follows: for all graphs $G$ and site weights $\alpha$ and $\omega$, 	
\begin{equation}
	{\rm gap}_{\rm SIP}(G,\alpha,\omega,\varrho)={\rm gap}_{\rm RW}(G,\alpha,\omega)\comma\qquad \text{whenever}\ \theta \equiv \varrho>0\comma
\end{equation}
where ${\rm gap}_{\rm SIP}(G,\alpha,\omega,\varrho)$ denotes the first gap in the spectrum of the reversible ${\rm SIP}$ with $\theta\equiv \varrho>0$, compatibly with the functional setting of step \ref{it:eigen3}. 

When comparing the conservative and non-conservative settings, we remark a different behavior of ${\rm SIP}$'s spectral gap:  while a system of \textit{two particles} sharply captures  the spectral gap of the many-particle \textit{closed} ${\rm SIP}$,  the spectral gap of the \textit{open} ${\rm SIP}$ is always governed (at least when $\theta\equiv {\rm const.}$) by 	 \textit{one random walk with killing} --- with no constraint on the value of $\alpha_{\rm min}>0$.	We believe this dichotomy to hold true not just for ${\rm SIP}$, but also for a larger class of  interacting systems in statistical mechanics having Dirichlet-like  steady states \fs{(see Section \ref{sec:extensions} for further details)}.

\subsection{Organization of the paper} The rest of the paper is organized as follows. Sections \ref{sec2}--\ref{sec5} concern the closed ${\rm SIP}$.  More specifically, Section \ref{sec2} contains the model definition, the main results (Theorems \ref{th:sharp-alpha-min}--\ref{th:failure-gap}), and an outline of their proofs. In Sections \ref{sec3}--\ref{sec4}, we present the proof of Theorem \ref{th:sharp-alpha-min}; in Section \ref{sec5}, we prove the remaining two theorems from Section \ref{sec2}. Sections \ref{sec:open-SIP}--\ref{sec:proof-res} focus on ${\rm SIP}$ in contact with reservoirs, the first one of these sections detailing the model and main results, the second one presenting the proofs of the main results therein.
Finally, in Section \ref{sec:extension}, we briefly discuss some  extensions of our results  to  discrete and continuous-spin models related to ${\rm SIP}$.  In particular, in view of the isospectrality showed in \cite{kim_sau_spectral_2023} between ${\rm SIP}$ and Brownian energy process, we derive spectral gap estimates for the latter. We conclude the paper with two appendices, in which we provide the full proofs of two technical ingredients employed in Section \ref{sec5}.
\fs{For the reader's convenience, we add, at the end of the paper, a notation guide collecting the main recurrent symbols.}

\section{Conservative SIP. Setting and main results}\label{sec2}
Unless stated otherwise, all throughout the article, our underlying geometry consists of a weighted finite graph $G=(V,(c_{xy})_{x,y\in V})$, with symmetric conductances $c_{xy}=c_{yx}\ge 0$ (conventionally, $c_{xx}=0$).	We always assume the undirected graph $G$ to be connected, i.e., for all $x, y \in V$, there exists a sequence $x_0=x, x_1,\ldots, x_{\ell-1}, x_\ell=y$ in $V$ such that $\prod_{j=1}^\ell c_{x_{j-1}x_j}>0$. The length $\ell\in \N$ of the shortest sequence connecting two distinct sites $x,y\in V$ corresponds to their graph distance, ${\rm dist}_G(x,y)$. As usual, we define the diameter, ${\rm diam}(G)\in \{1,\ldots, 
|V|\}$, as the largest of all pairwise distances.

\subsection{Model} For a graph $G=(V,(c_{xy})_{x,y\in V})$,  positive site weights  $\alpha=(\alpha_x)_{x\in V}$, and $k\in \N$, ${\rm SIP}_k(G,\alpha)$  denotes the Markov chain evolving on the configuration space
\begin{equation}
	\Xi_k\eqdef \left\{\eta \in \N_0^V: |\eta|=k\right\}\comma\qquad \text{with}\ |\eta|\eqdef \sum_{x\in V} \eta_x\comma
\end{equation}
 with  infinitesimal generator given, for all $f\in \R^{\Xi_k}$, as
\begin{equation}\label{eq:gen-conservative}
	L_{G,\alpha,k}f(\eta)=\sum_{x, y \in V} c_{xy}\, \eta_x\left(\alpha_y+\eta_y\right)\tonde{f(\eta-\delta_x+\delta_y)-f(\eta)}\comma\qquad  \eta \in \Xi_k\fstop 
\end{equation}
In this formula, $ \eta-\delta_x+\delta_y\in \Xi_k$ denotes the  configuration  obtained from $\eta$ by removing a particle from $x$ (if any) and placing it on $y$.  
 Since all particle configurations in $\Xi_k$ are accessible \fs{(as the graph $G$ is connected and the weights $\alpha=(\alpha_x)_{x\in V}$ are all positive)}, ${\rm SIP}_k(G,\alpha)$ is irreducible. As a simple detailed balance computation shows, ${\rm SIP}_k(G,\alpha)$ admits a unique reversible measure  $\mu_{\alpha,k}$, given, for all $\eta\in \Xi_k$, by
\begin{equation}\label{mu-def}
	\mu_{\alpha,k}(\eta)= \frac{1}{Z_{\alpha,k}}\prod_{x\in V}\frac{\Gamma(\alpha_x+\eta_x)}{\Gamma(\alpha_x)\,\eta_x!}\comma\qquad \text{with}\ Z_{\alpha,k}\eqdef \frac{\Gamma(|\alpha|+k)}{\Gamma(|\alpha|)\,k!}\fstop
\end{equation}
Here, $\Gamma$ denotes the usual gamma function satisfying, for all $a>0$, $\Gamma(a+1)=a\, \Gamma(a)$.

\begin{remark}[Log-concavity]\label{rem:log-concave} The  measure $\mu_{\alpha,k}$ in \eqref{mu-def} is known as Dirichlet-Multinomial distribution of parameters $k\in \N$ and $\alpha=(\alpha_x)_{x\in V}$, and is the discrete analogue of the  Dirichlet distribution on the simplex $\varSigma\subset \R^V$ of probability measures, see  \eqref{eq:simplex} and \eqref{eq:dirichlet-distribution} below. The Dirichlet distribution is well known to be log-concave (in the sense that its probability density is of the form $\exp(-U)$, for some convex $U:\R^V\to \R$)	 if and only if $\alpha_{\rm min}\ge1$.	Hence, $\mu_{\alpha,k}$ is the discrete counterpart of a  log-concave measure if and only if $\alpha_{\rm min}\ge 1$. 
	More directly, $\mu_{\alpha,k}$ is a \textquotedblleft discrete log-concave measure\textquotedblright\ also in the following sense: $\mu_{\alpha,k}$ is the canonical measure (i.e., conditional on $|\eta|=k$) of the product measure $\nu_{\alpha,\varrho}$ in \eqref{eq:nu-sigma} below, whose Negative-Binomial marginal measures on $\N_0$ are log-concave (i.e., $\nu_{\alpha,\varrho}(\eta_x=\ell)^2\ge \nu_{\alpha,\varrho}(\eta_x=\ell-1)\,\nu_{\alpha,\varrho}(\eta_x=\ell+1)$, for all $\ell \ge 1$ and $x\in V$) if and only if $\alpha_{\rm min}\ge 1$.	
	
\end{remark}

\subsection{A non-asymptotic lower bound for the spectral gap}\label{sec:spectral-gap}

Because of irreducibility and reversibility, all $|\Xi_k|$ eigenvalues
 of the (negative) generator $-L_{G,\alpha,k}$ in \eqref{eq:gen-conservative} are real and non-negative, the smallest	 one being equal to zero, whereas the second  one --- referred to as spectral gap of ${\rm SIP}_k(G,\alpha)$ and shortened as ${\rm gap}_k(G,\alpha)$ --- being strictly positive. 	
 Confronting with the notation from Section \ref{sec1}, we have 
 \begin{equation}\label{eq:gap-RW}
 	{\rm gap}_{\rm RW}(G,\alpha)={\rm gap}_1(G,\alpha)\comma
 	\end{equation} because ${\rm SIP}_k(G,\alpha)$ with $k=1$ corresponds to a single, thus, non-interacting particle, abbreviated as ${\rm RW}(G,\alpha)$. All cases $k\ge 2$ describe a truly interacting system; hence,  we 	define \begin{equation}\label{eq:gap-sip-def}
	{\rm gap}_{\rm SIP}(G,\alpha)= \inf_{k\ge 2} {\rm gap}_k(G,\alpha)\fstop
\end{equation}

We are now ready to state the precise statement of the result in \eqref{eq:lb-alpha-min-intro}, concerned with the sharp dependence of ${\rm gap}_{\rm SIP}(G,\alpha)$ on $\alpha_{\rm min}=\min_{x\in V}\alpha_x$, with a special focus on the regime $\alpha_{\rm min}\in (0,1)$.

\begin{theorem}\label{th:sharp-alpha-min}
	For all graphs $G=(V,(c_{xy})_{x,y\in V})$ and site weights $\alpha=(\alpha_x)_{x\in V}$, we have
	\begin{equation}\label{eq:lb-alpha-min}
	\alpha_{\rm min}\set{	\frac1{\sk{54}} \, \frac{ \alpha_{\rm ratio}}{{\sk{(6e)}}^{\alpha_{\rm min}/\alpha_{\rm ratio}} } \, \frac{ c_{\rm min}}{|V|^2\, {\rm diam}(G) }}	\le {\rm gap}_{\rm SIP}(G,\alpha)\comma
	\end{equation} where
\begin{equation}\label{alpha-c-def}
	\alpha_{\rm ratio}\eqdef  \frac{\alpha_{\rm min}}{\alpha_{\rm max}}= \frac{\min_{x\in V}\alpha_x}{\max_{y\in V}\alpha_y}\le 1\comma\qquad c_{\rm min}\eqdef \min_{\substack{x,y\in V\\ c_{xy}> 0}}c_{xy}>0\fstop
\end{equation}
\end{theorem}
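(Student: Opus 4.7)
The plan is to establish the Poincaré inequality $\var_{\mu_{\alpha,k}}(f)\le C^{-1}\alpha_{\rm min}^{-1}\,\cE_{G,\alpha,k}(f,f)$ uniformly in $k\ge 2$, where $C$ is the geometric and $\hat\alpha$-dependent prefactor displayed in \eqref{eq:lb-alpha-min} (so that $C\,\alpha_{\rm min}$ lower-bounds ${\rm gap}_k(G,\alpha)$), and then conclude by taking the infimum in \eqref{eq:gap-sip-def}. The central conceptual difficulty is that any argument that first passes through a single-particle estimate via \eqref{eq:kim-sau-ineq} or \eqref{eq:quadratic} automatically loses a second power of $\alpha_{\rm min}$; the comparison must therefore be performed directly at the many-particle Dirichlet form level, and the hypothesis $k\ge 2$ must enter the argument in a non-trivial way.

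My approach rests on a two-stage Dirichlet form comparison. First, I would reduce the problem to the two-particle ${\rm SIP}$, whose state space $\Xi_2$ has $O(|V|^2)$ elements: the product form behind \eqref{mu-def} suggests that the $k-2$ spectator particles contribute only a Jensen-type averaging that does not worsen the Poincaré constant, so that $\cE_{G,\alpha,2}$ controls $\cE_{G,\alpha,k}$ after proper normalization --- this is presumably where the ``refined comparisons of Dirichlet forms for arbitrary diffusivity and particle numbers'' announced in the introduction come in. Second, for the reduced two-particle system I would run a canonical-path/Sinclair estimate: for each ordered pair $(\eta,\eta')\in\Xi_2^2$ I transport each particle one by one along a shortest graph path in $G$ of length at most ${\rm diam}(G)$, and the resulting congestion bound produces the factor $|V|^2\,{\rm diam}(G)/c_{\rm min}$.

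The decisive step is then the analysis of the transition rate $r(\xi,\xi-\delta_x+\delta_y)=c_{xy}\,\xi_x(\alpha_y+\xi_y)$: this is comparable to $c_{\rm min}$ whenever $\xi_y\ge 1$, but degenerates to $c_{\rm min}\,\alpha_y\ge c_{\rm min}\,\alpha_{\rm min}$ when $\xi_y=0$, which is the sole source of loss. To recover the \emph{linear} (rather than quadratic) dependence on $\alpha_{\rm min}$, I would exploit the two-particle freedom by routing the canonical paths so as to first dispatch the companion particle to pre-populate empty target sites, after which every transition into such a site enjoys $\xi_y\ge 1$; this ensures the $\alpha_{\rm min}^{-1}$-penalty is incurred at most once per path. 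The residual $\hat\alpha$-dependent factor $\alpha_{\rm ratio}\,6^{-\alpha_{\rm min}/\alpha_{\rm ratio}}$ should then emerge from uniformly controlling the stationary-weight ratios $\mu_{\alpha,2}(\eta)/\mu_{\alpha,2}(\xi)$ along the canonical path, which via \eqref{mu-def} reduce to products of the form $\Gamma(\alpha_x+\ell+1)/\Gamma(\alpha_x+\ell)=\alpha_x+\ell$ and are bounded by elementary factorial estimates using $\alpha_{\rm min}\le\alpha_x\le\alpha_{\rm max}$.

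The hard part, which I expect to occupy most of Sections \ref{sec3}--\ref{sec4}, is the first stage: extracting a clean, $k$-uniform reduction to the two-particle Dirichlet form without incurring an extra $\alpha_{\rm min}^{-1}$-loss, given that the interaction among many particles can in principle couple the motion along canonical paths in ways that a naive projection will not capture. Once this reduction is in place, the two-particle canonical-path estimate in stage two should follow from relatively direct combinatorial accounting, with the appearance of $|V|^2\,{\rm diam}(G)/c_{\rm min}$ making concrete the intuition that the $\Xi_2$-dynamics is the actual bottleneck for the ${\rm SIP}$ spectral gap outside the log-concave regime.
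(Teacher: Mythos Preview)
Your proposal contains a genuine gap at Stage~1. The ``reduction to two particles via Jensen-type averaging'' is not substantiated, and in fact the paper does \emph{not} reduce the number of particles at all. The measure $\mu_{\alpha,k}$ in \eqref{mu-def} is Dirichlet--Multinomial (a canonical ensemble conditioned on $|\eta|=k$), not a product measure, so there is no obvious factorization that lets you integrate out $k-2$ spectator particles without loss. The consistency structure of ${\rm SIP}$ (the annihilation operator $\mathfrak a_k$) goes the \emph{wrong way} for your purposes: it lifts eigenfunctions from $k-1$ to $k$ particles and yields ${\rm gap}_k\le{\rm gap}_{k-1}$, not a reduction of the $k$-particle Poincar\'e inequality to a two-particle one.

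What the paper actually does is orthogonal to your plan. First, via the orthogonal decomposition $L^2(\mu_{\alpha,k})={\rm Im}\,\mathfrak a_k\oplus_\perp{\rm Ker}\,\mathfrak a_{\alpha,k-1}^\dagger$, it reduces the iterative gap formula to testing only $f\in{\rm Ker}\,\mathfrak a_{\alpha,k-1}^\dagger$. Second, on the \emph{complete graph} $K$, such $f$ are eigenfunctions with eigenvalue exactly $k(|\alpha|+k-1)$ (Proposition~\ref{prop:complete-spectrum}), i.e.\ quadratic in $k$. Third, the canonical-path/flow machinery is deployed not on $\Xi_2$ but on $\Xi_k$ for every $k$, comparing $\cE_{G,\alpha,k}$ with $\cE_{K,\alpha,k}$; the comparison constant degrades like $k(\alpha_{\rm max}+k-1)$ (Theorem~\ref{th:key-ing}), and this $k^2$-loss is \emph{exactly cancelled} by the quadratic eigenvalue growth on $K$. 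This cancellation is the mechanism that upgrades the quadratic $\alpha_{\rm min}^2$-bound of \eqref{eq:quadratic} to a linear one, and it is entirely absent from your outline.

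Your Stage~2 intuition---use a companion particle to pre-populate empty target sites so that the $\alpha_{\rm min}$-penalty is paid at most once---does capture the spirit of the paper's path constructions in Section~\ref{sec4}, but the paper carries this out for general $k$ with three distinct regimes (occupied intermediate sites; empty sites with small stack; empty sites with large stack, the last requiring two-dimensional flows rather than paths), and the combinatorics are considerably more delicate than a two-particle routing.
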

As already discussed in Section \ref{sec:intro-sharp}, this lower bound captures the correct dependence of
${\rm gap}_{\rm SIP}(G,\alpha)$ on $\alpha_{\rm min}$ for every fixed $G$, provided that
$\alpha_{\rm ratio}$ remains bounded away from zero. Moreover, the estimate is non-asymptotic, with a rather explicit constant depending only on simple geometric parameters of the graph, such as its size and diameter. We do not claim, however, that this geometric dependence is sharp.
\sk{This is fundamentally due to the fact that the $L^2$ inequalities (Lemmas \ref{lem:4.1}--\ref{lem:4.4}) used in the proof of Theorem \ref{th:sharp-alpha-min}
are designed to maintain the sharp linear dependence on $\alpha_{\rm min}$, at the expense of suboptimality caused by exploiting the geometric structure.}

\fs{To illustrate this point, take $\alpha\equiv \varepsilon\in(0,1)$, so that
$\alpha_{\rm min}=\varepsilon$ and $\alpha_{\rm ratio}=1$. For $d$-dimensional boxes or tori of side length $N$, Theorem \ref{th:sharp-alpha-min} gives
\begin{equation}
{\rm gap}_{\rm SIP}(G,\varepsilon{\bf 1})
\ge C\,\varepsilon\,N^{-(2d+1)}\comma
\end{equation}
for some $C=C(d,c_{\rm min})>0$. This is not expected to be the correct dependence on $N$. Indeed, in view of Theorem \ref{th:gap-asymptotics1} and of the proof of Theorem \ref{th:failure-gap}, the relevant small-$\varepsilon$ scale should rather be
\begin{equation}\label{eq:cox}
{\rm gap}_{\rm SIP}(G,\eps{\bf 1})
\asymp \eps/s_d(N)\comma
\qquad
 s_d(N)\eqdef
\begin{cases}
	N^{2} &\text{if}\ d=1\\
	N^{2}\log N &\text{if}\  d=2\\
	N^{d} &\text{if}\  d\ge 3\fstop
\end{cases}
\end{equation}
Thus, while Theorem \ref{th:sharp-alpha-min} is sharp in its dependence on
$\alpha_{\rm min}$, its dependence on the geometry is generally far from optimal.}

Rather than optimizing these geometric constants on specific families of graphs, we keep the underlying geometry arbitrary and pass to the regime of vanishing weights $\alpha\to0$. In this setting, we identify the relevant asymptotic object not in terms of the spectral gap of ${\rm RW}(G,\alpha)$, but of the ``second simplest system'' on the same geometry: ${\rm SIP}_2(G,\alpha)$, namely the symmetric inclusion process with just two particles.

\subsection{Spectral gap's asymptotics}\label{sec:results-asymptotics}

As already proved in \cite{kim_sau_spectral_2023} (see also Section \ref{sec3.1} below), we have,  for all graphs $G$ and site weights $\alpha$,
\begin{equation}\label{eq:gap-consistency}
\fs{	{\rm gap}_{k+1}(G,\alpha)\le {\rm gap}_{k}(G,\alpha)\comma\qquad k \ge 1\fstop}
\end{equation}
 The next result shows that,  \fs{along directional limits $\alpha \to 0$, i.e.,}
 \begin{equation}\label{eq:alpha-eps-beta}
 	\alpha =\eps \hat\alpha \comma\qquad \text{with}\  \eps\to 0\  \text{and}\  \hat\alpha = (\hat\alpha_x)_{x\in V}\in (0,\infty)^V\  \text{fixed}\comma
 \end{equation} \fs{equality is achieved in \eqref{eq:gap-consistency} for $k\ge 2$}.
 \fs{This result is formulated along fixed positive directions; see Remark
 	\ref{rem:general-alpha-limit} below for the relation with general limits $\alpha\to0$.} 
\begin{theorem}\label{th:gap-asymptotics1}
For all graphs $G=(V,(c_{xy})_{x,y\in V})$ and   site weights $\hat\alpha = (\hat\alpha_x)_{x\in V}$,  we have
\begin{equation}\label{eq:asymp-identity}
	\lim_{\eps\to 0}	\frac{{\rm gap}_k(G,\eps\hat\alpha)}{{\rm gap}_2(G,\eps\hat\alpha)} =1\comma\qquad \fs{k\ge 2}\fstop
\end{equation}
\end{theorem}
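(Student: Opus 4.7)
The upper bound $\limsup_{\eps\to 0} {\rm gap}_k(G,\eps\hat\alpha)/{\rm gap}_2(G,\eps\hat\alpha) \le 1$ for $k\ge 3$ is immediate from the monotonicity \eqref{eq:gap-consistency}, so the substance of the theorem is the matching lower bound. My plan is to show the stronger asymptotic
\begin{equation}
\lim_{\eps\to 0}\eps^{-1}{\rm gap}_k(G,\eps\hat\alpha)={\rm gap}_{\rm RW}(G,\hat\alpha)\comma\qquad k\ge 2\fstop
\end{equation}
Since the right-hand side does not depend on $k$, the ratio ${\rm gap}_k/{\rm gap}_2$ converges to $1$. The natural framework is a slow-fast / singular-perturbation analysis of the generator $L_{G,\eps\hat\alpha,k}$, split into its interaction and diffusion parts $L_{G,\eps\hat\alpha,k}=L^{\rm fast}+\eps L^{\rm slow}$, where the rate $c_{xy}\eta_x\eta_y$ (nonzero only between already-occupied sites) is $O(1)$ while the diffusive rate $c_{xy}\eta_x\hat\alpha_y$ is $O(\eps)$. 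An inspection of \eqref{mu-def} shows that $\mu_{\eps\hat\alpha,k}$ concentrates as $\eps\to 0$ on the set $C_k=\{k\delta_x:x\in V\}$ of fully clumped configurations, with mass $\hat\alpha_x/|\hat\alpha|+O(\eps)$ on $k\delta_x$ and mass $O(\eps^{s-1})$ on configurations whose support has size $s$.

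The key computation is a first-step analysis that identifies the effective slow dynamics on $C_k\cong V$. From $k\delta_x$, the only way to leave $C_k$ is via a slow jump, at total rate $\sum_y k\eps c_{xy}\hat\alpha_y$, to a state $(k-1)\delta_x+\delta_y$. Conditional on this initial jump being along edge $xy$, the subsequent fast dynamics restricted to configurations supported on $\{x,y\}$ is a birth-death chain on $\{j\delta_y+(k-j)\delta_x:j=0,\ldots,k\}$ with both birth and death rates from the bulk equal to $c_{xy}j(k-j)(1+O(\eps))$. A standard gambler's-ruin computation for this symmetric chain yields probability exactly $1/k$ of being absorbed at $k\delta_y$ before returning to $k\delta_x$; excursions through a third site $z$ have probability $O(\eps)$ and are negligible at leading order. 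Combining escape rate and absorption probability gives an effective rate
\begin{equation}
k\eps c_{xy}\hat\alpha_y\cdot\tfrac{1}{k}=\eps c_{xy}\hat\alpha_y
\end{equation}
for $k\delta_x\rightsquigarrow k\delta_y$ — precisely the rate of ${\rm RW}(G,\eps\hat\alpha)$ from $x$ to $y$, and \emph{independent of $k$}. Hence, heuristically, the effective slow generator on $C_k$ coincides with that of ${\rm RW}(G,\eps\hat\alpha)$ for every $k\ge 2$, with smallest nontrivial eigenvalue $\eps\,{\rm gap}_{\rm RW}(G,\hat\alpha)$.

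The remaining, and hardest, step is to promote this heuristic to a sharp spectral asymptotic. The strategy is to use the variational characterization of ${\rm gap}_k$ together with a refined Dirichlet-form comparison of the type used to prove Theorem~\ref{th:sharp-alpha-min}: given an eigenfunction with $O(\eps)$ eigenvalue, one projects it onto $C_k$, bounds the error in terms of the fast Dirichlet form (which controls fluctuations transverse to $C_k$ and has $O(1)$ spectral gap), and matches the projected function against an eigenfunction of the effective slow random walk. This is essentially a two-scale Courant-Fischer argument à la Landim-style metastability. The main obstacle I anticipate is precisely this quantitative decoupling of slow and fast scales uniformly in $\eps$: the fast dynamics has many absorbing components (configurations with pairwise non-adjacent supports) besides $C_k$, so one must argue that the non-clump absorbing classes either equilibrate on $O(\eps^{-1})$-slower timescales than the targeted gap or carry asymptotically vanishing mass under $\mu_{\eps\hat\alpha,k}$. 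Once this is under control, taking the ratio with $k=2$ closes the proof.
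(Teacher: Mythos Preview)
Your proposal contains a fundamental error: the ``stronger asymptotic''
\[
\lim_{\eps\to 0}\eps^{-1}{\rm gap}_k(G,\eps\hat\alpha)={\rm gap}_{\rm RW}(G,\hat\alpha)\comma\qquad k\ge 2\comma
\]
that you set out to prove is \emph{false} in general. This is precisely the content of Theorem~\ref{th:failure-gap}: on many graphs (for instance the discrete torus $\mathbb T_N^d$, $d\ge 2$, $N$ large), one has $\lim_{\eps\to 0}\eps^{-1}{\rm gap}_2(G,\eps\hat\alpha)<{\rm gap}_{\rm RW}(G,\hat\alpha)$. So the strategy of proving the theorem by showing that $\eps^{-1}{\rm gap}_k$ tends to a $k$-independent limit equal to ${\rm gap}_{\rm RW}(\hat\alpha)$ cannot work.

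The error lies exactly where you anticipated difficulty, but you misjudged its nature. You correctly identify that the fast dynamics $L^{\rm fast}$ has absorbing states other than the fully clumped set $C_k$: namely, all configurations $\Omega_{k,m}$ consisting of $m\ge 2$ stacks that are mutually \emph{non-adjacent} in $G$. You then propose to discard these states on the grounds that they either ``equilibrate on $O(\eps^{-1})$-slower timescales'' or ``carry asymptotically vanishing mass under $\mu_{\eps\hat\alpha,k}$.'' Neither argument is valid. The mass under $\mu_{\eps\hat\alpha,k}$ is irrelevant for the spectral gap: the limiting slow dynamics on the full absorbing set $\Omega_k$ has a block lower-triangular generator whose diagonal blocks $M_{\hat\alpha,k,m}$ (one for each stack-count $m$) \emph{all} contribute eigenvalues to the spectrum, regardless of stationary mass. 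And these transient blocks do \emph{not} relax on a slower scale: two non-adjacent stacks evolve as two independent ${\rm RW}(\hat\alpha)$ until they meet, and the smallest eigenvalue $\lambda_{2,2}(\hat\alpha)$ of this sub-Markovian block is governed by the \emph{meeting time} of two walkers --- which, on graphs like $\mathbb T_N^d$, is much larger than the single-walk relaxation time, yielding $\lambda_{2,2}(\hat\alpha)<{\rm gap}_{\rm RW}(\hat\alpha)$.

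The paper's proof shows instead that the correct, $k$-independent limit is
\[
\lim_{\eps\to 0}\eps^{-1}{\rm gap}_k(G,\eps\hat\alpha)={\rm gap}_{\rm RW}(G,\hat\alpha)\wedge \lambda_{2,2}(G,\hat\alpha)\comma\qquad k\ge 2\fstop
\]
Establishing this requires two nontrivial reductions among the transient blocks: first $\lambda_{k,m}(\hat\alpha)=\lambda_{m,m}(\hat\alpha)$ for all $k\ge m$ (via the consistency/annihilation structure of ${\rm SIP}$, which survives the $\eps\to 0$ limit), and then $\lambda_{m,m}(\hat\alpha)\ge \lambda_{2,2}(\hat\alpha)$ for all $m\ge 2$ (via a Dirichlet-form comparison showing that the coalescence rate of $m$ walkers dominates that of any two). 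Your gambler's-ruin calculation for the recurrent block $\Omega_{k,1}\cong V$ is correct and matches the paper, but it captures only one of the two terms in the minimum above.
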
In view of the definition of ${\rm gap}_{\rm SIP}(G,\alpha)$ in \eqref{eq:gap-sip-def} and of the inequalities in \eqref{eq:gap-consistency}, 
 the above limit is an  asymptotic version  of the two-particle spectral gap identity in \eqref{eq:conj}.
Let us stress that this result  holds true for any graph $G$, \fs{positive site weights $\hat\alpha$,} and integer $k\ge 2$ \fs{(trivially, for $k=2$)}. However,
 the case \fs{$k=1$} is excluded for a good reason, as shown in the following result.
\begin{theorem}\label{th:failure-gap}
For some graphs $G=(V,(c_{xy})_{x,y\in V})$ and site weights $\hat\alpha=(\hat\alpha_x)_{x\in V}$, we have
\begin{equation}\label{eq:failure-identity2}
	\lim_{\eps \to 0}\frac{{\rm gap}_2(G,\eps\hat\alpha)}{{\rm gap}_1(G,\eps\hat \alpha)}<1\fstop
\end{equation}
\end{theorem}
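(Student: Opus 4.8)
The plan is to pin down the exact $\eps\to0$ asymptotics of ${\rm gap}_2(G,\eps\hat\alpha)$: it will be governed by a two-particle effective rate which, unlike ${\rm gap}_{\rm RW}(G,\hat\alpha)$, can be \emph{strictly smaller} than that of a single walk, and exhibiting one such $G$ then gives \eqref{eq:failure-identity2}. I would begin by recording that, by the linear scaling of all rates, ${\rm gap}_1(G,\eps\hat\alpha)={\rm gap}_{\rm RW}(G,\eps\hat\alpha)=\eps\,{\rm gap}_{\rm RW}(G,\hat\alpha)$ \emph{exactly}, so everything reduces to the $k=2$ numerator.

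Next I would set up ${\rm SIP}_2(G,\eps\hat\alpha)$ as a slow--fast system. With $\alpha=\eps\hat\alpha$ the rates in \eqref{eq:gen-conservative} for $k=2$ split into a \emph{fast} part of order $1$ --- the coalescing moves $\delta_x+\delta_y\mapsto 2\delta_x$ and $\delta_x+\delta_y\mapsto 2\delta_y$, available when $x\sim y$, with rates tending to $c_{xy}$ --- and a \emph{slow} part of order $\eps$, each genuine relocation of a particle onto an empty site having rate $\eps c_{xy}\hat\alpha_y$. The fast dynamics has one ergodic class per ``collided'' state $2\delta_x$ and one per ``scattered'' state $\delta_x+\delta_y$ with $x\not\sim y$, while the ``adjacent'' states $\delta_x+\delta_y$ ($x\sim y$) are fast-transient and are reabsorbed into $\{2\delta_x\}$ or $\{2\delta_y\}$ with probabilities tending to $\tfrac12$. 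Averaging the slow part over these classes, the rescaled effective generator $L^\sharp$ on $\{2\delta_x\}_{x\in V}\cup\{\delta_x+\delta_y:x\not\sim y\}$ is block \emph{lower}-triangular: its collided block is exactly the generator of ${\rm RW}(G,\hat\alpha)$ (from $2\delta_x$ one reaches $2\delta_y$ at rate $2c_{xy}\hat\alpha_y\cdot\tfrac12=c_{xy}\hat\alpha_y$); its scattered block is the generator $L^\flat$ of two independent copies of ${\rm RW}(G,\hat\alpha)$ killed the first time a move would put the two particles on equal or adjacent sites; the collided-to-scattered off-diagonal block vanishes, that passage costing order $\eps^2$. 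Since $G$ is connected and, unless $G$ is complete, $L^\flat$ genuinely loses mass, $-L^\sharp$ has a simple zero eigenvalue and its next eigenvalue is $\min\{{\rm gap}_{\rm RW}(G,\hat\alpha),\beta_\star\}$, where $\beta_\star=\beta_\star(G,\hat\alpha)>0$ is the principal Dirichlet eigenvalue of $-L^\flat$. Standard slow--fast spectral perturbation then yields ${\rm gap}_2(G,\eps\hat\alpha)=\eps\,\min\{{\rm gap}_{\rm RW}(G,\hat\alpha),\beta_\star\}\,(1+o(1))$, and hence
\begin{equation}
\lim_{\eps\to0}\frac{{\rm gap}_2(G,\eps\hat\alpha)}{{\rm gap}_1(G,\eps\hat\alpha)}=\min\Bigl\{1,\ \frac{\beta_\star(G,\hat\alpha)}{{\rm gap}_{\rm RW}(G,\hat\alpha)}\Bigr\}\fstop
\end{equation}
(For the strict inequality alone the matching lower bound is unnecessary: plugging into the ${\rm SIP}_2$ Rayleigh quotient the $\beta_\star$-eigenfunction of $L^\flat$ on scattered configurations and $0$ elsewhere, and using that $\mu_{\eps\hat\alpha,2}$ assigns mass of order $\eps$ to each scattered pair, already gives ${\rm gap}_2(G,\eps\hat\alpha)\le\eps(\beta_\star+o(1))$.)

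It then remains to find $G,\hat\alpha$ with $\beta_\star(G,\hat\alpha)<{\rm gap}_{\rm RW}(G,\hat\alpha)$, and I would take $G=C_n$, the $n$-cycle on $\{0,\dots,n-1\}$ with unit conductances, and $\hat\alpha\equiv1$, so that ${\rm gap}_{\rm RW}(C_n,\mathbf1)=2-2\cos(2\pi/n)$. On a cycle the pair-difference $D=X-Y\bmod n$ is itself a random walk jumping by $\pm1$ at rate $2$, and $L^\flat$'s killing occurs exactly when $D\in\{0,\pm1\}$; by cyclic symmetry the principal Dirichlet eigenfunction of $L^\flat$ depends on $D$ only, so $\beta_\star(C_n,\mathbf1)$ is the bottom eigenvalue of this birth--death chain on $\{2,\dots,n-2\}$ absorbed at both ends, namely $\beta_\star(C_n,\mathbf1)=2\bigl(2-2\cos\tfrac{\pi}{n-2}\bigr)$. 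The inequality $\beta_\star<{\rm gap}_{\rm RW}$ then reads $\sqrt2\,\sin\tfrac{\pi}{2(n-2)}<\sin\tfrac{\pi}{n}$, which holds for all $n\ge8$ (and fails for $n\le7$); e.g.\ at $n=8$,
\begin{equation}
\beta_\star(C_8,\mathbf1)=4-2\sqrt3<2-\sqrt2={\rm gap}_{\rm RW}(C_8,\mathbf1)\comma
\end{equation}
whence $\lim_{\eps\to0}{\rm gap}_2(C_8,\eps\mathbf1)/{\rm gap}_1(C_8,\eps\mathbf1)=(4-2\sqrt3)/(2-\sqrt2)<1$, proving \eqref{eq:failure-identity2}. (As $n\to\infty$ this ratio tends to $1/2$, so the failure is robust.)

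The main obstacle is making the slow--fast spectral analysis rigorous: one must handle the fast-transient adjacent states (whose reabsorption probabilities are only asymptotically $\tfrac12$), prove that the $|V|+\#\{x\not\sim y\}$ smallest eigenvalues of $-L_{G,\eps\hat\alpha,2}$ are $\eps\cdot\mathrm{spec}(-L^\sharp)\,(1+o(1))$ with all other eigenvalues of order $1$, and control the errors uniformly. This is precisely the perturbation theory that must already be developed for Theorem \ref{th:gap-asymptotics1} --- indeed, that is why the two-particle process is singled out there --- so the only genuinely new inputs are the block-triangular form of $L^\sharp$ (with collided block ${\rm RW}(G,\hat\alpha)$) and the elementary cycle computation of $\beta_\star$, both of which I expect to be routine in that framework.
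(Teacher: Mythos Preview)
Your proposal is correct and follows the same slow--fast reduction as the paper (your $L^\sharp$ is the paper's $M_{\hat\alpha,2}$, your $\beta_\star$ is their $\lambda_{2,2}(G,\hat\alpha)$, and your limit formula is exactly Corollary~\ref{lem:5.2} combined with Corollary~\ref{cor:wk}); the perturbation theory you defer to is precisely Proposition~\ref{lem:kurtz} plus the block-triangular decomposition of Section~\ref{sec5.2}.

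Where you genuinely diverge is in the choice of example. The paper takes $G=\mathbb{T}_N^d$ with $d\ge2$ and $\hat\alpha\equiv1$, and compares $\lambda_{2,2}$ to ${\rm gap}_{\rm RW}$ via the asymptotics of mean meeting times (Cox's theorem), obtaining $\lambda_{2,2}/{\rm gap}_{\rm RW}\to0$ as $N\to\infty$. You instead take the one-dimensional cycle $C_n$, exploit translation invariance to reduce $\beta_\star$ to a Dirichlet eigenvalue of a single birth--death chain on $\{2,\dots,n-2\}$, and compute it in closed form. Your route is more elementary and fully explicit (no appeal to meeting-time asymptotics, works already at $n=8$), and it shows that the failure of the one-particle identity occurs even in one dimension, where relaxation and meeting times are of the same order; the paper's heuristic ``relaxation time $\ll$ meeting time'' is thus sufficient but not necessary. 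On the other hand, the paper's example gives a ratio tending to $0$, demonstrating that the gap discrepancy can be arbitrarily severe, whereas your cycle ratio saturates at $1/2$. Both are valid proofs of the theorem as stated. One minor imprecision: your phrase ``that passage costing order $\eps^2$'' for the vanishing of the collided-to-scattered block is not quite the mechanism --- the point is simply that from $2\delta_x$ every $\cA$-move lands on an adjacent pair in $\Delta_2$, whence the $\cB$-absorption returns to $\Omega_{2,1}$ with probability one (this is Proposition~\ref{pr:meta}\ref{it:meta0}).
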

This inequality  provides, if combined with  \eqref{eq:gap-consistency} and the definitions in \eqref{eq:gap-RW}--\eqref{eq:gap-sip-def}, an instance of \eqref{eq:failure-gap}, namely, of the failure of the one-particle spectral gap identity in \eqref{eq:gap-identity}.
The graphs $G$ and site weights $\hat \alpha$ which we adopt to prove this result are far from being intricate. In fact, we take $G$ to be the  standard $d$-dimensional discrete torus, with $d\ge 2$ and sufficiently large size, and $\hat \alpha \equiv 1$. These are just some of the possible examples that one could exhibit for proving  \eqref{eq:failure-identity2}. Indeed, as it will become apparent from the proof, essentially any other sparse geometry for which the random walk's relaxation time (i.e., ${\rm gap}_{\rm RW}(G,\alpha)^{-1}$)   is much smaller than the  expected meeting time of two independent random walks initialized at equilibrium, will equally work.
We refer to the subsequent section  for  further explanations.

\subsection{Proofs outline}\label{sec:metastability}
Recall that an inclusion particle jumps from $x$ to a nearest neighbor $y$ at rate proportional to $\alpha_y + \eta_y$. On the one hand,  when non-zero,	the term $\eta_y\ge 1$ stands for the interaction between particles on adjacent sites, which mutually attract each other, tending to stick together. On the other hand, $\alpha_y> 0$ represents an independent mechanism of particle diffusion. 	In this sense, the regime $\alpha_{\rm min}\in (0,1)$  depicts	the situation in which, at least in some portions of the graph, particle stickiness dominates over  diffusion.

This regime becomes particularly significant when considering the limit $\alpha\to 0$. In this setting,  ${\rm SIP}(G,\alpha)$ exhibits a metastable behavior, as thoroughly studied in the last decade \cite{grosskinsky_redig_vafayi_dynamics_2013,bianchi_metastability_2017,kim_seo_condensation_2021,kim_second_2021,kim_hierarchical_2023}. These works describe the following qualitative picture, when $\alpha=\eps\hat\alpha$ as in \eqref{eq:alpha-eps-beta}. Roughly speaking, as long	 as particles are far from each other (i.e., at graph distance $\ge 2$), they perform jumps on the timescale $\eps^{-1}$. As soon as they are at distance one, they stick together in a   much shorter time, roughly of order one. These two mechanisms bring particles to meet and  pile up together in large stacks (or, condensates) in a time of order $\eps^{-1}$. Concurrently, some particles sitting on a stack would still attempt to jump to empty nearest neighboring vertices at rate $\asymp \eps$. While some of the attempts  fail, that is, the \textquotedblleft courageous\textquotedblright\ particle is instantaneously sucked back into a neighboring stack, some of these will succeed to attract sufficiently many particles, managing to move the whole stack. Due to the rates' symmetry encoded in the condition $c_{xy}=c_{yx}$, it is then part of the results in \cite{grosskinsky_redig_vafayi_dynamics_2013,bianchi_metastability_2017} to show that relevant jumps of entire stacks occur at times of order $\eps^{-1}$, and that, on this timescale, stacks are effectively approximated by independent random walks on $G$, each evolving at rate proportional to $\eps$, and capable of coalescing (possibly in a very complicated way if three or more stacks are involved) when getting at distance one from each other. Hence, the metastable relevant part of the dynamics takes place, according to this qualitative picture,  on the timescale $\eps^{-1}$.

Although this metastable picture provides a first correct intuition that ${\rm gap}_{\rm SIP}(G,\eps\hat\alpha) \asymp \eps$ as $\eps\to0$, our quantitative analysis must take into account also features not captured by usual  metastability limit theorems. 
For instance, while metastability describes macroscopic features of the system when initialized from macroscopically relevant configurations,   global functional inequalities as those determining spectral gaps require  bounds which must be uniform over the initial conditions. 
Moreover, we seek for lower bounds for ${\rm gap}_{\rm SIP}(G,\eps\hat\alpha)$ \fs{that are} independent of the system size.
However, a crucial ingredient in the metastability picture of ${\rm SIP}(G,\eps\hat\alpha)$ is that $k\in \N$, the number of particles in the system is not too large; more precisely, 
\begin{equation}\label{eq:not-huge}	\log k\ll \eps^{-1}\comma\qquad \text{as}\ \eps\ll 1\fstop
\end{equation} This condition, always assumed in previous works (see, e.g., \cite{grosskinsky_redig_vafayi_dynamics_2013,bianchi_metastability_2017,kim_second_2021,kim_hierarchical_2023}) is not just technical, but strictly required for the condensation mechanism to take place: \eqref{eq:not-huge} is a sufficient and necessary condition for the steady state of the system to charge  configurations consisting of a single stack of particles only.

\subsubsection{Outline of the  proof of Theorem \ref{th:sharp-alpha-min}} \label{sec:outline-1}
Our proof for determining the order-one dependence on $\eps \hat\alpha_{\rm min}$ of ${\rm gap}_{\rm SIP}(G,\eps\hat\alpha)$  combines three main ingredients: 
\begin{itemize}
	\item the nested eigenstructure of ${\rm SIP}$ (valid for any underlying graph $G$) as  already  exploited in \cite{kim_sau_spectral_2023};
	\item the full knowledge of the eigendecomposition in mean-field geometries and, more specifically, the fact that eigenvalues grow quadratically (neglecting multiplicities) with the particle total number;
	\item comparison inequalities of Dirichlet forms associated to the particle system on a graph $G$, against that on the complete graph. 
\end{itemize}
All three steps  are presented and combined to yield the proof of Theorem \ref{th:sharp-alpha-min} in Section \ref{sec3}, but the actual proof of the comparison inequality, longer and more technical,  is presented in Section \ref{sec4}. 

In the metastable regime \eqref{eq:not-huge}, our comparisons build on the classical distinguishing paths method as applied, e.g., in the seminal work \cite{diaconis_saloff-coste-comparison_1993} to the symmetric exclusion process. Most of the care in our context lies in devising paths in which the total \textquotedblleft cost\textquotedblright\ becomes not larger than $\eps^{-1} \, k^2$, up to constants depending only on $G$ and $\hat\alpha$. Here, the cost of each move  heavily depends on whether relocating a single particle augments, keeps constant, or lowers the total number of stacks, and whether stacks do have or not neighboring stacks.  Hence, following the metastable behavior of the system offers us a guideline to construct efficient distinguishing paths, e.g., taking care of never creating more than one extra occupied site along the path. We refer to Sections \ref{sec4.2} and \ref{sec4.3} for the details.

When \eqref{eq:not-huge} does not hold, as already mentioned, the metastability picture breaks down, and so does this approach via distinguishing paths.  In order to overcome this, we replace paths by \sk{a} more sophisticated two-dimensional \sk{\emph{surface}, which is obtained by collecting the trace of all relevant paths.} In presence of stacks consisting of a huge number of particles, \sk{this} two-dimensional \sk{surface allows} us to rigorously implement the idea of moving only a smaller portion of the stack at the time according to some suitably chosen probability, and leaving the rest untouched.  While this strategy of moving only some (and not all) particles of a stack produces an extra factor $\eps^{-1}$, which we could not afford in the regime \eqref{eq:not-huge}, this is now not problematic, as we can control this factor with a suitable function of the number of particles of the stack. For more details, see Section \ref{sec4.4}.	

As we just sketched, 	these  estimates involving Dirichlet forms yield  comparison constants which unavoidably degenerate like $1/k^2$ as the total number of particles $k\in \N$ grows. Here is where we crucially exploit the first two ingredients of the proof, in particular, the quadratic growth of eigenvalues for the mean-field system, which removes this degeneracy.

\subsubsection{Outline of the proof  of Theorems \ref{th:gap-asymptotics1} and \ref{th:failure-gap}}\label{sec:outline-2}
In order to establish the strict inequality between ${\rm gap}_{\rm SIP}(G,\eps\hat\alpha)$ and ${\rm gap}_{\rm RW}(G,\eps\hat\alpha)$ as in \eqref{eq:failure-gap}, comparison arguments as those we just described turn out to be too loose  (and, actually, too elaborated) to capture the precise pre-factors. Instead,  we follow a more basic approach, namely,  turning the aforementioned metastability picture into some quantitative spectral information. 

More specifically, for a fixed graph $G$ and a fixed number of particles $k\in \N$, we employ and sharpen classical limit theorems for slow--fast systems from \cite{kurtz_limit_theorem_1973} (see also \cite{grosskinsky_redig_vafayi_dynamics_2013} for another application in the context of ${\rm SIP}$). These theorems rigorously describe the metastable  dynamics for the slow macroscopic variables of ${\rm SIP}(G,\eps\hat\alpha)$ in the limit $\eps \to 0$. The limit of the slow variables after a thermalization of the fast ones  identifies the low-lying spectrum. This allows us to capture the behavior of ${\rm gap}_{\rm SIP}(G,\eps\hat\alpha)$, at least for $\eps$ small enough. These intuitive ideas are stated and proved rigorously in Section \ref{sec5.1}.

We are then left with identifying the spectral gap of the limiting metastable dynamics. In contrast to ${\rm SIP}$, which is irreducible, this metastable chain may consist of both recurrent and transient states, as we detail in Section \ref{sec5.2}. Recurrent regions correspond to those configurations in which all particles sit together, and the resulting single stack moves like ${\rm RW}(G,\hat\alpha)$. Transient states are  those configurations in which there are  stacks  at graph distance at least two from each other. Hence, the spectral gap of the metastable process is just the smallest among the random walk's gap and the eigenvalues associated to any part of the (sub-stochastic) transient dynamics. 

Most of our work is devoted to analyzing the hierarchy of transient states. Indeed, even the decomposition into irreducible components and the  jump rates of  the transient dynamics are, in general, highly complicated and  non-trivially dependent on the underlying geometry and the number of particles. Nevertheless, by exploiting a form of consistency for the metastable limiting dynamics, we are able to deduce that the lowest-lying eigenvalue of the transient dynamics is attained by the system with just \textit{two} particles, as adding more particles --- thus, potentially,  stacks --- does not cause any slowdown in the system. More in detail, in Section \ref{sec5.3}, we verify that one only needs to focus on the number of stacks (and not on the precise allocation of particles in each stack): for each $m \ge 2$, the smallest eigenvalue of all transient sub-systems with $m$ piles comes exactly from the sub-system with $m$ isolated particles. As a next step, in Section \ref{sec5.4}, we prove that the smallest eigenvalue of all sub-systems with $m \ge 2$ isolated particles is attained at $m=2$, i.e., the sub-system with two isolated particles. Collecting these observations, we prove  Theorem \ref{th:gap-asymptotics1} in Section \ref{sec5.5}.
Theorem \ref{th:failure-gap} is also proved in this section: by exploiting this rigorous metastable description of eigenvalues' asymptotics, our proof   boils down to estimates of  relaxation and mean meeting times  of independent particles on a well studied geometry, corresponding, respectively, to  (the inverse of) the spectral gap of the recurrent and transient dynamics of    ${\rm SIP}$'s  metastable process.

\section{Proof of Theorem \ref{th:sharp-alpha-min}}\label{sec3}
We follow the outline in Section \ref{sec:outline-1}.
\subsection{General eigenstructure of SIP}\label{sec3.1}
We start by recalling from \cite{kim_sau_spectral_2023} some general facts on the eigenstructure of ${\rm SIP}$ functional to our analysis. 
First of all, ${\rm SIP}$ is {consistent}, in the sense that the system with $k-1$ particles is recovered (in the sense of finite-dimensional distributions) from the system with $k$ particles, provided that one particle is removed \textit{uniformly at random}. In formula, this means that
\begin{equation}\label{eq:consistency-SIP}
	L_{G,\alpha,k}\,\mathfrak a_{k} = \mathfrak a_{k}\,L_{G,\alpha,k-1}
\end{equation}
holds true for all graphs $G$, site weights $\alpha$, and $k\ge 2$, where $\mathfrak a_{k}:\R^{\Xi_{k-1}}\to \R^{\Xi_k}$ is the \textit{annihilation operator} defined as
\begin{equation}\label{ann-op-def}
	\mathfrak a_{k} g(\eta)\eqdef \sum_{x\in V}\eta_x\, g(\eta-\delta_x)\comma\qquad g\in \R^{\Xi_{k-1}}\comma \eta\in \Xi_k\fstop
\end{equation}
As proved in \cite[Appendix A]{kim_sau_spectral_2023}, $\mathfrak a_k$ is one-to-one. This readily implies that the (real) spectrum of $L_{G,\alpha,k-1}$ is contained (with multiplicities) in that of $L_{G,\alpha,k}$ (and, in particular, that \eqref{eq:gap-consistency} holds). More precisely, if $(\lambda,f)$ is an eigenvalue-eigenfunction pair for $-L_{G,\alpha,k-1}$, then $(\lambda,\mathfrak a_k f)$ is one for $-L_{G,\alpha,k}$. As a consequence of this fact and reversibility, the rest of the  spectrum of $-L_{G,\alpha,k}$ that does not come from $-L_{G,\alpha,k-1}$  must be found in the orthogonal complement of those eigenfunctions \textquotedblleft lifted\textquotedblright\ from those of $-L_{G,\alpha,k-1}$.
This observation leads us to introduce the  \textit{creation operator} $\cre :\R^{\Xi_k}\to \R^{\Xi_{k-1}}$:
\begin{equation}\label{eq:cre-def}
\cre f(\xi) \eqdef \sum_{x\in V} (\xi_x +\alpha_x ) \, f(\xi + \delta_x ) \comma \qquad f \in \R^{\Xi_k} \comma \xi \in \Xi_{k-1} \fstop
\end{equation}
As demonstrated in \cite[Proposition 3.1]{kim_sau_spectral_2023}, the two operators $\ann$ and $\cre$ are adjoint one to each other in the following sense:
\begin{equation}\label{eq:orth}
\nscalar{\ann g}{f}_{\alpha,k} = \frac{k}{|\alpha|+k-1} \, \nscalar{g}{\cre f}_{\alpha,k-1} \comma \qquad f\in \R^{\Xi_k} \comma g \in \R^{\Xi_{k-1}} \comma
\end{equation}
where $\nscalar{\emparg}{\emparg}_{\alpha,k}$ denotes the inner product in $\Xi_k$ with respect to $\mu_{\alpha,k}$ given in \eqref{mu-def}. In turn,  $\cre$ is \fs{surjective} and  the following orthogonal decomposition holds:
\begin{equation}\label{ortho-decomp}
L^2(\mu_{\alpha,k}) = {\rm Im}\,\ann \oplus_{\perp} {\rm Ker}\,\cre \fstop
\end{equation}

Next, let us recall a well-known variational characterization of ${\rm gap}_k(G,\alpha)$: for each $k\ge 2$,
\begin{equation}\label{eq:var-formulation-gap}
	{\rm gap}_k(G,\alpha) = \inf_{\substack{f \in \R^{\Xi_k}\\ f \ne \text{const.}}} \frac{\cE_{G,\alpha,k}(f)}{{\rm Var}_{\alpha,k}(f)} \fstop
\end{equation}
Here,  $\cE_{G,\alpha,k}(f)=\scalar{f}{-L_{G,\alpha,k}f}_{\alpha,k}$ denotes the Dirichlet form evaluated at $f\in\R^{\Xi_k}$, namely,
\begin{equation}\label{eq:dir-form-SIP}
\cE_{G,\alpha,k}(f) 
= \frac12 \sum_{\eta \in \Xi_k} \sum_{x,y\in V} \mu_{\alpha,k}(\eta)\,c_{xy}\, \eta_x \tonde{\alpha_y + \eta_y} \tonde{f(\eta-\delta_x+\delta_y)-f(\eta)}^2 \comma
\end{equation}
whereas ${\rm Var}_{\alpha,k}(f)$ stands for the variance of $f \in \R^{\Xi_k}$ with respect to $\mu_{\alpha,k}$. If $f\in {\rm Ker}\,\cre$, then $\langle f\rangle_{\alpha,k}=\nscalar{f}{1}_{\alpha,k}=0$ \fs{(remark that $1\in {\rm Im}\,\mathfrak a_k$ and apply \eqref{eq:orth})} and,  thus,	
\begin{equation}\label{eq:ker-var}
{\rm Var}_{\alpha,k}(f) = \norm{f}_{\alpha,k}^2\eqdef \sum_{\eta \in \Xi_k} \mu_{\alpha,k}(\eta)\, f(\eta)^2 \fstop
\end{equation}
Taking advantage of the lifting property of $\ann$, the self-adjointness of $L_{G,\alpha,k}$ on $L^2(\mu_{\alpha,k})$, and the orthogonal decomposition in \eqref{ortho-decomp}, \eqref{eq:var-formulation-gap} simplifies as follows:
\begin{equation}\label{eq:gap-induction}
{\rm gap}_k(G,\alpha) = {\rm gap}_{k-1}(G,\alpha) \wedge \bigg( \inf_{\substack{f\in {\rm Ker}\,\cre \\ f\ne 0}} \frac{\cE_{G,\alpha,k}(f)}{\norm{f}_{\alpha,k}^2} \bigg) \fstop
\end{equation}
Hence, by an iterative argument on $k\ge 2$, we may focus on comparing  $\cE_{G,\alpha,k}(f)$ and $\norm{f}_{\alpha,k}^2$, only for functions  $f \in {\rm Ker}\,\cre$.

\subsection{Dirichlet forms and comparisons}\label{sec3.2}
The Dirichlet form $\cE_{G,\alpha,k}(f)$ depends on the underlying graph $G$, whereas \fs{the reversible measure $\mu_{\alpha,k}$, and, thus,} the orthogonal decomposition in \eqref{ortho-decomp} and $\norm{f}_{\alpha,k}^2$,  do not. This simple observation motivates us to compare $\cE_{G,\alpha,k}(f)$ with its complete graph analogue, i.e., $\cE_{K,\alpha,k}(f)$, where $K=K_V$ denotes the complete graph on the sites of $V$ with unitary conductances $c_{xy}\equiv 1$.

	 It turns out that, in this complete graph case, we can obtain the full eigendecomposition of ${\rm SIP}$. Remark that, although the spectrum and a set of eigenfunctions of ${\rm SIP}_k(K,\alpha)$ are  known \cite{shimakura_equations1977} (see also \cite[Theorem 1.4]{corujo2020spectrum} or \cite{wang_zhang_nash_2019} and references therein), the next proposition, together with the identity in \eqref{eq:gap-induction},  provides a simple and complete description of eigenvalues and eigenspaces when $G=K$, which we shall exploit later.

\begin{proposition}\label{prop:complete-spectrum}
For every $k \ge 1$ and $f\in {\rm Ker}\,\cre$, we have
\begin{equation}\label{eq:complete-spectrum}
\cE_{K,\alpha,k}(f) = k \tonde{|\alpha|+k-1} \norm{f}_{\alpha,k}^2 \fstop
\end{equation}
\end{proposition}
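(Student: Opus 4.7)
My plan is to prove the identity $-L_{K,\alpha,k} = k(|\alpha|+k-1)\,I - \ann\cre$ as operators on $\R^{\Xi_k}$, after which the statement follows by applying both sides to $f\in {\rm Ker}\,\cre$ and pairing with $f$ in $L^2(\mu_{\alpha,k})$. In effect, this gives a clean algebraic derivation of the \textquotedblleft highest\textquotedblright\ eigenvalue of the mean-field generator via the ladder operators already introduced in Section \ref{sec3.1}, bypassing any explicit diagonalization like that of \cite{shimakura_equations1977,corujo2020spectrum}.

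The first step is a direct expansion of the composition $\ann\cre$. For $\eta\in \Xi_k$, using the definitions \eqref{ann-op-def} and \eqref{eq:cre-def},
\begin{equation}
\ann\cre f(\eta) = \sum_{y\in V}\eta_y\,\cre f(\eta-\delta_y) = \sum_{x,y\in V}\eta_y\tonde{\eta_x - \mathbf{1}_{x=y}+\alpha_x}f(\eta-\delta_y+\delta_x).
\end{equation}
The second step is to rewrite $L_{K,\alpha,k}f(\eta)$ (with $c_{xy}\equiv 1$) by swapping the roles of $x$ and $y$ in \eqref{eq:gen-conservative}, separating the diagonal $x=y$ contribution in the gain term (which collapses to $\sum_y \eta_y f(\eta) = k f(\eta)$), and evaluating the loss term as $\left(\sum_y \eta_y\right)\left(\sum_x (\alpha_x+\eta_x)\right) f(\eta) = k(|\alpha|+k)f(\eta)$. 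Matching the gain term with the expansion of $\ann\cre f$ yields exactly
\begin{equation}
L_{K,\alpha,k}f(\eta) = \ann\cre f(\eta) + k f(\eta) - k(|\alpha|+k)f(\eta) = \ann\cre f(\eta) - k(|\alpha|+k-1)f(\eta).
\end{equation}

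For the third step, take $f\in {\rm Ker}\,\cre$. Then $\cre f\equiv 0$, so $\ann\cre f\equiv 0$, and the identity above reduces to $-L_{K,\alpha,k}f = k(|\alpha|+k-1)\,f$. Taking the $L^2(\mu_{\alpha,k})$-inner product with $f$ and invoking the variance formula \eqref{eq:ker-var} gives
\begin{equation}
\cE_{K,\alpha,k}(f) = \scalar{f}{-L_{K,\alpha,k}f}_{\alpha,k} = k\tonde{|\alpha|+k-1}\,\norm{f}_{\alpha,k}^2,
\end{equation}
which is exactly \eqref{eq:complete-spectrum}.

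There is no serious obstacle here: the entire argument is a one-line commutator-type computation between $\ann$ and $\cre$, and the only bookkeeping to watch is the $\mathbf{1}_{x=y}$ correction coming from the non-commutativity of removing and adding a particle at the same site. Combined with \eqref{eq:gap-induction}, this proposition also immediately yields, by induction on $k$, that the full spectrum of $-L_{K,\alpha,k}$ consists of the values $j(|\alpha|+j-1)$ for $j=0,1,\ldots,k$, which is the feature exploited in Section \ref{sec:outline-1} to remove the $1/k^2$ loss in the Dirichlet-form comparison.
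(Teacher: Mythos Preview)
Your proof is correct and takes a genuinely different route from the paper. The paper proceeds by reducing the $k$-particle Dirichlet form to a $\mu_{\alpha,k-1}$-average of one-particle Dirichlet forms via the identity $\cE_{K,\alpha,k}(f)=k\sum_{\xi}\mu_{\alpha,k-1}(\xi)\,\cE_{K,\alpha+\xi,1}(f_\xi)$ (borrowed from \cite{kim_sau_spectral_2023}), then uses that $f\in{\rm Ker}\,\cre$ forces each $f_\xi$ to have mean zero so that the $k=1$ case applies with shifted weights $\alpha+\xi$. Your approach instead establishes the operator identity $-L_{K,\alpha,k}=k(|\alpha|+k-1)\,I-\ann\cre$ directly, which shows at once that every $f\in{\rm Ker}\,\cre$ is an eigenfunction with the stated eigenvalue. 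Your argument is shorter and avoids the external input \eqref{eq:dec-dir}--\eqref{eq:basic-id}; it also makes the ladder-operator structure of the mean-field spectrum completely transparent, as you note in your last paragraph. The paper's decomposition, on the other hand, is the same device used later in Section~\ref{sec5.4} (Lemma~\ref{lem:lambdakk-monotone}) for a non-mean-field comparison, so it fits the paper's recurring theme of conditioning on $k-1$ particles.
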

\begin{proof}
The result is well known for $k=1$,  the random walk case. Indeed, ${\rm Ker}\,\mathfrak a_{\alpha,0}^\dagger$ coincides with the subspace of  functions $f\in \R^{\Xi_1}$ having mean zero with respect to $\mu_{\alpha,1}$, which equals $\mu_{\alpha,1}(\delta_x)=\frac{\alpha_x}{|\alpha|}$, $x\in V$.

Now, fix $k\ge 2$ and $f\in {\rm Ker}\,\cre$. We have
\begin{equation}\label{eq:dec-dir}
	\cE_{K,\alpha,k}(f)= k\sum_{\xi\in \Xi_{\fs{k-1}}}\mu_{\alpha,k-1}(\xi)\, \cE_{K,\alpha+\xi,1}(f_\xi)\comma
\end{equation}
with $f_\xi\in \R^{\Xi_1}$ being defined as $f_\xi(\delta_x)\eqdef f(\xi+\delta_x)$. Remark that \fs{\eqref{eq:dec-dir} corresponds to   \cite[Eq.\ (3.10)]{kim_sau_spectral_2023}, which is deduced}  from the following  identity \cite[Eq.\ (3.9)]{kim_sau_spectral_2023}: for all $x\in V$ and $\xi\in \Xi_{k-1}$, 
\begin{equation}\label{eq:basic-id}
	\mu_{\alpha,k}(\xi+\delta_x)\tonde{\xi_x+1}= \frac{Z_{\alpha,k-1}}{Z_{\alpha,k}}\,\mu_{\alpha,k-1}(\xi)\tonde{\alpha_x+\xi_x} = k\,\mu_{\alpha,k-1}(\xi)\,\frac{\alpha_x+\xi_x}{|\alpha|+k-1}\comma
\end{equation}
where the second step used (cf.\ \eqref{mu-def})
\begin{equation}\label{eq:Zk-Zk-1}
	\frac{Z_{\alpha,k-1}}{Z_{\alpha,k}} =\frac{k}{|\alpha|+k-1}\fstop
\end{equation} Analogously \fs{to \eqref{eq:dec-dir}}, we get
\begin{equation}\label{eq:dec-norm}
	\norm{f}_{\alpha,k}^2 = \sum_{\xi\in \Xi_{k-1}}\mu_{\alpha,k-1}(\xi)\norm{f_\xi}_{\alpha+\xi,1}^2\fstop
\end{equation}By  \eqref{eq:dec-dir} and the identity in \eqref{eq:complete-spectrum} for $k=1$ (note that $f\in {\rm Ker}\,\cre$ ensures that $f_\xi\in {\rm Ker}\,\mathfrak a_{\alpha,0}^\dagger$) \fs{with $\alpha+\xi$ in place of $\alpha$ therein (note that $|\alpha+\xi|=|\alpha|+k-1$)}, we obtain
\begin{equation}
	\cE_{K,\alpha,k}(f) = k\tonde{|\alpha|+k-1}\sum_{\xi\in \Xi_{k-1}}\mu_{\alpha,k-1}(\xi) \norm{f_\xi}_{\alpha+\xi,1}^2 = k\tonde{|\alpha|+k-1}\norm{f}_{\alpha,k}^2\comma
\end{equation}
\fs{where the last step used \eqref{eq:dec-norm}. This proves the desired result for $k\ge 2$.}
\end{proof}

In view of \eqref{eq:gap-induction} and Proposition \ref{prop:complete-spectrum}, it remains to compare the two Dirichlet forms $\cE_{G,\alpha,k}(f)$ and $\cE_{K,\alpha,k}(f)$, for any $f\in \R^{\Xi_k}$.  This is the content of the following theorem, and certainly represents the hardest step of the proof of Theorem \ref{th:sharp-alpha-min}. 

\begin{theorem}\label{th:key-ing}
For all $k \ge 2$ and $f\in \R^{\Xi_k}$, we have
\begin{equation}\label{eq:key-ing}
\cE_{K,\alpha,k}(f)\le \frac{\sk{54} \, k \tonde{\alpha_{\rm max}+k-1} |V|^2 \,  {\rm diam}(G) \, \sk{(6e)}^{\alpha_{\rm max} }}{\alpha_{\rm min} \, \alpha_{\rm ratio} \, c_{\rm min}}   \, \cE_{G,\alpha,k}(f) \fstop
\end{equation}
\end{theorem}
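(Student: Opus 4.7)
My plan is to apply the classical method of canonical paths/flows (cf.\ \cite{diaconis_saloff-coste-comparison_1993}) to compare the Dirichlet form $\cE_{K,\alpha,k}$ on the complete graph $K$ with $\cE_{G,\alpha,k}$. For each transition $(\eta,\eta-\delta_x+\delta_y)$ of ${\rm SIP}_k(K,\alpha)$ I would construct a canonical path in ${\rm SIP}_k(G,\alpha)$ by fixing, for each ordered pair $(x,y)$, a shortest $G$-path $\gamma_{xy}=(x=z_0,z_1,\dots,z_\ell=y)$ with $\ell\le{\rm diam}(G)$, and moving a single particle one step at a time. The intermediate configurations are $\eta^{(j)}\eqdef\eta-\delta_x+\delta_{z_j}$, and each sub-step $\eta^{(j-1)}\to\eta^{(j)}$ is a legitimate ${\rm SIP}_k(G,\alpha)$-transition along the edge $z_{j-1}z_j$, with rate at least $c_{\rm min}\,\alpha_{\rm min}$. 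A telescoping sum and Cauchy--Schwarz then bound $(f(\eta-\delta_x+\delta_y)-f(\eta))^2$ by $\ell$ times the sum of squared increments along the sub-steps.

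Substituting into $\cE_{K,\alpha,k}(f)$ and interchanging the summation, I would reorganize the complete-graph Dirichlet form as a sum over $G$-transitions $(\xi,\xi-\delta_u+\delta_v)$, each weighted by a congestion factor collecting, over all canonical paths traversing this $G$-edge, the weight $\mu_{\alpha,k}(\eta)\,\eta_x(\alpha_y+\eta_y)$. The target is to bound this congestion by $A\,\mu_{\alpha,k}(\xi)\,c_{uv}\,\xi_u(\alpha_v+\xi_v)$ with $A$ equal to the prefactor in \eqref{eq:key-ing}. The crucial ingredient is the measure-ratio identity $\mu_{\alpha,k}(\eta)/\mu_{\alpha,k}(\xi)=\tfrac{\alpha_x+\xi_x}{\xi_x+1}\cdot\tfrac{\xi_u}{\alpha_u+\xi_u-1}$ for $\eta=\xi-\delta_u+\delta_x$ with $x\ne u$, read off from \eqref{mu-def}. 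Combined with the uniform bounds $\eta_x(\alpha_y+\eta_y)\le k(\alpha_{\rm max}+k-1)$, $c_{uv}\,\xi_u(\alpha_v+\xi_v)\ge c_{\rm min}\,\alpha_{\rm min}$, the path-length control $\ell\le{\rm diam}(G)$, and the trivial count $|\{(x,y):\gamma_{xy}\ni(u,v)\}|\le|V|^2$, this produces the correct combinatorial shape of the comparison constant.

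The main obstacle, and the source of the extra factors $6^{\alpha_{\rm max}}/\alpha_{\rm ratio}$ in \eqref{eq:key-ing}, lies in the regime $\alpha_{\rm min}\in(0,1)$, where $\mu_{\alpha,k}$ is non-log-concave and strongly charges clustered configurations. The ratio $(\alpha_x+\xi_x)/(\xi_x+1)$ attains its worst value $\alpha_x$ when $\xi_x=0$, while $\xi_u/(\alpha_u+\xi_u-1)$ attains its maximum $1/\alpha_u$ at $\xi_u=1$; either factor can be uniformly bounded by $\max(1,\alpha_{\rm max})/\alpha_{\rm min}$, but the singular values occur only at isolated sites along the path, and a careful accounting is needed to keep the $\alpha_{\rm min}^{-1}$-loss linear rather than exponential in ${\rm diam}(G)$. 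I expect the factor $\alpha_{\rm ratio}^{-1}$ to reflect the worst-case ratio $\alpha_x/\alpha_u$ attained once per path, and the constant $6^{\alpha_{\rm max}}$ to emerge from a Pochhammer-type estimate controlling the products of gamma-ratios $\Gamma(\alpha_w+\xi_w)/(\Gamma(\alpha_w)\,\xi_w!)$ along the traversed sites. For very large $k$, where stacks may accumulate enormous numbers of particles, it may be necessary to replace the one-dimensional canonical path by a two-dimensional flow moving only a fraction of a stack at a time (as alluded to in Section \ref{sec:outline-1}). Carrying out this bookkeeping cleanly and uniformly in $k$ and in the graph $G$ is the technical crux of the argument.
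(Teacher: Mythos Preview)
Your framework is right---canonical paths with Cauchy--Schwarz and congestion counting---but the single-particle path $\eta^{(j)}=\eta-\delta_x+\delta_{z_j}$ does \emph{not} deliver the constant in \eqref{eq:key-ing}. Along an empty stretch ($\sigma_{z_1}=\cdots=\sigma_{z_{t-1}}=0$), a middle step $\eta^{(s-1)}\to\eta^{(s)}$ has congestion ratio
\[
\frac{\mu_{\alpha,k}(\eta)\,\eta_x(\alpha_y+\eta_y)}{\fc(\eta^{(s-1)},\eta^{(s)})}
=\frac{(\alpha_x+m-1)(\alpha_y+\ell-m)}{\alpha_{z_{s-1}}\alpha_{z_s}\,c_{z_{s-1}z_s}}\ ,
\]
so the total cost picks up $(\alpha_{\rm max}+k-1)^2/\alpha_{\rm min}^2$. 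The stated bound has instead $k(\alpha_{\rm max}+k-1)/(\alpha_{\rm min}\alpha_{\rm ratio})=k(\alpha_{\rm max}+k-1)\,\alpha_{\rm max}/\alpha_{\rm min}^2$. For small $\alpha_{\rm max}$ (the regime driving Theorem~\ref{th:sharp-alpha-min}), your bound is larger by the factor $(\alpha_{\rm max}+k-1)/(k\,\alpha_{\rm max})\sim 1/\alpha_{\rm max}$, and after cancelling against the eigenvalue $k(|\alpha|+k-1)$ from Proposition~\ref{prop:complete-spectrum} you would recover only ${\rm gap}_{\rm SIP}\gtrsim\alpha_{\rm min}^2$---precisely the quadratic bound \eqref{eq:quadratic} that Theorem~\ref{th:sharp-alpha-min} is meant to improve.

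The missing idea is that when intermediate sites are empty one must move the \emph{entire stack} of $m$ particles from $x$ along the path (forward, deposit one at $y$, then return $m-1$), not a single particle. This trades the factor $(\alpha_x+m-1)$ in the numerator for $\pi_x(m)\,m\le\alpha_x\,e^{\alpha_x\sum_{j=1}^{m-1}1/j}$, gaining exactly the $\alpha_{\rm max}$ you need---but at the price of the exponential $e^{\alpha_x H_{m-1}}$. This is where the dichotomy enters: when $\alpha_x H_{m-1}\le 1$ the whole-stack path is cheap (this is the source of $6^{\alpha_{\rm max}}$, not a Pochhammer estimate on $\pi$-ratios as you guessed); when $\alpha_x H_{m-1}>1$, i.e.\ $m\gtrsim e^{1/\alpha_x}$, one replaces the path by a two-dimensional \emph{flow} that moves only $i\in\bbr{1}{\lfloor m/2\rfloor}$ particles with weight $\propto 1/i$, so that the extra $\alpha_x^{-1}$ this introduces is absorbed by $\alpha_x H_{m-1}>1$. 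So the flow is required not because ``$k$ is very large'' in an absolute sense, but precisely in the crossover $m>e^{1/\alpha_x}$; your description misidentifies this threshold and, more importantly, omits the whole-stack mechanism altogether.
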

We postpone the proof of this estimate to Section \ref{sec4} and, in Remark \ref{rem:new-eps-to-zero}, we discuss possible improvements.  

\begin{remark}While all previous steps  relied on the fact that we chose the test function $f$ in ${\rm Ker}\,\cre\subset \R^{\Xi_k}$, \fs{the last theorem is true for all	 $f\in \R^{\Xi_k}$}.
	\end{remark}

\subsection{Proof of Theorem \ref{th:sharp-alpha-min}}\label{sec3.3}
Assuming the validity of Theorem \ref{th:key-ing}, 
we now conclude the proof of Theorem \ref{th:sharp-alpha-min}.

\begin{proof}[Proof of Theorem \ref{th:sharp-alpha-min}]
By \eqref{eq:gap-sip-def}, \eqref{eq:gap-induction} and an iterative argument, it suffices to prove
\begin{equation}
\inf_{\substack{f\in {\rm Ker}\,\cre \\ f\ne 0 } } \frac{\cE_{G,\alpha,k}(f)}{\norm{f}_{\alpha,k}^2} \ge \alpha_{\rm min}\, \frac1{21} \frac{ \alpha_{\rm ratio}\, c_{\rm min}}{ 6^{\alpha_{\rm max}} \,  |V|^2\, {\rm diam}(G)} \comma\qquad \text{\fs{for all}}\ k\ge 2\fstop
\end{equation}
By Theorem \ref{th:key-ing} and Proposition \ref{prop:complete-spectrum}, we have, for all $f\in{\rm Ker}\,\cre$ and $k \ge 2$, 
\begin{align}
\cE_{G,\alpha,k}(f) & \ge \frac{ \alpha_{\rm min} \, \alpha_{\rm ratio} \, c_{\rm min} }{ \sk{54} \, k \tonde{\alpha_{\rm max}+k-1} |V|^2 \,  {\rm diam}(G) \, \sk{(6e)}^{\alpha_{\rm max} }  }  \, \cE_{K,\alpha,k}(f) \\
& = \frac{\alpha_{\rm min}\, c_{\rm min}\, \alpha_{\rm ratio}\tonde{|\alpha|+k-1}}{\sk{54}  \tonde{\alpha_{\rm max}+k-1} |V|^2\, {\rm diam} (G) \, \sk{(6e)}^{\alpha_{\rm max}}  }\norm{f}_{\alpha,k}^2  \\
& \ge \frac1{\sk{54}}\frac{\alpha_{\rm min}\, c_{\rm min}  \, \alpha_{\rm ratio} }{|V|^2 \, {\rm diam}(G) \, \sk{(6e)}^{\alpha_{\rm max}}} \norm{f}_{\alpha,k}^2 \fstop
\end{align}
This concludes the proof of the theorem.
\end{proof}

\section{Proof of Theorem \ref{th:key-ing}}\label{sec4}
 Since $G, \alpha, k$ and $f\in \R^{\Xi_k}$ are fixed all throughout this section,  we abbreviate
\begin{equation}\label{eq:abbrv-dir-forms}
\cE = \cE_{G,\alpha,k}(f) \qquad \text{and} \qquad \cE_K = \cE_{K,\alpha,k}(f) \fstop
\end{equation}
Let us assume that an ordering is given on $V$, so that
\begin{equation}\label{eq:Diri-K-dec}
\cE_K= \sum_{x<y} \sum_{\eta\in\Xi_{k}} \mu_{\alpha,k}(\eta)\,\eta_x\tonde{\alpha_y+\eta_y}\tonde{f(\eta-\delta_x+\delta_y)-f(\eta)}^{2} \eqqcolon \sum_{x<y} \cE_K^{x,y} \comma
\end{equation}
where, for any pair $x<y$, $\cE_K^{x,y}$ is defined as the summation in $\eta\in\Xi_k$ above.
\sk{The following lemma gives an upper bound for the last term $\mathcal E_K^{x,y}$.}

\sk{\begin{lemma}\label{lem:WTS-xy}
		For every pair $x<y$,
		\begin{equation}\label{eq:WTS-xy}
			\cE_K^{x,y} \le \frac{\sk{108 \, k \tonde{\alpha_{\rm max}+k-1}  {\rm diam}(G) \, (6e)^{\alpha_{\rm max} }}}{\alpha_{\rm min} \, \alpha_{\rm ratio} \, c_{\rm min}}  \, \cE \fstop
		\end{equation}
\end{lemma}}

\sk{We first prove Theorem \ref{th:key-ing} provided that Lemma \ref{lem:WTS-xy} holds true.

\begin{proof}[Proof of Theorem \ref{th:key-ing}] Recall the notation from \eqref{eq:abbrv-dir-forms}.
By \eqref{eq:Diri-K-dec} and \eqref{eq:WTS-xy}, we obtain
\begin{align}
\cE_K & \le {|V| \choose 2} \, \frac{108 \,k \tonde{\alpha_{\rm max}+k-1} {\rm diam}(G) \, (6e)^{\alpha_{\rm max}}}{\alpha_{\rm min} \, c_{\rm min} \, \alpha_{\rm ratio}} \, \cE \\
& \le \frac{54 \, k \tonde{\alpha_{\rm max}+k-1} |V|^2 \, {\rm diam}(G) \, (6e)^{\alpha_{\rm max}} }{\alpha_{\rm min} \, c_{\rm min} \, \alpha_{\rm ratio}} \, \cE \fstop
\end{align}
This is exactly the claim in Theorem \ref{th:key-ing}.
\end{proof}
}

In the remainder of this section, we fix $x<y$ and prove \sk{Lemma \ref{lem:WTS-xy}}.

For $A\subseteq V$ and $\ell\in\N_0$, define
\begin{equation}
\Xi_\ell^A \eqdef \left\{ \sigma \in \Xi_\ell : \sum_{\sk{z} \in A}\sigma_\sk{z} = \ell \right\} \fstop
\end{equation}
Note that $\Xi_{k}^{V}=\Xi_{k}$. Moreover, for $\sigma \in \Xi_{\ell}^{A}$, $y_{1},\ldots,y_{s}\in V$, and $m_{1},\ldots,m_{s}\in\N_0$, define
\begin{equation}\label{eq:eta-xi-mi}
\sigma_{m_1,\ldots,m_{s}}^{y_{1}\cdots y_s} \eqdef \sigma+\sum_{i=1}^{s} m_{i}\, \delta_{y_{i}}  \in \Xi_{\ell+m_1 +\cdots +m_s}\comma
\end{equation}
where the right-hand side should be understood as a summation of
functions. Thus, the number of particles in the new configuration $\sigma_{m_{1},\ldots,m_{s}}^{y_{1}\cdots y_{s}}$
equals $\ell+m_{1}+\cdots+m_s$.

According to \eqref{eq:eta-xi-mi}, we may decompose the Dirichlet summation $\cE_K^{x,y}$ in \eqref{eq:Diri-K-dec} as
\begin{equation}
\sum_{\ell=1}^{k} \sum_{m=1}^{\ell} \sum_{\sigma\in\Xi_{k-\ell}^{V\setminus\{x,y\}}} \mu_{\alpha,k}(\sigma_{m,\ell-m}^{xy})\,m\tonde{\alpha_{y}+\ell-m}\tttonde{f(\sigma_{m-1,\ell-m+1}^{xy})-f(\sigma_{m,\ell-m}^{xy})}^{2} \comma
\end{equation}
where $\ell\in\bbr{1}{k}\eqdef [1,k]\cap\Z$ denotes the number of particles in $\{x,y\}$, and $m\in\bbr{1}{\ell}$ denotes the number of particles at $x$. 
\sk{For each $\eta \in \Xi_k$ and $x,y \in V$, define
\begin{equation}\label{eq:cK-nablaK-def}
{\fc_K(\eta;x,y)} \eqdef \mu_{\alpha,k}(\eta) \, \eta_x \tonde{\alpha_y + \eta_y} \comma\quad
\nabla_K^2 f(\eta;x,y) \eqdef \fc_K (\eta;x,y)\, \tttonde{f(\eta-\delta_x+\delta_y)-f(\eta)}^2 \fstop
\end{equation}
According to this notation,
\begin{equation}\label{eq:Diri-dec-sigma}
\cE_K^{x,y} = \sum_{\ell=1}^k \sum_{m=1}^\ell \sum_{\sigma \in \Xi_{k-\ell}^{V\setminus \{x,y\}}}
\nabla_K^2 f ( \sigma_{m,\ell-m}^{xy} ; x,y) \fstop
\end{equation}
Indeed, the two summations in $\ell \in \bbr1k$ and $m \in \bbr1\ell$ count all possible types of jumps from $x$ to $y$, and the summation in $\sigma \in \Xi_{k-\ell}^{V \setminus \{x,y\}}$ counts the possible particle configuration on the remaining vertices in $V \setminus \{x,y\}$.
}

Fix a shortest sequence
\begin{equation}\label{eq:xy-path}
x=x_{0},x_{1},\ldots,x_{t}=y
\end{equation}
in $G$ such that $c_{x_{s-1}x_s}>0$ for all $s\in\bbr{1}{t}$, where $t\in\bbr{1}{{\rm diam}(G)}$. 
For $\eta,\zeta\in\Xi_{k}$, write
\begin{equation}\label{eq:c-nabla-def}
	\fc(\eta,\zeta)\eqdef \mu_{\alpha,k}(\eta)\, r_{\alpha,k}(\eta,\zeta)\qquad \text{and} \qquad \nabla^{2}f(\eta,\zeta) \eqdef \fc(\eta,\zeta) \tonde{f(\zeta)-f(\eta)}^{2}\comma
\end{equation}
where $r_{\alpha,k}(\cdot,\cdot)$ is the transition rate function of ${\rm SIP}_{k}(G,\alpha)$.
\sk{Note that by the reversibility of $\mu_{\alpha,k}$, $\nabla^2f(\eta,\zeta) = \nabla^2f(\zeta,\eta)$.}
Then, we have
\begin{equation}\label{eq:Diri-nabla-dec}
	\cE = \sum_{ \eta \in \Xi_k } \sum_{\substack{z<w \\ c_{zw}>0} } \nabla^2f(\eta,\eta-\delta_z+\delta_w ) \fstop
\end{equation}
 A sequence of configurations $\eta_0,\eta_1,\ldots, \eta_M$ in $\Xi_k$ is  a \emph{path} if the  transition rates $r_{\alpha,k}(\cdot,\cdot)$ are positive along the sequence, i.e., $r_{\alpha,k}(\eta_{s-1},\eta_{s})>0$ for all $s\in \bbr{1}{M}$.

The idea to prove \sk{Lemma \ref{lem:WTS-xy}} is as follows. First, we decompose $\cE_K^{x,y}$ according to \eqref{eq:Diri-dec-sigma}. Then, for each triple $(\ell,m,\sigma)$, we upper bound  $\sk{\nabla_K^2 f ( \sigma_{m,\ell-m}^{xy} ; x,y) }$ with a certain collection of terms $\nabla^2f(\eta,\eta-\delta_z+\delta_w)$ that appear in the right-hand side of \eqref{eq:Diri-nabla-dec}. Finally, we count the number of overlaps for each term $\nabla^2f(\eta,\eta-\delta_z+\delta_w)$ for all such triple $(\ell,m,\sigma)$,  which gives an upper bound of $\cE_K^{x.y}$ in terms of $\cE$.

The procedure of upper bounding each $\sk{\nabla_K^2 f ( \sigma_{m,\ell-m}^{xy} ; x,y) }$ depends on the detailed distribution of particles along the sequence \eqref{eq:xy-path}. In the following four subsections, Sections \ref{sec4.1}, \ref{sec4.2}, \ref{sec4.3} and \ref{sec4.4}, we demonstrate each simple cases, and finally in Section \ref{sec4.5} we deal with the general case and conclude the proof of \sk{Lemma \ref{lem:WTS-xy}}.

\subsection{Connected case}\label{sec4.1}
First, suppose that (cf.\ \eqref{eq:xy-path})
\begin{equation}\label{eq:cond-connected}
c_{xy}>0 \comma \quad \sk{\text{which implies that}}\ t=1  \fstop
\end{equation}
Define
\begin{equation}\label{eq:Omega-def-1}
\sk{\Omega_{x,y}^{\ell,m,\sigma}} \eqdef \sk{\, \Big\{ } \{ \sigma_{m,\ell-m}^{xy} , \sigma_{m-1,\ell-m+1}^{xy} \} \sk{\Big\} \, } \comma
\end{equation}
\sk{which is the singleton set that contains the edge $\{\sigma_{m,\ell-m}^{xy} , \sigma_{m-1,\ell-m+1}^{xy} \}$. When there is no room for confusion, we simply write $\Omega = \Omega_{x,y}^{\ell,m,\sigma}$.}

\begin{lemma}\label{lem:4.1}
Suppose that \eqref{eq:cond-connected} holds. Then, we have
\begin{equation}
\sk{\nabla_K^2 f ( \sigma_{m,\ell-m}^{xy} ; x,y) } \le \sk{\frac{m \tonde{\alpha_y+\ell-m}}{c_{\rm min} \, \alpha_{\rm min}}}
\sum_{\{\zeta,\zeta'\} \sk{\in } \Omega } \nabla^2f(\zeta,\zeta') \fstop
\end{equation}
\end{lemma}

\sk{
\begin{proof}
It is clear that $\sigma_{m,\ell-m}^{xy},\sigma_{m-1,\ell-m+1}^{xy}$ is itself a path, and
\begin{equation}
\sk{\nabla_K^2 f ( \sigma_{m,\ell-m}^{xy} ; x,y) } = \frac1{c_{xy}} \, \nabla^2f(\sigma_{m,\ell-m}^{xy},\sigma_{m-1,\ell-m+1}^{xy}) = \frac1{c_{xy}} \, \sum_{\{\zeta,\zeta'\} \in \Omega} \nabla^2 f(\zeta,\zeta') \fstop
\end{equation}
Thus, the lemma easily follows since $c_{xy} \ge c_{\rm min}$ and $m\,(\alpha_y+\ell-m) \ge \alpha_y \ge \alpha_{\rm min}$.
\end{proof}
}

\subsection{Decomposition of gradients: occupied sites}\label{sec4.2}
Now, assume that (cf.\ \eqref{eq:xy-path} and \eqref{eq:cond-connected})
\begin{equation}\label{eq:cond-far}
c_{xy}=0 \comma \quad \sk{\text{which implies that}} \ t \ge 2  \fstop
\end{equation}
Additionally, in this subsection, assume that all sites along the sequence \eqref{eq:xy-path} are occupied by particles of $\sigma$, i.e., 
\begin{equation}\label{eq:cond-occupied}
\sigma_{x_s} \ge 1\quad \text{for all} \ s\in\bbr{1}{t-1} \fstop
\end{equation}
Then, starting from $\sigma_{m,\ell-m}^{xy}$, we send a single particle from $x_{s-1}$ to $x_s$ for each $s \in \bbr{1}{t}$, to arrive at $\sigma_{m-1,\ell-m+1}^{xy}$. The corresponding path becomes
\begin{equation}\label{eq:4.1-path}
\sigma_{m,\ell-m}^{xy},\sigma_{m-1,1,\ell-m}^{xx_1y},\ldots,\sigma_{m-1,1,\ell-m}^{xx_{t-1}y},\sigma_{m-1,\ell-m+1}^{xy} \fstop
\end{equation}
See Figure \ref{fig1} for a visual representation.

\begin{figure}
\begin{tikzpicture}[scale=0.8]
\foreach \j in {-2,...,2} {\draw (0.5*\j,0)--(0.5*\j,-0.1); }
\draw (-1,-0.1) node[below]{\scriptsize $x$};
\draw (-0.5,-0.1) node[below]{\scriptsize $x_1$};
\draw (0.5,-0.1) node[below]{\scriptsize $x_{t-1}$};
\draw (1,-0.1) node[below]{\scriptsize $y$};
\foreach \i in {0,...,8} {\fill[black!30!white] (-1,0.1+0.2*\i) circle (0.1); }
\foreach \i in {9} {\fill[red] (-1,0.1+0.2*\i) circle (0.1); }
\foreach \i in {0,...,2} {\fill[black!30!white] (-0.5,0.1+0.2*\i) circle (0.1); }
\foreach \i in {0,...,1} {\fill[black!30!white] (0.5,0.1+0.2*\i) circle (0.1); }
\foreach \i in {0,...,5} {\fill[black!30!white] (1,0.1+0.2*\i) circle (0.1); }
\draw (-1.5,0)--(1.5,0);
\foreach \i in {0,...,9} {\draw (-1,0.1+0.2*\i) circle (0.1); }
\foreach \i in {0,...,2} {\draw (-0.5,0.1+0.2*\i) circle (0.1); }
\foreach \i in {0,...,1} {\draw (0.5,0.1+0.2*\i) circle (0.1); }
\foreach \i in {0,...,5} {\draw (1,0.1+0.2*\i) circle (0.1); }
\draw[very thick,-latex] (1.7,1)--(2.3,1);
\begin{scope}[shift={(4,0)}]
\foreach \j in {-2,...,2} {\draw (0.5*\j,0)--(0.5*\j,-0.1); }
\draw (-1,-0.1) node[below]{\scriptsize $x$};
\draw (-0.5,-0.1) node[below]{\scriptsize $x_1$};
\draw (0.5,-0.1) node[below]{\scriptsize $x_{t-1}$};
\draw (1,-0.1) node[below]{\scriptsize $y$};
\foreach \i in {0,...,8} {\fill[black!30!white] (-1,0.1+0.2*\i) circle (0.1); }
\foreach \i in {0,...,2} {\fill[black!30!white] (-0.5,0.1+0.2*\i) circle (0.1); }
\foreach \i in {3} {\fill[red] (-0.5,0.1+0.2*\i) circle (0.1); }
\foreach \i in {0,...,1} {\fill[black!30!white] (0.5,0.1+0.2*\i) circle (0.1); }
\foreach \i in {0,...,5} {\fill[black!30!white] (1,0.1+0.2*\i) circle (0.1); }
\draw (-1.5,0)--(1.5,0);
\foreach \i in {0,...,8} {\draw (-1,0.1+0.2*\i) circle (0.1); }
\foreach \i in {0,...,3} {\draw (-0.5,0.1+0.2*\i) circle (0.1); }
\foreach \i in {0,...,1} {\draw (0.5,0.1+0.2*\i) circle (0.1); }
\foreach \i in {0,...,5} {\draw (1,0.1+0.2*\i) circle (0.1); }
\end{scope}
\draw[very thick,-latex] (5.7,1)--(6.3,1);
\draw (7,1) node{$\boldsymbol{\cdots}$};
\draw[very thick,-latex] (7.7,1)--(8.3,1);
\begin{scope}[shift={(10,0)}]
\foreach \j in {-2,...,2} {\draw (0.5*\j,0)--(0.5*\j,-0.1); }
\draw (-1,-0.1) node[below]{\scriptsize $x$};
\draw (-0.5,-0.1) node[below]{\scriptsize $x_1$};
\draw (0.5,-0.1) node[below]{\scriptsize $x_{t-1}$};
\draw (1,-0.1) node[below]{\scriptsize $y$};
\foreach \i in {0,...,8} {\fill[black!30!white] (-1,0.1+0.2*\i) circle (0.1); }
\foreach \i in {0,...,2} {\fill[black!30!white] (-0.5,0.1+0.2*\i) circle (0.1); }
\foreach \i in {0,...,1} {\fill[black!30!white] (0.5,0.1+0.2*\i) circle (0.1); }
\foreach \i in {2} {\fill[red] (0.5,0.1+0.2*\i) circle (0.1); }
\foreach \i in {0,...,5} {\fill[black!30!white] (1,0.1+0.2*\i) circle (0.1); }
\draw (-1.5,0)--(1.5,0);
\foreach \i in {0,...,8} {\draw (-1,0.1+0.2*\i) circle (0.1); }
\foreach \i in {0,...,2} {\draw (-0.5,0.1+0.2*\i) circle (0.1); }
\foreach \i in {0,...,2} {\draw (0.5,0.1+0.2*\i) circle (0.1); }
\foreach \i in {0,...,5} {\draw (1,0.1+0.2*\i) circle (0.1); }
\end{scope}
\draw[very thick,-latex] (11.7,1)--(12.3,1);
\begin{scope}[shift={(14,0)}]
\foreach \j in {-2,...,2} {\draw (0.5*\j,0)--(0.5*\j,-0.1); }
\draw (-1,-0.1) node[below]{\scriptsize $x$};
\draw (-0.5,-0.1) node[below]{\scriptsize $x_1$};
\draw (0.5,-0.1) node[below]{\scriptsize $x_{t-1}$};
\draw (1,-0.1) node[below]{\scriptsize $y$};
\foreach \i in {0,...,8} {\fill[black!30!white] (-1,0.1+0.2*\i) circle (0.1); }
\foreach \i in {0,...,2} {\fill[black!30!white] (-0.5,0.1+0.2*\i) circle (0.1); }
\foreach \i in {0,...,1} {\fill[black!30!white] (0.5,0.1+0.2*\i) circle (0.1); }
\foreach \i in {0,...,5} {\fill[black!30!white] (1,0.1+0.2*\i) circle (0.1); }
\foreach \i in {6} {\fill[red] (1,0.1+0.2*\i) circle (0.1); }
\draw (-1.5,0)--(1.5,0);
\foreach \i in {0,...,8} {\draw (-1,0.1+0.2*\i) circle (0.1); }
\foreach \i in {0,...,2} {\draw (-0.5,0.1+0.2*\i) circle (0.1); }
\foreach \i in {0,...,1} {\draw (0.5,0.1+0.2*\i) circle (0.1); }
\foreach \i in {0,...,6} {\draw (1,0.1+0.2*\i) circle (0.1); }
\end{scope}
\end{tikzpicture}\caption{\label{fig1}Path of configurations from $\sigma_{m,\ell-m}^{xy}$ to $\sigma_{m-1,\ell-m+1}^{xy}$ in Section \ref{sec4.2}. In this occupied case, the red particle simply moves from $x$ to $y$ along the sequence $x=x_0,x_1,\dots,x_t=y$ consecutively.}
\end{figure}
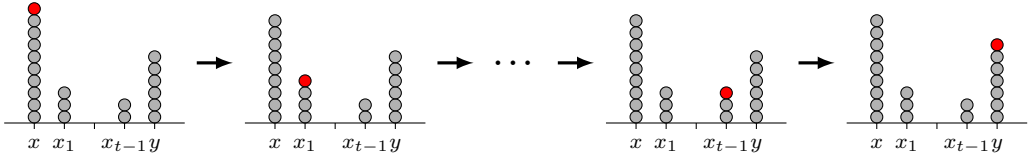

Let $\Omega=\sk{\Omega_{x,y}^{\ell,m,\sigma}}$ be the collection of \sk{all edges $\{\sigma_{m-1,1,\ell-m}^{xx_{s-1}y},\sigma_{m-1,1,\ell-m}^{xx_sy}\}$ for all $s \in \llbracket 1,t \rrbracket$, i.e.,
\begin{equation}
\Omega_{x,y}^{\ell,m,\sigma} := \left\{ \{\sigma_{m-1,1,\ell-m}^{xx_{s-1}y},\sigma_{m-1,1,\ell-m}^{xx_sy}\} : s \in \llbracket 1,t \rrbracket \right\} \fstop
\end{equation}
}
With this notation, we \sk{prove in this subsection} the following lemma.

\begin{lemma}\label{lem:4.2}
Assume that the \sk{conditions in \eqref{eq:cond-far} and \eqref{eq:cond-occupied} hold.} Then, we have
\begin{equation}
\sk{\nabla_K^2 f ( \sigma_{m,\ell-m}^{xy} ; x,y) } \le
\sk{\frac{ t \, m \tonde{\alpha_y + \ell-m} (1+\alpha_{\rm max})}{c_{\rm min} \, \alpha_{\rm min}} }
\sum_{\{\zeta,\zeta'\} \sk{\in } \Omega } \nabla^2f(\zeta,\zeta') \fstop
\end{equation}
\end{lemma}

\begin{proof}

By  Cauchy--Schwarz inequality, for any path $\eta=\eta_0,\eta_1,\ldots,\eta_M=\zeta$,  we have (cf.\ \eqref{eq:c-nabla-def})
\begin{equation}\label{eq:C-S}
(f(\zeta)-f(\eta))^2 \le \left( \sum_{i=1}^M \nabla^2f(\eta_{i-1},\eta_i) \right) \left(\sum_{i=1}^M \frac1{\fc(\eta_{i-1},\eta_i)} \right) \fstop
\end{equation}
Thus, applying \eqref{eq:C-S} along the path \eqref{eq:4.1-path}, we get (cf.\ \eqref{eq:cK-nablaK-def})
\begin{equation}\label{eq:Diri-C-S-dec}
\begin{aligned}
\sk{\nabla_K^2 f ( \sigma_{m,\ell-m}^{xy} ; x,y) } \le \, \Bigg( \sum_{s=1}^t \nabla^2f( & \sigma_{m-1,1,\ell-m}^{xx_{s-1}y},\sigma_{m-1,1,\ell-m}^{xx_sy}) \Bigg) \, \\
&\times \left( \sum_{s=1}^t \frac  { \mu_{\alpha,k}(\sigma_{m,\ell-m}^{xy})\, m\tonde{\alpha_y + \ell-m } } { \fc(\sigma_{m-1,1,\ell-m}^{xx_{s-1}y},\sigma_{m-1,1,\ell-m}^{xx_sy}) } \right) \fstop
\end{aligned}
\end{equation}
For $x\in V$ and $m \in \N_0$, define
\begin{equation}\label{eq:omega-xn-not}
\pi_{x}(m) \eqdef \frac{\Gamma(\alpha_{x}+m)}{\Gamma(\alpha_{x})\,m!}\comma \quad \text{such that} \ \mu_{\alpha,k}(\eta)=\frac{1}{Z_{\alpha,k}}\prod_{x\in V}\pi_{x}(\eta_{x})\fstop
\end{equation}
\sk{Here, $\Gamma(\cdot)$ denotes the usual gamma function. We now collect some basic properties.
\begin{lemma}\label{lem:omega-prop}
We have, for each $m \in \N_0$,
\begin{equation}\label{eq:omega-prop}
\alpha_x \le \pi_x(m)\tonde{\alpha_x+m} = \pi_x(m+1) \tonde{m+1} \le \alpha_x\,e^{\alpha_x\sum_{j=1}^m \frac1j} \fstop
\end{equation}
Moreover, for $m \ge m' \ge 1$,
\begin{equation}\label{eq:omega-prop-2}
1 \le \frac{ \pi_x(m)\,m } { \pi_x(m')\,m' }  \le e^{\alpha_x \sum_{j=m'}^{m-1}\frac1j}  \fstop
\end{equation}
\end{lemma}
\begin{proof}
Since $\Gamma(a+1)=a\,\Gamma(a)$ for all $a> 0$,
\begin{equation}
\pi_x(m) \tonde{\alpha_x+m} = \frac{ \Gamma(\alpha_x+m+1) }{\Gamma(\alpha_x) \, m!} = \pi_x(m+1) \tonde{m+1} \fstop
\end{equation}
Clearly, since $\alpha_x >0$,
\begin{equation}
\frac{ \Gamma(\alpha_x+m+1) }{\Gamma(\alpha_x) \, m!} = \alpha_x \, \prod_{j=1}^m \frac{j+\alpha_x}j \ge \alpha_x \fstop
\end{equation}
On the other hand, since $1+a \le e^a$ for any $a \in \R$,
\begin{equation}
\frac{ \Gamma(\alpha_x+m+1) }{\Gamma(\alpha_x) \, m!} = \alpha_x \, \prod_{j=1}^m \left( 1+\frac{\alpha_x}j \right) \le \alpha_x \, \exp \left( \sum_{j=1}^m \frac{\alpha_x}j \right) \fstop
\end{equation}
Combining the three displayed identities proves \eqref{eq:omega-prop}. In addition, for $m \ge m' \ge 1$,
\begin{equation}
\frac{\pi_x(m) \, m}{\pi_x(m') \, m'} = \frac{ \Gamma(\alpha_x+m) / (m-1)!}{ \Gamma(\alpha_x+m') / (m'-1)!} = \prod_{j=m'}^{m-1} \frac{j+\alpha_x}{j} \comma
\end{equation}
which lies inside the interval $[1,\exp \, (\sum_{j=m'}^{m-1}\frac{\alpha_x}j)]$ due to the same reasoning as above. This proves \eqref{eq:omega-prop-2}.
\end{proof}
}

\sk{
Returning to the proof of Lemma \ref{lem:4.2}, let us look at the term inside the second parenthesis in \eqref{eq:Diri-C-S-dec}. The term for $s=1$ equals
\begin{equation}
\frac  { \mu_{\alpha,k}(\sigma_{m,\ell-m}^{xy})\, m\tonde{\alpha_y + \ell-m } } { \fc(\sigma_{m,\ell-m}^{xy},\sigma_{m-1,1,\ell-m}^{xx_1y}) } =
\frac  { m\tonde{\alpha_y + \ell-m } } { r_{\alpha,k} (\sigma_{m,\ell-m}^{xy},\sigma_{m-1,1,\ell-m}^{xx_1y}) } = \frac{ \alpha_{y}+\ell-m } { \tonde{\alpha_{x_1}+\sigma_{x_1}} c_{xx_1} } \fstop
\end{equation}
Similarly, by \eqref{eq:omega-xn-not}, the term for $s=t$ becomes
\begin{equation}
\frac{ \pi_{x}(m)\,\pi_{x_{t-1}}(\sigma_{x_{t-1}})\, m } { \pi_{x}(m-1)\, \pi_{x_{t-1}} (\sigma_{x_{t-1}}+1) \, (\sigma_{x_{t-1}}+1) \, c_{x_{t-1}y} } \comma
\end{equation}
and the term for $s \in \llbracket 2,t-1 \rrbracket$ becomes
\begin{equation}
\frac{ \pi_{x}(m)\,\pi_{x_{s-1}}(\sigma_{x_{s-1}})\, m\tonde{\alpha_{y}+\ell-m}} { \pi_{x}(m-1)\, \pi_{x_{s-1}}(\sigma_{x_{s-1}}+1) \, (\sigma_{x_{s-1}}+1) \tonde{\alpha_{x_{s}}+\sigma_{x_{s}}} c_{x_{s-1}x_{s}} } \fstop
\end{equation}
Combining the last three displayed identities,
}
the summation inside the second parenthesis in \eqref{eq:Diri-C-S-dec} equals
\begin{equation}\label{eq:sum-dec}
\begin{aligned}
& \frac{ \alpha_{y}+\ell-m } { \tonde{\alpha_{x_1}+\sigma_{x_1}} c_{xx_1} } + \frac{ \pi_{x}(m)\,\pi_{x_{t-1}}(\sigma_{x_{t-1}})\, m } { \pi_{x}(m-1)\, \pi_{x_{t-1}} (\sigma_{x_{t-1}}+1) \, (\sigma_{x_{t-1}}+1) \, c_{x_{t-1}y} } \\
& + \sum_{s=2}^{t-1} \frac{ \pi_{x}(m)\,\pi_{x_{s-1}}(\sigma_{x_{s-1}})\, m\tonde{\alpha_{y}+\ell-m}} { \pi_{x}(m-1)\, \pi_{x_{s-1}}(\sigma_{x_{s-1}}+1) \, (\sigma_{x_{s-1}}+1) \tonde{\alpha_{x_{s}}+\sigma_{x_{s}}} c_{x_{s-1}x_{s}} } \fstop
\end{aligned}
\end{equation}
The first term in \eqref{eq:sum-dec} is easily bounded by (cf.\ \eqref{alpha-c-def})
\begin{equation}
\frac{\alpha_y + \ell-m }{ \tonde{\alpha_{x_1}+\sigma_{x_1}}c_{xx_1} } \le \frac{m \tonde{\alpha_y+\ell-m} } {c_{\rm min}} \comma 
\end{equation}
where we used that $\sigma_{x_1} \ge 1$. By \eqref{eq:omega-xn-not}, the second term in \eqref{eq:sum-dec} reads as
\begin{equation}
\frac { \alpha_x + m-1 } { (\alpha_{x_{t-1}} + \sigma_{x_{t-1}}) \, c_{x_{t-1} \sk{y}} } \le \sk{\frac{m \tonde{\alpha_y + \ell-m}} {c_{\rm min} \, \alpha_{\rm min}} \, \frac{\alpha_x + m-1}{m} \le \frac{m \tonde{\alpha_y + \ell-m}} {c_{\rm min} \, \alpha_{\rm min}} \, (1+\alpha_{\rm max}) } \comma 
\end{equation}
where we used $\sigma_{x_{t-1}} \ge 1$ \sk{and $\alpha_y + \ell-m \ge \alpha_{\rm min}$ at the first inequality and $\alpha_x + m-1 \le m(1+\alpha_x)$ at the second inequality.} Finally, the third term in \eqref{eq:sum-dec} is similarly dealt with:
\begin{equation}
\sum_{s=2}^{t-1} \frac{ \tonde{\alpha_x + m-1}\tonde{\alpha_{y}+\ell-m} } { (\alpha_{x_{s-1}} + \sigma_{x_{s-1}})\tonde{\alpha_{x_{s}}+\sigma_{x_{s}}} c_{x_{s-1}x_{s}} } 
 \le \sk{\frac{(t-2) \, m \tonde{\alpha_y + \ell-m} (1 + \alpha_{\rm max})}{c_{\rm min} \, \alpha_{\rm min} } } \comma
\end{equation}
where we used that $\sigma_{x_{s-1}}\ge1$ and $\sk{\alpha_{x_s} + \sigma_{x_s} \ge \alpha_{\rm min}}$ for all $s\in \bbr{2}{t-1}$. Therefore, inserting these three bounds \sk{and \eqref{eq:sum-dec}} into \eqref{eq:Diri-C-S-dec}, we conclude that
\begin{equation}
\sk{\nabla_K^2 f ( \sigma_{m,\ell-m}^{xy} ; x,y) } \le \sk{\frac{ t \, m \tonde{\alpha_y + \ell-m} (1+\alpha_{\rm max})}{c_{\rm min} \, \alpha_{\rm min}} } \left( \sum_{s=1}^t \nabla^2f(\sigma_{m-1,1,\ell-m}^{xx_{s-1}y},\sigma_{m-1,1,\ell-m}^{xx_sy}) \right) \fstop
\end{equation}
\sk{This proves Lemma \ref{lem:4.2}.}
\end{proof}

 \subsection{Decomposition of gradients: empty sites \& few particles}\label{sec4.3}
 Next, assume \eqref{eq:cond-far} and that sites $x_1,\dots,x_{t-1}$ are empty with respect to $\sigma$, i.e.,
 \begin{equation}\label{eq:cond-empty}
 \sigma_{x_s}=0 \quad \text{for all} \ s \in \bbr{1}{t-1} \fstop
 \end{equation}
 In this case, we divide the analysis into two parts. In Section \ref{sec4.3}, assume that
 \begin{equation}\label{eq:cond-few}
 \alpha_x \sum_{j=1}^{m-1}\frac1j  \le 1 \fstop
 \end{equation}
 We construct a path from $\sigma_{m,\ell-m}^{xy}$ to $\sigma_{m-1,\ell-m+1}^{xy}$
as follows (see also Figure \ref{fig2}):

\begin{enumerate}
\item[{\bf (F)}] Move $m$ particles from $x_{s-1}$ to $x_{s}$ consecutively, for each $s\in\bbr{1}{t-1}$. This subpath starts from $\sigma_{m,\ell-m}^{xy}$, visits $\sigma_{m-i,i,\ell-m}^{x_{s-1}x_sy}$ for each $i\in\bbr{0}{m}$ and $s\in\bbr{1}{t-1}$, and finally arrives at $\sigma_{m,\ell-m}^{x_{t-1}y}$.
\item[{\bf (S)}] Move a particle from $x_{t-1}$ to $y$. The resulting configuration
is $\sigma_{m-1,\ell-m+1}^{x_{t-1}y}$. If $m=1$, the path is complete.
\item[{\bf (B)}] If $m\ge2$, move the remaining $m-1$ particles from $x_s$ to $x_{s-1}$
consecutively, for each $s\in\bbr{1}{t-1}$ (backwards). This subpath starts from $\sigma_{m-1,\ell-m+1}^{x_{t-1}y}$, visits $\sigma_{m-i,i-1,\ell-m+1}^{x_{s-1}x_sy}$ for each $i\in\bbr{1}{m}$ and $s\in\bbr{1}{t-1}$, thereby arrives at $\sigma_{m-1,\ell-m+1}^{xy}$ as desired.
\end{enumerate}

\begin{figure}
\begin{tikzpicture}
\foreach \j in {-2,...,2} {\draw (0.5*\j,0)--(0.5*\j,-0.1); }
\draw (-1,-0.1) node[below]{\footnotesize $x$};
\draw (-0.5,-0.1) node[below]{\footnotesize $x_1$};
\draw (0.5,-0.1) node[below]{\footnotesize $x_{t-1}$};
\draw (1,-0.1) node[below]{\footnotesize $y$};
\foreach \i in {0,...,8} {\fill[black!30!white] (-1,0.1+0.2*\i) circle (0.1); }
\foreach \i in {9} {\fill[red] (-1,0.1+0.2*\i) circle (0.1); }
\foreach \i in {0,...,5} {\fill[black!30!white] (1,0.1+0.2*\i) circle (0.1); }
\draw (-1.5,0)--(1.5,0);
\foreach \i in {0,...,9} {\draw (-1,0.1+0.2*\i) circle (0.1); }
\foreach \i in {0,...,5} {\draw (1,0.1+0.2*\i) circle (0.1); }
\draw[very thick,-latex,double] (1.7,1)--(2.3,1);
\draw (2,1.1) node[above]{\footnotesize \bf (F)};
\begin{scope}[shift={(4,0)}]
\foreach \j in {-2,...,2} {\draw (0.5*\j,0)--(0.5*\j,-0.1); }
\draw (-1,-0.1) node[below]{\footnotesize $x$};
\draw (-0.5,-0.1) node[below]{\footnotesize $x_1$};
\draw (0.5,-0.1) node[below]{\footnotesize $x_{t-1}$};
\draw (1,-0.1) node[below]{\footnotesize $y$};
\foreach \i in {0,...,8} {\fill[black!30!white] (-0.5,0.1+0.2*\i) circle (0.1); }
\foreach \i in {9} {\fill[red] (-0.5,0.1+0.2*\i) circle (0.1); }
\foreach \i in {0,...,5} {\fill[black!30!white] (1,0.1+0.2*\i) circle (0.1); }
\draw (-1.5,0)--(1.5,0);
\foreach \i in {0,...,9} {\draw (-0.5,0.1+0.2*\i) circle (0.1); }
\foreach \i in {0,...,5} {\draw (1,0.1+0.2*\i) circle (0.1); }
\end{scope}
\draw[very thick,-latex,double] (5.7,1)--(6.3,1);
\draw (6,1.1) node[above]{\footnotesize \bf (F)};
\draw (7,1) node{$\boldsymbol{\cdots}$};
\draw[very thick,-latex,double] (7.7,1)--(8.3,1);
\draw (8,1.1) node[above]{\footnotesize \bf (F)};
\begin{scope}[shift={(10,0)}]
\foreach \j in {-2,...,2} {\draw (0.5*\j,0)--(0.5*\j,-0.1); }
\draw (-1,-0.1) node[below]{\footnotesize $x$};
\draw (-0.5,-0.1) node[below]{\footnotesize $x_1$};
\draw (0.5,-0.1) node[below]{\footnotesize $x_{t-1}$};
\draw (1,-0.1) node[below]{\footnotesize $y$};
\foreach \i in {0,...,8} {\fill[black!30!white] (0.5,0.1+0.2*\i) circle (0.1); }
\foreach \i in {9} {\fill[red] (0.5,0.1+0.2*\i) circle (0.1); }
\foreach \i in {0,...,5} {\fill[black!30!white] (1,0.1+0.2*\i) circle (0.1); }
\draw (-1.5,0)--(1.5,0);
\foreach \i in {0,...,9} {\draw (0.5,0.1+0.2*\i) circle (0.1); }
\foreach \i in {0,...,5} {\draw (1,0.1+0.2*\i) circle (0.1); }
\end{scope}
\draw[very thick,-latex] (10,-0.7)--(10,-1.3);
\draw (10.1,-1) node[right]{\footnotesize \bf (S)};
\begin{scope}[shift={(0,-3.5)}]
\foreach \j in {-2,...,2} {\draw (0.5*\j,0)--(0.5*\j,-0.1); }
\draw (-1,-0.1) node[below]{\footnotesize $x$};
\draw (-0.5,-0.1) node[below]{\footnotesize $x_1$};
\draw (0.5,-0.1) node[below]{\footnotesize $x_{t-1}$};
\draw (1,-0.1) node[below]{\footnotesize $y$};
\foreach \i in {0,...,8} {\fill[black!30!white] (-1,0.1+0.2*\i) circle (0.1); }
\foreach \i in {0,...,5} {\fill[black!30!white] (1,0.1+0.2*\i) circle (0.1); }
\foreach \i in {6} {\fill[red] (1,0.1+0.2*\i) circle (0.1); }
\draw (-1.5,0)--(1.5,0);
\foreach \i in {0,...,8} {\draw (-1,0.1+0.2*\i) circle (0.1); }
\foreach \i in {0,...,6} {\draw (1,0.1+0.2*\i) circle (0.1); }
\draw[very thick,latex-,double] (1.7,1)--(2.3,1);
\draw (2,1.1) node[above]{\footnotesize \bf (B)};
\begin{scope}[shift={(4,0)}]
\foreach \j in {-2,...,2} {\draw (0.5*\j,0)--(0.5*\j,-0.1); }
\draw (-1,-0.1) node[below]{\footnotesize $x$};
\draw (-0.5,-0.1) node[below]{\footnotesize $x_1$};
\draw (0.5,-0.1) node[below]{\footnotesize $x_{t-1}$};
\draw (1,-0.1) node[below]{\footnotesize $y$};
\foreach \i in {0,...,8} {\fill[black!30!white] (-0.5,0.1+0.2*\i) circle (0.1); }
\foreach \i in {0,...,5} {\fill[black!30!white] (1,0.1+0.2*\i) circle (0.1); }
\foreach \i in {6} {\fill[red] (1,0.1+0.2*\i) circle (0.1); }
\draw (-1.5,0)--(1.5,0);
\foreach \i in {0,...,8} {\draw (-0.5,0.1+0.2*\i) circle (0.1); }
\foreach \i in {0,...,6} {\draw (1,0.1+0.2*\i) circle (0.1); }
\end{scope}
\draw[very thick,latex-,double] (5.7,1)--(6.3,1);
\draw (6,1.1) node[above]{\footnotesize \bf (B)};
\draw (7,1) node{$\boldsymbol{\cdots}$};
\draw[very thick,latex-,double] (7.7,1)--(8.3,1);
\draw (8,1.1) node[above]{\footnotesize \bf (B)};
\begin{scope}[shift={(10,0)}]
\foreach \j in {-2,...,2} {\draw (0.5*\j,0)--(0.5*\j,-0.1); }
\draw (-1,-0.1) node[below]{\footnotesize $x$};
\draw (-0.5,-0.1) node[below]{\footnotesize $x_1$};
\draw (0.5,-0.1) node[below]{\footnotesize $x_{t-1}$};
\draw (1,-0.1) node[below]{\footnotesize $y$};
\foreach \i in {0,...,8} {\fill[black!30!white] (0.5,0.1+0.2*\i) circle (0.1); }
\foreach \i in {0,...,5} {\fill[black!30!white] (1,0.1+0.2*\i) circle (0.1); }
\foreach \i in {6} {\fill[red] (1,0.1+0.2*\i) circle (0.1); }
\draw (-1.5,0)--(1.5,0);
\foreach \i in {0,...,8} {\draw (0.5,0.1+0.2*\i) circle (0.1); }
\foreach \i in {0,...,6} {\draw (1,0.1+0.2*\i) circle (0.1); }
\end{scope}
\end{scope}
\end{tikzpicture}\caption{\label{fig2}Path of configurations from $\sigma_{m,\ell-m}^{xy}$ to $\sigma_{m-1,\ell-m+1}^{xy}$ in Section \ref{sec4.3}. Double arrows indicate series of consecutive jumps. Here, first the stack of $m$ particles at site $x$ moves to $x_{t-1}$ (\sk{type} {\bf (F)}), then the red particle jumps from site $x_{t-1}$ to $y$ (\sk{type} {\bf (S)}), and then the remaining stack of $m-1$ particles at site $x_{t-1}$ moves back to $x$ (\sk{type} {\bf (B)}).}
\end{figure}
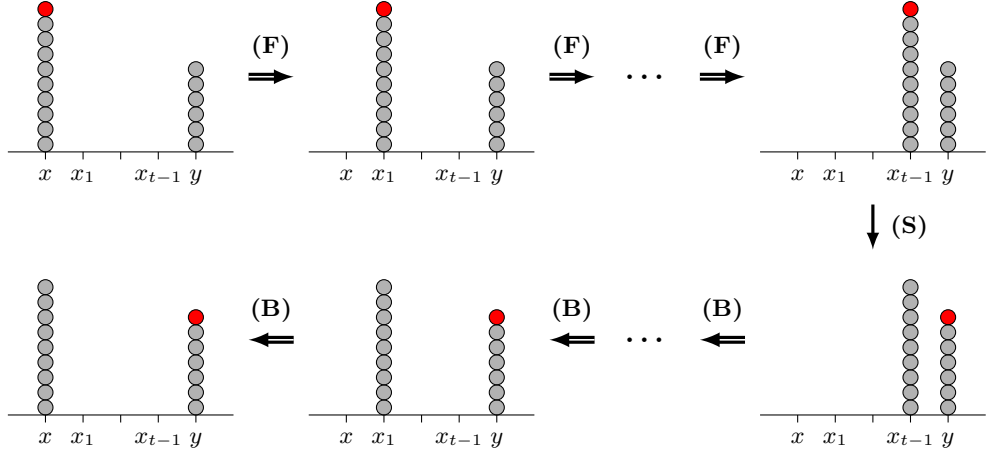

Above, letters {\bf (F)}, {(\bf S)} and {\bf (B)} stand for \textit{forward}, \textit{single} and \textit{backward}, respectively. Let $\Omega=\sk{\Omega_{x,y}^{\ell,m,\sigma}}$ denote the collection of \sk{all edges} that appear in the aforementioned path from $\sigma_{m,\ell-m}^{xy}$ to $\sigma_{m-1,\ell-m+1}^{xy}$.
\sk{Recall \eqref{eq:cK-nablaK-def}. In this subsection, we prove the following lemma:}
\begin{lemma}\label{lem:4.3}
\sk{Assume that \eqref{eq:cond-far} holds, and} suppose that $\ell\in\bbr{1}{k}$, $m\in\bbr{1}{\ell}$ and $\sigma\in\Xi_{k-\ell}^{V\setminus\{x,y\}}$ satisfy \eqref{eq:cond-empty} and \eqref{eq:cond-few}. Then,
\begin{equation}
\sk{\nabla_K^2 f ( \sigma_{m,\ell-m}^{xy} ; x,y) } \le \frac{\sk{6t \, m \tonde{ \alpha_y + \ell-m} }}{ c_{\rm min}\, \alpha_{\rm min}\, \alpha_{\rm ratio} }  \left( \sum_{\{\zeta,\zeta'\} \sk{\in } \Omega} \nabla^2f(\zeta,\zeta') \right) \fstop
\end{equation}
\end{lemma}

\begin{proof}
Applying \eqref{eq:C-S} along the path constructed above, we have the following upper bound for $\sk{\nabla_K^2 f ( \sigma_{m,\ell-m}^{xy} ; x,y) }$:
\begin{equation}\label{eq:UB-v1}
\tonde{\sum_{\{\eta,\zeta\} \sk{\in } \Omega} \nabla^2f(\eta,\zeta)} \tonde{\sum_{\{\eta,\zeta\} \sk{\in } \Omega} \frac  { \mu_{\alpha,k}(\sigma_{m,\ell-m}^{xy})\, m\tonde{\alpha_y + \ell-m} } { \fc(\eta,\zeta) } } \fstop
\end{equation}
\sk{Here, we expand the term in the second parenthesis as done exactly in \eqref{eq:sum-dec}. Namely,}
referring to the definition of $\Omega$ and \eqref{eq:omega-xn-not}, the term inside the second parenthesis in \eqref{eq:UB-v1} expands as
\begin{equation}\label{eq:2.1-1}
\begin{aligned}
& \sum_{s=1}^{t-1} \sum_{i=0}^{m-1} \frac{ \pi_x(m)\, m\tonde{\alpha_{y}+\ell-m}  } { \pi_{x_{s-1}}(m-i) \, \pi_{x_s} (i ) \, (m-i ) \tonde{\alpha_{x_s}+i  } c_{x_{s-1} x_s}} + \frac{ \pi_x(m)  } { \pi_{x_{t-1}}(m )\, c_{x_{t-1}y}  } \\
& + \sum_{s=1}^{t-1} \sum_{i=2}^{m} \frac{ \pi_x(m) \, m \tonde{\alpha_y + \ell-m}} { \pi_{x_{s-1}}(m-i ) \, \pi_{x_{s}}(i-1  ) \tonde{i-1 } (\alpha_{x_{s-1}} + m-i  )  \, c_{x_s x_{s-1}}   } \fstop
\end{aligned}
\end{equation}
\sk{Here, the first double summation indicates the forward jumps in \textbf{(F)}, the second single term indicates the jump in \textbf{(S)}, and the last double summation indicates the backward jumps in \textbf{(B)}.}
First, consider the first (double) summation in \eqref{eq:2.1-1}.
\sk{
According to \eqref{eq:omega-prop} in Lemma \ref{lem:omega-prop},
\begin{equation}
\pi_x(m) \, m \le \alpha_x \, e^{\alpha_x \sum_{j=1}^{m-1} \frac1j} \comma
\quad \pi_{x_{s-1}}(m-i) \, (m-i) \ge \alpha_{x_{s-1}} \comma
\quad \pi_{x_s}(i) \, (\alpha_{x_s}+i) \ge \alpha_{x_s} \fstop
\end{equation}
Since $c_{x_{s-1}x_s} \ge c_{\rm min}$ from \eqref{alpha-c-def}, we may bound the first double summation in \eqref{eq:2.1-1} from above by
}
\begin{equation}\label{eq:2.1-1-1}
\frac1{c_{\rm min}} \sum_{s=1}^{t-1} \sum_{i=0}^{m-1} \frac {  \alpha_x \, e^{\alpha_x \sum_{j=1}^{m-1} \frac1j } \tonde{\alpha_y + \ell-m } } { \alpha_{x_{s-1}}  \, \alpha_{x_s}   } \le \frac {e \, \sk{m \tonde{\alpha_y + \ell-m} (t-1) } } { c_{\rm min}\, \alpha_{\rm min}\, \alpha_{\rm ratio} }   \comma
\end{equation}
where we used that $\alpha_x  \sum_{j=1}^{m-1} \frac1j   \le 1$ from \eqref{eq:cond-few},
\sk{$\alpha_x / \alpha_{x_s} \le \alpha_{\rm ratio}^{-1}$ from \eqref{alpha-c-def},
and $\alpha_{x_{s-1}} \ge \alpha_{\rm min}$.}
Next, the second term in \eqref{eq:2.1-1} can be similarly bounded as
\begin{equation}\label{eq:2.1-1-2}
\frac{ \pi_x(m)  } { \pi_{x_{t-1}}(m )\, c_{x_{t-1}y}  }
\le \frac{ \alpha_x \, e^{\alpha_x \sum_{j=1}^{m-1} \frac1j } } {  \alpha_{x_{t-1}}  \, c_{\rm min}} \le \frac { e } { c_{\rm min} \,\alpha_{\rm ratio} }
\sk{\le \frac{e \, m \tonde{\alpha_y + \ell-m}}{c_{\rm min} \, \alpha_{\rm ratio} \, \alpha_{\rm min}} } \fstop
\end{equation}
Similarly, the third (double) summation in \eqref{eq:2.1-1} is bounded by 
\begin{equation}\label{eq:2.1-1-3}
\frac1{c_{\rm min}} \sum_{s=1}^{t-1} \sum_{i=2}^{m} \frac {   \alpha_x \, e^{\alpha_x \sum_{j=1}^{m-1} \frac1j } \tonde{\alpha_y + \ell-m } } {  \alpha_{x_{s-1}}  \,  \alpha_{x_s}   } \le \frac {e \, \sk{m \tonde{\alpha_y + \ell-m} (t-1) } } { c_{\rm min}\, \alpha_{\rm min}\, \alpha_{\rm ratio} }   \fstop
\end{equation}
Collecting \eqref{eq:2.1-1}, \eqref{eq:2.1-1-1}, \eqref{eq:2.1-1-2} and \eqref{eq:2.1-1-3},
\sk{and applying that $e \le 3$,}
the term inside the second parenthesis in \eqref{eq:UB-v1} is bounded by
\begin{equation}
\frac{\sk{9t \, m \tonde{ \alpha_y + \ell-m} }}{ c_{\rm min} \,\alpha_{\rm min}\, \alpha_{\rm ratio} } \fstop
\end{equation}
This concludes the proof of Lemma \ref{lem:4.3}.
\end{proof}

\subsection{Decomposition of gradients: empty sites \& many particles}\label{sec4.4}
In this subsection, we assume that \eqref{eq:cond-far}, \eqref{eq:cond-empty} hold and that (cf.\ \eqref{eq:cond-few})
\begin{equation}\label{eq:cond-many}
\alpha_x \sum_{j=1}^{m-1} \frac1j  >1  \fstop
\end{equation}
In this case, constructing a single one-dimensional path is insufficient to bound \sk{the term $\sk{\nabla_K^2 f ( \sigma_{m,\ell-m}^{xy} ; x,y) }$} properly. The main reason is that in the inequality in \eqref{eq:2.1-1-1}, we are not able to bound the term $e^{\alpha_x \sum_{j=1}^{m-1}\frac1j }$ with a constant in this new regime \eqref{eq:cond-many}. Having in mind that this term essentially comes from the mechanism that all $m$ particles at $x$ move together,  we now avoid this obstacle by sending only a limited amount of particles along the path. This replacement costs us an additional $\alpha_x^{-1}$   term in the upper bound, which is now manageable by the new condition $\alpha_x \sum_{j=1}^{m-1}\frac1j >1$; we refer to \eqref{eq6} for the exact place where this point is effectively exploited.

Define a new integer $\widetilde{m}$ as 
\begin{equation}\label{eq:m-tilde-def}
\widetilde{m}\eqdef \left\lfloor \frac{m}2  \right\rfloor \comma
\end{equation}
where $\lfloor a \rfloor$ is the greatest integer less than or equal to $a \in \R$. Then, we consider a two-dimensional system of paths from $\sigma_{m,\ell-m}^{xy}$ to $\sigma_{m-1,\ell-m+1}^{xy}$ as depicted in Figure \ref{fig3}.

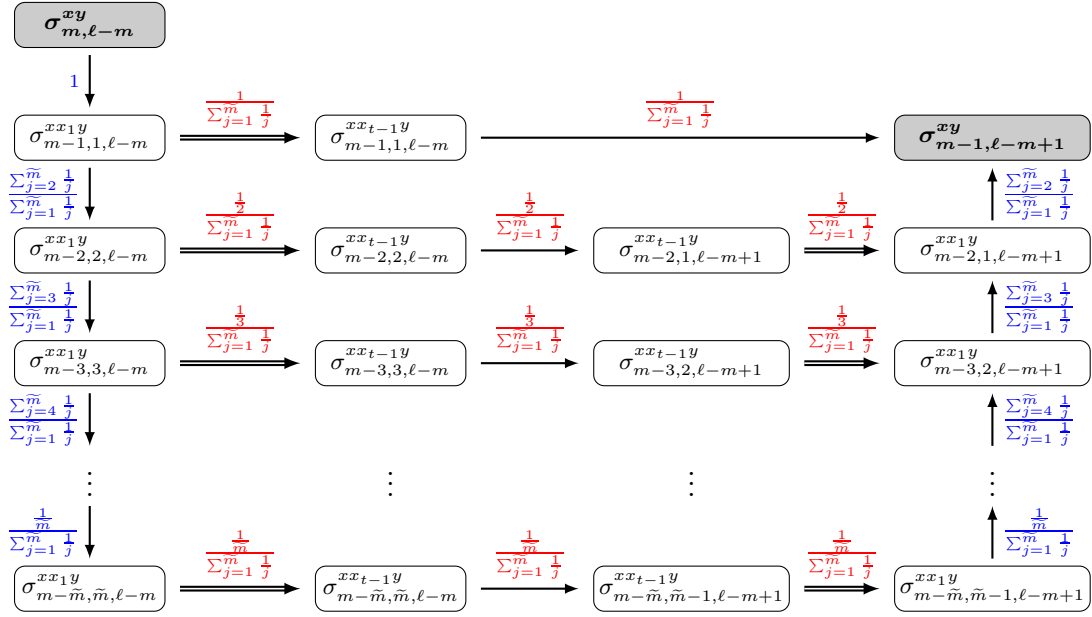
\begin{figure}
\begin{tikzpicture}
\fill[black!20!white,rounded corners] (-1,-0.3) rectangle (1,0.3); \draw[rounded corners] (-1,-0.3) rectangle (1,0.3); \draw (0,0) node[]{\footnotesize $\boldsymbol{\sigma_{m,\ell-m}^{xy}}$};
\fill[black!0!white,rounded corners] (-1,-1.8) rectangle (1,-1.2); \draw[rounded corners] (-1,-1.8) rectangle (1,-1.2); \draw (0,-1.5) node[]{\footnotesize $\sigma_{m-1,1,\ell-m}^{xx_1y}$};
\fill[black!0!white,rounded corners] (-1,-3.3) rectangle (1,-2.7); \draw[rounded corners] (-1,-3.3) rectangle (1,-2.7); \draw (0,-3) node[]{\footnotesize $\sigma_{m-2,2,\ell-m}^{xx_1y}$};
\fill[black!0!white,rounded corners] (-1,-4.8) rectangle (1,-4.2); \draw[rounded corners] (-1,-4.8) rectangle (1,-4.2); \draw (0,-4.5) node[]{\footnotesize $\sigma_{m-3,3,\ell-m}^{xx_1y}$};
\draw (0,-6) node[]{$\vdots$};
\fill[black!0!white,rounded corners] (-1,-7.8) rectangle (1,-7.2); \draw[rounded corners] (-1,-7.8) rectangle (1,-7.2); \draw (0,-7.5) node[]{\footnotesize $\sigma_{m-\widetilde{m},\widetilde{m},\ell-m}^{xx_1y}$};

\fill[black!0!white,rounded corners] (3,-1.8) rectangle (5,-1.2); \draw[rounded corners] (3,-1.8) rectangle (5,-1.2); \draw (4,-1.5) node[]{\footnotesize $\sigma_{m-1,1,\ell-m}^{xx_{t-1}y}$};
\fill[black!0!white,rounded corners] (3,-3.3) rectangle (5,-2.7); \draw[rounded corners] (3,-3.3) rectangle (5,-2.7); \draw (4,-3) node[]{\footnotesize $\sigma_{m-2,2,\ell-m}^{xx_{t-1}y}$};
\fill[black!0!white,rounded corners] (3,-4.8) rectangle (5,-4.2); \draw[rounded corners] (3,-4.8) rectangle (5,-4.2); \draw (4,-4.5) node[]{\footnotesize $\sigma_{m-3,3,\ell-m}^{xx_{t-1}y}$};
\draw (4,-6) node[]{$\vdots$};
\fill[black!0!white,rounded corners] (3,-7.8) rectangle (5,-7.2); \draw[rounded corners] (3,-7.8) rectangle (5,-7.2); \draw (4,-7.5) node[]{\footnotesize $\sigma_{m-\widetilde{m},\widetilde{m},\ell-m}^{xx_{t-1}y}$};

\fill[black!0!white,rounded corners] (6.7,-3.3) rectangle (9.3,-2.7); \draw[rounded corners] (6.7,-3.3) rectangle (9.3,-2.7); \draw (8,-3) node[]{\footnotesize $\sigma_{m-2,1,\ell-m+1}^{xx_{t-1}y}$};
\fill[black!0!white,rounded corners] (6.7,-4.8) rectangle (9.3,-4.2); \draw[rounded corners] (6.7,-4.8) rectangle (9.3,-4.2); \draw (8,-4.5) node[]{\footnotesize $\sigma_{m-3,2,\ell-m+1}^{xx_{t-1}y}$};
\draw (8,-6) node[]{$\vdots$};
\fill[black!0!white,rounded corners] (6.7,-7.8) rectangle (9.3,-7.2); \draw[rounded corners] (6.7,-7.8) rectangle (9.3,-7.2); \draw (8,-7.5) node[]{\footnotesize $\sigma_{m-\widetilde{m},\widetilde{m}-1,\ell-m+1}^{xx_{t-1}y}$};

\fill[black!20!white,rounded corners] (10.7,-1.8) rectangle (13.3,-1.2); \draw[rounded corners] (10.7,-1.8) rectangle (13.3,-1.2); \draw (12,-1.5) node[]{\footnotesize $\boldsymbol{\sigma_{m-1,\ell-m+1}^{xy}}$};
\fill[black!0!white,rounded corners] (10.7,-3.3) rectangle (13.3,-2.7); \draw[rounded corners] (10.7,-3.3) rectangle (13.3,-2.7); \draw (12,-3) node[]{\footnotesize $\sigma_{m-2,1,\ell-m+1}^{xx_1y}$};
\fill[black!0!white,rounded corners] (10.7,-4.8) rectangle (13.3,-4.2); \draw[rounded corners] (10.7,-4.8) rectangle (13.3,-4.2); \draw (12,-4.5) node[]{\footnotesize $\sigma_{m-3,2,\ell-m+1}^{xx_1y}$};
\draw (12,-6) node[]{$\vdots$};
\fill[black!0!white,rounded corners] (10.7,-7.8) rectangle (13.3,-7.2); \draw[rounded corners] (10.7,-7.8) rectangle (13.3,-7.2); \draw (12,-7.5) node[]{\footnotesize $\sigma_{m-\widetilde{m},\widetilde{m}-1,\ell-m+1}^{xx_1y}$};

\draw[thick,-latex] (0,-0.4)--(0,-1.1); \draw (0,-0.75) node[left]{\color{blue} \tiny $1$};
\draw[thick,-latex] (0,-1.9)--(0,-2.6); \draw (0,-2.25) node[left]{\color{blue} \tiny $\frac { \sum_{j=2}^{\widetilde m} \frac1{j} } { \sum_{j=1}^{\widetilde m} \frac1{j} }$};
\draw[thick,-latex] (0,-3.4)--(0,-4.1); \draw (0,-3.75) node[left]{\color{blue} \tiny $\frac { \sum_{j=3}^{\widetilde m} \frac1{j} } { \sum_{j=1}^{\widetilde m} \frac1{j} }$};
\draw[thick,-latex] (0,-4.9)--(0,-5.6); \draw (0,-5.25) node[left]{\color{blue} \tiny $\frac { \sum_{j=4}^{\widetilde m} \frac1{j} } { \sum_{j=1}^{\widetilde m} \frac1{j} }$};
\draw[thick,-latex] (0,-6.4)--(0,-7.1); \draw (0,-6.75) node[left]{\color{blue} \tiny $\frac { \frac1{\widetilde m} } { \sum_{j=1}^{\widetilde m} \frac1{j} }$};

\draw[thick,-latex,double] (1.2,-1.5)--(2.8,-1.5); \draw (2,-1.5) node[above]{\color{red} \tiny $\frac { 1 } { \sum_{j=1}^{\widetilde m} \frac1{j} }$};
\draw[thick,-latex,double] (1.2,-3)--(2.8,-3); \draw (2,-3) node[above]{\color{red} \tiny $\frac { \frac12 } { \sum_{j=1}^{\widetilde m} \frac1{j} }$};
\draw[thick,-latex,double] (1.2,-4.5)--(2.8,-4.5); \draw (2,-4.5) node[above]{\color{red} \tiny $\frac { \frac13 } { \sum_{j=1}^{\widetilde m} \frac1{j} }$};
\draw[thick,-latex,double] (1.2,-7.5)--(2.8,-7.5); \draw (2,-7.5) node[above]{\color{red} \tiny $\frac { \frac1{\widetilde{m}} } { \sum_{j=1}^{\widetilde m} \frac1{j} }$};

\draw[thick,-latex] (5.2,-1.5)--(10.5,-1.5); \draw (7.85,-1.5) node[above]{\color{red} \tiny $\frac { 1 } { \sum_{j=1}^{\widetilde m} \frac1{j} }$};
\draw[thick,-latex] (5.2,-3)--(6.5,-3); \draw (5.85,-3) node[above]{\color{red} \tiny $\frac { \frac12 } { \sum_{j=1}^{\widetilde m} \frac1{j} }$};
\draw[thick,-latex] (5.2,-4.5)--(6.5,-4.5); \draw (5.85,-4.5) node[above]{\color{red} \tiny $\frac { \frac13 } { \sum_{j=1}^{\widetilde m} \frac1{j} }$};
\draw[thick,-latex] (5.2,-7.5)--(6.5,-7.5); \draw (5.85,-7.5) node[above]{\color{red} \tiny $\frac { \frac1{\widetilde{m}} } { \sum_{j=1}^{\widetilde m} \frac1{j} }$};

\draw[thick,-latex,double] (9.5,-3)--(10.5,-3); \draw (10,-3) node[above]{\color{red} \tiny $\frac { \frac12 } { \sum_{j=1}^{\widetilde m} \frac1{j} }$};
\draw[thick,-latex,double] (9.5,-4.5)--(10.5,-4.5); \draw (10,-4.5) node[above]{\color{red} \tiny $\frac { \frac13 } { \sum_{j=1}^{\widetilde m} \frac1{j} }$};
\draw[thick,-latex,double] (9.5,-7.5)--(10.5,-7.5); \draw (10,-7.5) node[above]{\color{red} \tiny $\frac { \frac1{\widetilde{m}} } { \sum_{j=1}^{\widetilde m} \frac1{j} }$};

\draw[thick,latex-] (12,-1.9)--(12,-2.6); \draw (12,-2.25) node[right]{\color{blue} \tiny $\frac { \sum_{j=2}^{\widetilde m} \frac1{j} } { \sum_{j=1}^{\widetilde m} \frac1{j} }$};
\draw[thick,latex-] (12,-3.4)--(12,-4.1); \draw (12,-3.75) node[right]{\color{blue} \tiny $\frac { \sum_{j=3}^{\widetilde m} \frac1{j} } { \sum_{j=1}^{\widetilde m} \frac1{j} }$};
\draw[thick,latex-] (12,-4.9)--(12,-5.6); \draw (12,-5.25) node[right]{\color{blue} \tiny $\frac { \sum_{j=4}^{\widetilde m} \frac1{j} } { \sum_{j=1}^{\widetilde m} \frac1{j} }$};
\draw[thick,latex-] (12,-6.4)--(12,-7.1); \draw (12,-6.75) node[right]{\color{blue} \tiny $\frac { \frac1{\widetilde{m}} } { \sum_{j=1}^{\widetilde m} \frac1{j} }$};
\end{tikzpicture}\caption{\label{fig3}Two-dimensional system of paths from $\sigma_{m,\ell-m}^{xy}$ (top left)
to $\sigma_{m-1,\ell-m+1}^{xy}$ (top	 right) explained in Section \ref{sec4.4}. Double arrows indicate
series of consecutive paths. Moreover, the blue (resp.\ red) numbers indicate the corresponding value of the flow $\varphi$ along the path in the vertical (resp.\ horizontal) direction, as defined in \eqref{eq:flow-def-vertical} and \eqref{eq:flow-def-horizontal}. The values are chosen so as to ensure that $\varphi$ is  a unit flow from $\sigma_{m,\ell-m}^{xy}$ to $\sigma_{m-1,\ell-m+1}^{xy}$.}
\end{figure}

In  Figure \ref{fig3}, the downward vertical arrows starting from $\sigma_{m,\ell-m}^{xy}$ in the top left corner denote the $\widetilde{m}$ consecutive particle jumps from $x$ to $x_1$. Then, for $i \in \bbr{1}{\widetilde{m}}$, the $i$-th bold arrow on the left, from $\sigma_{m-i,i,\ell-m}^{xx_1y}$  to $\sigma_{m-i,i,\ell-m}^{xx_{t-1}y}$, denote the movements of $i$ particles from $x_{s-1}$ to $x_s$ consecutively for each $s\in\bbr{2}{t-1}$, as in mechanism {\bf (F)} in Section \ref{sec4.3}. Next, the middle horizontal arrows denote the single particle jump from $x_{t-1}$ to $y$, corresponding to mechanism {\bf (S)} in Section \ref{sec4.3}. The bold arrows in the right part denote the movements of $i-1$ particles from $x_{t-1}$ back to $x_1$ for each $i \in \bbr{2}{\widetilde{m}}$ (backwards), as in mechanism {\bf (B)} in Section \ref{sec4.3}. Finally, the upward vertical arrows below $\sigma_{m-1,\ell-m+1}^{xy}$ in the top right corner denote the $\widetilde{m}-1$ consecutive particle jumps from $x_1$ to $x$.
The values attached to the arrows, roughly speaking, play the role of weights (encoded as the value of the flow, see below) for each move.

\sk{As before, let $\Omega = \Omega_{x,y}^{\ell,m,\sigma}$ be the collection of all edges that appear in Figure \ref{fig3}. The objective of this subsection is to prove the following lemma:}

\begin{lemma}\label{lem:4.4}
\sk{Assume that \eqref{eq:cond-far} holds, and} suppose that $\ell\in\bbr{1}{k}$, $m\in\bbr{1}{\ell}$
and $\sigma\in\Xi_{k-\ell}^{V\setminus\{x,y\}}$ satisfy \eqref{eq:cond-empty} and \eqref{eq:cond-many}.
Then, we have
\begin{equation}
\sk{\nabla_K^2 f ( \sigma_{m,\ell-m}^{xy} ; x,y) } \le  \frac { \sk{4t \, m \tonde{\alpha_y + \ell-m}} \, 6^{\alpha_{\rm max} }} {\alpha_{\rm min} \, \alpha_{\rm ratio} \, c_{\rm min}}  \left( \sum_{\{\zeta,\zeta'\} \sk{\in} \Omega} \nabla^2f(\zeta,\zeta') \right) \fstop
\end{equation}
\end{lemma}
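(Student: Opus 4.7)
The plan is to replace the single one-dimensional distinguishing path from Section~\ref{sec4.3} with a two-dimensional system of paths (a flow), because under condition~\eqref{eq:cond-many} the factor $e^{\alpha_x \sum_{j=1}^{m-1} 1/j}$ arising in the forward--single--backward construction can no longer be treated as a constant. The idea is to move only a bounded fraction of the $m$ particles sitting at $x$ along the sequence $x=x_0,x_1,\ldots,x_t=y$: setting $\widetilde{m}=\lfloor m/2 \rfloor$, I will construct a flow that on the $i$-th lane ($i\in\bbr{1}{\widetilde{m}}$) sends $i$ particles from $x$ to $x_1$, pushes this stack of size $i$ across the middle sites $x_1,\ldots,x_{t-1}$, lets one particle jump from $x_{t-1}$ to $y$, and then moves the remaining $i-1$ particles back to $x$.

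Next, I will recast the desired bound in potential-theoretic terms. Restricting ${\rm SIP}_k(G,\alpha)$ to the collection $\Omega$ of all configurations visited by the above lanes and letting $\cD_\Omega$ denote the corresponding normalized Dirichlet form, the Dirichlet principle gives
\begin{equation}
	{\rm cap}(\sigma_{m,\ell-m}^{xy},\sigma_{m-1,\ell-m+1}^{xy})\tttonde{f(\sigma_{m-1,\ell-m+1}^{xy})-f(\sigma_{m,\ell-m}^{xy})}^2 \le \cD_\Omega(f),
\end{equation}
while Thomson's principle yields ${\rm cap}(\sigma_{m,\ell-m}^{xy},\sigma_{m-1,\ell-m+1}^{xy}) \ge \|\varphi\|^{-2}$ for any unit flow $\varphi$ from the source to the sink. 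Multiplying by $\mu_{\alpha,k}(\sigma_{m,\ell-m}^{xy})\,m(\alpha_y+\ell-m)$ produces $\cE_K^{\ell,m,\sigma}$ on the left, so everything reduces to exhibiting a unit flow whose weighted squared norm meets the claimed bound. I choose the weight of the $i$-th lane proportional to $\frac{1/i}{\sum_{j=1}^{\widetilde m} 1/j}$, a specific shape tailored so that the $1/i^2$ appearing in the squared norm conspires with the rates.

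I will then decompose $\varphi = \varphi_1 + \varphi_2$, with $\varphi_1$ collecting the vertical transfers ($x \leftrightarrow x_1$) and $\varphi_2$ the horizontal ones (across $x_1,\ldots,x_{t-1}$ together with the single jump $x_{t-1}\to y$), so that $\|\varphi\|^2 = \|\varphi_1\|^2 + \|\varphi_2\|^2$. The vertical part refines the one-dimensional estimate of Section~\ref{sec4.3}: using \eqref{eq:omega-prop-2} the exponential factor becomes $e^{\alpha_x \sum_{j=m-\widetilde m+1}^{m-1} 1/j}$, and since $\widetilde m \le m/2$ this is bounded by $2^{\alpha_{\rm max}}$, yielding a clean contribution of order $k(\alpha_{\rm max}+k-1)/(\alpha_{\rm min}\,c_{\rm min})$, independent of ${\rm diam}(G)$.

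The heart of the argument, and what motivates the two-dimensional construction, is the horizontal part $\|\varphi_2\|^2$. Each term there picks up a factor ${\rm diam}(G)$ from summing over the middle sites together with the more dangerous $e^{\alpha_x \sum_{j=m-\widetilde{m}}^{m-1} 1/j}$, which I can only bound by $6^{\alpha_{\rm max}}$. The saving comes from the lane weights $1/i$: in the squared flow norm they appear as $1/i^2$, and summing against $1/(\sum_{j=1}^{\widetilde m} 1/j)^2$ leaves an overall factor $1/\sum_{j=1}^{\widetilde m} 1/j$. Since $\sum_{j=1}^{\widetilde m} 1/j \ge \tfrac12 \sum_{j=1}^{m-1} 1/j$, this is exactly what is needed to absorb an inverse of $\alpha_x$ by invoking the assumption $\alpha_x \sum_{j=1}^{m-1} 1/j > 1$ from \eqref{eq:cond-many}. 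This absorption is the crucial point of the proof: condition \eqref{eq:cond-many} was engineered precisely so that this step goes through. Combining this estimate with $\|\varphi_1\|^2$ and the prefactor $\mu_{\alpha,k}(\sigma_{m,\ell-m}^{xy})\,m(\alpha_y+\ell-m)$ yields the claimed inequality.
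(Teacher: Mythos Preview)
Your proposal is correct and follows essentially the same route as the paper: the two-dimensional flow with $\widetilde m=\lfloor m/2\rfloor$ lanes weighted by $\frac{1/i}{\sum_{j=1}^{\widetilde m}1/j}$, the Dirichlet--Thomson reduction to bounding $\mu_{\alpha,k}(\sigma_{m,\ell-m}^{xy})\,m(\alpha_y+\ell-m)\|\varphi\|^2$, the vertical/horizontal split $\varphi=\varphi_1+\varphi_2$, and the key absorption of the extra $\alpha_x^{-1}$ via \eqref{eq:cond-many} all match the paper's argument. The numerical constants ($2^{\alpha_{\rm max}}$ from the vertical part, $6^{\alpha_{\rm max}}$ from the horizontal part, and the final $7$) arise exactly as you anticipate.
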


To derive an estimate similar to \eqref{eq:UB-v1} that works also in this two-dimensional setting, we make use of some potential-theoretic arguments. We present the statements as concisely as possible and refer the readers to, e.g., \cite[Chapter 7]{bovier-den_hollander-metastability} or \cite[Appendix A]{kim_hierarchical_2023}   for more details about the objects used in this subsection.

Only in this subsection, we restrict the original system ${\rm SIP}_k(G,\alpha)$ to the subset \sk{of the configurations that appear} in Figure \ref{fig3}\sk{, which we denote by $\hat\Xi$}. Moreover, let $\cD_{\Omega}(g)$ denote the (normalized) Dirichlet form in $\sk{\hat\Xi_k}$
given by (cf.\ \eqref{eq:c-nabla-def})
\begin{equation}\label{eq:restricted-Diri-def}
\cD_\Omega(g)=
\frac12 \sum_{\eta,\zeta\in\sk{\hat\Xi_k}} \mu_{\alpha,k}(\eta)\, r_{\alpha,k}(\eta,\zeta) \tonde{g(\zeta)-g(\eta)}^2 =
\sum_{\{\eta,\zeta\} \sk{\in } \Omega} \nabla^2g(\eta,\zeta)\fstop
\end{equation}
For non-empty disjoint subsets $A,B$ of $\Omega$, introduce    (e.g., \cite[Theorem 7.33]{bovier-den_hollander-metastability} or \cite[Proposition A.1]{kim_hierarchical_2023})
\begin{equation}
	{\rm cap}(A,B)\eqdef \inf\left\{\cD_\Omega(g): g\in \R^{\sk{\hat\Xi_k}}\ \text{such that}\ g = 1\ \text{on}\ A\ \text{and}\ g = 0\ \text{on}\ B\right\} \comma
\end{equation}
the {capacity} between $A$ and $B$. In particular, for $A=\{\sigma_{m,\ell-m}^{xy}\}$ and $B=\{\sigma_{m-1,\ell-m+1}^{xy}\}$,  we have, after shifting and renormalizing,
\begin{equation}\label{eq:Diri-prin}
{\rm cap}(\sigma_{m,\ell-m}^{xy},\sigma_{m-1,\ell-m+1}^{xy})\, \tttonde{f(\sigma_{m-1,\ell-m+1}^{xy})-f(\sigma_{m,\ell-m}^{xy})}^{2} \le \cD_\Omega(f) \comma
\end{equation}
where $f\in \R^{\Xi_k}$ is the function which we fixed at the beginning of Section \ref{sec4}.

\begin{proof}[\sk{Proof of Lemma \ref{lem:4.4}}]

Let us describe the flow $\varphi$ on $\sk{\hat\Xi_k}$ represented in Figure \ref{fig3}, where we recall that  \textquotedblleft flow on $\Omega$\textquotedblright\ refers to any antisymmetric real-valued function on $\sk{\hat\Xi_k} \times \sk{\hat\Xi_k}$. Due to this antisymmetry, we shall omit to specify the value of $\varphi$ when exchanging the arguments. Hence, vertically,  define
\begin{equation}\label{eq:flow-def-vertical}
	\begin{aligned}
\varphi(\sigma_{m-i,i,\ell-m}^{xx_1y},\sigma_{m-i-1,i+1,\ell-m}^{xx_1y}) &\eqdef \frac { \sum_{j=i+1}^{\widetilde m} \frac1j } { \sum_{j=1}^{\widetilde m} \frac1j }\comma &&\quad \text{for}\ i\in\bbr{0}{\widetilde{m}-1} \comma \\
\varphi(\sigma_{m-i-1,i,\ell-m+1}^{x\sk{x_1}y},\sigma_{m-i,i-1,\ell-m+1}^{x\sk{x_1}y}) &\eqdef \frac { \sum_{j=i+1}^{\widetilde m} \frac1j } { \sum_{j=1}^{\widetilde m} \frac1j }\comma &&\quad \text{for}\ i\in\bbr{1}{\widetilde{m}-1} \fstop
\end{aligned}
\end{equation}
Horizontally, define $\varphi$ as a constant flow at each $i$-th vertical level from $\sigma_{m-i,i,\ell-m}^{xx_1y}$ to $\sigma_{m-i,i-1,\ell-m+1}^{xx_1y}$ with value
\begin{equation}\label{eq:flow-def-horizontal}
\frac{\frac{1}{i}}{ \sum_{j=1}^{\widetilde m} \frac1j }\comma\quad \text{for each}\ i \in \bbr{1}{\widetilde{m}}\fstop
\end{equation}
The function $\varphi$ is a unit flow from $\sigma_{m,\ell-m}^{xy}$ to $\sigma_{m-1,\ell-m+1}^{xy}$. Indeed, letting $({\rm div}\,\varphi)(\eta)$ denote the net divergence at $\eta$, i.e., $({\rm div}\,\varphi)(\eta) \eqdef \sum_{\zeta \in \Omega} \varphi(\eta,\zeta)$, we have
\begin{equation}\label{eq:unit-flow}
({\rm div}\,\varphi)(\sigma_{m,\ell-m}^{xy}) = 1 \comma \quad ({\rm div}\,\varphi)(\sigma_{m-1,\ell-m+1}^{xy}) = -1 \quad \text{and} \quad {\rm div}\,\varphi = 0 \quad \text{otherwise}  \fstop
\end{equation}
Thus, the standard Thomson principle (e.g.,  \cite[Theorem 7.37]{bovier-den_hollander-metastability} or \cite[Proposition A.2]{kim_hierarchical_2023}) indicates that
\begin{equation}\label{eq:Thom-prin}
{\rm cap}(\sigma_{m,\ell-m}^{xy},\sigma_{m-1,\ell-m+1}^{xy})\ge\frac{1}{\|\varphi\|^{2}} \comma
\end{equation}
where $\|\varphi\|^2$ is the (square) flow norm given by	
\begin{equation}
\|\varphi\|^2 \eqdef \sum_{ \sk{\{ \eta,\zeta \} } \in \Omega } \frac{\varphi(\eta,\zeta)^2}{\fc(\eta,\zeta)} \fstop
\end{equation}
Hence, by \eqref{eq:cK-nablaK-def}, \eqref{eq:Diri-prin} and \eqref{eq:Thom-prin},  we obtain 	
\begin{equation}\label{eq1}
\sk{\nabla_K^2 f ( \sigma_{m,\ell-m}^{xy} ; x,y) } \le \cD_\Omega(f)\, \mu_{\alpha,k}(\sigma_{m,\ell-m}^{xy})\,m\tonde{\alpha_y+\ell-m} \|\varphi\|^2 \fstop
\end{equation}
Next, we upper bound the right-hand side of \eqref{eq1}, except $\cD_\Omega(f)$. For this purpose, we decompose $\varphi = \varphi_1 + \varphi_2$, where $\varphi_1$ (resp.\ $\varphi_2$) indicates the vertical (resp.\ horizontal) part of $\varphi$. First, the (square) flow norm of the vertical part $\varphi_1$ reads as
\begin{equation}\label{eq:flow-norm-v}
\|\varphi_1\|^2 = \sum_{i=0}^{\widetilde{m}-1} \frac{( \sum_{j=i+1}^{\widetilde{m}}\frac1j  )^{2}/( \sum_{j=1}^{\widetilde{m}}\frac1j )^{2}}{ \fc(\sigma_{m-i,i,\ell-m}^{xx_{1}y},\sigma_{m-i-1,i+1,\ell-m}^{xx_{1}y}) }
+\sum_{i=1}^{\widetilde{m}-1} \frac{( \sum_{j=i+1}^{\widetilde{m}} \frac1{j} )^{2}/( \sum_{j=1}^{\widetilde{m}}\frac1{j} )^{2}}{ \fc(\sigma_{m-i-1,i,\ell-m+1}^{x\sk{x_1}y},\sigma_{m-i,i-1,\ell-m+1}^{x\sk{x_1}y}) }\fstop
\end{equation}
Since the two  numerators are clearly bounded by $1$, we further get
\begin{equation}\label{eq2}
\begin{aligned}
& \mu_{\alpha,k}(\sigma_{m,\ell-m}^{xy})\,m\tonde{\alpha_y+\ell-m} \|\varphi_1\|^2 \\ 
& \le \sum_{i=0}^{\widetilde{m}-1} \frac{ \mu_{\alpha,k}(\sigma_{m,\ell-m}^{xy})\,m\tonde{\alpha_y+\ell-m} } { \fc(\sigma_{m-i,i,\ell-m}^{xx_{1}y},\sigma_{m-i-1,i+1,\ell-m}^{xx_{1}y}) }
+\sum_{i=1}^{\widetilde{m}-1} \frac{ \mu_{\alpha,k}(\sigma_{m,\ell-m}^{xy})\,m\tonde{\alpha_y+\ell-m} } { \fc(\sigma_{m-i-1,i,\ell-m+1}^{x\sk{x_1}y},\sigma_{m-i,i-1,\ell-m+1}^{x\sk{x_1}y}) }\fstop
\end{aligned}
\end{equation}
By the definitions in \eqref{eq:c-nabla-def} and \eqref{eq:omega-xn-not},
\sk{
the summand in the first summation in \eqref{eq2} becomes
\begin{equation}
\frac{ \mu_{\alpha,k}(\sigma_{m,\ell-m}^{xy})\,m\tonde{\alpha_y+\ell-m} } { \mu_{\alpha,k} (\sigma_{m-i,i,\ell-m}^{xx_{1}y}) \, r_{\alpha,k} (\sigma_{m-i,i,\ell-m}^{xx_{1}y},\sigma_{m-i-1,i+1,\ell-m}^{xx_{1}y}) } = \frac{ \pi_x(m)\,  m \tonde{\alpha_y + \ell - m} } { \pi_x(m-i)\, \pi_{x_1}(i) \tonde{m-i} \tonde{\alpha_{x_1} + i  } c_{xx_1} } \fstop
\end{equation}
Thus, by applying \eqref{eq:omega-prop} and \eqref{eq:omega-prop-2},
}
the first summation in \eqref{eq2} can be bounded from above by
\begin{align}
& \sum_{i=0}^{\widetilde{m}-1} \frac{ \pi_x(m)\,  m \tonde{\alpha_y + \ell - m} } { \pi_x(m-i)\, \pi_{x_1}(i) \tonde{m-i} \tonde{\alpha_{x_1} + i  } c_{xx_1} } \\
& \le \sum_{i=0}^{\widetilde{m}-1}  e^{\alpha_x \sum_{j=m-i}^{m-1}\frac1j } \, \frac{\alpha_y + \ell-m}{ \alpha_{x_1} \, c_{\rm min}  }  \le \frac{\sk{\tilde m \tonde{\alpha_y + \ell-m}}}{\alpha_{\rm min} \, c_{\rm min} } \, e^{\alpha_x \sum_{j=m- \widetilde{m}+1}^{m-1}\frac1j }  \fstop
\end{align}
Noting that $\sum_{j=m-\widetilde m+1}^{m-1}\frac1j \le \log \frac{m-1}{m-\widetilde{m}} \le \log 2$ (cf.\ \eqref{eq:m-tilde-def}), we deduce that
\begin{equation}\label{eq3}
\sum_{i=0}^{\widetilde{m}-1} \frac{ \mu_{\alpha,k}(\sigma_{m,\ell-m}^{xy})\,m\tonde{\alpha_y+\ell-m} } { \fc(\sigma_{m-i,i,\ell-m}^{xx_{1}y},\sigma_{m-i-1,i+1,\ell-m}^{xx_{1}y}) } \le \frac{ \sk{\tilde m \tonde{\alpha_y + \ell-m} } \, 2^{\alpha_{\rm max}}}{\alpha_{\rm min} \, c_{\rm min} } \fstop
\end{equation}
Similarly, the second summation in \eqref{eq2} can be bounded by the same value in the right-hand side of \eqref{eq3}. Thus, collecting \eqref{eq2} and \eqref{eq3} \sk{along with $2 \tilde m \le m$}, we deduce
\begin{equation}\label{eq5}
\mu_{\alpha,k}(\sigma_{m,\ell-m}^{xy}) \, m \tonde{\alpha_y+\ell-m}  \|\varphi_1 \|^2 \le \frac {\sk{m \tonde{\alpha_y + \ell-m} 2^{\alpha_{\rm max}}}}{\alpha_{\rm min} \, c_{\rm min} } \fstop
\end{equation}

Next, the horizontal part $\mu_{\alpha,k}(\sigma_{m,\ell-m}^{xy}) \, m \tonde{\alpha_y+\ell-m}  \|\varphi_2\|^2$ becomes
\begin{equation}\label{eq:flow-norm-h}
\begin{aligned}
& \sum_{i=1}^{\widetilde{m}} \sum_{s=2}^{t-1} \sum_{i'=0}^{i-1} \frac { \mu_{\alpha,k}(\sigma_{m,\ell-m}^{xy}) \,m \tonde{\alpha_y+\ell-m}  \frac{1}{i^{2}}/( \sum_{j=1}^{\widetilde m}\frac1j )^{2} } { \fc(\sigma_{m-i,i-i',i',\ell-m}^{xx_{s-1}x_{s}y},\sigma_{m-i,i-i'-1,i'+1,\ell-m}^{xx_{s-1}x_{s}y}) } \\
& + \sum_{i=1}^{\widetilde{m}}\frac{ \mu_{\alpha,k}(\sigma_{m,\ell-m}^{xy}) \,m \tonde{\alpha_y+\ell-m} \frac{1}{i^{2}}/( \sum_{j=1}^{\widetilde m}\frac1j )^{2}}{\fc(\sigma_{m-i,i,\ell-m}^{xx_{t-1}y},\sigma_{m-i,i-1,\ell-m+1}^{xx_{t-1}y})} \\
 & + \sum_{i=2}^{\widetilde{m}} \sum_{s=2}^{t-1} \sum_{i'=1}^{i-1}\frac{ \mu_{\alpha,k}(\sigma_{m,\ell-m}^{xy}) \,m \tonde{\alpha_y+\ell-m} \frac{1}{i^{2}}/( \sum_{j=1}^{\widetilde m}\frac1j )^{2}}{\fc(\sigma_{m-i,i-i'-1,i',\ell-m+1}^{xx_{s-1}x_sy},\sigma_{m-i,i-i',i'-1,\ell-m+1}^{xx_{s-1}x_sy})} \fstop
\end{aligned}
\end{equation}
\sk{According to \eqref{eq:c-nabla-def} and \eqref{eq:omega-xn-not}, the first (triple) summation in \eqref{eq:flow-norm-h} equals
\begin{equation}
\sum_{i=1}^{\widetilde{m}} \sum_{s=2}^{t-1} \sum_{i'=0}^{i-1} \frac{ \pi_{x}(m)  \, m \tonde{\alpha_{y}+\ell-m} \frac{1}{i^{2}}/( \sum_{j=1}^{\widetilde m}\frac1j )^{2}  } { \pi_{x}(m-i) \, \pi_{x_{s-1}}(i-i' ) \, \pi_{x_{s}}(i'  )  \tonde{i-i' } \tonde{\alpha_{x_{s}}+i' } \, c_{x_{s-1}x_{s}}  } \fstop
\end{equation}
By Lemma \ref{lem:omega-prop}, $\pi_{x_{s-1}}(i-i') \, (i-i') \ge \alpha_{x_{s-1}} \ge \alpha_{\rm min}$, $\pi_{x_s} (i') \, (\alpha_{x_s} + i') \ge \alpha_{\rm min}$, and
\begin{equation}
\frac{\pi_x(m)\,m}{\pi_x(m-i)} \le (m-i) \, e^{\alpha_x \sum_{j=m-i}^{m-1} \frac1j} \le m \, e^{\alpha_x \sum_{j=m-i}^{m-1} \frac1j} \fstop
\end{equation}
Substituting these to the penultimate display, along with $c_{x_{s-1}x_s} \ge c_{\rm min}$,
}
the first (triple) summation in \eqref{eq:flow-norm-h} \sk{is upper bounded by}
\begin{equation}
\frac {\sk{m \tonde{\alpha_y + \ell-m}}}{\alpha_{\rm min}^2 \, c_{\rm min} }  \sum_{i=1}^{\widetilde{m}} \sum_{s=2}^{t-1} \sum_{i'=0}^{i-1}  \frac{e^{\alpha_x \sum_{j=m-i}^{m-1}\frac1j }}{i^{2} \, ( \sum_{j=1}^{\widetilde m}\frac1j )^{2}} \le
\frac { \sk{m \tonde{\alpha_y + \ell-m} (t-2)} \, e^{\alpha_x \sum_{j=m-\widetilde{m}}^{m-1}\frac1j }}{ {\alpha_{\rm min}^2 \, c_{\rm min} } \sum_{j=1}^{\widetilde m}\frac1j  } \fstop
\end{equation}
Since $\sum_{j=m-\widetilde{m}}^{m-1}\frac1j \le 1+\log 2 < \log 6$ and $\sum_{j=1}^{\widetilde m}\frac1j \ge \frac12 \sum_{j=1}^{m-1}\frac1j$ (cf.\ \eqref{eq:m-tilde-def}), we may further bound the right-hand side with
\begin{equation}\label{eq6}
\frac{2 \, \sk{m \tonde{\alpha_y + \ell-m} (t-2) } \, e^{\alpha_x  \log 6 }}{\alpha_{\rm min} \, \alpha_{\rm ratio} \, c_{\rm min} \, \alpha_x \sum_{j=1}^{m-1}\frac1j  } \le
\frac{2 \, \sk{m \tonde{\alpha_y + \ell-m} (t-2) } \, 6^{\alpha_{\rm max} }}{\alpha_{\rm min} \, \alpha_{\rm ratio} \, c_{\rm min}} \comma
\end{equation}
where in the inequality we used  condition \eqref{eq:cond-many}. Similarly, the third (triple) summation in \eqref{eq:flow-norm-h} is bounded by the same value in the right-hand side of \eqref{eq6}. Finally, \sk{by the same ideas,} the second summation in \eqref{eq:flow-norm-h} is bounded from above by
\begin{equation}\label{eq8}\sk{
\frac{m}{\alpha_{\rm min} \, c_{\rm min}} \sum_{i=1}^{\tilde m} \frac{6^{\alpha_{\rm max}}}{i^2 \tonde{\sum_{j=1}^{\tilde m}\frac1j}^2}
\le \frac{ m \tonde{\alpha_y + \ell-m} 6^{\alpha_{\rm max}}}{\alpha_{\rm  min}^2 \, c_{\rm min} \sum_{j=1}^{\tilde m}\frac1j }
\le \frac{ 2 \, m \tonde{\alpha_y + \ell-m} \, 6^{\alpha_{\rm max}}}{\alpha_{\rm  min} \, c_{\rm min} \, \alpha_{\rm ratio}  } \fstop}
\end{equation}
\sk{At the first inequality we used $i^2 \ge i$ and $\alpha_y + \ell-m \ge \alpha_{\rm min}$, and at the second inequality we used $\alpha_{\rm min} \ge \alpha_{\rm ratio} \, \alpha_x$ and \eqref{eq:cond-many}.}
Collecting \eqref{eq:flow-norm-h}, \eqref{eq6} and \eqref{eq8}, we obtain 
\begin{equation}\label{eq9}
\mu_{\alpha,k}(\sigma_{m,\ell-m}^{xy}) \,m \tonde{\alpha_y+\ell-m }   \|\varphi_2\|^2 \le \frac{\sk{(4t-6) \, m \tonde{\alpha_y + \ell-m} 6^{\alpha_{\rm max} }}}{\alpha_{\rm min} \, \alpha_{\rm ratio} \, c_{\rm min}} \fstop
\end{equation}
\sk{The constant $4t$ appears because the term in \eqref{eq6} should be counted twice to deal with the first and third lines in \eqref{eq:flow-norm-h}, and the term in \eqref{eq8} corresponds to the second line in \eqref{eq:flow-norm-h}, thus $ 2 \times 2(t-2) + 2 = 4t-6$.}
Since $\varphi_1$ and $\varphi_2$ \sk{have disjoint supports}, we have $\|\varphi\|^2 = \|\varphi_1\|^2 + \|\varphi_2\|^2$. Therefore, by \eqref{eq5} and \eqref{eq9}, since $\alpha_{\rm ratio} \le 1$, we conclude that
\begin{equation}\label{eq10}
\mu_{\alpha,k}(\sigma_{m,\ell-m}^{xy}) \, m \tonde{\alpha_y+\ell-m }  \|\varphi\|^2 \le
\frac {\sk{4t \, m \tonde{\alpha_y + \ell-m} 6^{\alpha_{\rm max}} }} {\alpha_{\rm min} \, \alpha_{\rm ratio} \, c_{\rm min}} \fstop
\end{equation}
Collecting \eqref{eq:restricted-Diri-def}, \eqref{eq1} and \eqref{eq10}, \sk{we arrive at conclusion of Lemma \ref{lem:4.4}.}
\end{proof}

\subsection{General case and proof of Theorem \ref{th:key-ing}}\label{sec4.5}
Now, we handle the general case without the restrictions imposed in the previous four subsections.
The idea is to send a particle from $x$ to $y$ along the sequence $x=x_{0},x_{1},\ldots,x_{t}=y$
by obeying the following two criteria:

\begin{itemize}
\item If consecutive sites are occupied by particles in $\sigma$, send a single particle along these occupied sites as explained in Section \ref{sec4.2}.
\item If consecutive sites are empty for the configuration $\sigma$, follow the
mechanism explained in Sections \ref{sec4.3} or \ref{sec4.4} depending
on the size of the stack of particles just before these empty sites.
\end{itemize}

To state this procedure in a rigorous manner, we decompose $\sk{\bbr{0}{t}}$ as
\begin{equation}\label{eq:seg-dec}
\bbr{\sk{0}}{b_{1}-1} \cup \bbr{ b_{1}}{a_{2}-1} \cup \cdots \cup \sk{\bbr{ b_{r-1}}{a_{r}-1} \cup \bbr{ a_{r}}{t}} \eqqcolon O_{1} \cup  E_{1} \cup \cdots \cup \sk{E_{r-1} \cup O_{r}} \comma
\end{equation}
where $1\le b_{1}<a_{2}<b_{2}<\cdots < b_{r-1}<a_{r} \le t$ and
\begin{equation}
\begin{cases}
\sigma_{x_{s}} \ge 1 & \text{if}\ s\in O_{j} \ \text{for some} \ j\in\bbr{1}{r} { \sk{, \ \text{except for} \ s=t}} \\
\sigma_{x_{s}}=0 & \text{if}\ s\in E_{j} \ \text{for some} \ j\in\bbr{1}{\sk{r-1}} \fstop
\end{cases}
\end{equation}
Here, letter $O$ (resp.\ $E$) stands for \textit{occupied} (resp.\ \textit{empty}).
Also, let $a_{1} \eqdef 0$ and $\sk{b_r} \eqdef t+1$. Now, the path from $\sigma_{m,\ell-m}^{xy}$
to $\sigma_{m-1,\ell-m+1}^{xy}$ is constructed as follows. Refer to Figure \ref{fig4} for a visual presentation of this recursive procedure.

\begin{enumerate}
\item[{\bf [I]}] Along $x_{a_j}, \ldots, x_{b_j-1}$ for $j\in\bbr{1}{r}$:
send a single particle from $x_{a_{j}}$ to $x_{b_{j}-1}$ consecutively, as explained in Section \ref{sec4.2}.
\sk{This would be a path from $\sigma_{m-1,1,\ell-m}^{xx_{a_j}y}$ to $\sigma_{m-1,1,\ell-m}^{xx_{b_j-1}y}$, where the first configuration for $j=1$ is $\sigma_{m,\ell-m}^{xy}$ and the second configuration for $j=r$ is $\sigma_{m-1,\ell-m+1}^{xy}$. Notice that this step is omitted if $b_j = a_j+1$.}
\item[{\bf [II]}] Along $x_{b_j-1}, \ldots, x_{a_{j+1}}$ for $j\in\bbr{1}{\sk{r-1}}$
if $$\alpha_{x_{b_j-1}} \sum_{j=1}^{\sigma_{x_{b_j-1}}}\frac1j  \le 1 \; :$$ we proceed as in Section \ref{sec4.3}; move all $\sigma_{x_{b_{j}-1}}+1$ particles from $x_{b_{j}-1}$
to $x_{a_{j+1}-1}$ consecutively, move a single particle from $x_{a_{j+1}-1}$
to $x_{a_{j+1}}$, and then move back the remaining $\sigma_{x_{b_{j}-1}}$
particles at $x_{a_{j+1}-1}$ to $x_{b_{j}-1}$ backwards.
\sk{This is a path from $\sigma_{m-1,1,\ell-m}^{xx_{b_j-1}y}$ to $\sigma_{m-1,1,\ell-m}^{xx_{a_{j+1}}y}$.}
\item[{\bf [III]}] Along $x_{b_j-1}, \ldots, x_{a_{j+1}}$ for $j\in\bbr{1}{\sk{r-1}}$
if $$\alpha_{x_{b_j-1}} \sum_{j=1}^{\sigma_{x_{b_j-1}}}\frac1j > 1 \; :$$ we proceed as in Section
\ref{sec4.4}; move each $i \in \bbr{1} { \lfloor(\sigma_{x_{b_{j}-1}}+1)/2\rfloor }$ particles
from $x_{b_{j}-1}$ to $x_{a_{j+1}-1}$ consecutively, move a single
particle from $x_{a_{j+1}-1}$ to $x_{a_{j+1}}$, and then move back
the remaining $i-1$ particles at $x_{a_{j+1}-1}$ to $x_{b_{j}-1}$ backwards.
\sk{As in {\bf [II]}, this gives a path from $\sigma_{m-1,1,\ell-m}^{xx_{b_j-1}y}$ to $\sigma_{m-1,1,\ell-m}^{xx_{a_{j+1}}y}$.}

\end{enumerate}

\begin{figure}
\begin{tikzpicture}
\draw[thick,red] (0,1.9) sin (0.25,2.1); \draw[thick,red] (0.25,2.1) cos (0.5,0.7);
\draw[thick,red] (0.5,0.7) sin (0.75,0.9); \draw[thick,red] (0.75,0.9) cos (1,0.3);
\draw[thick,red] (1,0.3) sin (1.25,1.1); \draw[thick,red] (1.25,1.1) cos (1.5,0.9);
\draw[thick,red] (1.5,0.9) sin (1.75,1.1); \draw[thick,red,-latex] (1.75,1.1) cos (2,0.6);
\draw (1.25,1.3) node[above]{Sec. \ref{sec4.2}};

\draw[thick,red] (2,0.5) sin (3.25,0.9); \draw[thick,red,-latex] (3.25,0.9) cos (4.5,0.4);
\draw (3.25,1) node[above]{Sec. \ref{sec4.3}};

\draw[thick,red] (4.5,0.3) sin (4.75,1.1); \draw[thick,red] (4.75,1.1) cos (5,0.9);
\draw[thick,red] (5,0.9) sin (5.25,1.3); \draw[thick,red] (5.25,1.3) cos (5.5,1.1);
\draw[thick,red] (5.5,1.1) sin (5.75,1.9); \draw[thick,red,-latex] (5.75,1.9) cos (6,1.8);
\draw (5.25,1.9) node[above]{Sec. \ref{sec4.2}};

\draw[thick,red] (6,1.7) sin (7.25,1.9); \draw[thick,red,-latex] (7.25,1.9) cos (8.5,0.8);
\draw (7.25,2) node[above]{Sec. \ref{sec4.4}};

\draw (10,1) node[]{\color{red} $\boldsymbol{\cdots\cdots}$};

\foreach \j in {0,...,23} {\draw (0.5*\j,0)--(0.5*\j,-0.1); }
\draw (0,-0.1) node[below]{$x$};
\draw (11.5,-0.1) node[below]{$y$};

\draw[latex-latex,very thick] (0,-0.8)--(2,-0.8); \draw (1,-0.8) node[above]{$O_1$};
\draw[latex-latex,very thick] (2.5,-0.8)--(4,-0.8); \draw (3.25,-0.8) node[above]{$E_1$};
\draw[latex-latex,very thick] (4.5,-0.8)--(6,-0.8); \draw (5.25,-0.8) node[above]{$O_2$};
\draw[latex-latex,very thick] (6.5,-0.8)--(8,-0.8); \draw (7.25,-0.8) node[above]{$E_2$};
\draw (10,-0.3) node[below]{$\cdots\cdots$};

\foreach \i in {0,...,8} {\fill[black!30!white] (0,0.1+0.2*\i) circle (0.1); } \foreach \i in {9} {\fill[red] (0,0.1+0.2*\i) circle (0.1); } \foreach \i in {0,...,9} {\draw (0,0.1+0.2*\i) circle (0.1); }

\foreach \i in {0,...,2} {\fill[black!30!white] (0.5,0.1+0.2*\i) circle (0.1); } \foreach \i in {0,...,2} {\draw (0.5,0.1+0.2*\i) circle (0.1); }
\foreach \i in {0} {\fill[black!30!white] (1,0.1+0.2*\i) circle (0.1); } \foreach \i in {0} {\draw (1,0.1+0.2*\i) circle (0.1); }
\foreach \i in {0,...,3} {\fill[black!30!white] (1.5,0.1+0.2*\i) circle (0.1); } \foreach \i in {0,...,3} {\draw (1.5,0.1+0.2*\i) circle (0.1); }
\foreach \i in {0,...,1} {\fill[black!30!white] (2,0.1+0.2*\i) circle (0.1); }\foreach \i in {2} {\fill[red] (2,0.1+0.2*\i) circle (0.1); }  \foreach \i in {0,...,2} {\draw (2,0.1+0.2*\i) circle (0.1); }

\foreach \i in {0} {\fill[black!30!white] (4.5,0.1+0.2*\i) circle (0.1); } \foreach \i in {1} {\fill[red] (4.5,0.1+0.2*\i) circle (0.1); }  \foreach \i in {0,...,1} {\draw (4.5,0.1+0.2*\i) circle (0.1); }
\foreach \i in {0,...,3} {\fill[black!30!white] (5,0.1+0.2*\i) circle (0.1); } \foreach \i in {0,...,3} {\draw (5,0.1+0.2*\i) circle (0.1); }
\foreach \i in {0,...,4} {\fill[black!30!white] (5.5,0.1+0.2*\i) circle (0.1); } \foreach \i in {0,...,4} {\draw (5.5,0.1+0.2*\i) circle (0.1); }
\foreach \i in {0,...,7} {\fill[black!30!white] (6,0.1+0.2*\i) circle (0.1); } \foreach \i in {8} {\fill[red] (6,0.1+0.2*\i) circle (0.1); } \foreach \i in {0,...,8} {\draw (6,0.1+0.2*\i) circle (0.1); }

\foreach \i in {0,...,2} {\fill[black!30!white] (8.5,0.1+0.2*\i) circle (0.1); } \foreach \i in {3} {\fill[red] (8.5,0.1+0.2*\i) circle (0.1); } \foreach \i in {0,...,3} {\draw (8.5,0.1+0.2*\i) circle (0.1); }

\foreach \i in {0,...,5} {\fill[black!30!white] (11.5,0.1+0.2*\i) circle (0.1); } \foreach \i in {6} {\fill[red] (11.5,0.1+0.2*\i) circle (0.1); } \foreach \i in {0,...,6} {\draw (11.5,0.1+0.2*\i) circle (0.1); }
\draw (-0.5,0)--(12,0);
\end{tikzpicture}\caption{\label{fig4} Path from $\sigma_{m,\ell-m}^{xy}$ to $\sigma_{m-1,\ell-m+1}^{xy}$ in the general case explained in Section \ref{sec4.5}. The figure explains how the red particle moves from $x_0=x$ to $x_t=y$ along the path. First, in $O_1$ where the sites are occupied, the red particle simply jumps consecutively to the right as explained in Section \ref{sec4.2}. Next, in $E_1$ where the sites are empty and the initial stack is small, we follow the mechanism presented in Section \ref{sec4.3}.
In $O_2$ where the sites are again occupied, we proceed as in Section \ref{sec4.2}. In $E_2$ where the sites are empty and the initial stack is big, we follow the two-dimensional collection of paths explained in Section \ref{sec4.4}. We iterate these procedures until the red particle arrives at $y$.}
\end{figure}
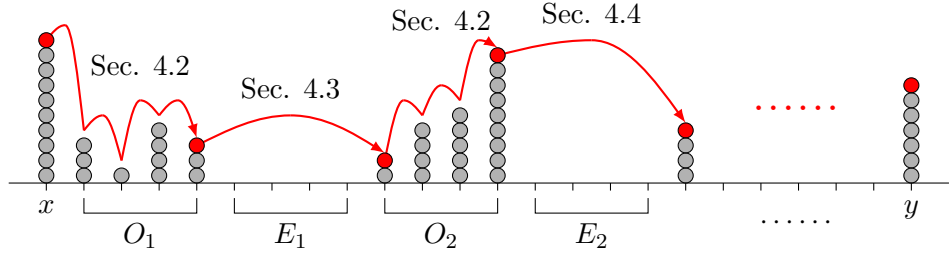

Again, denote by $\sk{\Omega_{x,y}^{\ell,m,\sigma}}$ the collection of all \sk{edges} that appear along the paths.

\begin{proof}[Proof of \eqref{eq:WTS-xy}]
\sk{
First, we bound $\sk{\nabla_K^2 f ( \sigma_{m,\ell-m}^{xy} ; x,y) }$ from above by intermediate quadratic forms, on which we can apply the special results in Sections \ref{sec4.1}--\ref{sec4.4}.
Recall the decomposition \eqref{eq:seg-dec}. The path $x \to y$ can be decomposed into (at most) $2r-1$ steps
\begin{equation}
x = x_0 \to x_{b_1-1} \to x_{a_2} \to x_{b_2-1} \to \cdots \to x_{a_r} \to x_t = y \comma
\end{equation}
where step $x_{a_j} \to x_{b_j-1}$ must be omitted if $b_j = a_j +1$. In this regard, we may decompose as
\begin{equation}\label{eq:Omega-dec}
\Omega_{x,y}^{\ell,m,\sigma} = \bigcup_{j=1}^r \Omega_{x_{a_j},x_{b_j-1}}^{\ell,m,\sigma} \cup \bigcup_{j=1}^{r-1} \Omega_{x_{b_j-1},x_{a_{j+1}}}^{\ell,m,\sigma}
\end{equation}
where, with a slight abuse of notation, each $\Omega_{x_{a_j},x_{b_j-1}}^{\ell,m,\sigma}$ (resp. $\Omega_{x_{b_j-1},x_{a_{j+1}}}^{\ell,m,\sigma}$) collects the edges that are obtained within the step $x_{a_j} \to x_{b_j-1}$ (resp. $x_{b_j-1} \to x_{a_{j+1}}$).
We will upper bound the term $\sk{\nabla_K^2 f ( \sigma_{m,\ell-m}^{xy} ; x,y) }$ with
\begin{equation}
\sk{\nabla_K^2 f ( \sigma_{m-1,1,\ell-m}^{xx_{a_1}y} ; x_{a_1},x_{b_1-1}) },
\sk{\nabla_K^2 f ( \sigma_{m-1,1,\ell-m}^{xx_{b_1-1}y} ; x_{b_1-1},x_{a_2}) }, \dots,
\sk{\nabla_K^2 f ( \sigma_{m-1,1,\ell-m}^{xx_{a_r}y} ; x_{a_r},x_{b_r-1}) }
\end{equation}
where $x_{a_1}=x_0=x$ and $x_{b_r-1}=x_t=y$. The Cauchy--Schwarz inequality applied to the difference of $f$ along these steps implies that
\begin{equation}
\begin{aligned}
& \tttonde{f(\sigma_{m-1,\ell-m+1}^{xy})-f(\sigma_{m,\ell-m}^{xy})}^{2} \le \\
& t \left( \sum_{j=1}^r \frac{\tttonde{f(\sigma_{m-1,1,\ell-m}^{xx_{b_j-1}y})-f(\sigma_{m-1,1,\ell-m}^{xx_{a_j}y})}^{2}}{b_j - a_j - 1}
+ \sum_{j=1}^{r-1} \frac{\tttonde{f(\sigma_{m-1,1,\ell-m}^{xx_{a_{j+1}}y})-f(\sigma_{m-1,1,\ell-m}^{xx_{b_j-1}y})}^{2}}{a_{j+1} - b_j +1} \right) \comma
\end{aligned}
\end{equation}
where it was used that $\sum_{j=1}^r (b_j-a_j-1) + \sum_{j=1}^{r-1} (a_{j+1}-b_j+1) = b_r - a_1 - 1 = t$. Here, the first summation in $j \in \bbr{1}{r}$ should be run over those $j$ such that $b_j > a_j+1$. Substituting \eqref{eq:cK-nablaK-def} to this inequality, we obtain that
\begin{equation}\label{eq:CS-inter}
\begin{aligned}
\nabla_K^2 f ( \sigma_{m,\ell-m}^{xy} ; x,y) \le t \, \Bigg(  \sum_{j=1}^r & \frac{ \fc_K (\sigma_{m,\ell-m}^{xy} ; x,y) \, \nabla_K^2 f ( \sigma_{m-1,1,\ell-m}^{xx_{a_j}y} ; x_{a_j},x_{b_j-1}) } {\fc_K(\sigma_{m-1,1,\ell-m}^{xx_{a_j}y};x_{a_j},x_{b_j-1}) (b_j-a_j-1)} \\
& + \sum_{j=1}^{r-1} \frac{ \fc_K (\sigma_{m,\ell-m}^{xy} ; x,y) \, \nabla_K^2 f ( \sigma_{m-1,1,\ell-m}^{xx_{b_j-1}y} ; x_{b_j-1},x_{a_{j+1}}) } {\fc_K(\sigma_{m-1,1,\ell-m}^{xx_{b_j-1}y};x_{b_j-1},x_{a_{j+1}}) (a_{j+1} - b_j + 1)} \Bigg) \, \fstop
\end{aligned}
\end{equation}
According to the results in Lemmas \ref{lem:4.1}, \ref{lem:4.2}, \ref{lem:4.3}, and \ref{lem:4.4}, for each $j \in \bbr1r$ we have
\begin{equation}
\begin{aligned}
& \frac{\fc_K (\sigma_{m,\ell-m}^{xy} ; x,y) \, \nabla_K^2 f ( \sigma_{m-1,1,\ell-m}^{xx_{a_j}y} ; x_{a_j},x_{b_j-1}) } {\fc_K(\sigma_{m-1,1,\ell-m}^{xx_{a_j}y};x_{a_j},x_{b_j-1}) (b_j-a_j-1)} \le \\
& \frac { \mu_{\alpha,k}(\sigma_{m,\ell-m}^{xy}) \, 9 \, m \tonde{\alpha_y + \ell-m} 6^{\alpha_{\rm max} }} { \mu_{\alpha,k} (\sigma_{m-1,1,\ell-m}^{xx_{a_j}y}) \, \alpha_{\rm min} \, \alpha_{\rm ratio} \, c_{\rm min}} \left( \sum_{\{\zeta,\zeta'\} \sk{\in} \Omega_{x_{a_j},x_{b_j-1}}^{\ell,m,\sigma}} \nabla^2f(\zeta,\zeta') \right) \comma
\end{aligned}
\end{equation}
and for each $j \in \bbr1{j-1}$ we have
\begin{equation}
\begin{aligned}
& \frac{\fc_K (\sigma_{m,\ell-m}^{xy} ; x,y) \, \nabla_K^2 f ( \sigma_{m-1,1,\ell-m}^{xx_{b_j-1}y} ; x_{b_j-1},x_{a_{j+1}}) } {\fc_K(\sigma_{m-1,1,\ell-m}^{xx_{b_j-1}y};x_{b_j-1},x_{a_{j+1}}) (a_{j+1}-b_j+1)} \le \\
& \frac { \mu_{\alpha,k}(\sigma_{m,\ell-m}^{xy}) \, 9 \, m \tonde{\alpha_y + \ell-m} 6^{\alpha_{\rm max} }} { \mu_{\alpha,k} (\sigma_{m-1,1,\ell-m}^{xx_{b_j-1}y}) \, \alpha_{\rm min} \, \alpha_{\rm ratio} \, c_{\rm min}} \left( \sum_{\{\zeta,\zeta'\} \sk{\in} \Omega_{x_{b_j-1},x_{a_{j+1}}}^{\ell,m,\sigma}} \nabla^2f(\zeta,\zeta') \right) \fstop
\end{aligned}
\end{equation}
Indeed, the term $t$ in the right-hand sides of the lemmas cancels out with $b_j-a_j-1$ or $a_{j+1}-b_j+1$, and the term $m \, (\alpha_y + \ell-m)$ cancels out with the $\fc_K$-term except the $\mu_{\alpha,k}$ part, thus we obtain the right-hand sides in the last two displays.

Next, we estimate the ratios of $\mu_{\alpha,k}$ that appear in the right-hand sides of the last two displays.
If $a_j \ne x,y$, then
\begin{equation}
\begin{aligned}
& \frac { \mu_{\alpha,k}(\sigma_{m,\ell-m}^{xy}) \, m \tonde{\alpha_y + \ell-m} } { \mu_{\alpha,k} (\sigma_{m-1,1,\ell-m}^{xx_{a_j}y}) } = \frac{\pi_x(m) \, m \, \pi_{x_{a_j}} (\sigma_{x_{a_j}}) \, \tonde{\alpha_y + \ell-m}}{\pi_x(m-1) \,\pi_{x_{a_j}}(\sigma_{x_{a_j}}+1) } \\
& = (\alpha_x + m-1) \tonde{\alpha_y + \ell-m} \, \frac{\sigma_{x_{a_j}}+1}{\sigma_{x_{a_j}}+\alpha_{x_{a_j}}} \le 2 \, (1+\alpha_{\rm max}) \, k \, (\alpha_{\rm max} + k-1) \fstop
\end{aligned}
\end{equation}
Here, we used $\sigma_{x_{a_j}} \ge 1$ (which implies $\frac{\sigma_{x_{a_j}}+1}{\sigma_{x_{a_j}}+\alpha_{x_{a_j}}} \le 2$), $\alpha_y + \ell-m \le \alpha_{\rm max} + k-1$, and $\alpha_x + m-1 \le (1+\alpha_{\rm max}) \, k$. The same bound trivially holds if $a_j = x$ or $a_j = y$, and similarly,
\begin{equation}
\frac { \mu_{\alpha,k}(\sigma_{m,\ell-m}^{xy}) \, m \tonde{\alpha_y + \ell-m} } { \mu_{\alpha,k} (\sigma_{m-1,1,\ell-m}^{xx_{b_j-1}y}) } \le 2 \, (1+\alpha_{\rm max}) \, k \, (\alpha_{\rm max} + k-1) \fstop
\end{equation}
Substituting the last four estimates to \eqref{eq:CS-inter}, we arrive at
\begin{equation}
\begin{aligned}
\nabla_K^2 f ( \sigma_{m,\ell-m}^{xy} ; x,y) & \le \frac { 18t \, (1+\alpha_{\rm max}) \, k \, (\alpha_{\rm max} + k-1) \, 6^{\alpha_{\rm max} }} { \alpha_{\rm min} \, \alpha_{\rm ratio} \, c_{\rm min}} \\
& \left(  \sum_{j=1}^r \sum_{\{\zeta,\zeta'\} \sk{\in} \Omega_{x_{a_j},x_{b_j-1}}^{\ell,m,\sigma}} \nabla^2f(\zeta,\zeta') + \sum_{j=1}^{r-1} \sum_{\{\zeta,\zeta'\} \sk{\in} \Omega_{x_{b_j-1},x_{a_{j+1}}}^{\ell,m,\sigma}} \nabla^2f(\zeta,\zeta') \right) \, \fstop
\end{aligned}
\end{equation}
Applying $t \le {\rm diam}(G)$, $1+\alpha_{\rm max} \le e^{\alpha_{\rm max}}$, and \eqref{eq:Omega-dec},
we get,
}for any $\ell\in\bbr{1}{k}$, $m\in\bbr{1}{\ell}$
and $\sigma\in\Xi_{k-\ell}^{V\setminus\{x,y\}}$,
\begin{equation}
\sk{\nabla_K^2 f ( \sigma_{m,\ell-m}^{xy} ; x,y) } \le  \frac{\sk{18 \, k \tonde{\alpha_{\rm max}+k-1}  {\rm diam}(G) \, (6e)^{\alpha_{\rm max} }}}{\alpha_{\rm min} \, \alpha_{\rm ratio} \, c_{\rm min}}    \sum_{\{\zeta,\zeta'\} \sk{\in} \sk{\Omega_{x,y}^{\ell,m,\sigma}}} \nabla^2f(\zeta,\zeta')   \fstop
\end{equation}
Along with \eqref{eq:Diri-dec-sigma}, this implies 
\begin{equation}\label{eq11}
\cE_K^{x,y} \le \frac{\sk{18 \, k \tonde{\alpha_{\rm max}+k-1}  {\rm diam}(G) \, (6e)^{\alpha_{\rm max} }}}{\alpha_{\rm min} \, \alpha_{\rm ratio} \, c_{\rm min}}  \sum_{\ell=1}^k \sum_{m=1}^\ell \sum_{\sigma \in \Xi_{k-\ell}^{V\setminus \{x,y\}}}    \sum_{\{\zeta,\zeta'\} \sk{\in} \sk{\Omega_{x,y}^{\ell,m,\sigma}}} \nabla^2f(\zeta,\zeta')   \fstop
\end{equation}
Thus, to conclude the proof of \eqref{eq:WTS-xy}, according to \eqref{eq:Diri-nabla-dec} and \eqref{eq11}, we only need to count the number of \textit{overlaps}, i.e., the number of triples $(\ell,m,\sigma)$ in \eqref{eq11} that produce the same Dirichlet summand $\nabla^2f(\eta,\eta-\delta_z+\delta_w)$ in \eqref{eq:Diri-nabla-dec}. 

To this end, we fix $x<y$, the sequence $x=x_0, x_1, \dots, x_t=y$ with $t\in \bbr{1}{{\rm diam}(G)}$, and a triple $(\eta,z,w)$ in \eqref{eq:Diri-nabla-dec}, and \sk{then} count the number of triples $(\ell,m,\sigma)$ that produce the term $\nabla^2f(\eta,\eta-\delta_z + \delta_w)$ in \eqref{eq11} by distinguishing three cases.\medskip

\noindent {\bf (Case 1)} There exists no $s \in \bbr{1}{t}$ for which $\{ x_{s-1},x_s \} = \{ z,w \}$:

Clearly, in this case 	 the overlap is \textit{zero}.\medskip

\noindent {\bf (Case 2)} There exists $s\in \bbr{1}{t}$ for which $( x_{s-1},x_s )= (z,w)$:

By the minimal property of the path length $t$, there is no other $s' \in \bbr{1}{t}$ such that $\{x_{s'-1},x_{s'}\}=\{z,w\}$, thus we may only focus on the particle jump $z \to w$. First, we observe that if $t=1$, then the overlap is at most \textit{one}. Thus,  we only focus on the case 	 $t \ge 2$. We further divide into three subcases according to the type --- {\bf [I]}, {\bf [II]} and {\bf [III]} --- of the particle jump $z \to w$.

\begin{itemize}
\item {\bf (Case 2.I)} $\eta \to \eta-\delta_z + \delta_w$ belongs to type {\bf [I]}: 

In this case, recalling that the overall mechanism is to move a single particle from $x$ to $y$ along the path, we obtain 
\begin{equation}
\eta= \sigma_{m,\ell-m}^{xy} + ( \delta_z - \delta_x ) \fstop
\end{equation}
This observation characterizes $(\ell,m,\sigma)$ uniquely from the configuration $\eta$. Hence, in this subcase, there is exactly \textit{one} overlap.\medskip

\item {\bf (Case 2.II)} $\eta \to \eta-\delta_z + \delta_w$ belongs to type {\bf [II]}: 

Recall that the mechanism in Section \ref{sec4.3} is further divided into types {\bf (F)}, {\bf (S)} and {\bf (B)}, where type {\bf (B)} is impossible here since the particle jumps forward. Moreover, this {\bf [II]}-mechanism is happening in $x_{s'}, \dots, x_{s-1},x_s,\dots, x_{s''}$ with $s' \le s-1 < s \le s''$ such that\footnote{If $x_{s'}=x$ (resp.\ $x_{s''}=y$), then condition $\sigma_{x_{s'}} \ge 1$ (resp.\ $\sigma_{x_{s''}} \ge 1$) should be removed.}
\begin{equation}
\sigma_{x_{s'}} \ge 1 \comma \quad \sigma_{x_{s'+1}} = \cdots = \sigma_{x_{s''-1}} = 0  \quad \text{and} \quad \sigma_{x_{s''}} \ge 1 \fstop
\end{equation}
First, suppose that $x_{s'}\ne x$. If the jump $\eta \to \eta-\delta_z + \delta_w$ is of type {\bf (F)}, then
\begin{align}
&\eta + ( \eta_w \delta_z - \eta_w \delta_w ) \\
&\qquad= \sigma_{m,\ell-m}^{xy} + (\delta_{x_{s'}}  - \delta_x) + ( (\sigma_{x_{s'}}+1) \delta_z - (\sigma_{x_{s'}}+1) \delta_{x_{s'}} )  \fstop
\end{align}
\sk{Note that the jump $\eta \to \eta-\delta_z + \delta_w$ happens within the $\eta_z + \eta_w$ jumps of the same number of particles from $z$ to $w$. Thus, the left-hand side configuration $\eta + (\eta_w \delta_z - \eta_w \delta_w)$ detects the moment right before when the particle jumps $z \to w$ are about to happen. Then, the right-hand side configuration is the corresponding configuration represented in terms of $(\ell,m,\sigma)$ due to the following logic. First, from $\sigma_{m,\ell-m}^{xy}$, a particle would have traveled from $x$ to $x_{s'}$, giving the contribution $\delta_{x_{s'}} - \delta_x$. Then, the whole $\sigma_{x_{s'}}+1$ particles at $x_{s'}$ would have traveled all together from $x_{s'}$ to $z$, which gives the contribution $(\sigma_{x_{s'}}+1)(\delta_z - \delta_{x_{s'}})$. This instance is exactly right before when the jumps $z \to w$ are initiated, giving the equality in the display. Thus, $\sigma_{m,\ell-m}^{xy}$ has a unique representation in terms of $x,y,\eta,z,w$, thus the triple $(\ell,m,\sigma)$ is determined uniquely. More precisely, $m$ is exactly the number of particles at $x$, $\ell$ is the number of particles at both $x$ and $y$, and $\sigma$ is the remaining configuration on $V \setminus \{x,y\}$.}
\sk{On the other hand,} if $\eta \to \eta-\delta_z +\delta_w$ is of type {\bf (S)}, then $s=s''$, and we have
\begin{equation}
\eta = \sigma_{m,\ell-m}^{xy} + (\delta_{x_{s'}}  - \delta_x) + ( (\sigma_{x_{s'}}+1) \delta_z - (\sigma_{x_{s'}}+1) \delta_{x_{s'}} ) \fstop
\end{equation}
\sk{Here, both sides detect the instance right before when the single jump $z \to w$ occurs, which is clearly $\eta$ in the left-hand side and also the configuration in the right-hand side by the same logic as explained above. Hence, the triple $(\ell,m,\sigma)$ is again determined uniquely in this case.} Thus, we have two possibilities of triples $(\ell,m,\sigma)$. 
Now, suppose that $x_{s'}=x$. Then, we may similarly detect two possibilities:
\begin{equation}
\eta + ( \eta_w \delta_z - \eta_w \delta_w ) = \sigma_{m,\ell-m}^{xy} + ( m \delta_z - m \delta_{x} ) 
\end{equation}
for type {\bf (F)}, and
\begin{equation}
\eta = \sigma_{m,\ell-m}^{xy} + ( m \delta_z - m \delta_{x} )
\end{equation}
for type {\bf (S)}. They both yield a unique triple $(\ell,m,\sigma)$. Therefore, we conclude that {\bf (Case 2.II)} gives rise to at most \textit{two}  overlaps.\medskip

\item {\bf (Case 2.III)} $\eta \to \eta-\delta_z + \delta_w$ belongs to type {\bf [III]}: 

As done in {\bf (Case 2.II)}, we fix $x_{s'} , \dots , x_{s-1} , x_s , \dots , x_{s''}$ on which this {\bf [III]}-mechanism takes place. A forward jump in type {\bf [III]} is of one of the following three types: initial distribution of $i \in \bbr{1}{\widetilde{m}}$ particles $x_{s'} \to x_{s'+1}$ (downward arrows in Figure \ref{fig3}), movement of $i$ particles in the bulk (\sk{double horizontal} arrows in the left \sk{half} of Figure \ref{fig3}), or the single particle jump $x_{s''-1} \to x_{s''}$ (\sk{single} horizontal arrows in the middle part of Figure \ref{fig3}). Each type determines a triple $(\ell,m,\sigma)$ uniquely, as thoroughly explained in {\bf (Case 2.II)}. Thus, we omit the details here and conclude that no more than \textit{three} overlaps are possible in {\bf (Case 2.III)}.\medskip

\end{itemize}
Collecting these three subcases, {\bf (Case 2)} admits at most \textit{six} overlaps.

\medskip

\noindent {\bf (Case 3)} There exists $s\in \bbr{1}{t}$ for which $( x_{s-1},x_s )= (w,z)$:

The analysis in this third case is almost identical to the one done in {\bf (Case 2)}. Thus, we choose not to repeat the tedious computations and record here that, also in this case, at most \textit{six} overlaps are possible in {\bf (Case 3)}.\medskip

Finally,  given $x<y$ and $(\eta,z,w)$, collecting {\bf (Case 1)}, {\bf (Case 2)} and {\bf (Case 3)} ensure that there are at most \textit{six} possible overlaps of triples $(\ell,m,\sigma)$. Therefore, by \eqref{eq:Diri-nabla-dec}, we deduce
\begin{equation}
\sum_{\ell=1}^k \sum_{m=1}^\ell \sum_{\sigma \in \Xi_{k-\ell}^{V \setminus \{x,y\} }} \sum_{\{\zeta,\zeta'\} \sk{\in} \sk{\Omega_{x,y}^{\ell,m,\sigma}}} \nabla^2f(\zeta,\zeta') \le 6 \, \cE \fstop
\end{equation}
Combining this with \eqref{eq11}, the proof of \eqref{eq:WTS-xy} is now completed.
\end{proof}

\begin{remark}\label{rem:new-eps-to-zero}
The full combination of the three types {\bf [I]}, {\bf [II]} and {\bf [III]} is needed to obtain the correct dependence on $\alpha_{\rm min}$ in Theorem \ref{th:key-ing}. Alternatively, we may  combine only two of those types (that is, {\bf [I]} $+$ {\bf [II]} or {\bf [I]} $+$ {\bf [III]}) to obtain different bounds. Here, we briefly record these new results for potential  applications in future works.

Recall that we collected \eqref{eq:2.1-1-1}, \eqref{eq:2.1-1-2}, and \eqref{eq:2.1-1-3} to obtain the exact bound in Lemma \ref{lem:4.3}. If we bound ${\alpha_x \sum_{j=1}^{m-1} \frac1j}$ from above by ${\alpha_{\rm max} \tonde{1+\log k}}$ rather than by $1$,   we obtain an alternative bound
\begin{equation}
\sk{\nabla_K^2 f ( \sigma_{m,\ell-m}^{xy} ; x,y) } \le \sk{3 \, e^{\alpha_{\rm max}(1+\log k)} } \, \frac {\sk{m\tonde{\alpha_y + \ell-m}  t }} { c_{\rm min}\, \alpha_{\rm min}\, \alpha_{\rm ratio} }  \left( \sum_{\{\zeta,\zeta'\} \sk{\in} \Omega} \nabla^2f(\zeta,\zeta') \right) \comma
\end{equation}
which does not require \eqref{eq:cond-few}. Combining this inequality with Lemma \ref{lem:4.2} and applying the same logic explained in that subsection, we get
\begin{equation}
\cE_{K} \le \frac{ C \, e^{\alpha_{\rm max}(1+\log k)} \, k \tonde{\alpha_{\rm max}+k-1} |V|^2 \,  {\rm diam}(G) \, 2^{\alpha_{\rm max} }}{\alpha_{\rm min} \, c_{\rm min} \, \alpha_{\rm ratio} }   \, \cE \comma
\end{equation}
\sk{where $C>0$ is a global constant.}
For simplicity, if we take $\alpha=\eps\hat \alpha$ as in \eqref{eq:alpha-eps-beta}, take $\eps\to 0$, and \sk{toss the dependence on $G$ and $\hat\alpha$ to the constants}, this reduces to	
\begin{equation}\label{cor:eps-to-zero-1}
\cE_K \le \frac{ C_{G,\hat\alpha} \, e^{C_{G,\hat\alpha} \,\eps \log k} \, k^2}{\eps} \, \cE \qquad \text{as} \ \eps \to 0 \comma
\end{equation}
\sk{where $C_{G,\hat\alpha}>0$ now depends on $G,\hat\alpha$ but not on $k,\eps$.}
Similarly, if we do not lower bound $\alpha_x \sum_{j=1}^{m-1} \frac1j$ in \eqref{eq6} by $1$ and repeat the same logic as above, we obtain a different bound
\begin{equation}
\cE_{K} \le \frac{ C \, k \tonde{\alpha_{\rm max}+k-1} |V|^2 \,  {\rm diam}(G) \, 6^{\alpha_{\rm max} }}{\alpha_{\rm min}^2 \, c_{\rm min} \, \alpha_{\rm ratio} \log k }   \, \cE \comma
\end{equation}
which then implies, in the asymptotic regime \eqref{eq:alpha-eps-beta},
\begin{equation}\label{cor:eps-to-zero-2}
\cE_K \le \frac{ C_{G,\hat\alpha} \, k^2}{\eps^2 \log k} \, \cE \qquad \text{as} \ \eps \to 0 \comma
\end{equation}
\sk{with different constants $C>0$ (global) and $C_{G,\hat\alpha}>0$ (depends only on $G,\hat\alpha$).}
\end{remark}

\section{Proofs of Theorems \ref{th:gap-asymptotics1} and \ref{th:failure-gap}}\label{sec5}
In this section, we prove  Theorems \ref{th:gap-asymptotics1} and \ref{th:failure-gap}, following the outline of Section \ref{sec:outline-2}. The basic starting idea is to split the infinitesimal generator $L_{G,\alpha,k}$ into two parts: a slow part and a fast part. In view of this  decomposition,  standard limit theorems for slow--fast systems ensure that, in the small-diffusivity limit, $\rm SIP$  is well approximated by  the slow dynamics, after a suitable projection/thermalization according to the fast one. Understanding how and when this projection affects ${\rm SIP}$'s spectral gap is the main non-trivial task of this section.

Recall from \eqref{eq:alpha-eps-beta} that we assume $\alpha = \eps \hat{\alpha}$, where $\hat{\alpha}=(\hat{\alpha}_x)_{x\in V}$ are fixed site weights  and $\eps$ tends to $0$. Finally, since the graph $G$ is fixed all throughout the section, we \fs{often} drop it from the notation, and simply write, e.g., ${\rm SIP}_k(\alpha)={\rm SIP}_k(G,\alpha)$ and $L_{\alpha,k}=L_{G,\alpha,k}$.

\subsection{Slow and fast dynamics}\label{sec5.1} Fix $k\ge 2$, and consider ${\rm SIP}_k(\eps\hat \alpha)$ with time sped up by a factor $\eps^{-1}$. Recalling \eqref{eq:gen-conservative}, the corresponding
  generator is then simply given by $\eps^{-1}L_{\alpha,k}= \eps^{-1}L_{\eps \hat \alpha,k}$.
By separating the $\eps$-dependent terms from the rest, we obtain
\begin{equation}\label{Ak-Bk-dec}
	\eps^{-1} L_{\eps \hat \alpha,k}= A_{\hat\alpha,k} +\eps^{-1} B_k\comma
\end{equation}
where, for all $f \in \R^{\Xi_k}$ and $\eta \in \Xi_k$,
\begin{align}\label{eq:Ak}
A_{\hat\alpha,k} f(\eta)&\eqdef \sum_{x,y\in V}c_{xy}\, \eta_x \hat\alpha_y \left ( {f(\eta-\delta_x+\delta_y)-f(\eta)} \right ) \comma\\
\label{eq:Bk}
B_k f(\eta)&\eqdef \sum_{x, y \in V}c_{xy}\, \eta_x \eta_y \left(f(\eta-\delta_x+\delta_y)-f(\eta)\right)\fstop
\end{align}
Let $(\cA_{\hat\alpha,k}(t))_{t\ge0}$ and $(\cB_k(t))_{t\ge0}$ denote the Markov chains in $\Xi_k$ generated by $A_{\hat\alpha,k}$ and $B_k$, respectively. More in detail, $(\cA_{\hat\alpha,k}(t))_{t\ge0}$ describes $k$ independent particles on $G$, while $(\cB_k(t))_{t\ge0}$ is a particle system with absorbing set
\begin{equation}\label{Omegak-def}
\Omega_k\eqdef \left\{\eta\in \Xi_k: c_{xy}\, \eta_x\eta_y=0 \ \text{for all}\  x, y \in V\right\} \comma
\end{equation}
and transient set $\Delta_k\eqdef \Xi_k\setminus \Omega_k$, i.e.,
\begin{equation}\label{Deltak-def}
\Delta_k\eqdef  \left\{\eta\in \Xi_k: c_{xy}\, \eta_x\eta_y>0\ \text{for some}\ x, y \in V\right\}\fstop
\end{equation}
Let $\Pi_k: \R^{\Xi_k}\to \R^{\Xi_k}$ denote the $B_k$-harmonic projection operator, i.e., 
\begin{equation}
\Pi_k f\eqdef \lim_{t\to \infty} e^{t B_k}f \comma \qquad f \in \R^{\Xi_k}\fstop
\end{equation}
Remark that the range of $\Pi_k$, written as $\cR(\Pi_k)\subseteq \R^{\Xi_k}$, is a $|\Omega_k|$-dimensional subspace. \fs{More precisely, since $\Omega_k$ is the absorbing set of $B_k$, one has, for all $f\in \R^{\Xi_k}$, $\Pi_k f=f$ on $\Omega_k$, and the values of $\Pi_k f$ on $\Delta_k$ are fully determined by the absorption probabilities on $\Omega_k$. Thus, restriction to $\Omega_k$ gives a canonical linear isomorphism $\R^{\Xi_k}\supseteq\cR(\Pi_k)\cong \R^{\Omega_k}$, and $\cR(\Pi_k)$ consists of $B_k$-harmonic extensions to $\R^{\Xi_k}$ of functions in $\R^{\Omega_k}$.}

The following result builds on a powerful theorem \cite[Theorem 2.1]{kurtz_limit_theorem_1973} (see also \cite[\S1, Theorem 7.6]{ethier_kurtz_1986_Markov}) on limit theorems for Markovian slow--fast systems.

\begin{proposition}\label{lem:kurtz}
The operator \fs{${\mathscr G}_{\hat\alpha,k}\eqdef\Pi_k A_{\hat\alpha,k}|_{\cR(\Pi_k)}:\cR(\Pi_k)\to\cR(\Pi_k)$} is an infinitesimal generator  on $\cR(\Pi_k) \subseteq \R^{\Xi_k}$. Moreover,  we have the following  convergence:
\begin{equation}\label{eq:conv-kurtz}
\lim_{\eps\to 0} \max_{\eta \in \Xi_k}\left|e^{t\tttonde{A_{\hat \alpha,k}+\eps^{-1}B_k}}  f(\eta) - e^{t\fs{{\mathscr G}_{\hat\alpha,k}}} \Pi_k f(\eta) \right| = 0 \comma\qquad t>0\comma f \in \R^{\Xi_k} \fstop
\end{equation} 
\end{proposition}
\begin{proof}In this proof, we write $P_t^\eps = e^{t\tttonde{A_{\hat \alpha,k}+\eps^{-1}B_k}}$. The first claim and a convergence as in \eqref{eq:conv-kurtz} with $P_t^\eps f(\eta)$ replaced by $P_t^\eps \Pi_k f(\eta)$ are the content of \cite[\S1, Theorem 7.6]{ethier_kurtz_1986_Markov}. In order to obtain the desired claim (i.e., remove the projection operator $\Pi_k$), it suffices to prove that
\begin{equation}
\lim_{\eps\to 0} \max_{\eta\in \Xi_k} \abs{P_t^\eps \Pi_k f(\eta)-P_t^\eps f(\eta)}=0\comma\qquad t>0\fstop
\end{equation}
For this purpose, fix $f\in \R^{\Xi_k}$, and observe that
\begin{equation}
\Pi_k f(\eta)= f(\eta)\comma \qquad  \eta \in \Omega_k\comma
\end{equation}
because $(\cB_k(t))_{t\ge0}$ has $\Omega_k$ as absorbing states. Hence, since $\Pi_k$ is a contraction, we get
\begin{equation}
\abs{P_t^\eps \Pi_k f(\eta)-P_t^\eps f(\eta)} \le
\sum_{\xi \in \Delta_k } p_t^\eps(\eta,\xi)\abs{\Pi_k f(\xi)-f(\xi)}
\le \ttonde{2\max_{\xi\in \Xi_k}\abs{f(\xi)} }\, p_t^\eps(\eta,\Delta_k ) \comma
\end{equation}
where $p_t^\eps(\emparg,\emparg)$ denotes the transition kernel of the $\eps^{-1}$-sped up ${\rm SIP}_k(\eps\hat \alpha)$ at time $t$.
 Since
\begin{equation}
\lim_{\eps \to 0} p_t^\eps (\eta, \Delta_k)= 0\comma\qquad \eta \in \Xi_k\comma t>0 \comma
\end{equation}
 (see Proposition \ref{lem:fin-dim-conv} below for the full statement and proof of this fact), the proof of the proposition is concluded.
\end{proof}

Fix $t> 0$, and recall that $e^{t\tttonde{A_{\hat \alpha,k}+\eps^{-1}B_k}}$ has all real eigenvalues, just one equal to $1$, while all the others lying in the interval $(0,1)$.  Further,  Proposition \ref{lem:kurtz} ensures that all  eigenvalues (with multiplicity) of $e^{t\tttonde{A_{\hat \alpha,k}+\eps^{-1}B_k}}$ converge to those of $e^{t\fs{{\mathscr G}_{\hat\alpha,k}}}\fs{\Pi_k}$. \fs{Here, ${\mathscr G}_{\hat\alpha,k}$ acts on $\cR(\Pi_k)$, while $e^{t{\mathscr G}_{\hat\alpha,k}}\Pi_k$ is regarded as an operator on $\R^{\Xi_k}$. On $\cR(\Pi_k)$, this operator coincides with $e^{t{\mathscr G}_{\hat\alpha,k}}$, whereas on ${\rm Ker}(\Pi_k)$ it is identically zero.} In particular, the spectrum of $e^{t\fs{{\mathscr G}_{\hat\alpha,k}}}\fs{\Pi_k}$ entirely lies in $[0,1]$: \fs{one eigenvalue is $1$,} $|\Omega_k|-1$  eigenvalues \fs{lie in $(0,1)$}, while the  eigenvalue $0$ has multiplicity \fs{${\rm dim}({\rm Ker}(\Pi_k))= |\Xi_k|-|\Omega_k|=|\Delta_k|$}.

As a consequence, the second-largest eigenvalue  of $e^{t\tttonde{A_{\hat \alpha,k}+\eps^{-1}B_k}}$ converges to the second-largest eigenvalue of $ e^{t \fs{{\mathscr G}_{\hat\alpha,k}}}\fs{\Pi_k}$ \fs{on $\cR(\Pi_k)\subset \R^{\Xi_k}$}, which is given by $e^{-t w_k(\hat\alpha)}$, where
\begin{equation}\label{wk-def}
w_k(\hat\alpha)\eqdef 	\	 \text{the second-smallest eigenvalue of}\ \fs{-{\mathscr G}_{\hat\alpha,k}}:\cR(\Pi_k)\to \cR(\Pi_k) \fstop
\end{equation}
Thus, we proved the following claim. (Recall that ${\rm gap}_k(\eps \hat \alpha)={\rm gap}_k(G,\eps\hat \alpha)$.)
\begin{corollary}\label{lem:5.2}
For all integers $k\ge 2$, we have
\begin{equation}
\lim_{\eps \to 0} \frac{{\rm gap}_k( \eps \hat\alpha)}{\eps}= w_k(\hat\alpha)\fstop
\end{equation} 
\end{corollary}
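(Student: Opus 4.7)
The plan is to upgrade the semigroup convergence in Proposition~\ref{lem:kurtz} to a convergence of eigenvalues, and then read off the limiting spectral gap from the explicit form of the limit operator. Fix $t>0$ and set
\begin{equation}
M_\eps\eqdef e^{t(A_{\hat\alpha,k}+\eps^{-1}B_k)} \qquad \text{and} \qquad M_0\eqdef e^{t\,\Pi_k A_{\hat\alpha,k}}\,\Pi_k\fstop
\end{equation}
Since $\R^{\Xi_k}$ is finite-dimensional, the pointwise convergence in \eqref{eq:conv-kurtz} upgrades immediately to convergence of $M_\eps$ to $M_0$ in operator norm. A standard continuity-of-spectrum argument (the roots of the characteristic polynomial depend continuously on the entries of the matrix) then ensures that the $|\Xi_k|$ eigenvalues of $M_\eps$, counted with algebraic multiplicity, converge as a multiset in $\mathbb{C}$ to those of $M_0$.

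Next, I identify both spectra explicitly. Since ${\rm SIP}_k(\eps\hat\alpha)$ is reversible, the eigenvalues of $-L_{\eps\hat\alpha,k}=-\eps(A_{\hat\alpha,k}+\eps^{-1}B_k)$ are real and non-negative, hence the eigenvalues of $M_\eps$ are of the form $\{e^{-t\eps^{-1}\lambda}\}$ as $\lambda$ ranges over the spectrum of $-L_{\eps\hat\alpha,k}$; in decreasing order, the largest is $1$ and the second largest is $e^{-t\mu_2^\eps}$, with $\mu_2^\eps\eqdef\eps^{-1}{\rm gap}_k(\eps\hat\alpha)$. For $M_0$, the idempotent $\Pi_k$ projects onto the $|\Omega_k|$-dimensional subspace $\cR(\Pi_k)$ and vanishes on its $|\Delta_k|$-dimensional complementary kernel, so in a basis adapted to this direct-sum decomposition $M_0$ is block-diagonal, and its spectrum equals $\{0\}$ with multiplicity $|\Delta_k|$ together with $\{e^{-t\nu}\}$ where $\nu$ runs over the eigenvalues of $-\Pi_kA_{\hat\alpha,k}$ restricted to $\cR(\Pi_k)$. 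The largest eigenvalue of $M_0$ is $1$ (coming from the eigenvalue $\nu=0$, attained on constants, which lie in $\cR(\Pi_k)$) and the second largest, in decreasing order, is precisely $e^{-tw_k(\hat\alpha)}$ by the definition \eqref{wk-def}.

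Matching the two descriptions via ordered-spectrum convergence then yields $e^{-t\mu_2^\eps}\to e^{-tw_k(\hat\alpha)}$, and taking logarithms produces the desired $\mu_2^\eps\to w_k(\hat\alpha)$, i.e., $\eps^{-1}{\rm gap}_k(\eps\hat\alpha)\to w_k(\hat\alpha)$. The one mildly delicate point will be tracking the matching of the second-largest eigenvalue across the limit when eigenvalue multiplicities may collide (e.g., if $0$ were not a simple eigenvalue of $-\Pi_kA_{\hat\alpha,k}$ on $\cR(\Pi_k)$); however, this is handled automatically by sorting in decreasing order and invoking the very definition of $w_k(\hat\alpha)$ as the second-smallest eigenvalue of $-\Pi_kA_{\hat\alpha,k}$, so no additional information about the ergodic structure of the limiting metastable chain is needed to close the argument at this stage.
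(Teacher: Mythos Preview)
Your argument is correct and follows essentially the same route as the paper: upgrade the uniform semigroup convergence of Proposition~\ref{lem:kurtz} (automatic in finite dimension) to multiset convergence of eigenvalues, identify the spectrum of the limit operator $e^{t\Pi_kA_{\hat\alpha,k}}\Pi_k$ as $\{0\}^{|\Delta_k|}\cup\{e^{-t\nu}:\nu\in\mathrm{spec}(-\Pi_kA_{\hat\alpha,k}|_{\cR(\Pi_k)})\}$ via the direct sum $\R^{\Xi_k}=\cR(\Pi_k)\oplus\ker\Pi_k$, and match second-largest eigenvalues. Your explicit mention of the block-diagonal structure and your remark that the definition of $w_k(\hat\alpha)$ absorbs any possible multiplicity at $\nu=0$ make the argument slightly more self-contained than the paper's paragraph, but there is no substantive difference.
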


In the next subsection, we analyze more in detail the limiting generator \fs{${\mathscr G}_{\hat\alpha,k}=\Pi_k A_{\hat \alpha,k}|_{\cR(\Pi_k)}$} 	 and its block-triangular structure.

\subsection{Recurrent and transient states}\label{sec5.2}
As already proved in Proposition \ref{lem:kurtz},  
\fs{${\mathscr G}_{\hat\alpha,k}=\Pi_k A_{\hat\alpha,k}|_{\cR(\Pi_k)}: \cR(\Pi_k)\to \cR(\Pi_k)$ may be interpreted as} the infinitesimal generator of a Markov chain  on $\Omega_k$, hereafter referred to as $(\cM_{\hat\alpha,k}(t))_{t\ge0}$. 
\fs{For notational simplicity, instead of analyzing directly the operator
 ${\mathscr G}_{\hat\alpha,k}$ on the $|\Omega_k|$-dimensional subspace $\cR(\Pi_k)$ of $\R^{\Xi_k}$, 
	we identify $\mathcal R(\Pi_k)$ with $\mathbb R^{\Omega_k}$ by restriction to $\Omega_k$,
	and analyze the transformed rate matrix --- defined as $M_{\hat\alpha,k}$ in  \eqref{eq:M} or \eqref{eq:iota-ahat-M-proof} below --- on $\mathbb R^{\Omega_k}$. Let us now 	describe this rate matrix in detail.
	
	Recall the chains $(\cA_{\hat\alpha,k}(t))_{t\ge 0}$ and $(\cB_k(t))_{t\ge 0}$ introduced at the beginning of Section \ref{sec5.1}.
	Let $\mathbf r^\cA_{\hat\alpha,k}$ be the transition-rate function of the process
	$(\cA_{\hat\alpha,k}(t))_{t\ge0}$. Moreover, let
	$\mathbf P^{\cB}_\zeta$ denote the law of the process
	$(\cB_k(t))_{t\ge0}$ started from $\zeta\in\Xi_k$, and define
	\begin{equation}
		\tau_\xi
		\eqdef
		\inf\{t\ge0:\mathcal B_k(t)=\xi\}
		\comma\qquad
		\tau_{\Omega_k}
		\eqdef
		\min_{\xi\in\Omega_k}\tau_\xi
		\fstop
	\end{equation}
	The effective process on $\Omega_k$, denoted by
	$(\cM_{\hat\alpha,k}(t))_{t\ge0}$, has non-diagonal transition rates
	\begin{equation}\label{eq:rM-def}
		\mathbf r^\cM_{\hat\alpha,k}(\eta,\xi)
		\eqdef
		\sum_{\zeta\in\Xi_k}
		\mathbf r^\cA_{\hat\alpha,k}(\eta,\zeta)\,
		\mathbf P^{\mathcal B}_\zeta
		\left[
		\tau_\xi=\tau_{\Omega_k}
		\right]
		\comma\qquad
		\eta,\xi\in\Omega_k\comma \eta\neq\xi
		\fstop
	\end{equation}
	The diagonal entries are chosen so that row sums vanish. We write
	\begin{equation}\label{eq:M}
		M_{\hat\alpha,k}
		:=
		\tonde{
		\mathbf r^\cM_{\hat\alpha,k}(\eta,\xi)
	}_{\eta,\xi\in\Omega_k}\in \R^{\Omega_x\times \Omega_k}
	\end{equation}
	for the corresponding square rate matrix of size $|\Omega_k|$.}

From the form of these rates, we immediately derive a classification of recurrent and transient states for this Markov chain; this will lead to a further characterization of the spectral gap $w_k(\hat \alpha)$ of \fs{${\mathscr G}_{\hat \alpha,k}$}. We  collect this classification in {Proposition \ref{pr:meta} below}; its direct consequence on the spectral gap is the content of the subsequent corollary. Before that, let us introduce, for  every integer $m\ge 1$, 
\begin{equation}\label{Omegakm-def}
	\Omega_{k,m} \eqdef  \left\{ \eta \in \Omega_k :| \{ x \in V : \eta_x > 0 \} | = m \right\} \comma
\end{equation}
that is,  the set of configurations in $\Omega_k$ with $m$ separated stacks of particles (cf.\ \eqref{Omegak-def}). 
Clearly, $\Omega_{k,m}=\emp$ implies $\Omega_{k,m+\ell}=\emp$ for all $\ell \ge 1$. Further, we have $\Omega_{k,1}\neq \emp$,  $\Omega_{k,k+1}=\emp$, and, thus, $	\Omega_k =  \sqcup_{m=1}^\infty \Omega_{k,m} = \Omega_{k,1} \sqcup \Omega_{k,2} \sqcup \cdots \sqcup \Omega_{k,k}$.	

\fs{To make the limiting dynamics more transparent, we first discuss a simple example.}

\fs{\begin{example}[Three-site path: block structure]\label{ex:effective-chain-path3}
		Let $G$ be the path $\{1,2,3\}$, with conductances $c_{12}, c_{23}>0$ and $c_{13}=0$, and consider the
		limiting chain on $\Omega_k$ generated by $M_{\hat\alpha,k}$. This example illustrates
		the block structure described above.
		
		For $k=2$, the absorbing set of the fast dynamics is
		\begin{equation}
		\Omega_2
		=
		\Omega_{2,1}\sqcup\Omega_{2,2}\comma
		\qquad
		\Omega_{2,1}=\{2\delta_1,2\delta_2,2\delta_3\}\comma
		\qquad
		\Omega_{2,2}=\{\delta_1+\delta_3\}\fstop
		\end{equation}
		The block $\Omega_{2,1}$ is recurrent: it describes a single stack of two particles moving
		effectively as one random walk on $G$. The block $\Omega_{2,2}$ is transient: from
		$\delta_1+\delta_3$, a slow jump of either particle to the middle site creates a configuration
		in $\Delta_2$, after which the fast dynamics immediately thermalizes into the recurrent
		block $\Omega_{2,1}$,  never returning to $\Omega_{2,2}$. 
		
		For $k=3$, one similarly has
		\begin{equation}
		\Omega_3
		=
		\Omega_{3,1}\sqcup\Omega_{3,2}\comma
		\qquad
		\Omega_{3,1}=\{3\delta_1,3\delta_2,3\delta_3\}\comma
		\qquad
		\Omega_{3,2}
		=
		\{2\delta_1+\delta_3,\ \delta_1+2\delta_3\}\comma
		\end{equation}
		while $\Omega_{3,m}=\emp$ for all $m\ge 3$.
		Again, $\Omega_{3,1}$ is recurrent, while $\Omega_{3,2}$ is transient.  In contrast with the case $k=2$, however, a slow jump from a state in $\Omega_{3,2}$ to $\Delta_3$ need not be followed by a fast relaxation directly into $\Omega_{3,1}$: the fast dynamics may also reach the other state of $\Omega_{3,2}$. Indeed, starting, e.g., from $2\delta_1+\delta_3\in\Omega_{3,2}$, a slow jump of one of the two particles sitting at site $1$ to site $2$ yields $\delta_1+\delta_2+\delta_3\in\Delta_3$. The subsequent fast thermalization may either merge all particles into a single stack, hence reaching $\Omega_{3,1}$, or merge the particle at site $2$ with the one at site $3$, thereby reaching $\delta_1+2\delta_3\in\Omega_{3,2}$. In conclusion, for $k=3$, the transient block is already a genuine two-state sub-Markov chain.
\end{example}}

\begin{proposition}\label{pr:meta}
	
For the chain $(\cM_{\hat\alpha,k}(t))_{t\ge0}$, the following two claims hold true:
	\begin{enumerate}[(a)]

		\item \label{it:meta0}For every $m, m'\ge 1$ with $\Omega_{k,m}\neq \emp\neq \Omega_{k,m'}$ and $\eta\in \Omega_{k,m}$, we have 
	\begin{equation}
		{\bf r}_{\hat \alpha,k}^\cM(\eta,\xi) = 0\comma\quad \text{for all}\ \xi \in \Omega_{k,m'}, \ m'>m\comma 
	\end{equation}
	while, when $m\ge 2$, there exist $\eta^1,\ldots,\eta^\ell \in \Omega_{k,m}$ satisfying
	\begin{equation}
{\bf r}_{\hat \alpha,k}^\cM(\eta,\eta^1)\tonde{	\prod_{h=1}^{\ell-1}	{\bf r}_{\hat \alpha,k}^\cM(\eta^h,\eta^{h+1})} {\bf r}_{\hat \alpha,k}^\cM(\eta^\ell,\xi)>0\comma\quad \text{for some}\  \xi\in \Omega_{k,m'}, \ m'<m\fstop
	\end{equation}
	As a consequence, $\Omega_{k,m}$, $m\ge 2$, consists of transient states only.
	
		\item \label{it:meta} The chain $(\cM_{\hat \alpha,k}(t))_{t\ge0}$ is irreducible on $\Omega_{k,1}\neq \emp$, and the position of the single stack evolves as ${\rm RW}(\hat \alpha)$.
		\end{enumerate}
\end{proposition}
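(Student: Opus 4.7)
The plan is to read off the structure of the chain $(\cM_{\hat\alpha,k}(t))_{t\ge 0}$ directly from the rate formula \eqref{eq:rM-def}, leveraging a crucial monotonicity property of $\cB_k$: since its jump rates $c_{xy}\eta_x\eta_y$ vanish whenever either endpoint is empty, the support $\{z : \cB_k(t)_z > 0\}$ is non-increasing along any $\cB_k$-trajectory. This single observation will drive both halves of the statement.

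Part (b) is essentially a direct computation. Observing that $\Omega_{k,1} = \{k\delta_x : x \in V\}$, the only $\zeta$ with ${\bf r}_{\hat\alpha,k}^\cA(k\delta_x,\zeta) > 0$ are of the form $(k-1)\delta_x + \delta_y$ for a neighbor $y$ of $x$, and from such a $\zeta$ the dynamics $\cB_k$ reduces to a birth-death chain on $\{0,1,\ldots,k\}$ (indexing the population at $x$) with symmetric rates $c_{xy}\,j(k-j)$ in both directions and absorbing endpoints $\{0,k\}$. The standard gambler's-ruin identity for symmetric walks yields an absorption probability of $1/k$ at $j=0$ starting from $j=k-1$, giving
\begin{equation}
{\bf r}_{\hat\alpha,k}^\cM(k\delta_x, k\delta_y) \;=\; c_{xy}\,k\,\hat\alpha_y \cdot \tfrac{1}{k} \;=\; c_{xy}\,\hat\alpha_y,
\end{equation}
which matches exactly the jump rates of ${\rm RW}(\hat\alpha)$. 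Irreducibility on $\Omega_{k,1}$ then follows from the connectedness of $G$.

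For the first claim in (a), I will combine the $\cB_k$-support monotonicity with careful bookkeeping of the preliminary $\cA$-jump. If ${\bf r}_{\hat\alpha,k}^\cM(\eta,\xi)>0$, there is an intermediate $\zeta = \eta - \delta_x + \delta_y$ with $\eta_x \ge 1$ and $c_{xy}>0$; since $\eta \in \Omega_k$ forces $\eta_y = 0$, the support of $\zeta$ has either $m$ sites (when $\eta_x=1$) or $m+1$ sites (when $\eta_x > 1$, in which case $x,y$ form an adjacent occupied pair in $\zeta$). The monotonicity of $\cB_k$ then ensures that the support of $\xi$ is contained in that of $\zeta$, and in the second case the constraint $\xi \in \Omega_k$ additionally forces at least one of $x, y$ to be empty; either way, $\xi$ has at most $m$ occupied sites, so $m' \le m$.

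The heart of the proof lies in the second claim of (a), for which I propose to induct on
\begin{equation}
d(\eta) \;\eqdef\; \min\{{\rm dist}_G(u,v) : u \ne v,\, \eta_u>0,\, \eta_v>0\} \;\ge\; 2,
\end{equation}
well-defined whenever $m \ge 2$. For the base case $d(\eta) = 2$, two occupied sites $u, v$ share an empty common neighbor $y$, and the slow jump $\eta \to \zeta = \eta - \delta_u + \delta_y$ activates $\cB_k$ on the three-site path-cluster $u\text{--}y\text{--}v$; an explicit $\cB_k$-trajectory migrating all particles from $\{u,y\}$ into $v$ absorbs at a configuration $\xi \in \Omega_{k,m-1}$, giving a one-step $\cM$-transition $\eta \to \xi$. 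For the inductive step $d(\eta) = d \ge 3$, I fix a shortest geodesic $u = z_0, z_1, \ldots, z_d = v$ between two occupied sites realising the minimum distance; by the minimality of $d$, the site $z_1$ cannot be adjacent to any occupied site of $\eta$ other than $u$ (otherwise one would produce a pair of occupied sites at distance $\le 2 < d$). This isolation lets me relocate the stack at $u$ entirely onto $z_1$ via one $\cM$-step --- either directly when $\eta_u=1$, or through the symmetric two-site $\cB_k$-chain on the isolated pair $(u,z_1)$ when $\eta_u\ge 2$ --- producing $\eta^1 \in \Omega_{k,m}$ with $d(\eta^1) = d-1$, and the induction closes. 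The main subtlety to handle with care is precisely this isolation argument: it is what guarantees that the fast dynamics does not accidentally merge the relocated stack with a third one, and it is what forces us to track the minimal pairwise distance $d(\eta)$ rather than attempting to route stacks along arbitrary paths in $G$.
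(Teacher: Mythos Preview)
Your proof is correct and follows essentially the same approach as the paper's. Part (b) uses the identical gambler's-ruin computation, while for part (a) the paper gives only a terse sketch (the $\cA$-step adds at most one stack, creating an adjacent pair which $\cB_k$ then merges; connectedness of $G$ yields the transience), whereas your induction on the minimum pairwise distance $d(\eta)$ and the isolation argument for $z_1$ flesh out precisely the details the paper leaves implicit.
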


\begin{proof} \ref{it:meta0} The desired claim follows from the connectedness of the graph $G$, and the observation that the rates ${\bf r}_{\hat\alpha,k}^\cM(\emparg,\emparg)$ in \eqref{eq:rM-def} describe  the following two subsequent mechanisms:  a step of the chain $(\cA_{\hat \alpha,k}(t))_{t\ge 0}$ increases the number of stacks at most by one, creating, in this case, at least two  neighboring stacks;  the chain $(\cB_k(t))_{t\ge 0}$  forces all  neighboring particles to eventually merge into isolated (i.e., non-neighboring) stacks.

	\smallskip
\noindent	\ref{it:meta} Observe that, for all  $x,y,z \in V$ with $c_{xz}>0$, we have
\begin{equation}
	{\bf P}^{\cB}_{(k-1)\delta_x+\delta_z}\,[\tau_{k\delta_y}=\tau_{\Omega_{k,1}}] = \frac{\car_{y=z}}k + \frac{(k-1) \, \car_{y=x}}k\comma
\end{equation} 
because the trajectories of $(\cB_k(t))_{t\ge 0}$ started from $(k-1)\delta_x+\delta_z\in \Delta_k$ coincide with those of the symmetric random walk on  $\bbr{0}{k}$ started from $k-1$. Plugging this expression into \eqref{eq:rM-def}, we obtain, for $x \ne y$,
\begin{multline}
	{\bf r}_{\hat \alpha,k}^\cM(k\delta_x,k\delta_y)\\ = \sum_{z\in V} {\bf r}_{\hat \alpha,k}^\cA(k\delta_x,(k-1)\delta_x+\delta_z)\, {\bf P}_{(k-1)\delta_x+\delta_z}^\cB \, [\tau_{k\delta_y}=\tau_{\Omega_{k,1}}] = k\,c_{xy}\,\hat \alpha_y\,\frac1k = c_{xy}\,\hat \alpha_y\comma
\end{multline}
namely, the transition rates of ${\rm RW}(\hat \alpha)$. Irreducibility is then an obvious consequence of  that  of ${\rm RW}(\hat \alpha)$.
\end{proof}

\begin{remark}
	The claim in Proposition \ref{pr:meta}\ref{it:meta} is the finite-particle analogue of the metastable picture of ${\rm SIP}$, fully detailed in \cite{grosskinsky_redig_vafayi_dynamics_2013,bianchi_metastability_2017}.
\end{remark}

\begin{remark}\label{rem:complex}
		For $k\ge m\ge 2$, the dynamics on $\Omega_{k,m}\neq \emp$ encoded by $M_{\hat\alpha,k}$   goes as follows. All stacks evolve as independent ${\rm RW}(\hat \alpha)$ as long as their mutual distances are at least three. When two or more stacks reach  mutual distance two, then there is a positive rate for them to merge. These rates --- as well as the new positions of the stacks and their new compositions ---  depend on $G$ and $\hat \alpha$, but also non-trivially on $k\ge m$ and, when $k>m$, on the stacks' composition. For more details, see Section \ref{sec:irreducible-dec} below.
\end{remark}

The state-classification in Proposition \ref{pr:meta} may be equivalently restated as follows: after a suitable conjugation with a permutation matrix, $M_{\hat \alpha,k}$ is turned into a block lower triangular matrix with $M_{\hat \alpha,k,1},\ldots, M_{\hat \alpha,k,k}$ as diagonal blocks, where,
 for all $m\in \bbr{1}{k}$,	 
\begin{equation}\label{eq:Mkm}
	M_{\hat \alpha,k,m} \in \R^{|\Omega_{k,m}|\times|\Omega_{k,m}|}
\end{equation} is the submatrix of $M_{\hat \alpha,k}$ obtained from the restriction on configurations in $\Omega_{k,m}$ (see Figure \ref{fig6}). Since the eigenvalues of block triangular matrices  are the union of those of the diagonal blocks, it suffices to focus on the spectrum of the blocks $M_{\hat \alpha,k,m}$, $m\in \bbr{1}{k}$.
As already showed in the previous section, the eigenvalues of $-M_{\hat \alpha,k,m}$, $m\in \bbr{1}{k}$, are all real and non-negative.  Remark that:
\begin{itemize}
	\item for $m=1$, the spectrum of $M_{\hat \alpha,k,m}$ coincides with that of the generator of ${\rm RW}(\hat \alpha)$;
	\item  for $m\ge 2$ and $\Omega_{k,m}\neq \emp$, $M_{\hat \alpha,k,m}$ is the rate matrix of a sub-Markovian chain killed upon exiting $\Omega_{k,m}$; thus, all its eigenvalues are strictly negative.
\end{itemize}   
In view of these simple observations, we  register the following useful characterization of the spectral gap $w_k(\hat \alpha)$ defined in \eqref{wk-def}.

\begin{corollary}\label{cor:wk} For all $m\ge 2$ such that $\Omega_{k,m}\neq\emp$, define
	\begin{equation}\label{eq:lambda-k-m}
		\lambda_{k,m}(\hat \alpha)\eqdef\ \text{smallest eigenvalue of the matrix}\  -M_{\hat \alpha,k,m}\in \R^{|\Omega_{k,m}|\times|\Omega_{k,m}|} \comma
	\end{equation} whereas  $\lambda_{k,m}(\hat \alpha)\eqdef +\infty$ if $\Omega_{k,m}=\emp$. Then, we have
\begin{equation}\label{eq:gap-km-dec}
	w_k(\hat\alpha) = {\rm gap}_{\rm RW}(\hat\alpha) \wedge \min_{m \in \bbr{2}{k}} \lambda_{k,m}(\hat\alpha) \fstop
\end{equation}
\end{corollary}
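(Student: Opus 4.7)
The plan is to read off the spectrum of $M_{\hat\alpha,k}$ (equivalently, of $\Pi_k A_{\hat \alpha,k}$ on $\cR(\Pi_k)$) directly from the block triangular decomposition already identified right before the corollary, and then match the two contributions with the two terms in \eqref{eq:gap-km-dec}.

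First, I would make the block lower triangular structure fully precise. Order the states in $\Omega_k = \Omega_{k,1} \sqcup \Omega_{k,2} \sqcup \cdots \sqcup \Omega_{k,k}$ by increasing $m$, so that entries of $M_{\hat\alpha,k}$ are grouped into $(\Omega_{k,m},\Omega_{k,m'})$-blocks. By Proposition \ref{pr:meta}\ref{it:meta0}, the rates ${\bf r}_{\hat\alpha,k}^\cM(\eta,\xi)$ vanish whenever $\eta\in \Omega_{k,m}$, $\xi\in \Omega_{k,m'}$ with $m'>m$; hence, under this ordering, $M_{\hat\alpha,k}$ is block lower triangular with diagonal blocks precisely $M_{\hat\alpha,k,m}$, $m\in \bbr{1}{k}$, as defined in \eqref{eq:Mkm} (discarding empty blocks, which never occur for $m\le k$ since $\Omega_{k,1}\neq \emp$ and one may always split any stack into more stacks by following the graph). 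Since the characteristic polynomial of a block triangular matrix factorises as the product of the characteristic polynomials of its diagonal blocks, the spectrum of $M_{\hat\alpha,k}$ (counted with algebraic multiplicity) equals the union of the spectra of $M_{\hat\alpha,k,m}$, $m\in \bbr{1}{k}$.

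Next, I would identify each of these spectra. For $m=1$, Proposition \ref{pr:meta}\ref{it:meta} tells us that $M_{\hat\alpha,k,1}$ is, up to the canonical identification $k\delta_x\leftrightarrow x$ of $\Omega_{k,1}$ with $V$, the generator of ${\rm RW}(\hat\alpha)$; hence the eigenvalues of $-M_{\hat\alpha,k,1}$ are exactly those of this random walk's generator, the smallest being $0$ (simple, since $G$ is connected) and the second-smallest being ${\rm gap}_{\rm RW}(\hat\alpha)$. For each $m\in \bbr{2}{k}$ with $\Omega_{k,m}\neq \emp$, Proposition \ref{pr:meta}\ref{it:meta0} shows that from every $\eta\in \Omega_{k,m}$ there is a directed path of positive $\cM_{\hat\alpha,k}$-rates leading into some $\Omega_{k,m'}$ with $m'<m$; hence $M_{\hat\alpha,k,m}$ is a strictly sub-stochastic rate matrix of a killed chain, and all eigenvalues of $-M_{\hat\alpha,k,m}$ are strictly positive, with minimum equal by definition to $\lambda_{k,m}(\hat\alpha)$ in \eqref{eq:lambda-k-m}.

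Finally, I would assemble these two observations. Collecting the eigenvalues of all diagonal blocks, the eigenvalue $0$ of $-M_{\hat\alpha,k}$ is simple (it comes only from $m=1$), and the smallest nonzero eigenvalue is the minimum of the second-smallest eigenvalue of $-M_{\hat\alpha,k,1}$ and the smallest eigenvalues of $-M_{\hat\alpha,k,m}$ for $m\in \bbr{2}{k}$, i.e.,
\begin{equation}
w_k(\hat\alpha) = {\rm gap}_{\rm RW}(\hat\alpha) \wedge \min_{m\in \bbr{2}{k}}\lambda_{k,m}(\hat\alpha) \comma
\end{equation}
with the convention $\lambda_{k,m}(\hat\alpha)=+\infty$ whenever $\Omega_{k,m}=\emp$. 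This is exactly \eqref{eq:gap-km-dec}. There is no real obstacle here; the only point worth being careful about is to argue that the block triangular spectral decomposition determines the whole (complex) spectrum of $M_{\hat\alpha,k}$ with multiplicities --- and not just the real part --- so that the second-smallest real eigenvalue of $-M_{\hat\alpha,k}$ (which coincides with $w_k(\hat\alpha)$, since the spectrum of $-\Pi_k A_{\hat\alpha,k}$ is real and non-negative by Proposition \ref{lem:kurtz} and the limiting argument leading to \eqref{wk-def}) is indeed the minimum above.
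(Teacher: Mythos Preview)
Your proof is correct and follows the same line as the paper, which lays out the entire argument in the paragraph preceding the corollary (block lower triangular structure, spectrum as union of diagonal blocks, identification of the $m=1$ block with ${\rm RW}(\hat\alpha)$ and of the $m\ge 2$ blocks as strictly sub-Markovian). One small slip: your parenthetical remark that $\Omega_{k,m}\neq\emp$ for all $m\le k$ is false in general (e.g., on the complete graph $\Omega_{k,m}=\emp$ for every $m\ge 2$), but this is harmless since the convention $\lambda_{k,m}(\hat\alpha)=+\infty$ already takes care of empty blocks.
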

\fs{\begin{example}[Three-site path: eigenvalues]\label{ex:effective-chain-path3-gaps}
		We keep the notation of Example \ref{ex:effective-chain-path3}.
		
		For $k=2$, the only transient state is $\delta_1+\delta_3$. It exits the transient
		block as soon as one of the two particles jumps to site $2$, which happens at rate
		\begin{equation}
			p\eqdef \tonde{c_{12}+c_{23}}\hat\alpha_2\fstop
		\end{equation}
		Therefore,
		$
			\lambda_{2,2}(\hat\alpha)=p$.
		
		For $k=3$, the transient block is
		$
			\Omega_{3,2}
			=
			\{2\delta_1+\delta_3,\delta_1+2\delta_3\}$.
		Starting from $2\delta_1+\delta_3$, a slow jump of one of the two particles at site $1$
		to site $2$ occurs at rate $2c_{12}\hat\alpha_2$ and produces
		$\delta_1+\delta_2+\delta_3$. From this configuration, the fast dynamics reaches
		$\delta_1+2\delta_3$ with probability $c_{23}/(2(c_{12}+c_{23}))$. Hence, the effective jump rate from
		$2\delta_1+\delta_3$ to $\delta_1+2\delta_3$ is
		\begin{equation}
			q\eqdef
			2c_{12}\hat\alpha_2\frac{c_{23}}{2(c_{12}+c_{23})}
			=
			\frac{c_{12}c_{23}}{c_{12}+c_{23}}\hat\alpha_2\fstop
		\end{equation}
		By symmetry, the effective jump rate in the opposite direction is also $q$.	
		The effective killing rate from either transient state to the recurrent block is $p$.
		Indeed, starting again from $2\delta_1+\delta_3$, the slow jump of the particle at site
		$3$ to site $2$ contributes $c_{23}\hat\alpha_2$ to this killing rate. The slow jump of one of
		the two particles at site $1$ to site $2$ contributes instead
		$2c_{12}\hat\alpha_2\cdot\frac12=c_{12}\hat\alpha_2$, since from
		$\delta_1+\delta_2+\delta_3$ the fast dynamics reaches $\Omega_{3,1}$ with probability
		$1/2$. Thus, the total killing rate is
		$
			c_{23}\hat\alpha_2+c_{12}\hat\alpha_2=p$.
		Therefore, in the ordered basis
		$\tonde{2\delta_1+\delta_3,\delta_1+2\delta_3}$,
		\begin{equation}
			-M_{\hat\alpha,3,2}
			=
			\begin{pmatrix}
				p+q & -q \\
				-q & p+q
			\end{pmatrix}\fstop
		\end{equation}
		Its smallest eigenvalue is
		$\lambda_{3,2}(\hat\alpha)=p$, 
		the other eigenvalue being $p+2q>p$. Thus, in this example, the same quantity $p$
		gives the transient contribution to the limiting gap for both $k=2$ and $k=3$.
\end{example}}

The natural next step is to estimate the eigenvalue $\lambda_{k,m} (\hat\alpha)$ for each $k\ge 2$ and $m \in \bbr{2}{k}$, by bounding it from below by $\lambda_{2,2}(\hat \alpha)$. This task is carried out in the following two subsections. In the first one, we prove $\lambda_{k,m}(\hat \alpha)= \lambda_{m,m}(\hat \alpha)$; in the second one, we show $\lambda_{m,m}(\hat \alpha)\ge \lambda_{2,2}(\hat \alpha)$.

We conclude this subsection by observing that each submatrix $M_{\hat\alpha,k,m}$ in $\Omega_{k,m}$ is self-adjoint with respect to a  positive measure $\varsigma_{\hat\alpha,k,m}$ given in \eqref{eq:varsigma} below. On the one hand, this property is interesting and may become handy in future use, e.g., for building an $L^2$-theory for the corresponding continuum model. On the other hand, in our storyline, neither this property nor the precise form of the symmetrizing measure are  of primary importance. Thus, we only state the full statement  here,	 and postpone its proof to Appendix \ref{appenB}.
\begin{lemma}\label{lem:nu-self-adj}
For all $m\le k$, $M_{\hat\alpha,k,m}$ is self-adjoint as an operator on $L^2(\varsigma_{\hat\alpha,k,m})$, where 
\begin{equation}\label{eq:varsigma}
\varsigma_{\hat \alpha,k,m}(\eta)\eqdef \prod_{\substack{x\in V\\ \eta_x>0}}\frac{\hat \alpha_x}{\eta_x}\comma\qquad \eta \in \Omega_{k,m}\fstop
\end{equation}
\end{lemma}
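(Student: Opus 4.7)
The plan is to establish the detailed balance relation
\[\varsigma_{\hat\alpha,k,m}(\eta)\, {\bf r}_{\hat\alpha,k}^\cM(\eta,\xi) = \varsigma_{\hat\alpha,k,m}(\xi)\, {\bf r}_{\hat\alpha,k}^\cM(\xi,\eta), \qquad \eta,\xi\in\Omega_{k,m},\]
which is equivalent to the self-adjointness of $M_{\hat\alpha,k,m}$ on $L^2(\varsigma_{\hat\alpha,k,m})$ (the diagonal entries being symmetric for trivial reasons). Rather than compute ${\bf r}^\cM_{\hat\alpha,k}$ explicitly via the hitting probabilities in \eqref{eq:rM-def} --- which would entail disentangling the complicated absorbing dynamics of $(\cB_k(t))_{t\ge 0}$ --- I would transport the already known reversibility of ${\rm SIP}_k(\eps\hat\alpha)$ through the scaling limit $\eps\to 0$ provided by Proposition \ref{lem:kurtz}.

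The first step is an asymptotic expansion of the reversible measure $\mu_{\eps\hat\alpha,k}$ on each block $\Omega_{k,m}$. Using $\Gamma(\eps\hat\alpha_x+\eta_x)/\Gamma(\eps\hat\alpha_x) = \eps\hat\alpha_x\prod_{j=1}^{\eta_x-1}(j+\eps\hat\alpha_x)$ for $\eta_x\ge 1$, and the analogous expansion $Z_{\eps\hat\alpha,k}=\frac{\eps|\hat\alpha|}{k}(1+O(\eps))$, one obtains
\[\mu_{\eps\hat\alpha,k}(\eta) = c_k(\eps,\hat\alpha)\,\eps^{\,m-1}\,\varsigma_{\hat\alpha,k,m}(\eta)\,\tonde{1+O(\eps)},\qquad \eta\in\Omega_{k,m},\]
where the positive prefactor $c_k(\eps,\hat\alpha)$ depends only on $(\eps,\hat\alpha,k)$. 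The crucial observation is that this prefactor is identical for every $\eta\in\Omega_{k,m}$; the dependence on the configuration is entirely carried by $\varsigma_{\hat\alpha,k,m}(\eta)$.

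Since $L_{\eps\hat\alpha,k}$ is reversible with respect to $\mu_{\eps\hat\alpha,k}$, so is the sped-up generator $\eps^{-1}L_{\eps\hat\alpha,k}=A_{\hat\alpha,k}+\eps^{-1}B_k$, and thus the semigroup $P_t^\eps\eqdef e^{t\ttonde{A_{\hat\alpha,k}+\eps^{-1}B_k}}$ satisfies $\mu_{\eps\hat\alpha,k}(\eta)\,P_t^\eps(\eta,\xi)=\mu_{\eps\hat\alpha,k}(\xi)\,P_t^\eps(\xi,\eta)$ for all $\eta,\xi\in\Xi_k$ and $t\ge 0$. Fixing $\eta,\xi$ in the \emph{same} block $\Omega_{k,m}$, dividing both sides by $c_k(\eps,\hat\alpha)\,\eps^{m-1}$, and then applying Proposition \ref{lem:kurtz} with $f=\car_\xi$ (noting that, for $\xi\in\Omega_k$, $\Pi_k\car_\xi(\zeta)=\car_{\zeta=\xi}$ for every $\zeta\in\Omega_k$, which yields the pointwise convergence $P_t^\eps(\eta,\xi)\to e^{tM_{\hat\alpha,k}}(\eta,\xi)$), one arrives at
\[\varsigma_{\hat\alpha,k,m}(\eta)\,e^{tM_{\hat\alpha,k}}(\eta,\xi)=\varsigma_{\hat\alpha,k,m}(\xi)\,e^{tM_{\hat\alpha,k}}(\xi,\eta),\qquad t\ge 0.\]
Differentiating at $t=0$ and restricting to $\eta,\xi\in\Omega_{k,m}$ delivers the desired detailed balance for $M_{\hat\alpha,k,m}$.

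The main delicate point is the clean cancellation of the degenerate prefactor $c_k(\eps,\hat\alpha)\,\eps^{m-1}$; this works only because both sides of the finite-$\eps$ detailed balance carry the \emph{same} $\eps^{m-1}$ factor when $\eta$ and $\xi$ lie in a common block. For cross-block pairs $(\eta,\xi)\in\Omega_{k,m}\times\Omega_{k,m'}$ with $m\ne m'$, the asymmetric scaling instead produces the transient asymmetry $e^{tM_{\hat\alpha,k}}(\xi,\eta)=0$ whenever $m'<m$, consistently with Proposition \ref{pr:meta}\ref{it:meta0} but irrelevant to the self-adjointness claim at hand.
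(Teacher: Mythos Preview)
Your proof is correct and takes a genuinely different route from the paper's. The paper proves the lemma by a direct combinatorial argument: it introduces an auxiliary chain on $\Xi_k$ that moves according to $A_{\hat\alpha,k}$ on $\Omega_k$ and according to $B_k$ on $\Delta_k$, checks by hand that this auxiliary chain, restricted to $\Omega_{k,m}\cup\Delta_{k,m+1}$, satisfies detailed balance with respect to $\varsigma_{\hat\alpha,k}$, and then collapses the excursions into $\Delta_{k,m+1}$ via a standard path-reversal argument to deduce detailed balance for ${\bf r}_{\hat\alpha,k}^\cM$ on $\Omega_{k,m}$. Your approach instead passes the known reversibility of ${\rm SIP}_k(\eps\hat\alpha)$ with respect to $\mu_{\eps\hat\alpha,k}$ through the limit $\eps\to 0$, using the semigroup convergence of Proposition~\ref{lem:kurtz} together with the leading-order asymptotics $\mu_{\eps\hat\alpha,k}(\eta)\sim\tfrac{k}{|\hat\alpha|}\,\eps^{m-1}\,\varsigma_{\hat\alpha,k,m}(\eta)$ on each block $\Omega_{k,m}$.

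The trade-off is as follows. The paper's argument is self-contained and independent of Proposition~\ref{lem:kurtz} (and of Appendix~\ref{appenA} on which it rests); it also makes transparent, through the explicit edge-by-edge verification, why $\varsigma_{\hat\alpha,k}$ is the right measure. Your argument is shorter, avoids any computation with the rates ${\bf r}^\cM_{\hat\alpha,k}$, and gives a clean probabilistic interpretation of $\varsigma_{\hat\alpha,k,m}$ as the (block-wise renormalised) $\eps\to 0$ limit of $\mu_{\eps\hat\alpha,k}$; since Proposition~\ref{lem:kurtz} is already available and does not rely on Lemma~\ref{lem:nu-self-adj}, there is no circularity.
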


\subsection{Comparing $\lambda_{k,m}(\hat \alpha)$ with $\lambda_{m,m}(\hat \alpha)$}\label{sec5.3}

The main goal of this subsection is to prove the following result.
\fs{\begin{proposition}\label{pr:lambda-km-mm}
For all  integers $m\ge 2$ with $\Omega_{m,m}\neq \emp$, we have
	\begin{equation}\label{eq:lambda-km-mm}
		\lambda_{k,m}(\hat\alpha)=\lambda_{m,m}(\hat\alpha)\comma\qquad k\ge m\fstop
	\end{equation}
	\end{proposition}}
We divide the proof of this identity into a few steps, and fix $m\ge 2$ so as to satisfy $\Omega_{m,m}\neq \emp$ all throughout.

\subsubsection{Consistency}\label{sec5.3.1} Recall the consistency property \eqref{eq:consistency-SIP} satisfied by  ${\rm SIP}$, as well as the corresponding annihilation operator $\mathfrak{a}_k : \R^{\Xi_{k-1}} \to \R^{\Xi_k}$ in \eqref{ann-op-def}. Since $M_{\hat\alpha,k}$ arises as a small-diffusivity limit of ${\rm SIP}$, it is natural to expect that an analogous property should hold also for $M_{\hat\alpha,k}$. This is the content of the following proposition. 
\begin{proposition}\label{pr:cons-M}
	Define, for all integers $k\ge 2$, 	the following \textquotedblleft restricted\textquotedblright\ annihilation operator $\hat{\mathfrak a}_k : \R^{\Omega_{k-1}}\to \R^{\Omega_k}$ as
	\begin{equation}\label{eq:ann-hat}
		\hat{\mathfrak a}_k g(\eta)\eqdef \sum_{x\in V} \eta_x\,g(\eta-\delta_x)\comma\qquad g\in \R^{\Omega_{k-1}}\comma\eta\in \Omega_k\fstop 
	\end{equation}
	Then, we have \begin{equation}
		\label{eq:Omega-intertwine}M_{\hat\alpha,k}\,\hat{\mathfrak a}_k = \hat{\mathfrak a}_k\, M_{\hat\alpha,k-1}\fstop
		\end{equation}
\end{proposition}
\begin{proof}	It is well known that a system of $k$ independent random walks is consistent, i.e.,  $A_{\hat\alpha,k} \, \mathfrak{a}_k = \mathfrak{a}_k \, A_{\hat\alpha,k-1}$. Moreover, by rewriting the consistency property of ${\rm SIP}(\eps\hat \alpha)$ in \eqref{eq:consistency-SIP} with the notation in \eqref{Ak-Bk-dec}, we have $(A_{\hat\alpha,k} + \eps^{-1} B_k) \, \mathfrak{a}_k  = \mathfrak{a}_k \, (A_{\hat\alpha,k-1} + \eps^{-1} B_{k-1})$. Thus, by linearity, we must also have $B_k\,\mathfrak{a}_k=\mathfrak a_k\,B_{k-1}$, and a similar relation for the harmonic projection $\Pi_k$ of $B_k$. \fs{By combining these observations, we obtain
		\begin{equation}\label{eq:Pi-a}
			\Pi_k\,\mathfrak a_k = \mathfrak a_k\, \Pi_{k-1}\quad\text{as maps from}\ \R^{\Xi_{k-1}}\ \text{to}\ \R^{\Xi_k}\comma
		\end{equation}
		and
		\begin{equation}\label{eq:PiA-cons-M-proof}
			\Pi_k A_{\hat\alpha,k}\,\mathfrak a_k = \mathfrak a_k\,\Pi_{k-1}A_{\hat\alpha,k-1}\quad\text{as maps from}\ \R^{\Xi_{k-1}}\ \text{to}\ \R^{\Xi_k}\fstop
		\end{equation}
		In particular, \eqref{eq:Pi-a} ensures that $\mathfrak a_k$ maps $\cR(\Pi_{k-1})$ into $\cR(\Pi_k)$.

	For all $j\ge 1$, let
		$
			\rho_j:\cR(\Pi_j)\to\R^{\Omega_j}
		$
		be the restriction map to $\Omega_j$, and let
		$
			\iota_j:\R^{\Omega_j}\to\cR(\Pi_j)
	$
		be its inverse given by $B_j$-harmonic extension. By definition of the
		rate matrix in \eqref{eq:M},
		\begin{equation}\label{eq:M-conj-M-proof}
			M_{\hat\alpha,j}
			=
			\rho_j\Pi_j A_{\hat\alpha,j}\iota_j
			\quad\text{on }\R^{\Omega_j}
			\fstop
		\end{equation}
		Moreover, since $\mathfrak a_k$ maps $\cR(\Pi_{k-1})$ into $\cR(\Pi_k)$, we have
		\begin{equation}\label{eq:iota-ahat-M-proof}
			\iota_k\hat{\mathfrak a}_k
			=
			\mathfrak a_k\iota_{k-1}\quad\text{as maps from}\ \R^{\Omega_{k-1}}\ \text{to}\  \cR(\Pi_k)
			\fstop
		\end{equation}
		Indeed, the right-hand side belongs to $\cR(\Pi_k)$. Moreover, for all
		$g\in\R^{\Omega_{k-1}}$ and $\eta\in\Omega_k$,
		\begin{align}
			\mathfrak a_k\iota_{k-1}g(\eta)
			&=
			\sum_{x\in V}\eta_x\,\iota_{k-1}g(\eta-\delta_x)
			=
			\sum_{x\in V}\eta_x\,g(\eta-\delta_x)
			=
			\hat{\mathfrak a}_k g(\eta)
			\comma
		\end{align}
		where we used that $\eta-\delta_x\in\Omega_{k-1}$ whenever $\eta\in\Omega_k$ and
		$\eta_x\ge1$. Hence the two sides of \eqref{eq:iota-ahat-M-proof} are the same
		$B_k$-harmonic extension.
		
		Finally, by
		\eqref{eq:M-conj-M-proof}, \eqref{eq:iota-ahat-M-proof}, and
		\eqref{eq:PiA-cons-M-proof}, we get
		\begin{align}
			M_{\hat\alpha,k}\hat{\mathfrak a}_k 
			&=
			\rho_k\Pi_k A_{\hat\alpha,k}\iota_k
			\hat{\mathfrak a}_k 
			\notag\\
			&=
			\rho_k\Pi_k A_{\hat\alpha,k}\mathfrak a_k
			\iota_{k-1}
			\notag\\
			&=
			\rho_k\mathfrak a_k\Pi_{k-1}A_{\hat\alpha,k-1}
			\iota_{k-1}
			\notag\\
			&=
			\rho_k\mathfrak a_k\iota_{k-1}
			\rho_{k-1}\Pi_{k-1}A_{\hat\alpha,k-1}
			\iota_{k-1}
			\notag\\
			&=
			\hat{\mathfrak a}_k M_{\hat\alpha,k-1}
			\fstop
		\end{align}
		In the fourth line, we used that
		$\Pi_{k-1}A_{\hat\alpha,k-1}\iota_{k-1}$ has  $\cR(\Pi_{k-1})$ as an image, so that
		$\iota_{k-1}\rho_{k-1}$ acts as the identity on it. This concludes the proof of the proposition}
\end{proof}

 \subsubsection{Irreducible decompositions}\label{sec:irreducible-dec}
 \fs{By an \textit{irreducible component of $(\cM_{\hat\alpha,k}(t))_{t \geq 0}$ on
 	$\Omega_{k,m}$} we mean a communicating class for the restriction of the
 	transition graph to $\Omega_{k,m}$: two configurations $\eta,\xi \in
 	\Omega_{k,m}$ belong to the same component if there exist paths
 	$\eta=\eta_0,\eta_1,\ldots,\eta_\ell=\xi$ and
 	$\xi=\xi_0,\xi_1,\ldots,\xi_r=\eta$, all contained in $\Omega_{k,m}$, such that
 	\begin{equation}
 	\mathbf r^\cM_{\hat\alpha,k}(\eta_{h-1},\eta_h)>0\comma\qquad
 	\mathbf r^\cM_{\hat\alpha,k}(\xi_{h-1},\xi_h)>0\comma
 	\end{equation}
 	for all admissible indices. An irreducible decomposition is the
 	partition of $\Omega_{k,m}$ into such irreducible components.}
 
The chain $(\cM_{\hat\alpha,k}(t))_{t\ge 0}$ is not necessarily irreducible on $\Omega_{k,m}$.	For instance, when $G=\mathbb T_{2m}$ is the one-dimensional discrete torus of size $2m$, $$\fs{\Omega_{m,m}=\{\delta_1+\delta_3+\ldots+\delta_{2m-1},\delta_0+\delta_2+\ldots+\delta_{2m-2}\}\defeq\{\eta_{m,\rm odd},\eta_{m,\rm even}\}}$$ consists of exactly two configurations (one with particles/stacks on the odd numbers, the other one on the even numbers), and  \fs{${\bf r}_{\hat\alpha,k}^\cM(\eta_{m,\rm odd},\eta_{m,\rm even})={\bf r}_{\hat\alpha,k}^\cM(\eta_{m,\rm even},\eta_{m,\rm odd})=0$}.
Moreover, the structure of the irreducible components of the chain $(\cM_{\hat\alpha,k}(t))_{t\ge 0}$ on $\Omega_{k,m}$ is always \textit{finer} than the corresponding structure on $\Omega_{m,m}$, in the sense of the following proposition.
\begin{proposition}\label{pr:irreducibility}
Decompose $\Omega_{m,m}$ into irreducible  components for the chain $(\cM_{\hat\alpha,m}(t))_{t\ge0}$: for some integer $\mathfrak n_m\ge 1$ (depending only on $m\ge 2$ and $G$),
\begin{equation}\label{eq:Omegamm-dec}
\Omega_{m,m}=\Omega_{m,m}^1\sqcup\cdots\sqcup \Omega_{m,m}^{\mathfrak n_m}\fstop
\end{equation}  For all $k> m$ and $i\in \bbr{1}{\mathfrak n_m}$, define, iteratively,
\begin{equation}\label{Omegakm-dec-structure}
\Omega_{k,m}^i = \{ \eta + \delta_x \in \Omega_{k,m} : \eta \in \Omega_{k-1,m}^i, \  x \in V \ \text{such that} \ \eta_x \ge 1 \} \fstop
 \end{equation}
Then, there exist partitions $\Omega_{k,m}^i=\Omega_{k,m}^{i,1} \sqcup \cdots \sqcup \Omega_{k,m}^{i,\mathfrak{N}^k_i}$ for all $i \in \bbr{1}{\mathfrak n_m}$ such that
\begin{equation}\label{Omegakm-dec}
\Omega_{k,m}= \bigsqcup_{i=1}^{\mathfrak n_m} \bigsqcup_{j=1}^{\mathfrak N_i^k} \Omega_{k,m}^{i,j}
\end{equation}
is an \emph{irreducible}  decomposition of $\Omega_{k,m}$ for the chain $(\cM_{\hat\alpha,k}(t))_{t\ge0}$.
\end{proposition}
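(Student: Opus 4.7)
My plan is to organize the argument around the \emph{skeleton map} $\pi:\Omega_{k,m}\to\Omega_{m,m}$ defined by $\pi(\eta)\eqdef\sum_{x:\eta_x>0}\delta_x$, which replaces each stack of $\eta$ by a single particle and in particular preserves the set of occupied sites. The proof has three steps: (i) verify that the $\Omega_{k,m}^i$'s partition $\Omega_{k,m}$ by establishing $\Omega_{k,m}^i=\pi^{-1}(\Omega_{m,m}^i)$; (ii) show that each $\Omega_{k,m}^i$ is invariant under $(\cM_{\hat\alpha,k}(t))_{t\ge0}$; (iii) deduce the irreducible refinement $\Omega_{k,m}^{i,j}$ of each $\Omega_{k,m}^i$.

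Step (i) proceeds by induction on $k\ge m$. The base case $k=m$ is immediate from the decomposition \eqref{eq:Omegamm-dec}. For the inductive step $\Omega_{k,m}^i\subseteq\pi^{-1}(\Omega_{m,m}^i)$, any $\zeta\in\Omega_{k,m}^i$ is of the form $\zeta=\eta+\delta_x$ with $\eta\in\Omega_{k-1,m}^i$ and $\eta_x\ge1$, so $\pi(\zeta)=\pi(\eta)\in\Omega_{m,m}^i$ by the induction hypothesis. For the reverse inclusion, if $\zeta\in\Omega_{k,m}$ satisfies $\pi(\zeta)\in\Omega_{m,m}^i$ and $k>m$, the pigeonhole principle yields some $x\in V$ with $\zeta_x\ge2$; then $\zeta-\delta_x\in\Omega_{k-1,m}$ and $\pi(\zeta-\delta_x)=\pi(\zeta)\in\Omega_{m,m}^i$, so by the induction hypothesis $\zeta-\delta_x\in\Omega_{k-1,m}^i$, and \eqref{Omegakm-dec-structure} places $\zeta$ in $\Omega_{k,m}^i$. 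Step (iii) is automatic from step (ii): once invariance is shown, $M_{\hat\alpha,k,m}$ is block-diagonal with respect to the partition $\{\Omega_{k,m}^i\}$, and each block decomposes uniquely into its own irreducible components.

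Step (ii) is the main task and the principal difficulty. I consider an arbitrary one-step transition $\eta\to\xi$ of $\cM_{\hat\alpha,k}$ with $\eta,\xi\in\Omega_{k,m}$; by \eqref{eq:rM-def} it arises from a single-particle jump $x\to y$ under $\cA_{\hat\alpha,k}$ followed by absorption back to $\Omega_k$ via the $\cB_k$ dynamics. The key observation is that $\cB_k$ only transfers particles between currently occupied sites, so that, writing $\mathrm{supp}(\zeta)\eqdef\{w\in V:\zeta_w>0\}$,
\begin{equation}
\mathrm{supp}(\xi)\subseteq\mathrm{supp}(\eta-\delta_x+\delta_y) \fstop
\end{equation}
Combining this with $|\mathrm{supp}(\xi)|=m=|\mathrm{supp}(\eta)|$ and the independence requirement imposed by $\xi\in\Omega_k$, a short case analysis separating $\eta_x=1$ from $\eta_x\ge2$ forces either $\pi(\xi)=\pi(\eta)$ or $\pi(\xi)=\pi(\eta)-\delta_x+\delta_y$, where in the latter case $x$ must be the \emph{unique} occupied neighbor of $y$ in $\eta$. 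In this latter scenario the skeleton move is itself an allowed one-step transition of $\cM_{\hat\alpha,m}$ on $\Omega_{m,m}$, since the corresponding $\cA_{\hat\alpha,m}$-jump produces $\pi(\eta)-\delta_x+\delta_y\in\Omega_m$ with no new adjacencies and no $\cB_m$-absorption is triggered. Hence in every case $\pi(\eta)$ and $\pi(\xi)$ belong to the same irreducible component $\Omega_{m,m}^i$, which by step (i) means $\eta,\xi\in\Omega_{k,m}^i$ for the same $i$. The delicate aspect here is that, although $\cB_k$-absorption may cascade through a large occupied cluster around $y$ when $y$ has several occupied neighbors, the support inclusion above lets one bypass the detailed absorption dynamics and reduce everything to a purely set-theoretic constraint together with the independence condition defining $\Omega_k$.
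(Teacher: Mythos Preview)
Your proposal is correct and follows essentially the same approach as the paper. Your skeleton map $\pi$ is exactly the paper's $\eta\mapsto\eta^*$, and your steps (i)--(ii) correspond to the paper's items (a)--(b); the only organizational difference is that you package the case analysis through the single observation $\mathrm{supp}(\xi)\subseteq\mathrm{supp}(\eta-\delta_x+\delta_y)$ together with cardinality and independence constraints, whereas the paper splits according to whether $\zeta=\eta-\delta_x+\delta_y$ lies in $\Omega_k$ or $\Delta_k$ and then examines the neighbors of $y$ explicitly. Both routes yield the same dichotomy $\pi(\xi)\in\{\pi(\eta),\pi(\eta)-\delta_x+\delta_y\}$.
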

\begin{proof}
First, the fact that $\Omega_{k,m}$, for each $k \ge m \ge 2$, can be decomposed into irreducible components follows at once from Lemma \ref{lem:nu-self-adj}. Thus, to prove the validity of Proposition \ref{pr:irreducibility}, it suffices to check the following two properties:
\begin{enumerate}[(a)]
\item\label{it:irr1} $\Omega_{k,m} = \Omega_{k,m}^1 \sqcup \cdots \sqcup \Omega_{k,m}^{\mathfrak n_m}$;
\item \label{it:irr2} if $i \ne i'$, $\eta \in \Omega_{k,m}^i$, and $\xi \in \Omega_{k,m}^{i'}$, then $\eta$ and $\eta'$ belong to different irreducible components of $\Omega_{k,m}$ for $(\cM_{\hat\alpha,k}(t))_{t\ge0}$.
\end{enumerate}
To verify item \ref{it:irr1}, take $\eta \in \Omega_{k,m}$ and define a new configuration $\eta^*\in \Omega_{m,m}$ as
\begin{equation}\label{eq:eta-star-def}
\eta^* \eqdef \sum_{\substack{x \in V \\ \eta_x \ge 1}} \delta_x \fstop
\end{equation}
Since $\eta$ has $m$ separated stacks of particles,  $\eta^*$ belongs to $\Omega_{m,m}$. Then, according to \eqref{eq:Omegamm-dec}, $\eta^* \in \Omega_{m,m}^i$ for some $i$, thus $\eta \in \Omega_{k,m}^i$ by \eqref{Omegakm-dec-structure}. This proves that $\Omega_{k,m}=\Omega_{k,m}^1 \sqcup \cdots \sqcup \Omega_{k,m}^{\mathfrak n_m}$.

We move on to item \ref{it:irr2}. It suffices to prove that, for any $\eta,\xi \in \Omega_{k,m}$ with ${\bf r}_{\hat\alpha,k}^\cM(\eta,\xi)>0$, the corresponding configurations $\eta^*,\xi^* \in \Omega_{m,m}$ (cf.\ \eqref{eq:eta-star-def}) satisfy either $\eta^* = \xi^*$ or ${\bf r}_{\hat\alpha,m}^\cM(\eta^*,\xi^*)>0$.

In view of \eqref{eq:rM-def}, there exists $\zeta \in \Xi_k$ such that ${\bf r}_{\hat \alpha,k}^\cA ( \eta , \zeta ) > 0$ and ${\bf P}_\zeta^\cB \, [\tau_\xi = \tau_{\Omega_k}] > 0$. In turn, there exist $x,y \in V$ with $c_{xy}>0$ such that $\zeta = \eta - \delta_x + \delta_y$. First, suppose that $\zeta \in \Omega_k$, i.e., $\zeta = \xi$. Then, it is straightforward that $\zeta^*=\xi^* \in \Omega_m$ and ${\bf r}_{\hat\alpha,m}^\cA(\eta^*,\zeta^*)>0$, thus we have ${\bf r}_{\hat\alpha,m}^\cM(\eta^*,\xi^*)>0$ by \eqref{eq:rM-def}. On the contrary, suppose that $\zeta \in \Delta_k$. This means that $y$ has at least one neighboring site (possibly $x$) on which $\zeta$ has a particle. We divide into several cases.

\begin{itemize}
\item[\bf 1.] If $\zeta_x \ge 1$ and $\zeta_z = 0$ for all $z \ne x$ with $c_{yz}>0$, then the dynamics $(\cB_k(t))_{t\ge0}$ can only move particles between sites $x$ and $y$. Thus, by ${\bf P}_\zeta^\cB \, [\tau_\xi = \tau_{\Omega_k} ] >0$ it necessarily holds that $\xi = \eta - \eta_x \delta_x + \eta_x \delta_y \in \Omega_k$. This implies that $\xi^* = \eta^* - \delta_x + \delta_y \in \Omega_m$, thus ${\bf r}_{\hat\alpha,m}^\cM(\eta^*,\xi^*) > 0$.
\item[\bf 2.] Suppose that $\zeta_x \ge 1$ and there exists $z \ne x$ with $c_{yz}>0$ such that $\zeta_z \ge 1$. Then, the dynamics $(\cB_k(t))_{t\ge0}$ gets absorbed exactly when $y$ becomes empty. For $\xi$ to have the same number of stacks, $m$, as $\eta$, it  follows that $\xi^*=\eta^*$.
\item[\bf 3.] Suppose that $\zeta_x = 0$. Then, since the absorption of $(\cB_k(t))_{t\ge0}$ triggers at least one additional loss of a stack of particles, the absorbed configuration $\xi$ has strictly less number of stacks than $\eta$, which contradicts the assumption that $\eta,\xi \in \Omega_{k,m}$.
\end{itemize}

The above three cases conclude the proof of Proposition \ref{pr:irreducibility}.
\end{proof}
 
\begin{remark}\label{rem:strictly-finer}
The irreducible structure in \eqref{Omegakm-dec} may indeed get \emph{strictly} finer than the one in \eqref{eq:Omegamm-dec}. For example, consider the following \emph{H-shape} graph $G$ with $V=\bbr{1}{6}$, $c_{12}=c_{23}=c_{25}=c_{45}=c_{56}=1$, and $c_{xy}=0$ otherwise (see Figure \ref{fig5}). Then, $\Omega_{4,4}$ is a singleton set with $\Omega_{4,4}^1 = \{ \delta_1 + \delta_3 + \delta_4 + \delta_6 \}$, whereas $\Omega_{5,4}$ has two irreducible components $\Omega_{5,4}^{1,1}$ and $\Omega_{5,4}^{1,2}$, given as
\begin{align}
\Omega_{5,4}^{1,1} &= \{ 2\delta_1 + \delta_3 + \delta_4 + \delta_6 , \delta_1 + 2\delta_3 + \delta_4 + \delta_6  \}
\\
\Omega_{5,4}^{1,2} &= \{ \delta_1 + \delta_3 + 2\delta_4 + \delta_6 , \delta_1 + \delta_3 + \delta_4 + 2\delta_6  \} \fstop
\end{align}
\end{remark}

\begin{figure}
\begin{tikzpicture}
\draw[very thick] (0,0.1)--(0,0.9); \draw[very thick] (0,1.1)--(0,1.9); \draw[very thick] (0.1,1)--(0.9,1);
\draw[very thick] (1,0.1)--(1,0.9); \draw[very thick] (1,1.1)--(1,1.9);
\foreach \i in {0,1} { \foreach \j in {0,1,2} {
\draw (\i+0.1,\j)--(\i,\j+0.1)--(\i-0.1,\j)--(\i,\j-0.1)--(\i+0.1,\j); } }
\foreach \i in {0,1} { \foreach \j in {0,2} {
\fill[red] (\i,\j+0.08) circle (0.08); \draw (\i,\j+0.08) circle (0.08); } }
\draw (0.5,-0.3) node[below]{$\Omega_{4,4}^1$};

\draw(-0.1,2) node[left]{\tiny \textit{1}};
\draw(-0.1,1) node[left]{\tiny \textit{2}};
\draw(-0.1,0) node[left]{\tiny \textit{3}};
\draw(1.1,2) node[right]{\tiny \textit{4}};
\draw(1.1,1) node[right]{\tiny \textit{5}};
\draw(1.1,0) node[right]{\tiny \textit{6}};

\begin{scope}[shift={(3,0)}]
\draw[very thick] (0,0.1)--(0,0.9); \draw[very thick] (0,1.1)--(0,1.9); \draw[very thick] (0.1,1)--(0.9,1);
\draw[very thick] (1,0.1)--(1,0.9); \draw[very thick] (1,1.1)--(1,1.9);
\foreach \i in {0,1} { \foreach \j in {0,1,2} {
\draw (\i+0.1,\j)--(\i,\j+0.1)--(\i-0.1,\j)--(\i,\j-0.1)--(\i+0.1,\j); } }
\foreach \i in {0,1} { \foreach \j in {0,2} {
\fill[red] (\i,\j+0.08) circle (0.08); \draw (\i,\j+0.08) circle (0.08); } }
\fill[red] (0,2.24) circle (0.08); \draw (0,2.24) circle (0.08);
\end{scope}
\begin{scope}[shift={(5,0)}]
\draw[very thick] (0,0.1)--(0,0.9); \draw[very thick] (0,1.1)--(0,1.9); \draw[very thick] (0.1,1)--(0.9,1);
\draw[very thick] (1,0.1)--(1,0.9); \draw[very thick] (1,1.1)--(1,1.9);
\foreach \i in {0,1} { \foreach \j in {0,1,2} {
\draw (\i+0.1,\j)--(\i,\j+0.1)--(\i-0.1,\j)--(\i,\j-0.1)--(\i+0.1,\j); } }
\foreach \i in {0,1} { \foreach \j in {0,2} {
\fill[red] (\i,\j+0.08) circle (0.08); \draw (\i,\j+0.08) circle (0.08); } }
\fill[red] (0,0.24) circle (0.08); \draw (0,0.24) circle (0.08);
\end{scope}
\draw (4.5,-0.3) node[below]{$\Omega_{5,4}^{1,1}$};

\begin{scope}[shift={(8,0)}]
\draw[very thick] (0,0.1)--(0,0.9); \draw[very thick] (0,1.1)--(0,1.9); \draw[very thick] (0.1,1)--(0.9,1);
\draw[very thick] (1,0.1)--(1,0.9); \draw[very thick] (1,1.1)--(1,1.9);
\foreach \i in {0,1} { \foreach \j in {0,1,2} {
\draw (\i+0.1,\j)--(\i,\j+0.1)--(\i-0.1,\j)--(\i,\j-0.1)--(\i+0.1,\j); } }
\foreach \i in {0,1} { \foreach \j in {0,2} {
\fill[red] (\i,\j+0.08) circle (0.08); \draw (\i,\j+0.08) circle (0.08); } }
\fill[red] (1,2.24) circle (0.08); \draw (1,2.24) circle (0.08);
\end{scope}
\begin{scope}[shift={(10,0)}]
\draw[very thick] (0,0.1)--(0,0.9); \draw[very thick] (0,1.1)--(0,1.9); \draw[very thick] (0.1,1)--(0.9,1);
\draw[very thick] (1,0.1)--(1,0.9); \draw[very thick] (1,1.1)--(1,1.9);
\foreach \i in {0,1} { \foreach \j in {0,1,2} {
\draw (\i+0.1,\j)--(\i,\j+0.1)--(\i-0.1,\j)--(\i,\j-0.1)--(\i+0.1,\j); } }
\foreach \i in {0,1} { \foreach \j in {0,2} {
\fill[red] (\i,\j+0.08) circle (0.08); \draw (\i,\j+0.08) circle (0.08); } }
\fill[red] (1,0.24) circle (0.08); \draw (1,0.24) circle (0.08);
\end{scope}
\draw (9.5,-0.3) node[below]{$\Omega_{5,4}^{1,2}$};
\end{tikzpicture}\caption{\label{fig5}The H-shape graph $G$ and the corresponding absorbing spaces $\Omega_{4,4}=\Omega_{4,4}^1$ and $\Omega_{5,4}=\Omega_{5,4}^{1,1} \sqcup \Omega_{5,4}^{1,2}$ explained in Remark \ref{rem:strictly-finer}.}
\end{figure}
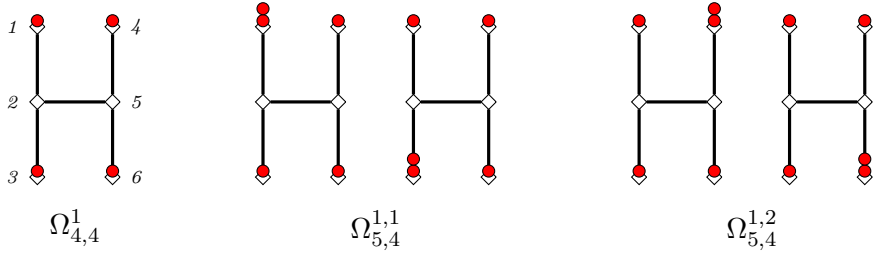

\subsubsection{Conclusion of the proof of \fs{Proposition \ref{pr:lambda-km-mm}}}\label{sec5.3.3}
\fs{We are now ready to prove  \eqref{eq:lambda-km-mm}.}

\begin{proof}[Proof of Proposition \ref{pr:lambda-km-mm}]
Recall the definition of the matrix $M_{\hat\alpha,k}$ and its submatrices $M_{\hat\alpha,k,m}$ from \eqref{eq:M} and \eqref{eq:Mkm}, respectively.

 First, we deal with the case $k=m$. We split $M_{\hat\alpha,m,m}$ into $\mathfrak n_m$ diagonal blocks $$M_{\hat\alpha,m,m}^1,\ldots,M_{\hat\alpha,m,m}^{\mathfrak n_m}\comma$$ each corresponding to an irreducible sub-Markovian dynamics. Adopting an analogous notation, we further write the eigenvalue $\lambda_{m,m}(\hat\alpha)$ in \eqref{eq:lambda-k-m} as
\begin{equation}
\lambda_{m,m}(\hat\alpha) = \lambda_{m,m}^1(\hat\alpha) \wedge \cdots \wedge \lambda_{m,m}^{\mathfrak n_m}(\hat\alpha) \comma
\end{equation}
where $\lambda_{m,m}^i(\hat\alpha)$ stands for the smallest eigenvalue of $-M_{\hat\alpha,m,m}^i$. By the Perron-Frobenius theorem, in view of sub-Markovianity, this eigenvalue is strictly positive. Furthermore, because of irreducibility, this is the only eigenvalue admitting a non-negative (actually, strictly positive) eigenfunction, which we refer to as
\begin{equation}
\psi_{\hat\alpha,m,m}^i \in \R^{\Omega_{m,m}^i} \fstop
\end{equation}
Let, for all $i\in \bbr{1}{\mathfrak n_m}$,
\begin{equation}\label{eq:mm-extend-def}
	\Psi_{\hat\alpha,m,m}^i (\eta) \eqdef \begin{cases} \psi_{\hat\alpha,m,m}^i (\eta) & \text{if } \ \eta \in \Omega_{m,m}^i  \\
		0 & \text{if } \ \eta \in \Omega_m \setminus \Omega_{m,m}^i \comma
	\end{cases}
\end{equation}
be the extension of the eigenfunction $\psi_{\hat\alpha,m,m}^i\in \R^{\Omega_{m,m}^i}$ to $\R^{\Omega_m}$. \fs{In particular, $\Psi_{\hat\alpha,m,m}^i$ is nonnegative on $\Omega_m$ and strictly positive on $\Omega_{m,m}^i$.} Then, due to the block lower triangular structure of $M_{\hat\alpha,m}$, we have
\begin{equation}\label{eq:eig-eq-mm}
	M_{\hat \alpha,m}\Psi_{\hat \alpha,m,m}^i = -\lambda_{m,m}^i(\hat \alpha)\, \Psi_{\hat\alpha,m,m}^i\fstop
\end{equation}
Since $ \R^{\Omega_m}\ni\Psi_{\hat\alpha,m,m}^i\neq 0$, this shows that a straightforward extension of $\psi_{\hat\alpha,m,m}^i$ produces an eigenfunction  for $M_{\hat\alpha,m}$.

	This canonical extension works well only for the case $k=m$, while it does not for $k>m$ in view of the block lower triangular structure of $M_{\hat\alpha,k}$.

For $k>m$, we follow a different route. In view of  Proposition \ref{pr:irreducibility}, as done above for $k=m$, we split $M_{\hat\alpha,k,m}$ into $\mathfrak N^k_1 + \cdots + \mathfrak N^k_{\mathfrak n_m}$ diagonal blocks, say, $M_{\hat\alpha,k,m}^{i,j}$ for $i \in \bbr{1}{\mathfrak n_m}$ and $j \in \bbr{1}{\mathfrak N_i^k}$, each corresponding to the irreducible sub-Markovian dynamics on $\Omega_{k,m}^{i,j}$. We refer to  Figure \ref{fig6}  for the overall structure of the matrix $M_{\hat\alpha,k}$. Then, we write the eigenvalue $\lambda_{k,m}(\hat\alpha)$ in \eqref{eq:lambda-k-m} as
 \begin{equation}\label{eq:lambdakm-dec}
 	\lambda_{k,m}(\hat\alpha) = \min_{i \in \bbr{1}{\mathfrak n_m}} \min_{j \in \bbr{1}{\mathfrak N_i^k}} \lambda_{k,m}^{i,j}(\hat\alpha)  \comma
 \end{equation}
 where $\lambda_{k,m}^{i,j}(\hat\alpha)$ is the smallest eigenvalue of $-M_{\hat\alpha,k,m}^{i,j}$, which admits a strictly positive eigenfunction $\psi_{\hat\alpha,k,m}^{i,j}\in \R^{\Omega_{k,m}^{i,j}}$, again by the Perron-Frobenius theorem.

Now, instead of the canonical extension as in \eqref{eq:mm-extend-def}, we define $\Psi_{\hat\alpha,k,m}^i \in \R^{\Omega_k}$ as the \textquotedblleft lifting\textquotedblright\ of $\Psi_{\hat \alpha,m,m}^i$ through the annihilation operators  in \eqref{eq:ann-hat}:
\begin{equation}\label{eq:lift-fcn-def}
	\Psi_{\hat\alpha,k,m}^i \eqdef ( \hat{\mathfrak{a}}_k \circ \cdots \circ \hat{\mathfrak{a}}_{m+1} ) \, \Psi_{\hat\alpha,m,m}^i \fstop
\end{equation}
\fs{In view of the definition of the annihilation operators in \eqref{eq:ann-hat} and $\Psi_{\hat\alpha,m,m}^i\ge 0$ (see \eqref{eq:mm-extend-def}), one proves, by induction on $k>m$,
\begin{equation}\label{eq:dom}
	\Psi_{\hat\alpha,k,m}^i(\eta+\xi)\ge \Psi_{\hat\alpha,m,m}^i(\eta)\comma\qquad \eta\in \Omega_m\comma \eta+\xi \in \Omega_k\fstop
	\end{equation}
Further,  $\Psi_{\hat\alpha,m,m}^i>0$ on $\Omega_{m,m}^i$ implies
\begin{equation} \label{eq:Psi-nonzero}
	\Psi_{\hat\alpha,k,m}^i\neq 0\fstop
\end{equation}}
By the intertwining relation \eqref{eq:Omega-intertwine} and the eigenvalue equation \eqref{eq:eig-eq-mm}, we obtain
\begin{align}
	\label{eq:eig-eqn-km}
	\begin{aligned}
		M_{\hat\alpha,k}  \Psi_{\hat\alpha,k,m}^i&=M_{\hat\alpha,k} \, \hat{\mathfrak{a}}_k \cdots \hat{\mathfrak{a}}_{m+1} \, \Psi_{\hat\alpha,m,m}^i\\
		& =  \hat{\mathfrak{a}}_k \cdots \hat{\mathfrak{a}}_{m+1} \, M_{\hat\alpha,m} \,  \Psi_{\hat\alpha,m,m}^i
		= -\lambda_{m,m}^i(\hat\alpha) \, \Psi_{\hat\alpha,k,m}^i \fstop
	\end{aligned}
\end{align}
The above identity \fs{and \eqref{eq:Psi-nonzero}}
 ensure that $\Psi_{\hat\alpha,k,m}^i\in \R^{\Omega_k}$ is an eigenfunction for $-M_{\hat\alpha,k}$ associated to the eigenvalue $\lambda_{m,m}^i(\hat\alpha)$. Moreover, \fs{after recalling \eqref{eq:lift-fcn-def} and \eqref{eq:ann-hat}}, we have\fs{
\begin{equation}
\Psi_{\hat\alpha,k,m}^i(\eta)
=
\sum_{\xi\in \Omega_m\,:\,
\xi\le \eta}
b_{k,m}(\eta,\xi)\,\Psi_{\hat\alpha,m,m}^i(\xi)
=0\comma
\qquad
\eta\in\Omega_{k,m'}^i,\ m'<m\comma
\end{equation}
where the coefficients $b_{k,m}(\eta,\xi)$ are nonnegative and supported on \begin{equation}\Omega_m(\eta)\eqdef\{\xi\in \Omega_m: \xi_x\le \eta_x\ \text{for all}\ x\in V\}\comma
	\end{equation} and the last identity used \eqref{eq:mm-extend-def}, $m'<m$, and that} \begin{equation}\fs{
	\eta \in \Omega_{k,m'}\ \text{and}\ \xi\in \Omega_m(\eta)\qquad \Longrightarrow\qquad \xi \in \Omega_{m,m''}\ \text{for some}\ m''\le m'\fstop}
\end{equation}
Because of the block lower triangular structure of $M_{\hat\alpha,k}$ and the irreducible decomposition \eqref{Omegakm-dec}, this ensures that the restriction
\begin{equation}
	\Psi_{\hat\alpha,k,m}^i\big|_{\Omega_{k,m}^{i,j}}\in \R^{\Omega_{k,m}^{i,j}}
\end{equation} of $\Psi_{\hat\alpha,k,m}^i\in \R^{\Omega_{k}}$ to $\R^{\Omega_{k,m}^{i,j}}$ is an eigenfunction for $-M_{\hat\alpha,k,m}^{i,j}$ with corresponding eigenvalue $\lambda_{m,m}^i(\hat\alpha)$. Finally,  by the aforementioned positivity of $\psi_{\hat\alpha,m,m}^i\in \R^{\Omega_{m,m}^i}$, 	we have
 \begin{equation}
 	\Psi_{\hat\alpha,k,m}^i (\eta) > 0\comma \qquad \text{for all} \ \eta \in \Omega_{k,m}^i \fstop
 \end{equation}
 \fs{Indeed, for $\eta\in\Omega_{k,m}^i$, the configuration
 	$\eta^\ast:=\sum_{x:\eta_x\ge 1}\delta_x$ belongs to $\Omega_{m,m}^i$, and
 	\eqref{eq:dom} applies with $\xi=\eta-\eta^\ast$} (see Figure \ref{fig6} for a visual proof).
 Since $M_{\hat\alpha,k,m}^{i,j}$ admits $\psi_{\hat\alpha,k,m}^{i,j}\in \R^{\Omega_{k,m}^i}$ as the only  positive eigenfunction (up to constant multiples), \fs{by the Perron-Frobenius theorem,} we  have 
 \begin{equation}
 	\Psi_{\hat\alpha,k,m}^i\big|_{\Omega_{k,m}^{i,j}} = c \, \psi_{\hat\alpha,k,m}^{i,j} \qquad \text{for some } c>0 \comma
 \end{equation}
 and, thus,  $\lambda_{m,m}^i(\hat\alpha) = \lambda_{k,m}^{i,j}(\hat\alpha)$. By \eqref{eq:lambdakm-dec}, the desired claim in \eqref{eq:lambda-km-mm} follows.
 \end{proof}

\begin{figure}
		\begin{tikzpicture}[scale=0.2]
			\draw[very thick] (0,35) rectangle (5,30); \draw[very thick] (5,30) rectangle (15,20); \draw[very thick] (20,15) rectangle (30,5);
			\draw[thick,densely dashed] (8,27) rectangle (12,23); \draw[thick,densely dashed] (23,12) rectangle (27,8);
			\draw (17.5,17.5) node[rotate=135]{$\bf{\cdots}$}; \draw (32.5,2.5) node[rotate=135]{$\bf{\cdots}$};
			\draw (6.5,28.5) node[rotate=135]{$\cdots$}; \draw (13.5,21.5) node[rotate=135]{$\cdots$}; \draw (21.5,13.5) node[rotate=135]{$\cdots$}; \draw (28.5,6.5) node[rotate=135]{$\cdots$};
			\draw (2.5,25) node{$\bf\star$}; \draw (2.5,17.5) node{$\bf\star$}; \draw (2.5,10) node{$\bf\star$}; \draw (2.5,2.5) node{$\bf\star$};
			\draw (10,17.5) node{$\bf\star$}; \draw (10,10) node{$\bf\star$}; \draw (10,2.5) node{$\bf\star$};
			\draw (17.5,10) node{$\bf\star$}; \draw (17.5,2.5) node{$\bf\star$}; \draw (25,2.5) node{$\bf\star$};
			\draw (10,32.5) node{$\bf 0$}; \draw (17.5,32.5) node{$\bf 0$}; \draw (25,32.5) node{$\bf 0$}; \draw (32.5,32.5) node{$\bf 0$};
			\draw (17.5,25) node{$\bf 0$}; \draw (25,25) node{$\bf 0$}; \draw (32.5,25) node{$\bf 0$};
			\draw (25,17.5) node{$\bf 0$}; \draw (32.5,17.5) node{$\bf 0$}; \draw (32.5,10) node{$\bf 0$};
			\draw (10,28.5) node{$0$}; \draw (13.5,28.5) node{$0$}; \draw (13.5,25) node{$0$}; \draw (6.5,25) node{$0$}; \draw (6.5,21.5) node{$0$}; \draw (10,21.5) node{$0$};
			\draw (25,13.5) node{$0$}; \draw (28.5,13.5) node{$0$}; \draw (28.5,10) node{$0$}; \draw (21.5,10) node{$0$}; \draw (21.5,6.5) node{$0$}; \draw (25,6.5) node{$0$};
			\draw[very thick] (0,0) rectangle (35,35);
			\draw (-4,32.5) node{$\Omega_{k,1}$}; \draw (-4,25) node{$\Omega_{k,2}$}; \draw (-4,10) node{$\Omega_{k,m}$};
			\draw (-4,17.5) node[rotate=90]{$\cdots$}; \draw (-4,2.5) node[rotate=90]{$\cdots$};
			\draw (2.5,32.5) node{\footnotesize $M_{k,1}$}; \draw (10,25) node{\footnotesize $M_{k,2}^{i,j}$}; \draw (25,10) node{\footnotesize $M_{k,m}^{i,j}$};
			\draw (17.5,-3) node{$M_k$};
			
			\begin{scope}[shift={(38,0)}]
				\fill[red!30!white] (0,8) rectangle (4,12);
				\draw[very thick] (0,0) rectangle (4,35); \draw[very thick] (0,30)--(4,30); \draw[very thick] (0,20)--(4,20); \draw[very thick] (0,15)--(4,15); \draw[very thick] (0,5)--(4,5);
				\draw[thick,densely dashed] (0,12)--(4,12); \draw[thick,densely dashed] (0,8)--(4,8);
				\foreach \i in {32.5,25,17.5} { \draw (2,\i) node{$0$}; } \foreach \i in {13.5,6.5,2.5} { \draw (2,\i) node{ $\star$}; } \draw (2,10) node{\footnotesize$\blacktriangle$};
				\draw (2,-3) node{$\Psi_{k,m}^i$};
			\end{scope}
			
			\draw (46,17.5) node{$=$};
			
			\begin{scope}[shift={(50,0)}]
				\fill[red!30!white] (0,8) rectangle (4,12);
				\draw[very thick] (0,0) rectangle (4,35); \draw[very thick] (0,30)--(4,30); \draw[very thick] (0,20)--(4,20); \draw[very thick] (0,15)--(4,15); \draw[very thick] (0,5)--(4,5);
				\draw[thick,densely dashed] (0,12)--(4,12); \draw[thick,densely dashed] (0,8)--(4,8);
				\foreach \i in {32.5,25,17.5} { \draw (2,\i) node{$0$}; } \foreach \i in {13.5,6.5,2.5} { \draw (2,\i) node{ $\star$}; }
				\draw (2,10) node{\footnotesize $\blacktriangledown$};
				\draw (2,-3) node{$-\lambda_{m,m}^i \, \Psi_{k,m}^i$};
			\end{scope}
		\end{tikzpicture}\caption{\label{fig6}In this figure, we depict the eigenvalue equation   in \eqref{eq:eig-eqn-km} (for  simplicity, we dropped $\hat\alpha$ from the notation). On the left-hand side, we show the $|\Omega_k| \times |\Omega_k|$ matrix $M_{\hat \alpha,k}$ in its  block lower triangular form. Stars ($\star$) indicate some possibly non-zero entries; triangles ($\blacktriangle$, resp.\ $\blacktriangledown$) strictly positive (resp.\ negative) ones. The  diagonal blocks in bold are the $|\Omega_{k,m}| \times |\Omega_{k,m}|$-matrices $M_{\hat\alpha,k,m}$, $m \in \bbr{1}{k}$, defined in \eqref{eq:Mkm}. Furthermore,  each $M_{\hat\alpha,k,m}$, $m\ge 2$, is further decomposed into block diagonal matrices  $M_{\hat\alpha,k,m}^{i,j}$,  $i \in \bbr{1}{\mathfrak{n}_m}$, $j \in \bbr{1}{\mathfrak N_i^k}$. Focusing on the red region $\Omega_{k,m}^{i,j}$, we readily verify that $\Psi_{\hat\alpha,k,m}^i \big|_{\Omega_{k,m}^{i,j}}$ is a strictly positive eigenfunction of $M_{\hat\alpha,k,m}^{i,j}$ associated to $-\lambda_{m,m}^i(\hat\alpha)$.}
\end{figure}

\fs{
	\subsection{$\lambda_{k,k}(\hat\alpha)$ and $k$ coalescing  particles}\label{sec5.4}
	This subsection is devoted to the proof of a monotonicity in $k$ of  $\lambda_{k,k}(\hat\alpha)$, defined in \eqref{eq:lambda-k-m} when $\Omega_{k,k}\neq \emp$, and set equal to $+\infty$ when $\Omega_{k,k}=\emp$ (see Corollary \ref{cor:wk}).

	\begin{proposition} For all integers $k\ge3$, we have
		\begin{equation}\label{eq:lambda-kk-monotone}
		\lambda_{k,k}(\hat\alpha)
		\geq
		\lambda_{k-1,k-1}(\hat\alpha)\fstop
	\end{equation}
		Consequently,
			\begin{equation}\label{eq:5.5-claim}
			\lambda_{k,k}(\hat\alpha) \ge \lambda_{2,2}(\hat\alpha)\comma \qquad  k \ge 2 \fstop
		\end{equation}
	\end{proposition}
	
	\begin{proof} Fix $k\ge 3$.
	As $\Omega_{k-1,k-1}=\emp$ implies $\Omega_{k,k}=\emp$,  we may assume that $\Omega_{k,k}\neq \emp$.

		On $\Omega_{k,k}$, the  dynamics described by $M_{\hat\alpha,k,k}$ coincides with $k$ independent copies of ${\rm RW}(\hat\alpha)={\rm RW}(G,\hat\alpha)$ killed upon exiting $\Omega_{k,k}$. 
		 For $\xi\in\Omega_{k,k}$, write $\mathsf P_\xi$ for the law of this process started from $\xi$, and define $\tau_k\ge 0$ as its killing time.
	Since the killed chain is finite-state, 
	\begin{equation}\label{eq:survival-principal-eigenvalue}
		\lambda_{k,k}(\hat\alpha)
		=
		-\lim_{t\to\infty}
		\frac1t
		\log
		\max_{\xi\in\Omega_{k,k}}
		\mathsf P_\xi[\tau_k>t] \fstop
	\end{equation}
	
	Fix $\xi\in\Omega_{k,k}$, and choose $z\in V$ such that $\xi_z=1$. Set
	$
		\xi^-\eqdef \xi-\delta_z\in\Omega_{k-1,k-1}$.
	We couple the $(k-1)$-particle process started from $\xi^-$ with the process obtained from the $k$-particle dynamics started from $\xi$ by ignoring the particle initially at $z$. Under this coupling,
	\begin{equation}
		\tau_k
		\leq
		\tau_{k-1} \fstop
	\end{equation}
	Indeed, any contact among the retained $k-1$ particles kills both systems, while contacts involving the ignored particle kill only the $k$-particle system. Hence, for all $t\geq 0$,
	\begin{equation}
		\mathsf P_\xi[\tau_k>t]
		\leq
		\mathsf P_{\xi^-}[\tau_{k-1}>t]
		\leq
		\max_{\xi'\in\Omega_{k-1,k-1}}
		\mathsf P_{\xi'}[\tau_{k-1}>t] \fstop
	\end{equation}
	By taking the maximum over $\xi\in\Omega_{k,k}$, the above bound and  \eqref{eq:survival-principal-eigenvalue} give \eqref{eq:lambda-kk-monotone}.
\end{proof}}

\subsection{Proofs of Theorems \ref{th:gap-asymptotics1} and \ref{th:failure-gap}}\label{sec5.5}
We are  ready to prove Theorems \ref{th:gap-asymptotics1} and \ref{th:failure-gap}. 

\begin{proof}[Proof of Theorem \ref{th:gap-asymptotics1}]
By Corollary \ref{lem:5.2} and \eqref{eq:gap-km-dec} \fs{(we omit $G$ from the notation)}, 
\begin{equation}\label{eq:limit}
\lim_{\eps \to 0} \frac{{\rm gap}_k(\eps \hat\alpha)}{\eps}= {\rm gap}_{\rm RW} (\hat\alpha) \wedge \min_{m \in \bbr{2}{k}} \lambda_{k,m}(\hat\alpha)  \comma
\end{equation}
whereas by  \eqref{eq:lambda-km-mm} and \eqref{eq:5.5-claim},
\begin{equation}\label{eq:limit2}
\min_{m \in \bbr{2}{k}} \lambda_{k,m}(\hat\alpha) = \lambda_{2,2}(\hat\alpha) \fstop
\end{equation}
Hence, since the limit in \eqref{eq:limit} is strictly positive,  Theorem \ref{th:gap-asymptotics1} holds true.
\end{proof}

\begin{proof}[Proof of Theorem \ref{th:failure-gap}] Recall that, for all graphs $G$ and  site weights $\hat \alpha$, we have ${{\rm gap}_1(G, \eps \hat\alpha)} =\eps\, {\rm gap}_{\rm RW} (G,\hat\alpha) $, $\eps>0$; moreover,  by \eqref{eq:limit} for $k=2$, we have
\begin{equation}
\lim_{\eps \to 0} \frac{{\rm gap}_2(G, \eps \hat\alpha)}{\eps} = {\rm gap}_{\rm RW} (G,\hat\alpha) \wedge \lambda_{2,2}(G,\hat\alpha)  \fstop
\end{equation}
Thus, it suffices to verify the existence of $G$ and $\hat\alpha$ such that
\begin{equation}\label{eq:fail-gap-pf}
\lambda_{2,2} (G,\hat\alpha) < {\rm gap}_{\rm RW} (G,\hat\alpha) \fstop
\end{equation}
In particular, let us recall from the discussion in Section \ref{sec5.4}, that $\lambda_{2,2}$ is the smallest eigenvalue of (the negative of) the generator describing two independent ${\rm RW}(G,\hat\alpha)$,  having $\hat\alpha=(\hat\alpha_x)_{x\in V}$ as a (non-normalized) reversible measure, and which get instantaneously killed when attempting to get to a mutual graph-distance strictly smaller than two. 

In view of this, it is not surprising that the standard  discrete torus $\mathbb{T}_N^d$, with dimension $d\ge2$, size $N\in \N$, and homogeneous site weights, satisfies \eqref{eq:fail-gap-pf}, provided that $N$ is large enough. From now on, we fix  $\hat \alpha\equiv 1$\fs{, $d\ge2$, $N\in \N$,} and
\begin{equation}G=\mathbb{T}_N^d\comma \text{with}\ c_{xy}\equiv1\ \text{for all neighboring sites}\ x,y\in \mathbb{T}_N^d\comma
\end{equation}
	  and drop them from the notation, simply writing, e.g., $\lambda_{2,2}$ and ${\rm gap}_{\rm RW}$ instead of $\lambda_{2,2}(G,\hat \alpha)$ and ${\rm RW}(G,\hat \alpha)$. \fs{Later on, we will take the limit  $N\to \infty$.}
	  
	  Consider the positions $(X_t,Y_t)_{t\ge0}$ of two independent particles on  $\mathbb{T}_N^d$, each one jumping with unit rate to any of its neighbors.
	  Observe that $(X_t, Y_t)_{t\ge0}$ is reversible with respect to the uniform measure $\kappa=\kappa_{N,d}$ on $\mathbb{T}_N^d \times \mathbb{T}_N^d$. In what follows, let $\mathsf P_{(x,y)}$ and $\mathsf E_{(x,y)}$ denote the law and corresponding expectation \fs{with respect to the law of} $(X_t,Y_t)_{t\ge 0}$ when starting from $(x,y)\in \mathbb{T}_N^d\times\mathbb{T}_N^d$.  It is well known (see, e.g., \cite[Sections 12.3.1 \& 12.4]{levin2017markov}) that, for all $N\in \N$ large enough and all $d\ge 1$, we have
	  \begin{equation}\label{eq:diffusive}
	  	\fs{{\rm gap}_{\rm RW}   =2\tonde{1-\cos\frac{2\pi}{N}} \ge  \frac{\pi^2}{N^2}\fstop}
	  \end{equation} Letting
	  \begin{equation}
	  	\cW \eqdef \{(x,y)\in \mathbb T^d_N: {\rm dist}_{\mathbb{T}_N^d}(x,y)\ge 2\} \comma
	  \end{equation}
by the second inequality in \cite[Eq.\ (1.7)]{hermon2023relaxation}, we have
\begin{equation}\label{eq:lambda22-UB}
\lambda_{2,2} \le \frac{1}{\max_{ \cU\subseteq \cW} \mathsf E_{\kappa( \emparg| \cU)}  [ \tau_{\cU^\complement}] }\le \frac{1}{\mathsf E_{\kappa( \emparg| \cW)}  [ \tau_{\cW^\complement}] } \comma
\end{equation}
where in the second step we substituted $\cU=\cW$. 
The last denominator in \eqref{eq:lambda22-UB} is almost the mean meeting time of two independent random walks, both being uniformly distributed on $\mathbb{T}_N^d$ (and independently) at time $t=0$. This latter quantity is known \cite[Theorem 4]{cox_coalescing_1989} to asymptotically diverge, as $N\to \infty$, like $s_d(N)$ \fs{given in \eqref{eq:cox}}.
In the remainder of this proof, we show that the denominator in \eqref{eq:lambda22-UB} follows the same asymptotic behavior.

Since $\kappa(\cW) = 1 - \frac{2d+1}{N^d}$, we calculate
\begin{equation}\label{eqeqeq1}
\mathsf E_{\kappa( \emparg\mid \Omega)} 	  [ \tau_{\cW^\complement}] = \left(1-\frac{2d+1}{N^d}\right)^{-1} \frac{1}{N^{2d}} \sum_{(x,y)\in \cW} \mathsf E_{(x,y)} [\tau_{\cW^\complement}] \fstop
\end{equation}
Moreover,  since  $Z_t\eqdef X_t-Y_t\in \mathbb T^d_N$ is a simple random walk (jumping to a nearest neighbor at rate $2$), we have
\begin{equation}\label{eqeqeq2}
\mathsf E_{(x,y)}  [\tau_{\cW^\complement}] = \widehat {\mathsf E}_{x-y} [\hat\tau_1] \comma\qquad (x,y)\in \cW\comma
\end{equation}
where we use the hat-notation to refer to quantities related to $Z_t$,  while $\hat\tau_\ell$, $\ell \in \bbr{0}{\lfloor N/2\rfloor} $,  denotes the first hitting time of the set
\begin{equation}
\{z\in \mathbb T^d_N: {\rm dist}_{\mathbb{T}_N^d}(z,0)=\ell\}\fstop
\end{equation}
By the strong Markov property and translation invariance of the dynamics, we have
\begin{equation}
\widehat {\mathsf E}_{x-y}  [\hat\tau_0] = \widehat {\mathsf E}_{x-y}  [\hat\tau_1] + \widehat {\mathsf E}_{e_1}  [\hat\tau_0] = \widehat {\mathsf E}_{x-y}  [\hat\tau_1] + \widehat {\mathsf E}_0 [\hat\tau_0^+] - \widehat {\mathsf E}_0  [\hat\tau_1] \comma
\end{equation}
where $\hat\tau_0^+$ denotes  the first return time to $0$ for the walk $Z_t$ started in $0$, i.e., 
\begin{equation}
\hat\tau_0^+\eqdef \inf\{t> \hat\tau_1: Z_t=0\}\fstop
\end{equation}
Clearly, $\widehat {\mathsf E}_0\, [\hat\tau_1] =  \frac{1}{4d}$, while,  by Kac's formula for return times (see, e.g., \cite[p.\ 34, Lemma 2.25]{aldous-fill-2014}), we obtain
\begin{equation}
\widehat {\mathsf E}_0  [\hat\tau_0^+] = \frac{N^d}{4d}\fstop
\end{equation}
Hence, we get
\begin{equation}\label{eqeqeq3}
\begin{aligned}
\frac{1}{N^{2d}} \sum_{(x,y) \in \cW} \widehat {\mathsf E}_{x-y} [\hat\tau_1] &= \frac{1}{N^{2d}} \sum_{(x,y) \in \cW} \tonde{\widehat {\mathsf E}_{x-y} [\hat\tau_0] - \widehat {\mathsf E}_0 	 [\hat\tau_0^+] + \widehat {\mathsf E}_0 [\hat\tau_1] } \\
&\ge \left\{\frac{1}{N^{2d}}\sum_{(x,y) \in \cW} \widehat {\mathsf E}_{x-y} [\hat\tau_0]\right\} - \frac{N^d}{4d} \fstop
\end{aligned}
\end{equation}
By \cite[Theorem 4]{cox_coalescing_1989} (see also \cite[Lemma 7.3.2]{durrett_dynamics_book} for a more recent textbook version), the expression between curly brackets is, for all $N\in \N$ large enough, bounded below by $\mathfrak g_d\,s_d(N)$,   for some positive constant $\mathfrak g_d$ (which, for $d\ge 3$, is strictly larger than $\frac1{4d}$, cf.\ \cite[Eq.\ (1.2)]{cox_coalescing_1989}).	
Hence, collecting \eqref{eqeqeq1}, \eqref{eqeqeq2} and \eqref{eqeqeq3}, we obtain, for some $C>0$ \fs{(possibly depending only on $d\ge 2$, but not on $N\in \N$)} and for all $N\in \N$ large enough,
\begin{equation}
	{\mathsf E}_{\kappa( \emparg| \cW)}  [ \tau_{\cW^\complement}] \ge C s_d(N)\comma
\end{equation}which, along with \eqref{eq:lambda22-UB}, implies
\begin{equation}
\lambda_{2,2} \le \frac1{Cs_d(N)} \fstop
\end{equation}
Comparing this inequality with \eqref{eq:diffusive} (cf.\ \eqref{eq:cox}) completes the proof of the theorem.
\end{proof}

\fs{We conclude this section with a remark on the role of the directional limits in \eqref{eq:alpha-eps-beta} in proving Theorem \ref{th:gap-asymptotics1}.

\begin{remark}[Directional \textit{vs.}\ general limits]\label{rem:general-alpha-limit}
	Theorem \ref{th:gap-asymptotics1} is stated for directional limits
	$\alpha=\varepsilon\hat\alpha$, with $\hat\alpha\in(0,\infty)^V$ fixed and
	$\varepsilon\to0$; see \eqref{eq:alpha-eps-beta}. Let us briefly explain what would be needed in order to pass from \eqref{eq:alpha-eps-beta} to a completely general limit $\alpha\to0$.
	
	A natural target would be the lower bound (we omit $G$ from the notation)
	\begin{equation}\label{eq:general-liminf-gap-ratio}
		\liminf_{\alpha\to0}
		\frac{{\rm gap}_k(\alpha)}{{\rm gap}_2(\alpha)}
		\ge 1\comma
		\qquad k\ge3\fstop
	\end{equation}
	Since, by \eqref{eq:gap-consistency}, one has
	${\rm gap}_k(\alpha)\le{\rm gap}_2(\alpha)$ for all $k\ge3$,
	\eqref{eq:general-liminf-gap-ratio} would imply the full limit
	\begin{equation}
	\lim_{\alpha\to0}
	\frac{{\rm gap}_k(\alpha)}{{\rm gap}_2(\alpha)}
	=1\fstop
	\end{equation}
	By the sequential characterization of the liminf, it suffices to consider arbitrary
	sequences $\alpha_n\to0$, as $n\to \infty$, in $(0,\infty)^V$. By setting
	\begin{equation}
\eps_n\eqdef(\alpha_n)_{\rm max}=\|\alpha_n\|_\infty\comma
\qquad	\beta_n\eqdef \eps_n^{-1}\alpha_n\comma
	\end{equation}
	we have $\eps_n\to0$ and, up to extracting a subsequence, $\beta_n\to \hat\alpha$, 
	for some $\hat\alpha\in[0,1]^V$ with $\max_{x\in V}\hat\alpha_x=1$. Along such a
	subsequence, the analogue of \eqref{Ak-Bk-dec} reads as
	\begin{equation}
	\eps_n^{-1}L_{\alpha_n,k}
	=
	 A_{\beta_n,k}+\eps_n^{-1}B_k\comma
	\end{equation}
	where $A_{\beta_n,k}$ is the slow part and  $B_k$ is the fast part of the generator, see \eqref{eq:Ak} and \eqref{eq:Bk}. Since
	$A_{\beta_n,k}\to A_{\hat\alpha,k}$ as finite matrices, the slow--fast reduction used in
	Proposition \ref{lem:kurtz} applies, with minor notational changes, also to this
	$n$-dependent situation. The limiting effective generator $\mathscr G_{\hat\alpha,k}$ is given by the same formula there.
	
	If $\hat\alpha\in(0,\infty)^V$, then the rest of the proof of Theorem
	\ref{th:gap-asymptotics1} goes through without changes: the limiting quantities
	$w_k(\hat\alpha)$ and $\lambda_{k,m}(\hat\alpha)$ appearing in \eqref{wk-def} and
	\eqref{eq:lambda-k-m} are exactly those analyzed in Sections
	\ref{sec5.2}--\ref{sec5.4}. Thus, strictly positive subsequential directions do not create
	any additional difficulty.
	
The only genuinely new case is when $\hat\alpha_x=0$ for some $x\in V$. Then, the
limiting effective generator may acquire additional zero eigenvalues: sites with
$\hat\alpha_x=0$ may generate further transient states and, depending on the graph,
may also disconnect irreducible components of the limiting effective dynamics.
In particular, the effective gap at scale $\varepsilon_n=(\alpha_n)_{\rm max}$ may vanish,
so that the relevant gaps may be of smaller order than $\varepsilon_n$. In such a
case, the first-order slow--fast limit does not determine the ratio
${\rm gap}_k(\alpha_n)/{\rm gap}_2(\alpha_n)$; one would have to analyze the next
non-vanishing scales of the coordinates of $\alpha_n$. Although one expects the same
two-particle mechanism to arise also through this further multi-scale analysis, we do
not pursue this extension here, and prefer to state Theorem \ref{th:gap-asymptotics1}
in the clean directional regime with $\hat\alpha\in(0,\infty)^V$ fixed.
	\end{remark}}

\section{Non-conservative SIP. Setting and main results}\label{sec:open-SIP}
In this  section, we attach particle reservoirs to some sites of the graph. We describe this by introducing two sets of non-negative site parameters,  $\omega=(\omega_x)_{x\in V}$ and $\theta=(\theta_x)_{x\in V}$. The former one represents the  rates of interaction between a site and its reservoir. The latter one prescribes the reservoirs' particle densities. Next to the inclusion particles' motion considered so far, we now let particles be created and annihilated with rates resembling those of ${\rm SIP}$.

\subsection{Model}\label{sec:non-conservative-stat-measures}
Let  $G=(V,(c_{xy})_{x,y\in V})$ and $\alpha=(\alpha_x)_{x\in V}$ be as in Section \ref{sec2}, and introduce some non-negative site weights $\omega=(\omega_x)_{x\in V}$ and $\theta=(\theta_x)_{x\in V}$. Then,  we write ${\rm SIP}(G,\alpha,\omega,\theta)$ for the Markov process  on\footnote{$\Xi_0\eqdef \{\emptyset\}$ consists of the empty configuration $\emptyset$ with no particles.} $\Xi\eqdef \cup_{k=0}^\infty\, \Xi_k$ with infinitesimal generator $L_{G,\alpha,\omega,\theta}$ given, for a bounded function $f\in \R^{\Xi}$ and a configuration $\eta\in \Xi$, by
\begin{equation}\label{eq:gen-res}
	\begin{aligned}
		&L_{G,\alpha,\omega,\theta}f(\eta)= \sum_{x,y\in V}c_{xy}\,\eta_x\tonde{\alpha_y+\eta_y}\tonde{f(\eta-\delta_x+\delta_y)-f(\eta)}\\
		&+ \sum_{x\in V}\omega_x\,\big\{  \eta_x\tonde{1+\theta_x}\tonde{f(\eta-\delta_x)-f(\eta)}+\theta_x\tonde{\alpha_x+\eta_x}\tonde{f(\eta+\delta_x)-f(\eta)}\big\}\fstop
	\end{aligned}
\end{equation}
Here, the first summation on the right-hand side corresponds with the conservative dynamics described by $L_{G,\alpha,k}$ in \eqref{eq:gen-conservative}, and is referred to as bulk dynamics. The second summation concerns the particle creation-annihilation mechanism, and may be interpreted as a bulk-boundary dynamics. More in detail, particles at $x\in V$ get killed independently, each at rate $\omega_x\tonde{1+\theta_x}$, while a new particle is created at $x\in V$ with a configuration-dependent rate $\omega_x\,\theta_x\tonde{\alpha_x+\eta_x}$.
Furthermore, albeit the configuration space $\Xi$ is countably infinite,  ${\rm SIP}(G,\alpha,\omega,\theta)$ does not explode in finite time. Indeed, the total number of particles is stochastically dominated by a pure birth process with linearly growing rates (see, e.g., \cite[Proposition 2.1]{franceschini2020symmetric}).

The long-time behavior of ${\rm SIP}(G,\alpha,\omega,\theta)$ clearly depends on the values of $\omega$ and $\theta$. To start with, we mention that $\omega_x=0$ means that site $x\in V$ does not directly exchange particles with any reservoirs, whereas
$\theta_x=0$ corresponds to having a purely absorbing reservoir at $x\in V$. As a consequence, if $\omega \equiv 0$, we recover the conservative case having, for any total number of particles $k\in \N$, $\mu_{\alpha,k}$ in \eqref{mu-def} as a unique reversible measure. If $\theta\equiv 0$ and $\omega \not\equiv 0$, the system eventually empties out. In other words, if $\theta$ vanishes on the support of $\omega$, that is,
\begin{equation}
	{\rm supp}(\omega)\eqdef \{x\in V: \omega_x > 0\}\comma
\end{equation}
then ${\rm SIP}(G,\alpha,\omega,\theta)$ has  the empty configuration $\emptyset \in \Xi_0$ as its unique stationary state, which, in turn, is absorbing.
More generally, for $\omega \not \equiv 0$, there exists a unique stationary measure for ${\rm SIP}(G,\alpha,\omega,\theta)$, referred to as $\mu_{G,\alpha,\omega,\theta}$.  
In this situation, one typically distinguishes between two scenarios, commonly referred to as {equilibrium} and {non-equilibrium}, respectively. 

More in detail, \emph{equilibrium} occurs if $\theta $ is constant  on 
${\rm supp}(\omega)$. In this case,  $\mu_{G,\alpha,\omega,\theta}$ is reversible, in product form, and explicit \fs{(see, e.g., \cite{carinci_duality_2013-1,floreani_boundary2020})}: assuming that  $\theta\equiv \varrho > 0$, then $	\mu_{G,\alpha,\omega,\theta}= \nu_{\alpha,\varrho}$, where,  for all $\eta \in \Xi$,
\begin{equation}\label{eq:nu-sigma}
 \nu_{\alpha,\varrho}(\eta)\eqdef\frac{1}{\cZ_{\alpha,\varrho}} \tonde{\frac{\varrho}{1+\varrho}}^{|\eta|} \prod_{x\in V}\frac{\Gamma(\alpha_x+\eta_x)}{\Gamma(\alpha_x)\,\eta_x!}\comma\qquad \text{with}\ \cZ_{\alpha,\varrho}\eqdef (1+\varrho)^{|\alpha|}\fstop
\end{equation} 
Note that $\mu_{\alpha,k}$ in \eqref{mu-def} is the canonical measure associated to the grand-canonical $\nu_{\alpha,\varrho}$.
One speaks of \emph{non-equilibrium} if $\theta$ is non-constant on ${\rm supp}(\omega)$. Then, $\mu_{G,\alpha,\omega,\theta}$ is, in general, neither reversible, nor in product form, nor explicit. We will not need to know more on the  structure of this steady state; for further details, see, e.g.,  \cite{floreani_boundary2020} and references therein.

\subsection{Spectral gap identity when $\theta\equiv 0$}
We start with the analysis of the purely absorbing ${\rm SIP}$. As already anticipated in Section \ref{sec:intro-non-consevative}, we shall see that understanding the case $\theta\equiv 0$ is one of the key ingredients for the spectral analysis of the setting with general  $\theta$.

Observe that substituting $\theta\equiv 0$ in \eqref{eq:gen-res} gives rise to a  system in which all particles are never created,  and get annihilated one at the time, until the system eventually gets empty. In particular,  $L_{G,\alpha,\omega,0}$, interpreted as an infinite matrix, is similar to a block lower triangular one. Moreover, the blocks on the diagonal --- each for any total number of particles $k\in \N_0$ in the bulk of the system --- are of finite size and describe the following  finite-state Markov chains.
The case   $k=1$  corresponds to a process with just one particle in the bulk, which  moves as ${\rm RW}(G,\alpha)$ defined in Section \ref{sec2},   additionally killed at rate $\omega_x\ge 0$ when sitting on $x\in V$. We refer to this process as ${\rm RW}(G,\alpha,\omega)$. The case $k\ge 2$ is similar, with $k$ interacting particles evolving in the bulk as ${\rm SIP}_k(G,\alpha)$, each independently killed with rate $\omega_x\ge 0$ when sitting at $x\in V$. Each of these Markov chains, say ${\rm SIP}_k(G,\alpha,\omega)$, \fs{is} sub-stochastic. Moreover, since the conservative ${\rm SIP}_k(G,\alpha)$ admits $\mu_{\alpha,k}$ in \eqref{mu-def} as its unique reversible measure, all eigenvalues of (the negative of) the generator of ${\rm SIP}_k(G,\alpha,\omega)$ (see \eqref{eq:gen-res-abs}) are real; \fs{as the conservative system is irreducible, ${\rm supp}(\omega)\neq \emp$ further ensures that such eigenvalues are} strictly positive. 
Letting ${\rm gap}_k(G,\alpha,\omega)$, $k\ge 1$, be the lowest of such eigenvalues, we write	(cf.\ \eqref{eq:gap-sip-def})
\begin{equation}\label{eq:gap-sip-gap-k-non-conservative}
	{\rm gap}_{\rm SIP}(G,\alpha,\omega)=\inf_{k\ge 1}{\rm gap}_k(G,\alpha,\omega)\fstop
\end{equation}
Similarly, we introduce the spectral gap of ${\rm RW}(G,\alpha,\omega)$ as \begin{equation}\label{eq:gap-rw-abs}
	{\rm gap}_{\rm RW}(G,\alpha,\omega)= {\rm gap}_1(G,\alpha,\omega)\fstop\end{equation}

Our  main result of this section establishes that, as soon as the killing rate $\omega$ is not identically zero, the spectral gaps in \eqref{eq:gap-sip-gap-k-non-conservative} and \eqref{eq:gap-rw-abs} coincide,    regardless of the underlying geometry and of the site weights ---  even when $\alpha_{\rm min}\in (0,1)$.	 This spectral gap identity stands in stark contrast to the behavior observed in the conservative setting, where  restrictions are imposed on the value of  $\alpha_{\rm min}$.
\begin{theorem}[$\theta\equiv0$]\label{th:non-conservative} For all graphs $G=(V,(c_{xy})_{x,y\in V})$, and site weights $\alpha=(\alpha_x)_{x\in V}$, $\omega=(\omega_x)_{x\in V}$ with $\omega\neq 0$, we have
	\begin{equation}
		{\rm gap}_{\rm SIP}(G,\alpha,\omega)= {\rm gap}_{\rm RW}(G,\alpha,\omega)\fstop
	\end{equation}
\end{theorem}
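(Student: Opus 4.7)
The plan is to establish the pointwise inequality ${\rm gap}_k(G,\alpha,\omega) \ge {\rm gap}_{\rm RW}(G,\alpha,\omega)$ for every $k\ge 1$; the theorem then follows, the reverse inequality being immediate from the block-triangular structure of $L_{G,\alpha,\omega,0}$, which places all one-particle eigenvalues into the spectrum of the full absorbing generator. I will prove the lower bound by lifting the principal eigenfunction of the killed RW to a strictly positive \emph{pointwise supersolution} of the $k$-particle spectral equation at the eigenvalue ${\rm gap}_{\rm RW}$.

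By Perron--Frobenius applied to the irreducible sub-Markov generator of ${\rm RW}(G,\alpha,\omega)$, there exists a strictly positive $\psi : V\to(0,\infty)$ with $L_{G,\alpha,1}\psi(x) - \omega_x \psi(x) = -{\rm gap}_{\rm RW}(G,\alpha,\omega)\,\psi(x)$ for every $x\in V$. For each $k\ge 1$, I lift $\psi$ to $\Xi_k$ via the first-moment ansatz $F_k(\eta) \eqdef \sum_{x\in V}\eta_x\,\psi(x)$, which is strictly positive since $|\eta|=k\ge 1$ and $\psi>0$. Denoting by $L^{\rm kill}_k \eqdef L_{G,\alpha,k} - D_\omega$ the generator of ${\rm SIP}_k(G,\alpha,\omega)$ on $\R^{\Xi_k}$, where $D_\omega f(\eta) \eqdef (\sum_{x\in V}\omega_x\eta_x)\,f(\eta)$, a direct computation --- using $F_k(\eta-\delta_x+\delta_y) - F_k(\eta) = \psi(y)-\psi(x)$ and the crucial vanishing, by antisymmetry in $(x,y)$ against the symmetry $c_{xy}\eta_x\eta_y = c_{yx}\eta_y\eta_x$, of the genuinely interacting contribution $\sum_{x,y\in V} c_{xy}\,\eta_x\eta_y\,(\psi(y)-\psi(x))$ to $L_{G,\alpha,k}F_k$ --- yields
\[
L^{\rm kill}_k F_k(\eta) = -{\rm gap}_{\rm RW}(G,\alpha,\omega)\,F_k(\eta) + \sum_{x\in V}\omega_x\,\eta_x\,\bigl(\psi(x) - F_k(\eta)\bigr).
\]
Whenever $\omega_x\eta_x>0$ one has $\eta_x\ge 1$, so $F_k(\eta) \ge \eta_x\psi(x)\ge\psi(x)$; the correction term is then $\le 0$ pointwise, giving the supersolution inequality $-L^{\rm kill}_k F_k \ge {\rm gap}_{\rm RW}(G,\alpha,\omega)\,F_k$ on all of $\Xi_k$.

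To convert the pointwise bound into the spectral conclusion ${\rm gap}_k\ge {\rm gap}_{\rm RW}$, I would use that $L^{\rm kill}_k$ is self-adjoint on $L^2(\mu_{\alpha,k})$ --- the conservative part is reversible with respect to $\mu_{\alpha,k}$ and $D_\omega$ is diagonal --- together with Perron--Frobenius on the irreducible sub-Markov chain on $\Xi_k$, which provides a strictly positive principal eigenfunction $\phi_k>0$ satisfying $-L^{\rm kill}_k\phi_k = {\rm gap}_k(G,\alpha,\omega)\,\phi_k$. Pairing the pointwise supersolution with $\phi_k$ in the $\mu_{\alpha,k}$-inner product and invoking self-adjointness gives
\[
{\rm gap}_k(G,\alpha,\omega)\,\nscalar{\phi_k}{F_k}_{\alpha,k} = \nscalar{\phi_k}{-L^{\rm kill}_k F_k}_{\alpha,k} \ge {\rm gap}_{\rm RW}(G,\alpha,\omega)\,\nscalar{\phi_k}{F_k}_{\alpha,k},
\]
and the desired bound follows since $\nscalar{\phi_k}{F_k}_{\alpha,k}>0$. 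The main obstacle, and what makes the identity conceptually striking, is the exact cancellation of the interacting term in the computation of $L_{G,\alpha,k}F_k$: this SIP-specific algebraic feature --- symmetric product inclusion rates $\eta_x\eta_y$ tested against an antisymmetric one-particle gradient --- is precisely what allows the many-particle gap to inherit the single-particle RW rate, and it fails completely for a system of $k$ independent killed walks, whose spectral gap would instead scale like $k\cdot{\rm gap}_{\rm RW}$.
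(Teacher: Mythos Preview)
Your supersolution argument for ${\rm gap}_k(G,\alpha,\omega)\ge{\rm gap}_{\rm RW}(G,\alpha,\omega)$ is correct and takes a genuinely different route from the paper. The paper argues probabilistically via a \emph{lookdown representation} of ${\rm SIP}_k(G,\alpha,\omega)$: it constructs a labeled process on $V^k$ in which the particle with label~$1$ evolves autonomously as ${\rm RW}(G,\alpha,\omega)$, independently of the higher-labeled particles, and obtains the survival-probability bound $\max_{\eta\in\Xi_k}\P_\eta(\tau_k>t)\le\max_{x\in V}\P_{\delta_x}(\tau_1>t)$ by simply discarding particles $2,\ldots,k$. Your approach instead lifts the Perron--Frobenius eigenfunction $\psi$ of the killed walk to the positive first-moment function $F_k(\eta)=\sum_x\eta_x\psi(x)$, verifies the pointwise inequality $-L^{\rm kill}_kF_k\ge{\rm gap}_1\,F_k$, and concludes by pairing against the principal eigenfunction of $-L^{\rm kill}_k$. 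Both routes rest on the same linear observable (your $F_k$ is, up to a multiplicative constant, the iterated annihilation $\mathfrak a_k\cdots\mathfrak a_2\psi$; the paper's ``first particle'' is its probabilistic avatar), but the paper's argument yields the stronger pointwise-in-$t$ bound on survival probabilities, while yours is entirely spectral and avoids the lookdown construction.

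Two small points. First, the block-triangular structure of $L_{G,\alpha,\omega,0}$ on $\Xi$ places ${\rm gap}_1$ in the spectrum of the \emph{full} absorbing generator, but this is not the same as $\inf_{k\ge2}{\rm gap}_k\le{\rm gap}_1$, which is how the paper defines ${\rm gap}_{\rm SIP}(G,\alpha,\omega)$; the paper's own proof likewise establishes only the direction ${\rm gap}_k\ge{\rm gap}_1$ and treats the rest as immediate, and it is this direction alone that is used downstream (Corollary~\ref{cor:non-conservative}). Second, your closing remark about independent killed walks is slightly off: for independent walks there is no $\eta_x\eta_y$ term to cancel in the first place, so your supersolution computation is literally unchanged and yields the same (now strictly non-sharp, since ${\rm gap}_k^{\rm indep}=k\,{\rm gap}_1$) bound ${\rm gap}_k^{\rm indep}\ge{\rm gap}_1$; the cancellation is not what is SIP-specific here.
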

 Remarkably, the proof of the above identity (deferred to Section \ref{sec:proof-gap-abs}) is quite elementary.

\subsection{General setting}
When dealing with the general case $\theta\neq 0$, we must specify the infinite-dimensional function analytic setting that we work in. For this purpose, we define a class of functions on $\Xi$ (Definition \ref{def:polynomials}), written in terms of  orthogonal \textquotedblleft Meixner\textquotedblright\ polynomial  dualities introduced in \cite[Section 4]{floreani_boundary2020}, which take the following form: for all $\varrho\ge 0$,
\begin{equation}\label{eq:duality}
	D_{\alpha,\varrho}(\xi,\eta)=\prod_{x\in V}d_{\alpha_x,\varrho}(\xi_x,\eta_x)\comma\qquad \eta\in \Xi\comma  \xi\in \Xi_k\comma k\ge 0\comma
\end{equation}
where, for $\varrho=0$,
\begin{align}
	d_{\alpha_x,0}(\xi_x,\eta_x) = \frac{\eta_x!}{\tonde{\eta_x-\xi_x}!}\frac{\Gamma(\alpha_x)}{\Gamma(\alpha_x+\xi_x)}\car_{\set{\xi_x\le \eta_x}}\comma
\end{align}
whereas, for $\varrho>0$, 
\begin{equation}\label{eq:meixner}
	d_{\alpha_x,\varrho}(\xi_x,\eta_x) =\sum_{\ell_x=0}^{\xi_x}\binom{\xi_x}{\ell_x}\,d_{\alpha_x,0}(\ell_x,\eta_x)\tonde{-\varrho}^{\xi_x-\ell_x}=	\tonde{-\varrho}^{\xi_k}\fs{{}_2F_1(-\xi_x,-\eta_x;\alpha_x;-\varrho^{-1})} \fstop
\end{equation}
This last identity is just a rewriting in terms of the ordinary hypergeometric function. Let us observe that $D_{\alpha,\varrho}(\emptyset,\emparg)\equiv 1$ and that, for all $k\in \N$ and $\xi\in \Xi_k$, $\eta \in \Xi\longmapsto D_{\alpha,\varrho}(\xi,\eta)\in \R
$  is a $k$-th order polynomial in  $\eta=(\eta_x)_{x\in V}$.	Moreover, as we shall show in Section \ref{sec:proof-th-eigen} below,	 \begin{equation}\label{eq:meixner-duality}
	\{D_{\alpha,\varrho}(\xi,\emparg): |\xi|\le k\}\end{equation} is a linearly independent set which spans the space of polynomials in $\eta$ of order $\le k$. 	This and the two other properties presented in Proposition \ref{pr:duality} below turn the functions in \eqref{eq:duality} into a convenient basis of polynomials on $\Xi$.

\begin{definition}\label{def:polynomials}
		For all $\varrho> 0$, integers $k\ge 1$, and functions $\psi\in \R^{\Xi_k}$, define
	\begin{equation}\label{eq:f-lift}
		F_{\alpha,\varrho}^\psi(\eta)\eqdef \sum_{\xi\in \Xi_k}\mu_{\alpha,k}(\xi)\,\psi(\xi)\, D_{\alpha,\varrho}(\xi,\eta)\comma \qquad \eta\in \Xi\fstop
	\end{equation}
	When $k=0$ and, thus, $\psi\in \R$ is a scalar, we conventionally set $F_{\alpha,\varrho}^\psi\equiv \psi\in \R$. 
\end{definition}
Remark that, as long as $\psi\neq 0$, the function in \eqref{eq:f-lift} is a non-vanishing $k$-th order polynomial in the variables $\eta=(\eta_x)_{x\in V}$. In particular, expressions like $L_{G,\alpha,\omega,\theta} F_{\alpha,\varrho}^\psi$ will always be well defined.
Finally,  introduce the operators
$\mathfrak b_{\alpha,\omega,\varrho,k-1}^\theta: \R^{\Xi_k}\to \R^{\Xi_{k-1}}$, $k\ge 1$, acting on functions $\psi \in \R^{\Xi_k}$ as
\begin{equation}\label{eq:mathfrak-b}
	\mathfrak b_{\alpha,\omega,\varrho,k-1}^\theta \psi(\zeta)\eqdef  k\sum_{x\in V}\omega_x\tonde{\theta_x-\varrho}\frac{\alpha_x+\zeta_x}{|\alpha|+k-1}\,\psi(\zeta+\delta_x)\comma\qquad \zeta \in \Xi_{k-1}\fstop
\end{equation}
Clearly, we have
\begin{equation}\label{eq:mathfrak-b-zero}
	\mathfrak b_{\alpha,\omega,\varrho,k-1}^\theta = 0\comma\qquad \text{if}\ \theta\equiv \varrho\fstop
\end{equation}
The following theorem, whose proof is postponed to Section \ref{sec:proof-th-eigen} below, is stated for any graph $G$ and site weights $\alpha$, $\omega$, and $\theta$.
\begin{theorem}\label{th:lifting-general}
 Fix $\varrho>0$ and recall Definition \ref{def:polynomials}.
	\begin{enumerate}[(a)]
		\item\label{it:lifting-general} Given $k\ge 0$, any eigenpair  $\lambda,\psi$ of the $k$-particle purely-absorbing system solves
			\begin{equation}\label{eq:eigen-lift-2}
			L_{G,\alpha,\omega,\theta}F_{\alpha,\varrho}^\psi = -\lambda\, F_{\alpha,\varrho}^{\psi} + F_{\alpha,\varrho}^{\mathfrak b_{\alpha,\omega,\varrho,k-1}^\theta\psi}\fstop
		\end{equation}
		\item \label{it:cons}  The functions $F_{\alpha,\varrho}^\psi$  as in Definition \ref{def:polynomials} may be chosen to form an orthogonal basis of $L^2(\nu_{\alpha,\varrho})$.	
	\end{enumerate}
\end{theorem}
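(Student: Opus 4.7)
\textbf{Proof plan for Theorem \ref{th:lifting-general}.} The underlying principle is that the Meixner-type polynomial kernels $D_{\alpha,\varrho}$ intertwine the non-conservative dynamics acting on $\eta$ with the purely-absorbing $k$-particle dynamics acting on $\xi$, up to a correction proportional to $\theta-\varrho$. When $\theta\equiv\varrho$ one recovers a genuine self-duality (as expected, since $\nu_{\alpha,\varrho}$ is reversible precisely in this case, cf.\ \eqref{eq:mathfrak-b-zero}); the general statement then follows by treating $\theta-\varrho$ as a perturbation.

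\emph{Part \ref{it:lifting-general}.} The first and main step is to establish the one-sided intertwining identity
\begin{equation}
L_{G,\alpha,\omega,\theta}^{\eta}\, D_{\alpha,\varrho}(\xi,\eta) = \big[\, L_{G,\alpha,\omega,k}^{\rm abs}\, D_{\alpha,\varrho}(\emparg,\eta)\,\big](\xi) + k\sum_{x\in V}\omega_x(\theta_x-\varrho)\,\frac{\alpha_x+\xi_x}{|\alpha|+k-1}\,D_{\alpha,\varrho}(\xi+\delta_x,\eta)\comma
\end{equation}
valid for all $k\ge 1$, $\xi\in\Xi_k$, $\eta\in\Xi$, where $L_{G,\alpha,\omega,k}^{\rm abs}$ denotes the generator of ${\rm SIP}_k(G,\alpha,\omega)$ and the superscript $\eta$ stresses that $L_{G,\alpha,\omega,\theta}$ acts on the $\eta$-variable. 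I would prove this identity by splitting $L_{G,\alpha,\omega,\theta}$ into its bulk, annihilation and creation parts: the bulk part matches the bulk of $L_{G,\alpha,\omega,k}^{\rm abs}$ in the $\xi$-variable via the classical SIP self-duality, while annihilation at rate $\omega_x(1+\theta_x)\,\eta_x$ and creation at rate $\omega_x\theta_x(\alpha_x+\eta_x)$ combine, via the forward/backward shift relations for the univariate Meixner polynomials $d_{\alpha_x,\varrho}$, into the absorption-like action in $\xi$ plus exactly the displayed remainder. Granted this identity, I multiply both sides by $\mu_{\alpha,k}(\xi)\,\psi(\xi)$ and sum over $\xi\in\Xi_k$. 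Self-adjointness of $L_{G,\alpha,\omega,k}^{\rm abs}$ with respect to $\mu_{\alpha,k}$ together with the eigenvalue equation turns the first summand into $-\lambda\, F_{\alpha,\varrho}^\psi(\eta)$. For the remainder, the change of index $\xi=\zeta+\delta_x$ combined with \eqref{eq:basic-id} rewrites the factor $\mu_{\alpha,k}(\zeta+\delta_x)(\alpha_x+\zeta_x)$ as $k\,\mu_{\alpha,k-1}(\zeta)\,(\alpha_x+\zeta_x)/(|\alpha|+k-1)$ times the appropriate multinomial correction, thereby recognising the remainder as $F_{\alpha,\varrho}^{\mathfrak{b}_{\alpha,\omega,\varrho,k-1}^\theta\psi}(\eta)$ in view of \eqref{eq:mathfrak-b}. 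The case $k=0$ is trivial because $F_{\alpha,\varrho}^\psi\equiv \psi\in \R$ is constant.

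\emph{Part \ref{it:cons}.} The key input is the well-known $L^2$-orthogonality of the univariate Meixner polynomials $d_{\alpha_x,\varrho}(\emparg,\emparg)$ against the Negative Binomial marginals of $\nu_{\alpha,\varrho}$. Since both $D_{\alpha,\varrho}$ and $\nu_{\alpha,\varrho}$ are product-like, this tensors to
\begin{equation}
\scalar{D_{\alpha,\varrho}(\xi,\emparg)}{D_{\alpha,\varrho}(\xi',\emparg)}_{\nu_{\alpha,\varrho}} = \car_{\set{\xi=\xi'}}\,\kappa_{\alpha,\varrho}(\xi)\comma
\end{equation}
for an explicitly computable normalization $\kappa_{\alpha,\varrho}(\xi)>0$. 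Inserting this orthogonality into Definition \ref{def:polynomials} reduces orthogonality of $\{F_{\alpha,\varrho}^\psi\}$ in $L^2(\nu_{\alpha,\varrho})$ to orthogonality of the $\psi$'s in $\R^{\Xi_k}$ under a weighted inner product which, up to a strictly positive multiplicative constant, coincides with $\scalar{\emparg}{\emparg}_{\alpha,k}$. Hence, fixing for each $k\ge 0$ an orthonormal eigenbasis of $L_{G,\alpha,\omega,k}^{\rm abs}$ (which exists by reversibility of ${\rm SIP}_k(G,\alpha,\omega)$ with respect to $\mu_{\alpha,k}$) and lifting via Definition \ref{def:polynomials}, I obtain an orthogonal family in $L^2(\nu_{\alpha,\varrho})$. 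Completeness then follows from two observations: first, $\{D_{\alpha,\varrho}(\xi,\emparg): \xi \in \bigcup_{k \ge 0}\Xi_k\}$ is linearly independent and spans the space of polynomial functions on $\Xi$; second, polynomials are dense in $L^2(\nu_{\alpha,\varrho})$ because $\nu_{\alpha,\varrho}$, being a product of Negative Binomial measures, has exponential moments and is moment-determinate.

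\emph{Main obstacle.} The most delicate step is the bookkeeping in the intertwining identity of part \ref{it:lifting-general}: one has to check that the combined action of creation and annihilation on $\eta$, composed with the two-parameter shifts of $d_{\alpha_x,\varrho}$, rearranges into the pure-absorption action on $\xi$ plus a remainder of exactly the form dictated by \eqref{eq:mathfrak-b}. This is a careful application of the three-term recurrence and forward-shift relations for Meixner polynomials, relating $d_{\alpha_x,\varrho}(\xi_x,\eta_x+1)$, $d_{\alpha_x,\varrho}(\xi_x,\eta_x-1)$, and $d_{\alpha_x,\varrho}(\xi_x+1,\eta_x)$; no conceptual difficulty is expected, but the combinatorial coefficients must match exactly in order to recover the sharp constant $(\alpha_x+\xi_x)/(|\alpha|+k-1)$ appearing in $\mathfrak{b}^\theta$.
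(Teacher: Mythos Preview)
Your overall strategy matches the paper's proof: establish a duality identity for the kernel $D_{\alpha,\varrho}$, multiply by $\mu_{\alpha,k}(\xi)\psi(\xi)$, sum over $\xi$, and use self-adjointness of the absorbing generator. Part \ref{it:cons} is also correct and essentially identical to the paper's argument.

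However, the intertwining identity you display is wrong. The correction term must involve $D_{\alpha,\varrho}(\xi-\delta_x,\eta)$ with coefficient $\omega_x\,\xi_x\,(\theta_x-\varrho)$, not $D_{\alpha,\varrho}(\xi+\delta_x,\eta)$. A quick sanity check exposes the problem: $L_{G,\alpha,\omega,\theta}$ maps polynomials in $\eta$ of degree $k$ to polynomials of degree at most $k$ (this is the content of Remark \ref{rem:poly}), but for $\xi\in\Xi_k$ the function $\eta\mapsto D_{\alpha,\varrho}(\xi+\delta_x,\eta)$ has degree $k+1$, so your identity cannot hold as written. What has happened is that you have pre-substituted the expression defining $\mathfrak b^\theta_{\alpha,\omega,\varrho,k-1}$ (which evaluates $\psi$ at $\zeta+\delta_x$) directly into the duality relation, confusing the roles of the two variables: the shift $+\delta_x$ only appears \emph{after} the summation over $\xi$, via the change of index $\xi=\zeta+\delta_x$ and \eqref{eq:basic-id} that you correctly anticipate later in the paragraph. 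The correct duality relation (stated in the paper as Proposition \ref{pr:duality}(b), taken from \cite{floreani_boundary2020}) is
\begin{equation*}
L_{G,\alpha,\omega,\theta}^{\eta}\, D_{\alpha,\varrho}(\xi,\eta) = L_{G,\alpha,\omega,k}^{\xi}\, D_{\alpha,\varrho}(\xi,\eta) + \sum_{x\in V}\omega_x\,\xi_x\,(\theta_x-\varrho)\, D_{\alpha,\varrho}(\xi-\delta_x,\eta)\comma
\end{equation*}
and from this the remaining steps you outline (summation, self-adjointness, reindexing via \eqref{eq:basic-id}) go through verbatim and produce $F_{\alpha,\varrho}^{\mathfrak b^\theta\psi}$ as the remainder.
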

Let us comment on two powerful consequences of the above theorem in the reversible case $\theta\equiv \varrho$ (we discuss the non-reversible case in Remark \ref{rem:non-rev} below\fs{; see also Remark \ref{rem:SEP} for a comparison with the symmetric \textit{exclusion} process}).
 The first item tells us that  any eigenvalue of the purely absorbing system with $k\ge 1$ particles is one also for the general system and, moreover, it provides a constructive procedure to build  eigenfunctions from those of the $k$-particle absorbing system.  Indeed,  by \eqref{eq:mathfrak-b-zero}, the identity in \eqref{eq:eigen-lift-2} becomes  the eigenvalue equation
 \begin{equation}
 	L_{G,\alpha,\omega,\theta}F_{\alpha,\varrho}^\psi = -\lambda\, F_{\alpha,\varrho}^\psi\comma
 \end{equation}
 because the second term on the right-hand side of \eqref{eq:eigen-lift-2} vanishes when $\theta\equiv \varrho$.
 The second item guarantees the existence of a \fs{complete orthonormal system} of eigenfunctions of $L_{G,\alpha,\omega,\theta}$. This, combined with the fact that the  generator is self-adjoint in the associated $L^2$-space, ensures that $-L_{G,\alpha,\omega,\theta}$ has a pure point spectrum therein, all contained in $[0,\infty)$, with a gap after the zero eigenvalue coinciding with that of the purely absorbing system. 
 Letting \begin{equation}{\rm gap}_{\rm SIP}(G,\alpha,\omega,\varrho)\end{equation} denote this spectral gap when $\theta\equiv \varrho>0$, Theorem \ref{th:non-conservative} readily implies the following result.

\begin{corollary}[$\theta\equiv \varrho$]\label{cor:non-conservative}
	For all graphs $G=(V,(c_{xy})_{x,y\in V})$,  and site weights $\alpha=(\alpha_x)_{x\in V}$, $\omega=(\omega_x)_{x\in V}$ with $\omega\neq 0$, we have
	\begin{equation}
		{\rm gap}_{\rm SIP}(G,\alpha,\omega,\varrho) = {\rm gap}_{\rm RW}(G,\alpha,\omega)\comma \qquad \varrho > 0\fstop
	\end{equation}
\end{corollary}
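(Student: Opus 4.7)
The plan is to assemble the corollary directly from the two main ingredients just established: the lifting mechanism of Theorem~\ref{th:lifting-general} and the spectral gap identity of Theorem~\ref{th:non-conservative}. Since the case $\theta \equiv \varrho > 0$ is precisely the reversible regime where the operator $\mathfrak b^{\theta}_{\alpha,\omega,\varrho,k-1}$ in \eqref{eq:mathfrak-b} vanishes identically (cf.\ \eqref{eq:mathfrak-b-zero}), the remainder term on the right-hand side of \eqref{eq:eigen-lift-2} disappears, turning the relation there into a genuine eigenvalue equation. This is the structural observation that drives the proof.

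More concretely, the first step would be the following: fix $\varrho > 0$, set $\theta \equiv \varrho$, and let $(\lambda,\psi)$ be an eigenpair of ${\rm SIP}_k(G,\alpha,\omega)$ for some $k \ge 1$. By Theorem~\ref{th:lifting-general}\ref{it:lifting-general} and the vanishing of $\mathfrak b^{\varrho}_{\alpha,\omega,\varrho,k-1}$, the polynomial $F_{\alpha,\varrho}^\psi$ defined in \eqref{eq:f-lift} satisfies $L_{G,\alpha,\omega,\varrho}F_{\alpha,\varrho}^\psi = -\lambda F_{\alpha,\varrho}^\psi$. Hence every eigenvalue of the $k$-particle purely absorbing system, for every $k\ge 1$, lies in the point spectrum of $-L_{G,\alpha,\omega,\varrho}$ acting on $L^2(\nu_{\alpha,\varrho})$; the ground state $\lambda = 0$ arises from the trivial $k=0$ block (where $F_{\alpha,\varrho}^\psi \equiv \psi$ is constant).

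The second step is to argue that the collection of lifted eigenfunctions $\{F_{\alpha,\varrho}^\psi\}$ obtained in this way exhausts the spectrum. This is where Theorem~\ref{th:lifting-general}\ref{it:cons} comes in: the $F_{\alpha,\varrho}^\psi$ can be chosen to form an orthogonal basis of $L^2(\nu_{\alpha,\varrho})$. Since $L_{G,\alpha,\omega,\varrho}$ is self-adjoint on $L^2(\nu_{\alpha,\varrho})$ (reversibility with respect to $\nu_{\alpha,\varrho}$ holds in this regime, see Section~\ref{sec:non-conservative-stat-measures}), the spectrum is purely point, and equals
\begin{equation}
{\rm Spec}\bigl(-L_{G,\alpha,\omega,\varrho}\bigr) = \{0\} \cup \bigcup_{k\ge 1} {\rm Spec}\bigl(-L_{G,\alpha,\omega,k}^{\text{abs}}\bigr),
\end{equation}
where the right-hand side denotes the spectra of the $k$-particle purely-absorbing generators. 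Consequently, the first non-zero eigenvalue satisfies
\begin{equation}
{\rm gap}_{\rm SIP}(G,\alpha,\omega,\varrho) = {\rm gap}_{\rm RW}(G,\alpha,\omega) \wedge {\rm gap}_{\rm SIP}(G,\alpha,\omega),
\end{equation}
in view of the definitions \eqref{eq:gap-sip-gap-k-non-conservative}--\eqref{eq:gap-rw-abs}.

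The third and final step is to invoke Theorem~\ref{th:non-conservative}, which asserts ${\rm gap}_{\rm SIP}(G,\alpha,\omega) = {\rm gap}_{\rm RW}(G,\alpha,\omega)$ whenever $\omega \neq 0$. Plugging this into the display above yields the stated identity. I do not expect any serious obstacle here: both hard pieces (the lifting/basis theorem and the one-particle reduction in the absorbing setting) are already available, and the corollary is essentially a book-keeping exercise. The only small care needed is to confirm that the basis in Theorem~\ref{th:lifting-general}\ref{it:cons} is indexed exactly by the union over $k\ge 0$ of bases of the $k$-particle eigenspaces, so that no spurious eigenvalue outside $\cup_{k}\,{\rm Spec}(-L_{G,\alpha,\omega,k}^{\text{abs}})$ enters; this should be transparent from the construction of the $F_{\alpha,\varrho}^\psi$ via the Meixner dualities, but it is the one spot where I would double-check the accounting.
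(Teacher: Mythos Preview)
Your proposal is correct and follows essentially the same approach as the paper: use Theorem~\ref{th:lifting-general}\ref{it:lifting-general} with $\theta\equiv\varrho$ (so that $\mathfrak b^\varrho_{\alpha,\omega,\varrho,k-1}$ vanishes and \eqref{eq:eigen-lift-2} becomes a genuine eigenvalue equation), invoke Theorem~\ref{th:lifting-general}\ref{it:cons} together with self-adjointness on $L^2(\nu_{\alpha,\varrho})$ to conclude that the spectrum is pure point and coincides with $\{0\}\cup\bigcup_{k\ge1}{\rm Spec}(-L_{G,\alpha,\omega,k})$, and then apply Theorem~\ref{th:non-conservative}. Your caveat about the bookkeeping in the basis construction is well placed but harmless, as the paper confirms via the orthogonality relations in \eqref{eq:norm-orthogonal}.
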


\fs{The above result is stated in the reversible setting $\theta\equiv \varrho$. The next two remarks clarify the corresponding non-reversible picture: for the infinite-state $\mathrm{SIP}$ one has to distinguish algebraic eigenfunction lifting from a full spectral statement on a prescribed function space (Remark \ref{rem:non-rev}), whereas in finite-state analogues such as the partial symmetric exclusion process this distinction disappears (Remark \ref{rem:SEP}).}	

\begin{remark}[Non-reversible setting]
	\label{rem:non-rev} In view of Theorem \ref{th:lifting-general}, it is plausible to expect  that Corollary \ref{cor:non-conservative} should extend, in some sense, to the non-reversible setting as well.	Indeed, since the two functions on the right-hand side of \eqref{eq:eigen-lift-2} are polynomials of order $k$ and $\ell \le k-1$, this guarantees that $\lambda$ therein lies in the point spectrum of $L_{G,\alpha,\omega,\theta}$, for all $\theta=(\theta_x)_{x\in V}$. However, in absence of a natural Hilbert space ensuring that such eigenfunctions form a complete orthonormal system  when $\theta\not\equiv{\rm const.}$ on ${\rm supp}(\omega)$, the presence, e.g., of a continuous part of the spectrum (a property which also depends on the choice of the underlying reference function space) is what  Theorem \ref{th:lifting-general} does not exclude.	
	
\end{remark}
\begin{remark}[Symmetric exclusion processes \& Co.]\label{rem:SEP}The proof of Theorem \ref{th:lifting-general}\ref{it:lifting-general} --- based on orthogonal-polynomial dualities, annihilation and creation operators --- carries over to  other non-equilibrium systems enjoying similar properties, e.g., \cite{redig_factorized_2018,floreani_boundary2020,giardina_redig_tol_intertwining_2024}. As an example, we mention the symmetric (partial) exclusion process (${\rm SEP}$) in contact with reservoirs (see, e.g., \cite{floreani_boundary2020,salez2022universality,salez_spectral_2024}), whose generator is readily obtained from $L_{G,\alpha,\omega,\theta}$ in \eqref{eq:gen-res} by further imposing 	 $\alpha=(\alpha_x)_{x\in V}\subset \N$ and $\theta=(\theta_x)_{x\in V}\in [0,1]$, and changing the plus signs in $\alpha_x+\eta_x$ and $1+\theta_x$ therein into minus signs. 
	In this case, the configuration space is simply
	$\prod_{x\in V}\{0,1,\ldots,\alpha_x\}$, thus finite, and the technical issues explained in
	Remark~\ref{rem:non-rev} do not arise: the spectrum is just the union of finitely many
	eigenvalues. 
	
\fs{As our proof arguments
	used for ${\rm SIP}$ adapt to the open ${\rm SEP}$, one obtains analogues of Theorems \ref{th:non-conservative}
	and \ref{th:lifting-general}: on the one hand, the eigenvalues of the open ${\rm SEP}$ with arbitrary
	reservoir densities $\theta\in[0,1]^V$ coincide with those of the purely absorbing case
	$\theta\equiv0$, and are therefore independent of $\theta$; on the other hand, importing the notation used
	above, one has ${\rm gap}_{\rm SEP}(G,\alpha,\omega)={\rm gap}_{\rm RW}(G,\alpha,\omega)$. These two results further ensure that
	\begin{equation}\label{eq:SEP}
		{\rm gap}_{\rm SEP}(G,\alpha,\omega,\theta)
		=
		{\rm gap}_{\rm RW}(G,\alpha,\omega)\comma
		\qquad \theta\in[0,1]^V \fstop
	\end{equation}
This, thus, recovers, in the reversible 
	case, the spectral-gap identity of \cite[Corollary 1]{salez2022universality}, and, for
	general reservoir densities, the corresponding result of \cite[Theorem 2]{salez_spectral_2024}.}
\end{remark}

\section{Proofs of Theorems \ref{th:non-conservative} and \ref{th:lifting-general}}\label{sec:proof-res}
The following two sections may be essentially read independently.
\subsection{Proof of Theorem \ref{th:non-conservative}}
\label{sec:proof-gap-abs}
For every $k\ge 1$,  the $k$-particle purely absorbing system (i.e., when $\theta\equiv0$)	is described by the infinitesimal generator 
\begin{equation}\label{eq:gen-res-abs}
	L_{G,\alpha,\omega,k}f(\eta)\eqdef L_{G,\alpha,k}f(\eta)	- f(\eta)\sum_{x\in V}\omega_x\, \eta_x \comma
\end{equation} 
where $f\in \R^{\Xi_k}$, $\eta\in \Xi_k$, and $L_{G,\alpha,k}$  given in \eqref{eq:gen-conservative}. \fs{Equivalently, if $f\in\R^{\Xi_k}$ is extended to $\Xi$ by setting $f\equiv0$ on $\Xi\setminus\Xi_k$, then the restriction to $\Xi_k$ of the generator in \eqref{eq:gen-res} with $\theta\equiv0$ is precisely the operator in \eqref{eq:gen-res-abs}.} Note that, when  the process is in configuration $\eta\in \Xi_k$, it gets killed with rate $\sum_{x\in V}\omega_x\,\eta_x\ge 0$. 

Recall the definition of the probability measure $\mu_{\alpha,k}$ in \eqref{mu-def}. Since $L_{G,\alpha,\omega,k}$ is the sum of a self-adjoint operator on $L^2(\mu_{\alpha,k})$ and a multiplicative one,  it is self-adjoint on $L^2(\mu_{\alpha,k})$ and, thus, all the eigenvalues of $-L_{G,\alpha,\omega,k}$ are real and non-negative. Additionally, since the conservative dynamics is irreducible and the multiplication operator is non-positive, all such eigenvalues are actually strictly positive. In particular, we obtain that ${\rm gap}_k(G,\alpha,\omega)$, the lowest of such eigenvalues, is positive and, by the Perron-Frobenius theorem, characterized as the worst-case exponential killing rate for large times: letting $\tau_k$ denote the first time at which  one of the $k$ particles gets killed, 
\begin{equation}
	{\rm gap}_k(G,\alpha,\omega)= - \lim_{t\to \infty}\frac1t\log \max_{\eta\in \Xi_k} \P_\eta^{G,\alpha,\omega,k}[\tau_k>t] \fstop
\end{equation}
Thanks to this asymptotic identity, the assertion of Theorem \ref{th:non-conservative} boils down to show that, for all $k\ge 2$,	
\begin{equation}\label{eq:final-boundary}
\max_{\eta\in \Xi_k}\P_\eta^{G,\alpha,\omega,k}[\tau_k>t]\le \max_{x\in V} \P_{\delta_x}^{G,\alpha,\omega,1}[\tau_1>t]\comma\qquad t\ge 0\fstop
\end{equation}

For this purpose, we introduce a lookdown representation of the particle system with killing, extending the one known within the conservative setting  \cite[Appendix A]{kim_sau_spectral_2023}.
We need to introduce a bit of  notation. For every $k\ge 2$, labeled configuration $\mathbf x=(x_1,\ldots, x_k)\in V^k$, and permutation $\varsigma \in S_k$, we write $\varsigma \mathbf x= (x_{\varsigma(1)},\ldots,x_{\varsigma(k)})\in V^k$. Further, $\cS_k: \R^{V^k}\to \R^{V^k}$ denotes the symmetrization operator, that is, 
\begin{equation}
	\cS_k \varphi(\mathbf x)= \frac1{k!}\sum_{\varsigma \in S_k}\varphi(\varsigma\mathbf x)\comma\qquad \varphi \in \R^{V^k}\comma
\end{equation}
whereas $\Phi_k(\mathbf x)=\sum_{i=1}^k \delta_{x_i}\in \Xi_k$ is the operator which removes the particles' labels.

\begin{proposition}[Lookdown representation of ${\rm SIP}$ with killing]For all $k\ge 2$, we have
	\begin{equation}\label{eq:lookdown-sip}
		\cS_k  {\mathscr L}_{G,\alpha,\omega,k}(f\circ \Phi_k)= (L_{G,\alpha,\omega,k}f)\circ \Phi_k\comma\qquad f \in \R^{\Xi_k}\comma
	\end{equation}
	where ${\mathscr L}_{G,\alpha,\omega,k}$ denotes the generator of the lookdown ${\rm SIP}_k(G,\alpha,\omega)$ given, for all $\varphi \in \R^{V^k}$ and $\mathbf x\in V^k$, by 
	\begin{equation}\label{eq:gen-lookdown}
		{\mathscr L}_{G,\alpha,\omega,k}\varphi(\mathbf x)= {\mathscr L}_{G,\alpha,k}\varphi(\mathbf x)
			 - \varphi(\mathbf x)\sum_{i=1}^k \omega_{x_i}\fstop
	\end{equation}	
	Here, ${\mathscr L}_{G,\alpha,k}\varphi(\mathbf x)$ reads as
	\begin{equation}
	{\mathscr L}_{G,\alpha,k}\varphi(\mathbf x) = \sum_{i=1}^k \sum_{x,y\in V}c_{xy}\,\delta_{x,x_i}\tonde{\alpha_y+2\sum_{j=1}^{i-1}\delta_{y,x_j}}\tonde{\varphi(\mathbf x_i^y)-\varphi(\mathbf x)}\comma
	\end{equation}and stands for the conservative part of the lookdown dynamics, with $\mathbf x_i^y\in V^k$ denoting the configuration obtained from $\mathbf x\in V^k$ by setting the $i$-th coordinate equal to $y\in V$.
\end{proposition}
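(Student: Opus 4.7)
My plan is to decompose both generators into conservative and killing parts, and to verify the identity \eqref{eq:lookdown-sip} piece by piece. Introducing $\mathfrak m_\omega(\eta)\eqdef\sum_{x\in V}\omega_x\eta_x$ and $\mathfrak M_\omega(\mathbf{x})\eqdef\sum_{i=1}^k\omega_{x_i}$, I would write $L_{G,\alpha,\omega,k}f=L_{G,\alpha,k}f-f\,\mathfrak m_\omega$ and $\mathscr L_{G,\alpha,\omega,k}\varphi=\mathscr L_{G,\alpha,k}\varphi-\varphi\,\mathfrak M_\omega$. By linearity of $\cS_k$, the statement \eqref{eq:lookdown-sip} then reduces to the two equalities
\begin{equation}\label{eq:plan-split}
\cS_k\mathscr L_{G,\alpha,k}(f\circ\Phi_k)=(L_{G,\alpha,k}f)\circ\Phi_k\qquad\text{and}\qquad
\cS_k\bigl[(f\circ\Phi_k)\,\mathfrak M_\omega\bigr]=(f\,\mathfrak m_\omega)\circ\Phi_k.
\end{equation}

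The first identity in \eqref{eq:plan-split} is precisely the lookdown representation of the conservative ${\rm SIP}_k(G,\alpha)$ established in \cite[Appendix A]{kim_sau_spectral_2023}, which I would invoke as a black box. The underlying intuition is that, after averaging over $S_k$, the asymmetric lookdown weight $2\sum_{j<i}\delta_{y,x_j}$ reduces to the symmetric count $\eta_y-\delta_{y,x_i}$ viewed as a function of the unlabeled configuration $\eta=\Phi_k(\mathbf{x})$; combining with the free component $\alpha_y$ and summing over the label $i$ then reconstructs the ${\rm SIP}$ jump rate $c_{xy}\,\eta_x(\alpha_y+\eta_y)$ per site pair.

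The second identity in \eqref{eq:plan-split} is the piece truly specific to the non-conservative setting, and it is essentially tautological. Since each particle contributes $\omega_x$ through its location $x$, and this value is intrinsic to the unlabeled configuration, one has $\mathfrak M_\omega(\mathbf{x})=\mathfrak m_\omega(\Phi_k(\mathbf{x}))$; hence $\mathfrak M_\omega$ is $S_k$-invariant. Coupled with the obvious $S_k$-invariance of $f\circ\Phi_k$, the product $(f\circ\Phi_k)\,\mathfrak M_\omega$ is already symmetric, so $\cS_k$ acts as the identity on it, and the product equals $(f\,\mathfrak m_\omega)\circ\Phi_k$ directly from the factorization above.

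I do not anticipate any substantial obstacle: the combinatorial content lies entirely in the conservative lookdown identity already available in the literature, and the killing term introduces no new difficulty, as it is diagonal and permutation-invariant. Adding the two contributions in \eqref{eq:plan-split} closes the argument.
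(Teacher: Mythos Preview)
Your proposal is correct and follows essentially the same approach as the paper: decompose into conservative and killing parts, invoke \cite[Proposition A.4]{kim_sau_spectral_2023} for the conservative identity, and observe that the killing part is trivial because $(f\circ\Phi_k)\,\mathfrak M_\omega$ is permutation-invariant.
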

\begin{proof}
By \cite[Proposition A.4]{kim_sau_spectral_2023}, an identity which is analogous to that in \eqref{eq:lookdown-sip} holds for the conservative dynamics, i.e., 
\begin{equation}
	\cS_k{\mathscr L}_{G,\alpha,k}(f\circ \Phi_k)= (L_{G,\alpha,k}f)\circ \Phi_k\comma\qquad f \in \R^{\Xi_k}\fstop
\end{equation}
Hence, by linearity, it suffices to prove the identity only for the multiplication operators in \eqref{eq:gen-res-abs} and \eqref{eq:gen-lookdown} describing the killing part. Nevertheless, this identity holds trivially because particles are killed independently (or, equivalently, because $\mathbf x=(x_1,\ldots, x_k)\mapsto (f\circ \Phi_k)(\mathbf x)\sum_{i=1}^k \omega_{x_i}$ is invariant under permutations).
\end{proof}

\fs{We now conclude the proof of Theorem \ref{th:non-conservative}.}
\begin{proof}[Proof of Theorem \ref{th:non-conservative}]
Let $\cP_t^{G,\alpha,\omega,k} = e^{t{\mathscr L}_{G,\alpha,\omega,k}}$, $t\ge0$, be the lookdown semigroup. In conclusion, taking $f=\car_{\Xi_k}\in \R^{\Xi_k}$ in \eqref{eq:lookdown-sip} and observing that $\car_{\Xi_k}\circ \Phi_k= (\otimes_{i=1}^k\,\car_V)\in \R^{V^k}$, we obtain, for all $\mathbf x\in V^k$ and $t> 0$, 
\begin{align}
\max_{\eta\in \Xi_k}\P_\eta^{G,\alpha,\omega,k}[\tau_k>t]&\le \max_{\mathbf x\in V^k} \cP_t^{G,\alpha,\omega,k}(\otimes_{i=1}^k\,\car_V)(\mathbf x)\\
&\le \max_{x\in V}\cP_t^{G,\alpha,\omega,1}\car_V(x)\comma
\end{align}
where the second inequality follows from neglecting all particles with labels $\ell=2,\ldots, k$ (in other words,  
bounding $\otimes_{i=2}^k\,\car_V\le 1$), the first being independent from them. Since this last expression equals the right-hand side of \eqref{eq:final-boundary}, this proves the inequality therein and, thus, concludes the proof of the theorem.
\end{proof}

\subsection{Proof of Theorem \ref{th:lifting-general}}\label{sec:proof-th-eigen}
Our main ingredient will be the duality property of ${\rm SIP}$ in contact with reservoirs (e.g., \cite{giardina_duality_2009,carinci_duality_2013-1}), in particular, the orthogonal polynomial duality functions found in \cite{floreani_boundary2020}. We shall not assume any prior knowledge on this topic, but rather exploit a few properties taken from \cite{koekoek_hyper_2010,redig_factorized_2018,floreani_boundary2020}, which, for the reader's convenience, we collect in the following proposition. For notational convenience, we simply write
 $L_\theta=L_{G,\alpha,\omega,\theta}$ and $L_k=L_{G,\alpha,\omega,k}$ (defined in \eqref{eq:gen-res} and \eqref{eq:gen-res-abs}, respectively).
\begin{proposition}\label{pr:duality} Fix  $\varrho > 0$.
	\begin{enumerate}[(a)
		]
		\item \emph{Orthogonality and completeness.} Let $\scalar{\emparg}{\emparg}_{\alpha,\varrho}$  denote the inner product in $L^2(\nu_{\alpha,\varrho})$ (see  \eqref{eq:nu-sigma}). Then, for all integers $k,\ell\ge 1$ and $\xi\in \Xi_k$, $\zeta\in \Xi_\ell$,  
		\begin{align}
			\begin{aligned}\label{eq:norm-orthogonal}
				\scalar{D_{\alpha,\varrho}(\xi,\emparg)}{D_{\alpha,\varrho}(\zeta,\emparg)}_{\alpha,\varrho}&= {\car_{\{\xi=\zeta\}}}\,\frac{\varrho^k\tonde{1+\varrho}^k}{Z_{\alpha,k}}\,{\mu_{\alpha,k}(\xi)^{-1}}
			\end{aligned}\fstop
		\end{align}
		Moreover,  $\set{D_{\alpha,\varrho}(\xi,\emparg)}_{k\ge0,\,\xi\in \Xi_k}$ is a complete system of $L^2(\nu_{\alpha,\varrho})$.

		\item \emph{Duality.} For all  $\theta=(\theta_x)_{x\in V}$  and $\eta\in \Xi$, we have
		\begin{align}\label{eq:duality-rel}
			\begin{aligned}
				L_\theta D_{\alpha,\varrho}(\xi,\emparg)(\eta)
				&= L_k D_{\alpha,\varrho}(\emparg,\eta)(\xi) \\
				&+ \sum_{x\in V}\omega_x\,\xi_x\tonde{\theta_x-\varrho} D_{\alpha,\varrho}(\xi-\delta_x,\eta) \fstop
			\end{aligned}
		\end{align}
	
	\end{enumerate}	
\end{proposition}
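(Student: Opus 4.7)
The plan follows the tensor-product structure of both $\nu_{\alpha,\varrho}$ and $D_{\alpha,\varrho}$, reducing both claims to univariate facts about Meixner polynomials. Matching parameters with \eqref{eq:nu-sigma} and the definition of $d_{\alpha_x,\varrho}$ in the excerpt, the marginal of $\nu_{\alpha,\varrho}$ at a site $x$ is Negative-Binomial with shape $\alpha_x$ and success probability $\varrho/(1+\varrho)$, while $d_{\alpha_x,\varrho}(\xi_x,\eta_x)=(-\varrho)^{\xi_x}M_{\xi_x}(\eta_x;\alpha_x,\varrho/(1+\varrho))$, with $M_n(\,\cdot\,;\beta,c)$ the classical Meixner polynomial.

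For part (a), product structure gives
\[
\scalar{D_{\alpha,\varrho}(\xi,\emparg)}{D_{\alpha,\varrho}(\zeta,\emparg)}_{\alpha,\varrho}=\prod_{x\in V}\sum_{n\ge 0}\nu_{x}(n)\,d_{\alpha_x,\varrho}(\xi_x,n)\,d_{\alpha_x,\varrho}(\zeta_x,n),
\]
with $\nu_x$ the $x$-marginal of $\nu_{\alpha,\varrho}$. By the classical Meixner orthogonality (e.g., \cite{koekoek_hyper_2010}), each factor vanishes unless $\xi_x=\zeta_x$, in which case it equals $\varrho^{\xi_x}(1+\varrho)^{\xi_x}\,\xi_x!/(\alpha_x)_{\xi_x}$. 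Taking the product across $x\in V$ and comparing against \eqref{mu-def} (using $\prod_{x}\xi_x!/(\alpha_x)_{\xi_x}=[Z_{\alpha,k}\,\mu_{\alpha,k}(\xi)]^{-1}$) recovers exactly the right-hand side of \eqref{eq:norm-orthogonal}. Completeness is then routine: Meixner polynomials form a complete orthogonal system in the $L^2$ of each Negative-Binomial marginal, and the standard tensor-product argument promotes this to completeness of $\{D_{\alpha,\varrho}(\xi,\emparg)\}_{\xi\in\Xi}$ in $L^2(\nu_{\alpha,\varrho})$.

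For part (b), split $L_\theta=L^{\mathrm{bulk}}+\sum_{x\in V} L^{\mathrm{res},x}_\theta$ according to \eqref{eq:gen-res}. The bulk term is handled by the classical self-duality of the conservative SIP with the \textquotedblleft factorial\textquotedblright\ polynomials $D_{\alpha,0}$; expanding $D_{\alpha,\varrho}$ as the signed sum $\sum_{\ell}\binom{\xi}{\ell}(-\varrho)^{|\xi|-|\ell|}D_{\alpha,0}(\ell,\emparg)$ recalled in the excerpt transports this self-duality verbatim to $D_{\alpha,\varrho}$ and produces precisely the conservative part of $L_k$ acting on $\xi$. For the reservoir terms, the factorized form of $D_{\alpha,\varrho}$ reduces matters to a single-site computation: writing $\alpha=\alpha_x$, $\omega=\omega_x$, $\theta=\theta_x$, $k=\xi_x$, $n=\eta_x$, and $d=d_{\alpha,\varrho}$, we need the univariate identity
\[
\omega n(1+\theta)\bigl(d(k,n-1)-d(k,n)\bigr)+\omega\theta(\alpha+n)\bigl(d(k,n+1)-d(k,n)\bigr)=-\omega k\,d(k,n)+\omega k(\theta-\varrho)\,d(k-1,n).
\]
The diagonal term $-\omega k\,d(k,n)$ arises exactly at $\theta=\varrho$ from the second-order difference equation for Meixner polynomials, which precisely expresses the fact that $d(k,\emparg)$ is an eigenfunction with eigenvalue $-\omega k$ of the one-site birth-death generator equilibrated at $\theta=\varrho$. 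The deviation for $\theta\ne\varrho$ contributes $\omega(\theta-\varrho)\bigl\{n\Delta^-+(\alpha+n)\Delta^+\bigr\}d(k,\emparg)(n)$, which, via the standard Meixner lowering identity, collapses to $\omega k(\theta-\varrho)d(k-1,n)$. Summing over $x$ and adding the bulk contribution reproduces $L_k D_{\alpha,\varrho}(\emparg,\eta)(\xi)$ together with the extra term $\sum_{x}\omega_x\xi_x(\theta_x-\varrho)D_{\alpha,\varrho}(\xi-\delta_x,\eta)$ in \eqref{eq:duality-rel}.

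The main obstacle is the last univariate manipulation: one must simultaneously produce the correct coefficient of $d(k,n)$ (from the difference equation) \emph{and} of $d(k-1,n)$ (from the lowering identity), while carefully tracking the $(-\varrho)^{k}$ normalization factors between $d_{\alpha,\varrho}$ and the textbook Meixner polynomials. The vanishing of the $d(k-1,n)$ coefficient at $\theta\equiv\varrho$ is precisely what makes Theorem \ref{th:lifting-general}\ref{it:lifting-general} collapse into a genuine eigenvalue equation in the reversible case, and is the structural reason underlying Corollary \ref{cor:non-conservative}. Both of these univariate identities are standard in the orthogonal-polynomial literature, and the full duality has been verified along these lines in \cite{floreani_boundary2020,redig_factorized_2018}; as a fallback, the probabilistic verification of the dual identity via a change-of-measure argument used there can be invoked directly.
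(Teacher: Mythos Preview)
The paper does not actually prove Proposition~\ref{pr:duality}: it is stated there as a collection of known facts imported from \cite{koekoek_hyper_2010,redig_factorized_2018,floreani_boundary2020}, with no argument supplied. Your proposal therefore goes further than the paper by sketching the proof itself, and the sketch is correct and is essentially the argument found in those references.

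Your reduction of part~(a) to single-site Meixner orthogonality via the product structure is exactly right, and the normalization computation checks out against \eqref{mu-def} and \eqref{eq:nu-sigma}. For part~(b), splitting $L_\theta$ into the bulk piece (handled by conservative SIP self-duality, transported from $D_{\alpha,0}$ to $D_{\alpha,\varrho}$ by the binomial expansion) and the reservoir piece (handled by the Meixner difference equation at $\theta=\varrho$ plus the lowering identity $n\Delta^- d(k,\cdot)+(\alpha+n)\Delta^+ d(k,\cdot)=k\,d(k-1,\cdot)$ for the deviation) is the natural route and matches the derivation in \cite[Section~4]{floreani_boundary2020}. Your caveat about tracking the $(-\varrho)^k$ normalization is well placed but not a genuine obstacle; the signs are consistent throughout.
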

\begin{remark}\label{rem:poly}
Since $\set{D_{\alpha,\varrho}(\xi,\emparg)}_{\ell\le k,\,\xi\in \Xi_\ell}$ spans all polynomials of order $\le k$, by \eqref{eq:duality-rel},  $L_\theta=L_{G,\alpha,\omega,\theta}$ leaves invariant  such  a subspace of polynomials.
\end{remark}
\begin{proof}[Proof of Theorem \ref{th:lifting-general}]
The rest of the proof of Theorem \ref{th:lifting-general} is routine. Indeed, Theorem \ref{th:lifting-general}\ref{it:lifting-general} follows by linearity and the duality relation in \eqref{eq:duality-rel}: recalling $F_{\alpha,\varrho}^\psi$ from Definition \ref{def:polynomials},  first we obtain
\begin{align}
	L_\theta F_{\alpha,\varrho}^\psi(\eta)&= \sum_{\xi\in \Xi_k} \mu_{\alpha,k}(\xi)\, \psi(\xi)\, L_\theta D_{\alpha,\varrho}(\xi,\emparg)(\eta)\\
		&= \sum_{\xi\in \Xi_k}\mu_{\alpha,k}(\xi)\,\psi(\xi)\, L_k D_{\alpha,\varrho}(\emparg,\eta)(\xi)\\
		&+\sum_{\xi\in \Xi_k}\mu_{\alpha,k}(\xi)\,\psi(\xi) \sum_{x\in V}\omega_x\,\xi_x\tonde{\theta_x-\varrho} D_{\alpha,\varrho}(\xi-\delta_x,\eta)\comma
		\end{align}
		which, \fs{by \eqref{eq:basic-id} and the definition of $\mathfrak b_{\alpha,\omega,\varrho,k-1}^\varrho$ in	 \eqref{eq:mathfrak-b}}, rearranges as 
		\begin{align}
		&\sum_{\xi\in \Xi_k}\mu_{\alpha,k}(\xi)\,\psi(\xi)\, L_k D_{\alpha,\varrho}(\emparg,\eta)(\xi)+F_{\alpha,\varrho}^{\mathfrak b_{\alpha,\omega,\theta,k-1}^\varrho\psi}\fstop
		\end{align}
		By the symmetry of $L_k$ in $L^2(\mu_{\alpha,k})$, we finally obtain
		\begin{align}
	L_\theta F_{\alpha,\varrho}^\psi(\eta)=\sum_{\xi\in \Xi_k}\mu_{\alpha,k}(\xi)\, L_k\psi(\xi)\,  D_{\alpha,\varrho}(\xi,\eta)+F_{\alpha,\varrho}^{\mathfrak b_{\alpha,\omega,\theta,k-1}^\varrho\psi}\comma
\end{align}
and the desired claim follows if $L_k\psi=-\lambda\,\psi$.

	 The second item in Theorem \ref{th:lifting-general}  is a direct consequence of the symmetry of $L_k$ in $L^2(\mu_{\alpha,k})$ (thus, the orthogonality relation of eigenfunctions $\psi.\varphi\in \R^{\Xi_l}$ associated to distinct eigenvalues) and the orthogonality relation in \eqref{eq:norm-orthogonal}: for all integers $k,\ell\ge0$, all eigenfunctions $\psi\in \R^{\Xi_k}$ and $\varphi\in \R^{\Xi_\ell}$ of $L_k$ and $L_\ell$, respectively, we have  
	 \begin{align}
	 	\tscalar{F_{\alpha,\varrho}^\psi}{F_{\alpha,\varrho}^\varphi}_{\alpha,\varrho}= \car_{\set{k=\ell}}\,\mathfrak c_{\alpha,\varrho,k} \scalar{\psi}{\varphi}_{\alpha,k}\comma
	 \end{align}
	 for some constant $\mathfrak c_{\alpha,\varrho,k}>0$.

\fs{The completeness of the functions in \eqref{eq:meixner-duality} follows from a
	one-dimensional density result. Each marginal of $\nu_{\alpha,\varrho}$ is a
	negative-binomial measure, hence has exponential tails and a determinate
	Hamburger moment problem. By the classical theorem of M.\ Riesz \cite{riesz1924probleme} (see also \cite[Chapter 2, Section 3 and Corollary 2.3.3]{akhiezer_classical_1965},	 \cite{berg_density_1981,berg_moment_1996}, and references therein), determinacy
	implies density of algebraic polynomials in the corresponding $L^2$-space. Since
	$V$ is finite and $\nu_{\alpha,\varrho}$ is a product measure on $\mathbb N_0^V$,
	tensor products of these univariate polynomial bases are dense in
	$L^2(\nu_{\alpha,\varrho})$, yielding the desired completeness.}
	\end{proof}

\section{Extensions and related models}\label{sec:extension}
In this last section, we extend  our results to the Brownian energy process, and briefly discuss their connections with what is known for some closely related models. 
\subsection{Brownian energy process}\label{sec:BEP}
Given a graph $G=(V,(c_{xy})_{x,y\in V})$ and site weights $\alpha=(\alpha_x)_{x\in V}$, the corresponding Brownian energy process --- referred to as ${\rm BEP}(G,\alpha)$ --- is the diffusion on the probability simplex 
\begin{equation}\label{eq:simplex}
	\varSigma\eqdef \set{\zeta \in [0,1]^V: \sum_{x\in V}\zeta_x=1}\comma
\end{equation}
 with infinitesimal generator
\begin{equation}
	\cL_{G,\alpha}= \frac12\sum_{x,y\in V}c_{xy}\set{-\left(\alpha_y \zeta_x-\alpha_x\zeta_y\right)\left(\partial_{\zeta_x}-\partial_{\zeta_y}\right)+\zeta_x\zeta_y\left(\partial_{\zeta_x}-\partial_{\zeta_y}\right)^2}\comma
\end{equation}
and the following Dirichlet distribution on $\varSigma$ as its unique reversible measure
\begin{equation}\label{eq:dirichlet-distribution}
	\gamma_\alpha(\dd\zeta)= \tonde{\frac{1}{{\rm B}(\alpha)}\prod_{x\in V} \zeta_x^{\alpha_x-1}}\dd \zeta\comma\qquad \text{with}\  {\rm B}(\alpha)\eqdef \frac{\prod_{x\in V}\Gamma(\alpha_x)}{\Gamma(|\alpha|)}\fstop
\end{equation}
Here, $\dd \zeta$ stands for the uniform measure on $\varSigma$. We refer to Section \ref{sec1}, \cite{kim_sau_spectral_2023} and references therein, for more information and history on this model. Here, we just mention that its mean field version (i.e., when $c_{xy}\equiv {\rm const.}$) coincides with the  renowned multi-type Wright-Fisher diffusion with parent independent mutations (see, e.g., \cite{shimakura_equations1977,wang_zhang_nash_2019}).  

As shown in \cite{kim_sau_spectral_2023},   ${\rm BEP}(G,\alpha)$ inherits its eigenstructure from  that of ${\rm SIP}(G,\alpha)$. 	More precisely, the Poisson intertwining relation \cite[Eq.\ (4.1)]{kim_sau_spectral_2023} between the generators $\cL_{G,\alpha}$ and $L_{G,\alpha,k}$ allows to lift eigenfunctions of ${\rm SIP}(G,\alpha)$ to eigenfunctions of ${\rm BEP}(G,\alpha)$, both corresponding to the same eigenvalue. Using this intertwining relation \fs{(and, more specifically, that $\cL_{G,\alpha}$ maps the finite-dimensional subspace of polynomials of degree at most $k$ into itself)} and the fact $\cL_{G,\alpha}$ is a self-adjoint operator on $L^2(\gamma_\alpha)$, one concludes that this lifting saturates all \fs{the spectral information (i.e., the spectrum of ${\rm BEP}$ is pure point and coincides with that of ${\rm SIP}$; see \cite[Sections 1.2 and 4]{kim_sau_spectral_2023} for further details)}. In particular, the corresponding spectral gaps coincide
\begin{equation}
	{\rm gap}_{\rm BEP}(G,\alpha)={\rm gap}_{\rm SIP}(G,\alpha)\fstop
\end{equation}

Combining the findings in \cite{kim_sau_spectral_2023} with our Theorems \ref{th:sharp-alpha-min} and \ref{th:failure-gap} on ${\rm gap}_{\rm SIP}(G,\alpha)$, we readily deduce the following analogues of \eqref{eq:failure-gap} and \eqref{eq:lb-alpha-min-intro}.
\begin{corollary}\label{cor:BEP}
	For all graphs $G=(V,(c_{xy})_{x,y\in V})$ and site weights $\alpha=(\alpha_x)_{x\in V}$, we have
	\begin{equation}
	 \alpha_{\rm min}\set{	\frac1{21} \, \frac{ \alpha_{\rm ratio}}{ 6^{\alpha_{\rm min}/\alpha_{\rm ratio}} } \, \frac{ c_{\rm min}}{|V|^2\, {\rm diam}(G) }}\le {\rm gap}_{\rm BEP}(G,\alpha)\le {\rm gap}_{\rm RW}(G,\alpha)\comma
	\end{equation}
	and, in some cases, we have	
	\begin{equation}
		{\rm gap}_{\rm BEP}(G,\alpha)< {\rm gap}_{\rm RW}(G,\alpha)\fstop
	\end{equation}
	\end{corollary}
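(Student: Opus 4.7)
The plan is to leverage the isospectrality ${\rm gap}_{\rm BEP}(G,\alpha)={\rm gap}_{\rm SIP}(G,\alpha)$ recalled at the beginning of Section \ref{sec:BEP} and transfer to ${\rm BEP}$ the three bounds already established for ${\rm SIP}$ in the body of the paper. No new estimate on the diffusion $\cL_{G,\alpha}$ is needed: all the analytic work has been done on the discrete side, and the Poisson intertwining of \cite{kim_sau_spectral_2023} is what propagates it to $\varSigma$.

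Once this identity is in place, the claimed two-sided bound is a direct substitution. The lower bound $\alpha_{\rm min}\{\frac{1}{21}\,\frac{\alpha_{\rm ratio}}{6^{\alpha_{\rm min}/\alpha_{\rm ratio}}}\,\frac{c_{\rm min}}{|V|^2\,{\rm diam}(G)}\}\le{\rm gap}_{\rm SIP}(G,\alpha)$ is exactly Theorem \ref{th:sharp-alpha-min}, while the upper bound ${\rm gap}_{\rm SIP}(G,\alpha)\le{\rm gap}_{\rm RW}(G,\alpha)$ follows by combining definition \eqref{eq:gap-sip-def} with the consistency chain \eqref{eq:gap-consistency} together with \eqref{eq:gap-RW} (this is also the right-hand inequality in \eqref{eq:kim-sau-ineq}).

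For the strict inequality, I would appeal to Theorem \ref{th:failure-gap}: with $G$ taken to be a $d$-dimensional discrete torus of sufficiently large size ($d\ge 2$) and $\hat\alpha\equiv 1$, the limit $\lim_{\eps\to 0}{\rm gap}_2(G,\eps\hat\alpha)/{\rm gap}_1(G,\eps\hat\alpha)$ is strictly less than $1$. Hence one may fix some small $\eps_0>0$ and set $\alpha=\eps_0\hat\alpha$, so that
\begin{equation}
    {\rm gap}_{\rm SIP}(G,\alpha)\le{\rm gap}_2(G,\alpha)<{\rm gap}_1(G,\alpha)={\rm gap}_{\rm RW}(G,\alpha)\fstop
\end{equation}
Transferring through ${\rm gap}_{\rm BEP}(G,\alpha)={\rm gap}_{\rm SIP}(G,\alpha)$ yields the desired strict inequality.

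There is essentially no obstacle here: the only point worth a line of care is to confirm that the intertwining between $\cL_{G,\alpha}$ and the $L_{G,\alpha,k}$'s upgrades the matching of individual eigenvalues to the identification of the full spectral gaps, so that both inequalities --- and not merely individual eigenvalue comparisons --- transfer. This is precisely what is recorded in Section \ref{sec:BEP}, so the corollary follows at once.
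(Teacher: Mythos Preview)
Your proposal is correct and follows essentially the same route as the paper: the corollary is derived directly from the isospectrality ${\rm gap}_{\rm BEP}(G,\alpha)={\rm gap}_{\rm SIP}(G,\alpha)$ recalled in Section~\ref{sec:BEP}, combined with Theorem~\ref{th:sharp-alpha-min} for the lower bound, \eqref{eq:kim-sau-ineq} for the upper bound, and Theorem~\ref{th:failure-gap} for the strict inequality.
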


Interestingly, also our results on the spectrum of the non-conservative ${\rm SIP}$, via similar arguments, translate to the context of the ${\rm BEP}$. Indeed, the aforementioned Poisson intertwining is effective also when one adds the interaction with external reservoirs. As shown, e.g., in \cite[Section 4.1]{giardina_redig_tol_intertwining_2024}, using this operator, the generator $L_{G,\alpha,\omega,\theta}$ in \eqref{eq:gen-res} of ${\rm SIP}(G,\alpha,\omega,\theta)$ satisfies, for all $\omega=(\omega_x)_{x\in V}$ and $\theta=(\theta_x)_{x\in V}$, an analogous intertwining relation with 
\begin{equation}
	\cL_{G,\alpha,\omega,\theta} = \cL_{G,\alpha} + \sum_{x\in V}\omega_x\set{\tonde{\alpha_x\theta_x-\zeta_x}\partial_{\zeta_x} + \theta_x\zeta_x\,\partial_{\zeta_x}^2}\comma
\end{equation}
the generator of what we refer to as ${\rm BEP}(G,\alpha,\omega,\theta)$.

When $\theta\equiv \varrho > 0$, this non-conservative diffusion on $[0,\infty)^V$ is well known to have the following product measure $\gamma_{\alpha,\varrho} = \otimes_{x\in V}\, {\rm Gamma}(\alpha_x,\varrho)$ as its unique reversible measure. Hence, arguing as above, we derive the corresponding spectral gap identity
\begin{equation}
	{\rm gap}_{\rm BEP}(G,\alpha,\omega,\varrho)={\rm gap}_{\rm SIP}(G,\alpha,\omega,\varrho)\comma
\end{equation}
and, as a consequence, the following analogue of Corollary \ref{cor:non-conservative}.
\begin{corollary}
		For all graphs $G=(V,(c_{xy})_{x,y\in V})$,  and site weights $\alpha=(\alpha_x)_{x\in V}$, $\omega=(\omega_x)_{x\in V}$ with $\omega\neq 0$, we have
	\begin{equation}
		{\rm gap}_{\rm BEP}(G,\alpha,\omega,\varrho) = {\rm gap}_{\rm RW}(G,\alpha,\omega)\comma \qquad \varrho > 0\comma
	\end{equation}
	where ${\rm gap}_{\rm RW}(G,\alpha,\omega)$ is defined in \eqref{eq:gap-rw-abs}.
\end{corollary}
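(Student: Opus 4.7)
The plan is to reduce the claim to Corollary~\ref{cor:non-conservative} by transferring the spectral information from ${\rm SIP}(G,\alpha,\omega,\varrho)$ to ${\rm BEP}(G,\alpha,\omega,\varrho)$ via a Poisson-type intertwining, exactly as done in the conservative case in \cite{kim_sau_spectral_2023} and in the non-conservative case in \cite{giardina_redig_tol_intertwining_2024}. Concretely, I would first recall from \cite[Section~4.1]{giardina_redig_tol_intertwining_2024} the intertwining operator $\Lambda_\varrho$ which maps bounded continuous functions on $[0,\infty)^V$ to functions on $\Xi$ via Poisson sampling with intensity vector $\varrho\,\zeta$, and satisfies the relation
\begin{equation}
L_{G,\alpha,\omega,\varrho}\,\Lambda_\varrho = \Lambda_\varrho\, \cL_{G,\alpha,\omega,\varrho}\fstop
\end{equation}
In particular, this operator sends the Gamma-product measure $\gamma_{\alpha,\varrho}=\otimes_{x\in V}\,{\rm Gamma}(\alpha_x,\varrho)$ to the Negative-Binomial product measure $\nu_{\alpha,\varrho}$ of \eqref{eq:nu-sigma}.

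Second, I would use this relation to lift eigenfunctions of $-\cL_{G,\alpha,\omega,\varrho}$ to eigenfunctions of $-L_{G,\alpha,\omega,\varrho}$ associated with the same eigenvalue: if $\cL_{G,\alpha,\omega,\varrho} g = -\lambda\,g$, then $\Lambda_\varrho g$ is an eigenfunction of $L_{G,\alpha,\omega,\varrho}$ with the same eigenvalue, provided $\Lambda_\varrho g\not\equiv 0$. Non-triviality follows from the injectivity of $\Lambda_\varrho$ on polynomials (which is the analogue of the injectivity of the annihilation operator in Section~\ref{sec3.1}, and can be checked directly from the fact that distinct monomials in $\zeta$ generate linearly independent  polynomials in $\eta$ via Poisson moments). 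This shows
\begin{equation}
{\rm gap}_{\rm BEP}(G,\alpha,\omega,\varrho)\ge {\rm gap}_{\rm SIP}(G,\alpha,\omega,\varrho)\fstop
\end{equation}

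Third, I would prove the reverse inequality by exploiting that $\cL_{G,\alpha,\omega,\varrho}$ is self-adjoint on $L^2(\gamma_{\alpha,\varrho})$ and has an orthogonal basis of polynomial eigenfunctions (the multivariate Laguerre polynomials, which are the continuous-spin analogue of the Meixner basis from Proposition~\ref{pr:duality}; see, e.g., the dualities in \cite{redig_factorized_2018,floreani_boundary2020}). Since the intertwining relation preserves the polynomial filtration and is injective on each degree-$k$ layer, the spectrum of $\cL_{G,\alpha,\omega,\varrho}$ on degree-$k$ polynomials coincides with that of $L_{G,\alpha,\omega,\varrho}$ restricted to the corresponding degree-$k$ polynomial layer. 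Consequently,
\begin{equation}
{\rm gap}_{\rm BEP}(G,\alpha,\omega,\varrho)={\rm gap}_{\rm SIP}(G,\alpha,\omega,\varrho)\comma
\end{equation}
and applying Corollary~\ref{cor:non-conservative} immediately yields
\begin{equation}
{\rm gap}_{\rm BEP}(G,\alpha,\omega,\varrho)={\rm gap}_{\rm RW}(G,\alpha,\omega)\fstop
\end{equation}

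The main obstacle I expect is of a technical and analytic nature: one must carefully set up the $L^2(\gamma_{\alpha,\varrho})$ framework for the unbounded diffusion $\cL_{G,\alpha,\omega,\varrho}$, verifying that the polynomial eigenfunctions form a complete orthogonal basis (so that the full $L^2$-spectrum is pure point and exhausted by the eigenvalues obtained via intertwining), and that the spectral gap is indeed attained at a degree-one polynomial, where the action of $\cL_{G,\alpha,\omega,\varrho}$ collapses to the generator of ${\rm RW}(G,\alpha,\omega)$. Completeness follows from the density of polynomials in $L^2$ of a product of Gamma measures, which has exponential moments; once this is in place, the rest of the argument is a clean parallel to the ${\rm SIP}$--${\rm BEP}$ correspondence already developed in \cite{kim_sau_spectral_2023}.
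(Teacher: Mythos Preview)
Your proposal is correct and follows essentially the same route as the paper: invoke the Poisson intertwining from \cite[Section~4.1]{giardina_redig_tol_intertwining_2024} between $L_{G,\alpha,\omega,\varrho}$ and $\cL_{G,\alpha,\omega,\varrho}$, argue (as in the conservative case of \cite{kim_sau_spectral_2023}) that this yields ${\rm gap}_{\rm BEP}(G,\alpha,\omega,\varrho)={\rm gap}_{\rm SIP}(G,\alpha,\omega,\varrho)$, and then apply Corollary~\ref{cor:non-conservative}. The paper compresses all of this into two sentences (\textquotedblleft arguing as above\textquotedblright), whereas you spell out the two-sided inclusion of spectra and the completeness issue for the Laguerre-type basis in $L^2(\gamma_{\alpha,\varrho})$, which is exactly the content hidden behind that phrase.
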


\subsection{Other models}\label{sec:extensions}
As already anticipated in Section \ref{sec1}, ${\rm SIP}$ and  ${\rm BEP}$ come with several closely related models,  having a different dynamics, but exactly the same steady states. The models we have in mind fall into the class of mass redistribution models. Here, we shall focus only on those models with configuration-independent edge updates (see, e.g., \cite{sasada_spectral_2015} and references therein for configuration-dependent mass redistribution models with Dirichlet-type steady states): after an edge, say $xy$, is selected	 with rate $c_{xy}\ge0$,   particles/energies sitting on $x$ and $y$ get randomly redistributed therein. Among them, let us mention:
\begin{enumerate}[(a)]
	\item the Beta-Binomial splitting process and its continuous version, also known as Beta splitting process or (generalized) KMP model (see, e.g., \cite{pymar2023mixing,demasi_ferrari_gabrielli_hidden_2024,giardina_redig_tol_intertwining_2024});
	\item the  discrete and continuous harmonic processes (see, e.g., \cite{franceschini_frassek_giardina_integrable_2023,giardina_redig_tol_intertwining_2024}). 
\end{enumerate}
All models in (a) and (b) have been studied in their conservative and non-conservative variants (see the aforementioned references for the precise definitions of the redistribution rules), and, just like  ${\rm SIP}$ and ${\rm BEP}$,  discrete and corresponding continuous models are isospectral (when reversible), enjoying completely analogous  Poisson  intertwining relations.

To the best of our knowledge, the only sharp quantitative results on their convergence to equilibrium are confined to a handful of results, see \cite{carlen_carvalho_loss_determination_2003,smith_gibbs_2014,caputo_mixing_2019,pymar2023mixing} and references therein. For what concerns   the spectral gap, the only sharp results that we are aware of concern the Beta and Beta-Binomial splitting processes  in two specific settings, both conservative:
\begin{enumerate}[(i)]
	\item  the complete graph with $\alpha\equiv {\rm const.}$ \cite{carlen_carvalho_loss_determination_2003} (see also \cite[Remark 4]{caputo_mixing_2019});
	\item the segment with $\alpha\equiv{\rm const.}\ge 1$ \cite{caputo_mixing_2019}.
	\end{enumerate}
	In particular, in (i), the gap is attained by the system with $k=2$ particles (which is strictly smaller than ${\rm gap}_1$), while in (ii) it coincides with that of the corresponding single particle.

In order to obtain  spectral gap asymptotics for small diffusivity, the   proof ideas of Theorems \ref{th:gap-asymptotics1} and \ref{th:failure-gap}    apply also to these models (actually, with considerable simplifications due to the presence of edge updates only, and edges are a.s.\ never updated simultaneously). For instance, it is not difficult to show that, on  graphs like that in Section \ref{sec5.5} and for    $\alpha$ small enough, the spectral gap of the two particles must be strictly smaller than that of the single random walk.
	Similarly, the non-conservative (reversible) models satisfy a one-particle spectral gap identity, analogous to the one contained in Corollary \ref{cor:non-conservative}. Indeed,  the key step in that proof (Section \ref{sec:proof-gap-abs}) carries over to these other models in contact with reservoirs, because of the analogous intertwining and consistency/duality relations available also for them, see, e.g., \cite{giardina_redig_tol_intertwining_2024}. We leave the verification of these claims  to the interested reader. What remains largely open is the problem of obtaining spectral gap's sharp lower bounds  --- and, possibly, identities in terms of simpler processes --- for the conservative variants of these mass redistribution models on general geometries.

\appendix

\section{Being in $\Omega_k$ at each positive time}\label{appenA}
In this appendix, we prove a technical result on the probability that the $\eps^{-1}$-accelerated $\rm SIP$ sits in the absorbing set $\Omega_k$ (see \eqref{Omegak-def}) at a given positive time. Since  $\Omega_k = \Xi_k$ when $k=1$, we  fix  $k\ge 2$ all throughout this appendix. 

We use the notation employed in Section \ref{sec5}.
Recall from the proof of Proposition \ref{lem:kurtz} that $p_t^\eps(\cdot,\cdot)$ denotes the transition kernel of the $\eps^{-1}$-sped up $\rm SIP(G,\eps\hat\alpha)$ with $k$ particles, i.e., 
\begin{equation}
	p_t^\eps(\eta,\xi)= e^{t\eps^{-1}L_{\eps\hat\alpha,k}}\car_\xi(\eta)\comma \qquad \eta,\xi\in \Xi_k\fstop
\end{equation} The following proposition is the main result of this appendix.

\begin{proposition}\label{lem:fin-dim-conv}
For any configuration $\eta\in\Xi_k$ and  time $t>0$, we have
\begin{equation}
\lim_{\eps \to 0} p_t^\eps (\eta,\Omega_k) = 1 \fstop
\end{equation}
\end{proposition}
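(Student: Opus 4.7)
My plan is to exploit the split $\eps^{-1}L_{\eps\hat\alpha,k} = A_{\hat\alpha,k}+\eps^{-1}B_k$ used throughout Section \ref{sec5}: the slow part has a bounded jump rate in $\eps$, while the fast part drives the process into the absorbing set $\Omega_k$ on a time scale of order $\eps$. The heuristic is then that at any fixed positive time $t$, the last time at which a slow jump occurred lies at distance $\Theta(1)$ from $t$ with high probability, and in the meantime the fast chain has had overwhelmingly enough time ($\gg\eps$) to reach $\Omega_k$.

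To make this precise, I would use a thinning representation. Set $C_1\eqdef\max_{\eta\in\Xi_k}\sum_{x,y\in V}c_{xy}\,\eta_x\,\hat\alpha_y<\infty$ and let $(\tau_n)_{n\ge 1}$ be the arrival times of an independent Poisson process of rate $C_1$. Construct the $\eps^{-1}$-sped up SIP so that, between two consecutive $\tau_n$'s, the process evolves purely under $\eps^{-1}B_k$; at each $\tau_n$, given the current state $\eta$, a slow jump is performed with probability $(\sum_{x,y}c_{xy}\eta_x\hat\alpha_y)/C_1\le 1$ and distributed according to $A_{\hat\alpha,k}$-transitions from $\eta$, and otherwise nothing happens. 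Setting $N_t=\max\{n:\tau_n\le t\}$ and $U=t-\tau_{N_t}$ (with $U=t$ if $N_t=0$), one has $U\ge\delta$ precisely when no $\tau_n$ lies in $(t-\delta,t]$, so
\begin{equation}
\mathbb P(U<\delta)\le 1-e^{-C_1\delta}\le C_1\delta\comma\qquad \delta\in(0,t)\fstop
\end{equation}

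The second ingredient is a uniform exponential absorption estimate for the (non-accelerated) fast chain $(\cB_k(s))_{s\ge 0}$: since $\Xi_k$ is finite and $\Omega_k$ is absorbing with transient complement $\Delta_k$, there exist $c,C_2>0$ with $\mathbb P_\xi(T_{\rm abs}^{\cB}>s)\le C_2 e^{-cs}$ for every $\xi\in\Xi_k$ and $s\ge 0$, where $T_{\rm abs}^{\cB}\eqdef\inf\{s\ge 0:\cB_k(s)\in\Omega_k\}$. This is standard: after iterating $\lceil s\rceil$ independent one-unit time windows, each has a uniform positive probability of absorption. Under the $\eps^{-1}$-acceleration, absorption from any $\xi$ thus takes time bounded by $\eps T_{\rm abs}^{\cB}$, so that $\mathbb P_\xi(\text{not yet absorbed by time }u)\le C_2 e^{-cu/\eps}$ uniformly in $\xi\in\Xi_k$.

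Combining the two estimates: conditioning on $U$ and on the state $\xi$ immediately after the last (accepted or rejected) Poisson arrival, on $\{U\ge\delta\}$ the process has evolved under $\eps^{-1}B_k$ for time $U\ge\delta$ starting from $\xi$, so
\begin{equation}
p_t^\eps(\eta,\Delta_k)\le\mathbb P(U<\delta)+\sup_{\xi\in\Xi_k}\mathbb P_\xi(\eps T_{\rm abs}^{\cB}>\delta)\le C_1\delta+C_2 e^{-c\delta/\eps}\fstop
\end{equation}
Taking $\eps\to 0$ first at fixed $\delta\in(0,t)$ gives $\limsup_{\eps\to 0}p_t^\eps(\eta,\Delta_k)\le C_1\delta$, and then letting $\delta\to 0$ yields the claim. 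The only delicate point in the argument is the exponential absorption bound for $(\cB_k(s))_{s\ge 0}$; it is routine for absorbing finite-state chains, but one must be careful to obtain a constant $c>0$ independent of the starting configuration (this is a consequence of finiteness of $\Xi_k$ combined with the irreducibility of the fast dynamics restricted to any communicating class of $\Delta_k$ that eventually feeds into $\Omega_k$, guaranteed by the rates $c_{xy}\eta_x\eta_y$).
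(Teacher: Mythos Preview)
Your argument is correct and takes a genuinely different, more elementary route than the paper. The paper proceeds in three stages: first a Lyapunov (martingale) argument using a logarithmic test function gives an explicit bound $\max_{\zeta\in\Delta_k}\E_\zeta^\eps[\tau_{\Omega_k}]\le C\eps\log k/c_{\rm min}$; then a local-time/time-change argument in the spirit of trace processes (following \cite{landim_loulakis_mourragui_2018}) shows that the generalized inverse $S_\eps(t)$ of the $\Omega_k$-local time converges to $t$; finally, combining this with a uniform lower bound on the $\Omega_k\to\Delta_k$ exit time yields the result. Your Poissonization trick bypasses all of this: by uniformizing only the slow part $A_{\hat\alpha,k}$ at rate $C_1$ and running pure $\eps^{-1}B_k$ between Poisson marks, you separate the two time scales at the level of the construction, so the only probabilistic input needed is the standard exponential absorption tail for the finite-state chain $\cB_k$. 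What the paper's route buys is an explicit $k$-dependent constant in the hitting-time bound and a set of lemmas (local time, trace) that are reusable in broader metastability arguments; what your route buys is brevity and transparency for this specific proposition, since the decoupling is exact rather than asymptotic. The one point worth making fully explicit in your write-up is the measurability/independence structure justifying the conditioning step: given the Poisson arrival times and the state $\xi$ just after the last arrival, the evolution on $[\tau_{N_t},t]$ is driven by $B_k$-clocks on that interval, independent of the past, so the uniform bound in $\xi$ applies cleanly.
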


To prove Proposition \ref{lem:fin-dim-conv}, we need a series of technical lemmas. Let $(\eta^\eps(t))_{t\ge 0}$
 be the $\eps^{-1}$-sped up $\rm SIP$ with generator $\eps^{-1}L_{\eps\hat\alpha,k}$. Let $\P_\eta^\eps$ and $\E_\eta^\eps$ denote the law and 	 corresponding expectation of $(\eta^\eps(t))_{t\ge0}$ starting from $\eta\in\Xi_k$.

The first lemma states that the time to reach the absorbing set $\Omega_k$ from the transient collection $\Delta_k$ (defined in \eqref{Deltak-def}) is uniformly negligible. Given a subset $A$ of $\Xi_k$, let $\tau_A$ be its (random) first hitting time. Moreover, define
\begin{equation}\label{cmin-cmax-def}
c_{\rm min} = \min_{\substack{x,y\in V\\c_{xy}>0}} c_{xy}> 0 \comma \qquad c_{\rm max} = \max_{x,y\in V}c_{xy}  \fstop
\end{equation}

\begin{lemma}\label{lem1}
Suppose that $\eps|\hat\alpha| \le \frac{c_{\rm min}}{4\,c_{\rm max}\,k}$. Then,  for some \fs{$C_1=C_1(G)>0$} and for all $\eps>0$, $k\ge 2$,
\begin{equation}
\max_{\zeta \in \Delta_k} \E_\zeta^\eps \,[ \tau_{\Omega_k}] \le \frac{\fs{C_1}\eps}{c_{\rm min}} \, \log k \fstop
\end{equation}
\end{lemma}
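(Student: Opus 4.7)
The approach I would take rests on the interaction energy Lyapunov function
\[
S(\eta)\eqdef \frac{1}{2}\sum_{x,y\in V}c_{xy}\,\eta_x\eta_y\comma\qquad \eta\in \Xi_k\comma
\]
which vanishes precisely on $\Omega_k$, satisfies $S(\eta)\ge c_{\rm min}/2$ on $\Delta_k$ (since at least one pair with $c_{xy}>0$ has $\eta_x,\eta_y\ge1$), and is bounded above by $c_{\rm max}k^2/2$. The starting point is a clean identity for the fast generator: expanding
$S(\eta-\delta_x+\delta_y)-S(\eta)=\sum_u\eta_u(c_{uy}-c_{ux})-c_{xy}$
and using the $x\leftrightarrow y$ symmetry in the double sum defining $B_k$ to cancel the antisymmetric cross-terms, one obtains
\[
B_kS(\eta)=-\sum_{x,y\in V}c_{xy}^2\,\eta_x\eta_y\;\le\; -2c_{\rm min}\,S(\eta)\fstop
\]

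First, I would treat the pure fast process $(\cB^\eps_k(t))_{t\ge0}$ generated by $\eps^{-1}B_k$ alone. By Gronwall, $\E^{\cB^\eps}_\zeta[S(\cB^\eps_k(t))]\le e^{-2c_{\rm min}t/\eps}S(\zeta)$. Markov's inequality together with the trivial bound then gives
\[
\P^{\cB^\eps}_\zeta\big[\cB^\eps_k(t)\in\Delta_k\big]\;\le\;\min\!\Big(1,\,\tfrac{c_{\rm max}k^2}{c_{\rm min}}\,e^{-2c_{\rm min}t/\eps}\Big)\comma
\]
and integrating this in $t$, splitting at $t^\star\eqdef \frac{\eps}{2c_{\rm min}}\log\!\big(c_{\rm max}k^2/c_{\rm min}\big)$, produces the pure-fast estimate $\E^{\cB^\eps}_\zeta[\tau_{\Omega_k}]\le C_0(G)\,\eps\log k/c_{\rm min}$ for every $\zeta\in\Delta_k$.

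The remaining task is to transfer this bound to the full process $X_t\eqdef \eta^\eps(t)$ generated by $A_{\hat\alpha,k}+\eps^{-1}B_k$. The smallness hypothesis $\eps|\hat\alpha|\le c_{\rm min}/(4c_{\rm max}k)$ is chosen precisely so that the total $A$-rate from any configuration, $\sum_{x,y}c_{xy}\eta_x\hat\alpha_y\le c_{\rm max}k|\hat\alpha|$, is at most $c_{\rm min}/(4\eps)$, i.e.\ four times smaller than the minimum $B$-rate $\eps^{-1}c_{\rm min}$ available in $\Delta_k$. I would then apply Dynkin's formula to $V(\eta)\eqdef \E^{\cB^\eps}_\eta[\tau_{\Omega_k}]$ — which, by construction, solves $\eps^{-1}B_kV\equiv -1$ on $\Delta_k$, $V=0$ on $\Omega_k$, and satisfies $0\le V\le V_\infty\eqdef C_0\,\eps\log k/c_{\rm min}$ — to obtain
\[
\E^\eps_\zeta[\tau_{\Omega_k}\wedge t]\;\le\; V(\zeta)+\E^\eps_\zeta\!\int_0^{\tau_{\Omega_k}\wedge t}\!A_{\hat\alpha,k}V(X_s)\,\dd s\fstop
\]

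\textbf{The hard part} will be controlling the error term $A_{\hat\alpha,k}V$: the naive sup-norm bound $\|A_{\hat\alpha,k}V\|_\infty\le 2c_{\rm max}k|\hat\alpha|\,V_\infty$ combined with the assumption only yields an additive error of order $(\log k)\,\E^\eps_\zeta[\tau\wedge t]$, which fails to close the recursion for large $k$. To overcome this I would establish a $k$-uniform pointwise gradient estimate of the form $|V(\eta-\delta_x+\delta_y)-V(\eta)|\le C'\eps/c_{\rm min}$, exploiting that a single $A$-jump perturbs the configuration by only one particle: by coupling $\cB^\eps_k$ started from $\eta$ and from $\eta-\delta_x+\delta_y$, and using the consistency intertwining $B_k\hat{\mathfrak a}_k=\hat{\mathfrak a}_kB_{k-1}$ of \eqref{eq:Omega-intertwine} (in the spirit of the arguments carried out in Section \ref{sec5.3}), the change in the pure-fast absorption time under a one-particle perturbation should be controlled independently of $k$. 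Once such a $k$-uniform gradient bound is in hand, the smallness assumption forces $|A_{\hat\alpha,k}V|\le c_{\rm max}k|\hat\alpha|\cdot C'\eps/c_{\rm min}\le C'/4$ pointwise, the Dynkin estimate closes to $\E^\eps_\zeta[\tau_{\Omega_k}\wedge t]\le 2V(\zeta)\le 2V_\infty$, and sending $t\to\infty$ yields the lemma with $C=2C_0(G)$.
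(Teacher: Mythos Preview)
Your approach differs from the paper's and carries a genuine gap. The paper uses a single Lyapunov function
\[
\phi(\eta)=-\sum_{x\in V}\sum_{j=1}^{\eta_x}\frac1j\comma
\]
and computes the drift of the \emph{full} generator $L_{\eps\hat\alpha,k}\phi$ directly on $\Delta_k$. The point of this choice is that $\phi$ has \emph{uniformly bounded increments}: $|\phi(\eta-\delta_x+\delta_y)-\phi(\eta)|=|1/\eta_x-1/(\eta_y+1)|\le 1$. Hence the slow part $A_{\hat\alpha,k}\phi$ is automatically bounded by $c_{\rm max}k|\hat\alpha|$, and the smallness hypothesis immediately makes it a lower-order correction to the $B_k$-drift. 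No transfer step is needed.

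By contrast, your interaction energy $S$ has increments of order $c_{\rm max}k$, which is exactly why you are forced into the indirect route via $V=\E^{\cB^\eps}[\tau_{\Omega_k}]$. The argument then hinges entirely on the $k$-uniform gradient estimate $|V(\eta-\delta_x+\delta_y)-V(\eta)|\le C'\eps/c_{\rm min}$, which you do not prove. Your proposed justification---coupling two copies of $\cB_k$ differing by one particle move, together with the intertwining $B_k\mathfrak a_k=\mathfrak a_kB_{k-1}$---is too vague to be a proof: the intertwining relates $k$- and $(k-1)$-particle \emph{observables}, not two $k$-particle configurations differing by a local move, and the rate-dependent coupling of $\cB_k$ from $\eta$ and $\eta-\delta_x+\delta_y$ can decouple in ways that are not obviously controlled uniformly in $k$. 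Without this gradient bound, the Dynkin recursion does not close (as you yourself note, the naive sup-norm estimate gives an error $\sim\log k$).

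The broader lesson: when splitting into slow/fast parts, pick a Lyapunov function whose increments are bounded \emph{independently of the configuration}, so that the slow contribution is trivially controlled by the total slow rate. The harmonic-number potential does this; the quadratic energy $S$ does not.
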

\begin{proof}
Define a test function $\phi : \Xi_k \to \R$ as
\begin{equation}
\phi(\eta) \eqdef - \sum_{x\in V } \left( 1+\frac12 + \cdots + \frac1{\eta_x}  \right) \fstop
\end{equation}
Then, there exists \fs{$C_1=C_1(G)>0$} such that
\begin{equation}\label{eqeq0.5}
\max_{\eta \in \Xi_k} \phi(\eta) - \min_{\eta \in \Xi_k} \phi(\eta) \le \frac{\fs{C_1}}4 \log k \fstop
\end{equation}
For each $\zeta \in \Delta_k$, we may express $L_{\eps\hat\alpha,k} \phi(\zeta)$ as
\begin{equation}\label{eqeq1}
\sum_{ \{x,y \} \subseteq V} c_{xy}  \left[ \zeta_x \,(\eps\hat\alpha_y +\zeta_y ) \, ( \phi(\zeta-\delta_x +\delta_y ) - \phi(\zeta)) + \zeta_y \, (\eps\hat\alpha_x + \eta_x ) \,( \phi (\zeta-\delta_y + \delta_x ) - \phi(\zeta))    \right]  \fstop
\end{equation}
If $\zeta_x , \zeta_y \ge1$, then the term inside the bracket in \eqref{eqeq1} becomes
\begin{equation}
\frac{\zeta_x}{\zeta_y+1} + \frac{\zeta_y}{\zeta_x+1} + \eps\hat\alpha_y \left( 1 -  \frac{\zeta_x}{\zeta_y+1}  \right)  + \eps\hat\alpha_x \left( 1 -  \frac{\zeta_y}{\zeta_x+1}  \right)  \ge  \frac{ (1-\eps\hat\alpha_y) \,\zeta_x}{\zeta_y+1} +  \frac{ (1-\eps\hat\alpha_x) \, \zeta_y}{\zeta_x+1} \fstop
\end{equation}
By the assumption, we have $\eps\hat\alpha_{\rm max} \le \eps|\hat\alpha| \le \frac{c_{\rm min}}{4 \, c_{\rm max} \, k} < \frac12$. Thus, the right-hand side is bounded from below by
\begin{equation}\label{eqeq2}
\frac12 \left( \frac{ \zeta_x}{\zeta_y+1} +  \frac{  \zeta_y}{\zeta_x+1}   \right) \ge \frac12 \comma
\end{equation}
where the inequality holds because $\zeta_x ,\zeta_y \ge 1$. Next, if $\zeta_x\ge1$ and $\zeta_y=0$, then the term inside the bracket in \eqref{eqeq1} equals
\begin{equation}\label{eqeq3}
\zeta_x \,\eps\hat\alpha_y  \left( \frac1{\zeta_x} - 1 \right) = - \eps\hat\alpha_y  \, (\zeta_x -1) \ge -\eps\hat\alpha_y \, \zeta_x  \fstop
\end{equation}
Similarly, if $\zeta_x=0$ and $\zeta_y\ge1$, then the term is at least $-\eps\hat\alpha_x \, \zeta_y$. As $\zeta \in \Delta_k$, there exists at least one $\{x^*,y^*\}$ such that $c_{x^*y^*}>0$ and $\zeta_{x^*},\zeta_{y^*} \ge 1$. Thus, by \eqref{cmin-cmax-def}, \eqref{eqeq1}, \eqref{eqeq2}, and \eqref{eqeq3},
\begin{equation}\label{eqeq4}
L_{\eps\hat\alpha,k}\phi (\zeta) \ge \frac{c_{x^*y^*}}2 - \sum_{\substack{x\in V \\ \zeta_x \ge 1}} \sum_{\substack{y\in V \\ \zeta_y = 0}} c_{xy} \,\eps\hat\alpha_y \,\zeta_x  \ge \frac{c_{\rm min}}{2} - c_{\rm max} \,k\, \eps|\hat\alpha| \ge \frac{c_{\rm min}}{4} \comma
\end{equation}
where the last inequality holds since $\eps|\hat\alpha| \le \frac{c_{\rm min}}{4 \, c_{\rm max} \, k}$.

Now, it is well known that
\begin{equation}
\phi(\eta^\eps(t)) - \phi(\eta^\eps(0)) - \int_0^t ( \eps^{-1} L_{\eps\hat\alpha,k} \phi ) (\eta^\eps(s)) \, \dd s  \comma \quad t\ge0 \comma
\end{equation}
is a $\P_\zeta^\eps$-martingale. Setting $t=\tau_{\Omega_k} \wedge r $, applying \eqref{eqeq0.5} and \eqref{eqeq4}, taking expectations with respect to $\E_\zeta^\eps$, and then sending $r \to \infty$, we obtain 
\begin{equation}
\frac{c_{\rm min} \, \eps^{-1}}{4} \,\E_\zeta^\eps \, [\tau_{\Omega_k}] \le \E_\zeta^\eps \left[ \int_0^{\tau_{\Omega_k}} (\eps^{-1} L_{\eps\hat\alpha,k} \phi)(\eta^\eps(s)) \, \dd s  \right] \le \frac{\fs{C_1}}4 \log k \fstop
\end{equation}
This concludes the proof of the lemma.
\end{proof}

For each $t\ge0$, let $T_\eps(t)$ be the local time spent by $(\eta^\eps(s))_{s\ge0}$ in $\Omega_k$ up to time $t$:
\begin{equation}\label{Talpha-def}
T_\eps(t) \eqdef \int_0^t \IND \{ \eta^\eps(s) \in \Omega_k \} \, \dd s \fstop
\end{equation}
Then, let $S_\eps(s)$, $s\ge0$, be its generalized inverse:
\begin{equation}\label{Salpha-def}
S_\eps(s) \eqdef \sup \, \{ t \ge 0 : T_\eps(t) \le s \} \fstop
\end{equation}
By definition, $T_\eps(S_\eps(s)) = s$ and $T_\eps(S_\eps(s)+u)>s$ for any $s\ge0$ and $u>0$. In turn, we have
\begin{equation}\label{Salpha-in-Ak}
\eta^\eps (S_\eps(s)) \in \Omega_k\comma\qquad s \ge 0  \fstop
\end{equation}
Note also that $\{ S_\eps(s) \ge s+u \} = \{ T_\eps(s+u) \le s \}$ for $s\ge0$ and $u>0$.

\begin{lemma}\label{lem2}
For all $\eta\in\Xi_k$, $t\ge0$ and $\delta >0$, we have
\begin{equation}
\lim_{\eps \to 0} \P_\eta^\eps \, [ S_\eps(t) \ge t+ \delta ]   = 0 \fstop
\end{equation}
\end{lemma}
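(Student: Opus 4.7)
The plan rests on the identity $\{S_\eps(t)\ge t+\delta\}=\{T_\eps(t+\delta)\le t\}=\{(t+\delta)-T_\eps(t+\delta)\ge\delta\}$, which means the event in question asks the process $(\eta^\eps(s))_{s\ge 0}$ to spend a total time of at least $\delta$ inside the transient set $\Delta_k=\Xi_k\setminus\Omega_k$ during the window $[0,t+\delta]$. By Markov's inequality, it therefore suffices to show
\begin{equation}
\E_\eta^\eps\left[\int_0^{t+\delta}\IND\{\eta^\eps(s)\in\Delta_k\}\,\dd s\right]\to 0\quad \text{as}\ \eps\to 0 \fstop
\end{equation}

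The strategy is to decompose the time in $\Delta_k$ into a sum of sojourn durations (excursions from $\Omega_k$), and to bound separately the number of excursions and the expected length of each one. For the length of each excursion, Lemma \ref{lem1} already does the work: once the process enters $\Delta_k$, it returns to $\Omega_k$ in expected time at most $\frac{C\eps}{c_{\rm min}}\log k$. Applying the strong Markov property at successive entrance times into $\Omega_k$, the expected total time in $\Delta_k$ up to time $t+\delta$ is at most $\left(\IND\{\eta\in\Delta_k\}+\E_\eta^\eps[N_\eps(t+\delta)]\right)\,\frac{C\eps\log k}{c_{\rm min}}$, where $N_\eps(t+\delta)$ counts the number of jumps from $\Omega_k$ into $\Delta_k$ during $[0,t+\delta]$.

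The key observation for bounding $\E_\eta^\eps[N_\eps(t+\delta)]$ is that such excursion-initiating jumps arise only via the slow part $A_{\hat\alpha,k}$ of the decomposition \eqref{Ak-Bk-dec}, and not the fast part $\eps^{-1}B_k$. Indeed, for any $\eta\in\Omega_k$, each site $y$ adjacent to an occupied site $x$ must be empty (otherwise $\eta$ would lie in $\Delta_k$), so the full rate $\eps^{-1}c_{xy}\eta_x(\eps\hat\alpha_y+\eta_y)$ reduces to $c_{xy}\eta_x\hat\alpha_y$, which is a bounded quantity $\le \lambda_{\max}\eqdef c_{\rm max}k|\hat\alpha|$ independently of $\eps$. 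Hence the jumps out of $\Omega_k$ are dominated by a Poisson process of bounded rate, and $\E_\eta^\eps[N_\eps(t+\delta)]\le\lambda_{\max}(t+\delta)$ regardless of $\eps$.

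Combining these two ingredients, the expected time in $\Delta_k$ during $[0,t+\delta]$ is bounded by $(1+\lambda_{\max}(t+\delta))\frac{C\eps\log k}{c_{\rm min}}$, which tends to zero as $\eps\to 0$ (recall $k$ is fixed in this appendix). Markov's inequality applied to this $L^1$-bound with threshold $\delta$ then yields the proposition. There is no genuine obstacle; the only point to watch is that the condition $\eps|\hat\alpha|\le\frac{c_{\rm min}}{4c_{\rm max}k}$ required by Lemma \ref{lem1} must be assumed, which is automatic since we let $\eps\to 0$.
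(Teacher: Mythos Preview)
Your proof is correct and noticeably cleaner than the paper's. Both arguments rest on the same two ingredients --- Lemma~\ref{lem1} for the expected length of an excursion in $\Delta_k$, and the observation that from any $\eta\in\Omega_k$ all outgoing rates come solely from the $A_{\hat\alpha,k}$-part and are therefore bounded by a constant $\lambda_{\max}$ independent of $\eps$ --- but they combine them differently. The paper truncates the number of excursions at a level $m$, bounds the probability of more than $m$ excursions by a Gamma tail, applies Markov's inequality to each of the finitely many excursion lengths separately, and then takes a double limit $\eps\to 0$ followed by $m\to\infty$. You instead work directly in $L^1$: a single application of Markov's inequality reduces the problem to bounding the expected occupation time of $\Delta_k$, which you write as a Wald-type sum of excursion lengths and control via $\E_\eta^\eps[N_\eps(t+\delta)]\le\lambda_{\max}(t+\delta)$ (rigorously, this is the compensator identity for the counting process $N_\eps$). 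Your route avoids the auxiliary parameter $m$ and the double limit entirely, and also dispenses with the preliminary reduction to $\eta\in\Omega_k$, at the modest price of requiring the slightly more sophisticated (but standard) compensator bound rather than the paper's explicit Gamma-tail estimate.
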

\begin{proof}
First, suppose that $\eta \in \Delta_k$.  We may bound
\begin{align}
\P_\eta^\eps \, [ S_\eps(t) \ge t+\delta ] & = \P_\eta^\eps \, [ T_\eps(t+\delta) \le t ]  \\
& \le \P_\eta^\eps \, [ T_\eps(t+\delta) \le t  , \ \tau_{\Omega_k} < \delta/2 ] + \P_\eta^\eps  \, [ \tau_{\Omega_k} \ge \delta/2 ]  \fstop
\end{align}
By Lemma \ref{lem1} and the Markov inequality, we obtain
\begin{equation}
\P_\eta^\eps \, [ \tau_{\Omega_k} \ge \delta/2 ] \le \frac2\delta \, \E_\eta^\eps \, [ \tau_{\Omega_k}] \le \frac2\delta \, \frac{\fs{C_1} \eps}{c_{\rm min}} \, \log k \xrightarrow{\eps \to 0} 0 \fstop
\end{equation}
Thus, by the strong Markov property of $(\eta^\eps(s))_{s\ge0}$ and the fact that $t \mapsto T_\eps(t)$ is non-decreasing, we deduce
\begin{equation}
\limsup_{\eps \to 0}  \P_\eta^\eps \, [ S_\eps(t) \ge t+\delta ] \le \limsup_{\eps \to 0} \sup_{\xi \in \Omega_k} \P_\xi^\eps \, [ T_\eps( t+ \delta/2 ) \le t] \fstop
\end{equation}
Thus, it suffices to prove Lemma \ref{lem2} for $\eta \in \Omega_k$.

It remains to prove that, for $\eta \in \Omega_k$, $t\ge0$ and $\delta>0$,
\begin{equation}\label{eq-lem2:WTS}
\lim_{\eps \to 0} \P_\eta^\eps \, [ T_\eps(t+\delta) \le t ] = 0 \fstop
\end{equation}
Let $0<\tau_1 < \tau_2 < \cdots$ be the consecutive jump times from $\Omega_k$ to $\Delta_k$. Then, for any  integer $m \ge 1$, we get
\begin{equation}\label{eq2-1}
\P_\eta^\eps \,[ T_\eps(t+\delta) \le t ]  \le  \P_\eta^\eps \,[ T_\eps(t+\delta) \le t , \ \tau_{m+1} > t+\delta ] + \P_\eta^\eps \,[ \tau_{m+1} \le t + \delta ] \fstop
\end{equation}
Conditioned on the first event in the right-hand side of \eqref{eq2-1}, there occurs at most $m$ jumps from $\Omega_k$ to $\Delta_k$ up to time $t+\delta$. Let $\tau_1' < \tau_2' < \cdots$ denote the consecutive return times to $\Omega_k$, such that we have
\begin{equation}
0 < \tau_1 < \tau_1'  < \tau_2 < \tau_2' < \cdots \fstop
\end{equation}
Then, by \eqref{Talpha-def}, we may bound the first probability in the right-hand side of \eqref{eq2-1} by
\begin{equation}
\P_\eta^\eps \, [ (\tau_1'-\tau_1) + (\tau_2' - \tau_2) + \cdots + (\tau_m' - \tau_m ) \ge \delta  ] \fstop
\end{equation}
We may further bound this by
\begin{equation}\label{eq2-2}
\begin{aligned}
\sum_{i=1}^m \P_\eta^\eps  [ \tau_i'-\tau_i \ge \delta/m ] & \le m \sup_{\zeta \in \Delta_k} \P_\zeta^\eps [ \tau_{\Omega_k} \ge \delta / m ] \\
& \le \frac{m^2}{\delta}  \sup_{\zeta \in \Delta_k}  \E_\zeta^\eps  [ \tau_{\Omega_k}] \le \frac{m^2  \fs{C_1} 	\eps  \log k}{\delta  c_{\rm min}} \xrightarrow{\eps \to 0 } 0 \fstop
\end{aligned}
\end{equation}
Here, the first inequality is due to the strong Markov property at $\tau_i$ for each $i \in \bbr{1}{m}$, the second inequality is due to the Markov inequality, and the third inequality is due to Lemma \ref{lem1}.
Further, for $(\eta^\eps(s))_{s\ge0}$, any direct jump from $ \Omega_k$ to $\Delta_k$ has rate bounded above by
\begin{equation}
\sup_{\eta\in \Omega_k}\sup_{x,y \in V} \eps^{-1} \,  c_{xy} \, \eta_x\eps\hat\alpha_y \le c_{\rm max} \, k \, \hat\alpha_{\rm max} \comma
\end{equation}
where $\hat\alpha_{\rm max} \eqdef \max_{x\in V} \hat\alpha_x$. Hence, each of the random variables $\tau_1 , \tau_2 - \tau_1' , \dots , \tau_{m+1} - \tau_m'$ stochastically dominates an exponential random variable with rate $ c_{\rm max} \, k \, \hat\alpha_{\rm max} $. Thus, letting $\tau_1^*,\tau_2^* , \dots$ be some i.i.d.\ exponentials of rate $ c_{\rm max} \, k \, \hat\alpha_{\rm max}$, by Markovianity we get	
\begin{equation}\label{eq2-3}
\begin{aligned}
\P_\eta^\eps \, [ \tau_{m+1} \le t+\delta ] & \le \P \, [ \tau_1^* + \cdots + \tau_{m+1}^* \le t + \delta  ] \\
& = \P \, [{\rm Gamma} \,(m+1, c_{\rm max} \, k \, \hat\alpha_{\rm max} ) \le t+\delta ]  \xrightarrow{m \to \infty} 0 \comma
\end{aligned}
\end{equation}
where ${\rm Gamma}\,(u,\beta)$ denotes the Gamma distribution with shape $u>0$ and rate $\beta>0$. Therefore, by \eqref{eq2-1}, \eqref{eq2-2} and \eqref{eq2-3}, we conclude that
\begin{align}
\limsup_{\eps \to 0} \P_\eta^\eps \, [T_\eps(t+\delta) \le t ] &\le   \limsup_{m \to \infty} \limsup_{\eps \to 0} \frac{m^2 \,\fs{C_1}\, \eps \, \log k}{ \delta \, c_{\rm min}} \\
& + \limsup_{m \to \infty} \P \,  [{\rm Gamma}\,(m+1, c_{\rm max} \, k \, \hat\alpha_{\rm max} ) \le t+\delta ]   = 0 \comma
\end{align}
as desired.
\end{proof}

\begin{lemma}\label{lem3}
We have
\begin{equation}
\lim_{\delta \to 0} \lim_{\eps \to 0} \sup_{\xi \in \Omega_k} \P_\xi^\eps \, [ \tau_{\Delta_k} < 3\delta  ] =  0 \fstop
\end{equation}
\end{lemma}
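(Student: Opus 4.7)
The plan is built on the elementary observation that the fast generator $B_k$ annihilates every function pointwise on $\Omega_k$. Indeed, the definition of $\Omega_k$ in \eqref{Omegak-def} precisely says $c_{xy}\,\xi_x\,\xi_y = 0$ for all $x,y\in V$ and all $\xi\in\Omega_k$, so every summand in $B_k f(\xi)$ vanishes regardless of $f$. Consequently, at any $\xi\in\Omega_k$ the $\eps^{-1}$-sped up generator $\eps^{-1}L_{\eps\hat\alpha,k} = A_{\hat\alpha,k} + \eps^{-1}B_k$ collapses to the $\eps$-independent slow generator $A_{\hat\alpha,k}$, whose total outgoing jump rate equals
\begin{equation}
R(\xi) \eqdef \sum_{x,y\in V} c_{xy}\,\xi_x\,\hat\alpha_y \le c_{\rm max}\,|\hat\alpha|\,k \eqqcolon C_0,
\end{equation}
a bound uniform in $\eps>0$ and in $\xi\in\Omega_k$. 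Intuitively, while the system sits in $\Omega_k$ the fast $\eps^{-1}$-coalescence is inactive, and the only thing that can happen is an order-one slow move of a single particle; such a slow move may or may not bring the system into $\Delta_k$, but in any case it happens only at rate $O(1)$ uniformly in $\eps$.

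Given this, the proof reduces to a one-line waiting-time estimate. Let $\sigma_1$ denote the first jump time of $(\eta^\eps(s))_{s\ge 0}$. Since the process starts in $\Omega_k$ and must perform at least one jump in order to leave it, the inclusion $\{\tau_{\Delta_k}<3\delta\}\subseteq \{\sigma_1<3\delta\}$ holds, and therefore
\begin{equation}
\P_\xi^\eps[\tau_{\Delta_k}<3\delta] \le \P_\xi^\eps[\sigma_1<3\delta] = 1 - e^{-R(\xi)\cdot 3\delta} \le 1 - e^{-3C_0\delta} \le 3C_0\delta.
\end{equation}
The right-hand side is free of $\eps$ and of $\xi\in\Omega_k$, so taking $\sup_{\xi\in\Omega_k}$ and $\eps\to 0$ are trivial, and the subsequent $\delta\to 0$ yields the lemma.

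There is essentially no obstacle here: all of the harder escape-from-$\Delta_k$ analysis has already been carried out in Lemmas \ref{lem1} and \ref{lem2}, and the present statement is the dual one, which concerns escape from $\Omega_k$ into $\Delta_k$. The whole point is that $\Omega_k$ is precisely the set on which the fast dynamics is silent, so escape from $\Omega_k$ is driven solely by the slow generator and therefore has order-one rate, making the short-time estimate immediate.
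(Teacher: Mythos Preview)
Your proof is correct and follows essentially the same route as the paper: both arguments observe that on $\Omega_k$ the fast part $B_k$ is silent, so the total exit rate from any $\xi\in\Omega_k$ is governed solely by $A_{\hat\alpha,k}$ and is bounded by an $\eps$-free constant, whence the first-jump (hence first-hit-of-$\Delta_k$) time stochastically dominates an exponential with that rate. The only cosmetic differences are that you bound the total outgoing rate by $c_{\rm max}|\hat\alpha|k$ and use the inclusion $\{\tau_{\Delta_k}<3\delta\}\subseteq\{\sigma_1<3\delta\}$, whereas the paper quotes the analogous bound $c_{\rm max}k\hat\alpha_{\rm max}$ already established in the proof of Lemma~\ref{lem2}.
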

\begin{proof}
As explained in the proof of Lemma \ref{lem2}, starting from any $\xi \in \Omega_k$, $\tau_{\Delta_k}$ stochastically dominates an exponential random variable of rate $c_{\rm max}\, k\, \hat\alpha_{\rm max}$. Thus, we get
\begin{equation}
\limsup_{\delta \to 0} \sup_{\eps \in (0,1)}  \P_\xi^\eps \,[ \tau_{\Delta_k} < 3\delta  ]  \le \limsup_{\delta \to 0} \left(  1 - e^{- c_{\rm max} \,k\, \hat\alpha_{\rm max} \,3\delta}  \right) = 0 \fstop
\end{equation}
This concludes the proof.
\end{proof}

Now, we are ready to prove Proposition \ref{lem:fin-dim-conv}.
\begin{proof}[Proof of Proposition \ref{lem:fin-dim-conv}]
We prove
\begin{equation}\label{fdc-WTS}
\lim_{\eps \to 0} \P_\eta^\eps\, [ \eta^\eps(t) \in \Omega_k ] =1\comma\qquad t>0\comma\eta\in \Xi_k \fstop
\end{equation}
We follow the proof ideas of \cite[Lemma 3.1]{landim_loulakis_mourragui_2018}. By \eqref{Salpha-in-Ak}, for $\delta \in (0,t/3)$, we estimate
\begin{align}
1 = \P_\eta^\eps \, [ \eta^\eps(S_\eps(t-3\delta)) \in \Omega_k ] \le & \ \P_\eta^\eps \left[ \eta^\eps(S_\eps(t-3\delta)) \in \Omega_k, \ S_\eps(t-3\delta) \in [t-3\delta, t-2\delta]  \right] \\
& + \P_\eta^\eps \,[ S_\eps(t-3\delta) > t-2\delta ] \fstop
\end{align}
By Lemma \ref{lem2}, the second probability in the right-hand side vanishes as $\eps \to 0$. Moreover, by \eqref{Salpha-in-Ak}, the first probability in the right-hand side is bounded by
\begin{equation}
\P_\eta^\eps \,  [ \, \exists s \in [t-3\delta, t-2\delta] , \ \eta^\eps(s) \in \Omega_k  \, ] \fstop
\end{equation}
We can further divide this into two events and bound the last probability as
\begin{equation}
\P_\eta^\eps \,[ \eta^\eps(t) \in \Omega_k ] + \P_\eta^\eps \,  [ \, \exists s \in [t-3\delta, t-2\delta] , \ \eta^\eps(s) \in \Omega_k , \ \eta^\eps(t) \in \Delta_k  \, ] \fstop
\end{equation}
Then, by the Markov property, the second probability above is bounded by
\begin{equation}
\sup_{\xi \in \Omega_k} \P_\xi^\eps \, [ \tau_{\Delta_k} < 3\delta ] \comma
\end{equation}
which,  by Lemma \ref{lem3}, vanishes by first taking the limit $\eps\to 0$, and then $\delta\to 0$.
 Collecting all displayed inequalities, we conclude that \eqref{fdc-WTS} holds.
\end{proof}

\section{Reversibility of the limit process}\label{appenB}
In this appendix, we prove Lemma \ref{lem:nu-self-adj} and simply write $\varsigma_{\hat\alpha,k}=\varsigma_{\hat\alpha,k,m}$, $k\ge m$.
\begin{proof}[Proof of Lemma \ref{lem:nu-self-adj}]
We start by introducing an auxiliary continuous-time Markov chain on $\Xi_k$ with rate function  (cf.\ \eqref{Ak-Bk-dec}--\eqref{Deltak-def})
\begin{equation}\label{eq:aux-dynamics}
{\bf s}(\eta,\emparg)\eqdef \begin{dcases}
{\bf r}^{\cA}_{\hat\alpha,k}(\eta,\emparg) &\text{if}\ \eta\in \Omega_k\comma\\
{\bf r}^{\cB}_k(\eta,\emparg) &\text{if}\ \eta\in \Delta_k\fstop
\end{dcases}
\end{equation}
In words, the above dynamics follows the rule of $A_{\hat\alpha,k}$ in $\Omega_k$, while that  of $B_k$ in $\Delta_k$. ({Remark that this is not a ${\rm SIP}$-dynamics.})
Now, recall the definition of $\Omega_{k,m}$ from \eqref{Omegakm-def} and, similarly, introduce the set 
\begin{equation}
\Delta_{k,m} \eqdef \left\{ \zeta \in \Delta_k : |\{x\in V: \zeta_x > 0 \}| = m \right\} \fstop
\end{equation}
Observe that this auxiliary chain can jump from $\Omega_{k,m}$ to either $\Delta_{k,m}$ or $\Delta_{k,m+1}$ (if $m\le k-1$), but a jump from $\Omega_{k,m}$ to $\Delta_{k,m}$ has rate zero to backtrack.

Now, fix $m\le k-1$. We claim that this dynamics, {restricted to $\Omega_{k,m}\cup \Delta_{k,m+1}$}, is reversible with respect to $\varsigma_{\hat\alpha,k}$.
For this purpose, it suffices to consider pairs $\eta, \xi = \eta-\delta_x+\delta_y \in \Omega_{k,m}\cup \Delta_{k,m+1}$ (hence, $\eta_x\ge 1$); rates are zero otherwise. If $\eta \in \Omega_{k,m}$ and $\xi \in \Omega_{k,m}$ (such that $\eta_x =1$), then
\begin{equation}
\varsigma_{\hat\alpha,k}(\eta) \, {\bf r}^\cA_{\hat\alpha,k} (\eta,\xi)  =  c_{xy} \, \eta_x \hat\alpha_y \, \prod_{\substack{z\in V\\ \eta_z > 0}} \frac{\hat\alpha_z}{\eta_z}  = c_{xy} \, \hat\alpha_x \xi_y \,  \prod_{\substack{z\in V \\ \xi_z > 0}} \frac{\hat\alpha_z}{\xi_z}  = \varsigma_{\hat\alpha,k} (\xi) \, {\bf r}^\cA_{\hat\alpha,k} ( \xi,\eta) \fstop
\end{equation}
If $\eta \in \Omega_{k,m}$ and $\xi \in \Delta_{k,m+1}$ (such that $\eta_x \ge 2$), we have
\begin{equation}
\varsigma_{\hat\alpha,k}(\eta) \, {\bf r}^\cA_{\hat\alpha,k} (\eta,\xi)  =  c_{xy} \, \eta_x \hat\alpha_y \, \prod_{\substack{z\in V\\ \eta_z > 0}} \frac{\hat\alpha_z}{\eta_z}  = c_{xy} \, \xi_x \xi_y \,  \prod_{\substack{z\in V \\ \xi_z > 0}} \frac{\hat\alpha_z}{\xi_z}  = \varsigma_{\hat\alpha,k} (\xi) \, {\bf r}^\cB_k ( \xi,\eta) \fstop
\end{equation}
If $\eta,\xi \in \Delta_{k,m+1}$, we have
\begin{equation}
\varsigma_{\hat\alpha,k}(\eta) \, {\bf r}^\cB_k (\eta,\xi)  =  c_{xy} \, \eta_x \eta_y \, \prod_{\substack{z\in V \\ \eta_z > 0}} \frac{\hat\alpha_z}{\eta_z}  
= c_{xy} \, \xi_x \xi_y \, \prod_{\substack{z\in V \\ \xi_z > 0}} \frac{\hat\alpha_z}{\xi_z}  = \varsigma_{\hat\alpha,k} (\xi)\, {\bf r}^\cB_k ( \xi,\eta) \fstop
\end{equation}
This shows that $\varsigma_{\hat \alpha,k}$ is reversible for the Markov chain described in \eqref{eq:aux-dynamics}, restricted to $\Omega_{k,m}\cup\Delta_{k,m+1}$.
Furthermore, by \eqref{eq:rM-def},  we have, for all distinct $\eta,\xi \in \Omega_{k,m}$, 
\begin{equation}
\varsigma_{\hat\alpha,k}(\eta) \, {\bf r}^\cM_{\hat\alpha,k} (\eta,\xi)  = \varsigma_{\hat\alpha,k}(\eta)  \sum_{\zeta \in \Delta_{k,m+1}} {\bf r}^\cA_{\hat\alpha,k} (\eta,\zeta) \, {\bf P}_\zeta^{\cB} \, [\tau_\xi = \tau_{\Omega_k}]  \fstop
\end{equation}
Following $(\cB_k(t))_{t\ge0}$, from $\zeta\in\Delta_{k,m+1}$ to arrive at $\xi \in\Omega_{k,m}$, the trajectory must stay in $\Delta_{k,m+1}$. Thus, by reversibility, the right-hand side is equal to
\begin{equation}
\varsigma_{\hat\alpha,k}(\xi)  \sum_{\zeta \in \Delta_{k,m+1}} {\bf r}^\cA_{\hat\alpha,k} (\xi,\zeta)  \, {\bf P}_\zeta^{\cB} \, [\tau_\eta = \tau_{\Omega_k}] = \varsigma_{\hat\alpha,k}(\xi) \, {\bf r}^\cM_{\hat\alpha,k} (\xi,\eta)  \fstop
\end{equation}
This concludes the proof of the lemma.
\end{proof}

\subsection*{Notation guide}
\fs{
For the reader's convenience, we collect some recurring notation. The table is only meant
as a quick reference; all objects are defined in detail at the indicated locations.}

\begin{center}
	\scriptsize
	\setlength{\tabcolsep}{3pt}
	\renewcommand{\arraystretch}{1.18}
	\begin{tabular}{|
			>{\centering\arraybackslash}m{0.18\textwidth}|
			>{\raggedright\arraybackslash}m{0.68\textwidth}|
			>{\centering\arraybackslash}m{0.09\textwidth}|}
		\hline
		\textbf{Symbol} & \textbf{Brief description} & \textbf{Ref.} \\
		\hline
		$G=(V,(c_{xy})_{x,y\in V})$ &
		finite connected weighted graph; $c_{xy}=c_{yx}\ge0$ conductance of edge $xy$ &
		Sec.~\ref{sec2} \\
		\hline
		$\alpha$ &
		site weights; $\alpha_{\min}:=\min_x\alpha_x$, $\alpha_{\max}:=\max_x\alpha_x$,
		$\alpha_{\rm ratio}:=\alpha_{\min}/\alpha_{\max}$ &
		\eqref{alpha-c-def} \\
		\hline
		$\Xi_k$ &
		$k$-particle configurations; $\Xi_k=\{\eta\in\mathbb N_0^V:|\eta|=k\}$ &
		Sec.~\ref{sec2} \\
		\hline
		$L_{G,\alpha,k}$ &
		generator of the conservative process ${\rm SIP}_k(G,\alpha)$ on $\Xi_k$ &
		\eqref{eq:gen-conservative} \\
		\hline
		$\mu_{\alpha,k}$ &
		reversible Dirichlet-Multinomial measure of ${\rm SIP}_k(G,\alpha)$ &
		\eqref{mu-def} \\
		\hline
		$\mathcal E_{G,\alpha,k}$ &
		Dirichlet form of $L_{G,\alpha,k}$ in $L^2(\mu_{\alpha,k})$ &
		\eqref{eq:dir-form-SIP} \\
		\hline
		${\rm gap}_k(G,\alpha)$ &
		spectral gap of the $k$-particle conservative process ${\rm SIP}_k(G,\alpha)$ &
		Sec.~\ref{sec:spectral-gap} \\
		\hline
		${\rm gap}_{\rm RW}(G,\alpha)$ &
		one-particle gap; ${\rm gap}_{\rm RW}(G,\alpha)={\rm gap}_1(G,\alpha)$ &
		\eqref{eq:gap-RW} \\
		\hline
		${\rm gap}_{\rm SIP}(G,\alpha)$ &
		conservative interacting gap; ${\rm gap}_{\rm SIP}(G,\alpha):=\inf_{k\ge2}{\rm gap}_k(G,\alpha)$ &
		\eqref{eq:gap-sip-def} \\
		\hline
		$\mathfrak a_k$ &
		annihilation operator; $(k-1)$-particle observables $\longrightarrow$ $k$-particle observables &
		\eqref{ann-op-def} \\
		\hline
		$\mathfrak a^\dagger_{\alpha,k-1}$ &
		creation operator; adjoint of $\mathfrak a_k$ in the corresponding $L^2(\mu_{\alpha,k})$ spaces &
		\eqref{eq:cre-def} \\
		\hline
		$\alpha=\varepsilon\hat\alpha$ &
		small-diffusivity scaling; fixed direction $\hat\alpha$, parameter $\varepsilon\downarrow0$ &
		\eqref{eq:alpha-eps-beta} \\
		\hline
		$A_{\hat\alpha,k}$ &
		slow part of $\varepsilon^{-1}L_{G,\varepsilon\hat\alpha,k}$; diffusive moves of particles &
		\eqref{Ak-Bk-dec} \\
		\hline
		$B_k$ &
		fast part of $\varepsilon^{-1}L_{G,\varepsilon\hat\alpha,k}$; attractive interaction/coalescence mechanism &
		\eqref{Ak-Bk-dec} \\
		\hline
		$\Omega_k$ &
		absorbing set of $B_k$; no two occupied sites are neighboring &
		\eqref{Omegak-def} \\
		\hline
		$\Delta_k$ &
		bad/transient complement; $\Delta_k:=\Xi_k\setminus\Omega_k$ &
		\eqref{Deltak-def} \\
		\hline
		$\Pi_k$ &
		$B_k$-harmonic projection; $\Pi_k f=\lim_{t\to\infty}e^{tB_k}f$ &
		Sec.~\ref{sec5.1} \\
		\hline
		${\mathscr G}_{\hat\alpha,k}$ &
		effective slow--fast generator $\Pi_k A_{\hat\alpha,k}$ on $\cR(\Pi_k)\subset \R^{\Xi_k}$
		&
		Prop.~\ref{lem:kurtz} \\
		\hline
		$w_k(\hat\alpha)$ &
		spectral gap of the limiting metastable dynamics generated by ${\mathscr G}_{\hat\alpha,k}$ &
		\eqref{wk-def} \\
		\hline
		$M_{\hat\alpha,k}$ &
		rate matrix on $\Omega_k$ associated with ${\mathscr G}_{\hat\alpha,k}: \cR(\Pi_k)\to \cR(\Pi_k)$ &
		\eqref{eq:M} \\
		\hline
		$r^M_{\hat\alpha,k}$ &
		jump rates of the effective chain $M_{\hat\alpha,k}$ on $\Omega_k$ &
		\eqref{eq:rM-def} \\
		\hline
		$\Omega_{k,m}$ &
		states in $\Omega_k$ with exactly $m$ separated stacks &
		\eqref{Omegakm-def} \\
		\hline
		$M_{\hat\alpha,k,m}$ &
		restriction of $M_{\hat\alpha,k}$ to $\Omega_{k,m}$; transient block for $m\ge2$ &
		\eqref{eq:Mkm} \\
		\hline
		$\lambda_{k,m}(\hat\alpha)$ &
		principal eigenvalue of $-M_{\hat\alpha,k,m}$ &
		\eqref{eq:lambda-k-m} \\
		\hline
		$\Xi$ &
		full non-conservative space; $\Xi=\mathbb N_0^V=\cup_{k\ge0}\Xi_k$ &
		Sec.~\ref{sec:open-SIP} \\
		\hline
		$\omega,\theta$ &
		reservoir parameters; $\omega_x \leftrightarrow$ interaction rate w/ res.\ at $x$;
		$\theta_x \leftrightarrow$ density res.\ at $x$ &
		Sec.~\ref{sec:open-SIP} \\
		\hline
		$L_{G,\alpha,\omega,\theta}$ &
		generator of the non-conservative/open  ${\rm SIP}(G,\alpha,\omega,\theta)$ &
		\eqref{eq:gen-res} \\
		\hline
		$\nu_{\alpha,\varrho}$ &
		product reversible measure for the open SIP when $\theta\equiv\varrho>0$ &
		\eqref{eq:nu-sigma} \\
		\hline
		$L_{G,\alpha,\omega,k}$ &
		generator of the $k$-particle absorbing system; case $\theta\equiv0$ &
		\eqref{eq:gen-res-abs} \\
		\hline
		${\rm gap}_k(G,\alpha,\omega)$ &
		lowest eigenvalue of $-L_{G,\alpha,\omega,k}$ for the $k$-particle absorbing system &
		Sec.~\ref{sec:non-conservative-stat-measures} \\
		\hline
		${\rm gap}_{\rm RW}(G,\alpha,\omega)$ &
		one-particle killed gap; killing rate $\omega_x$ at site $x$ &
		\eqref{eq:gap-rw-abs} \\
		\hline
		${\rm gap}_{\rm SIP}(G,\alpha,\omega)$ &
		absorbing interacting gap; ${\rm gap}_{\rm SIP}(G,\alpha,\omega):=\inf_{k\ge2}{\rm gap}_k(G,\alpha,\omega)$ &
		\eqref{eq:gap-sip-gap-k-non-conservative} \\
		\hline
			${\rm gap}_{\rm SIP}(G,\alpha,\omega,\varrho)$ &
		first $L^2(\nu_{\alpha,\varrho})$ gap of the reversible open SIP; case $\theta\equiv\varrho>0$ &
		Cor.~\ref{cor:non-conservative} \\
		\hline
		$D_{\alpha,\varrho}$ &
		orth.\ poly.\  duality functions; finite-particle absorbing systems $\longrightarrow$ open system &
		\eqref{eq:duality} \\
		\hline
		$F^\psi_{\alpha,\varrho}$ &
		polynomial lift of $\psi$ from $\Xi_k$ to $\Xi$ via $D_{\alpha,\varrho}$ &
		\eqref{eq:f-lift} \\
		\hline
		$\mathfrak b^\theta_{\alpha,\omega,\varrho,k-1}$ &
		lowering operator in the lifting relation; equals $0$ when $\theta\equiv\varrho$ &
		\eqref{eq:mathfrak-b} \\
		\hline
	\end{tabular}
\end{center}
\begin{acknowledgement}
		The authors wish to thank Pietro Caputo,   Matteo Quattropani, Justin Salez and Gunter Sch\"utz,  for fruitful conversations on various aspects of this work. \fs{The authors 	are grateful to the two anonymous reviewers for the careful reading of our manuscript and for their detailed comments, which helped us improve the presentation and clarify several points.} 
	The authors thank MIGE (Trieste), LMRS (Rouen) and IMPA (Rio de Janeiro) for the warm hospitality during their stays. 
\sk{SK was supported by the Basic Science Research Program through the National Research Foundation of Korea funded by the Ministry of Science and ICT (RS-2025-00518980), the Yonsei University Research Fund of 2025 (2025-22-0133), and the POSCO Science Fellowship of POSCO TJ Park Foundation.}
FS was supported by Microgrants 2022 (Regione FVG, legge LR 2/2011), and was a member of GNAMPA, INdAM, and of the  PRIN project  TESEO (2022HSSYPN). 
\end{acknowledgement}

\begin{data} Data sharing not applicable to this article as no datasets were generated or analyzed during the current study.
\end{data}

\begin{conflicts}
	All authors declare that they have no conflicts of interest.
\end{conflicts}


\end{document}